\documentclass[oneside,reqno]{amsart}

\usepackage{hyperref}
\usepackage[totalheight=22.5 true cm, totalwidth=16 true cm]{geometry}

\usepackage[shortcuts]{extdash}

\usepackage{xypic}

\usepackage{enumerate}
\usepackage{amsthm}
\usepackage{amsmath}
\usepackage{amssymb}
\usepackage{amsfonts}
\usepackage{dsfont}
\usepackage{array}
\usepackage{tabularx}
\usepackage{graphicx}

\newtheorem{theo}{Theorem}[section]
\newtheorem{lemma}[theo]{Lemma}
\newtheorem{prop}[theo]{Proposition}
\newtheorem{cor}[theo]{Corollary}
\newtheorem{defi}[theo]{Definition}

\newtheorem{conj}[theo]{Conjecture}

\theoremstyle{definition}
\newtheorem{ex}[theo]{Example}

\newcommand{\f}{\phi}

\newcommand{\spec}{\operatorname{Spec}}

\newcommand{\G}{\mathcal{G}}

\newcommand{\Gl}{\operatorname{GL}}

\newcommand{\Aut}{\operatorname{Aut}}
\newcommand{\Hom}{\operatorname{Hom}}

\newcommand{\id}{\operatorname{id}}

\newcommand{\de}{\delta}

\newcommand{\C}{\mathbb{C}}

\newcommand{\Gm}{\mathbb{G}_m}

\newcommand{\Ga}{\mathbb{G}_a}

\newcommand{\Rep}{\operatorname{Rep}}

\newcommand{\Regsing}{\operatorname{RegSing}}

\newcommand{\Lie}{\operatorname{Lie}}

\newcommand{\rank}{\operatorname{rank}}

\newcommand{\B}{\mathcal{B}}

\makeatletter
\@namedef{subjclassname@2020}{\textup{2020} Mathematics Subject Classification}
\makeatother

	\usepackage{todonotes} % for comments

\title[A Gasch\"{u}tz Lemma for linear algebraic groups with applications]{A Gasch\"{u}tz Lemma for linear algebraic groups with applications to the Riemann-Hilbert correspondence}
\author{Jason Bell}
\author{Michael Wibmer}

\address{Jason Bell, Department of Pure Mathematics, University of Waterloo, Canada N2L 3G1, \url{https://www.math.uwaterloo.ca/~jpbell}}
\email{jpbell@uwaterloo.ca}

\address{Michael Wibmer, School of Mathematics, University of Leeds,  LS2 9JT, United Kingdom, \url{https://sites.google.com/view/wibmer}}
\email{m.wibmer@leeds.ac.uk}

\subjclass[2020]{14L15, 20G15, 34M50, 12H05}
\date{\today}

\begin{document}
\maketitle

\begin{abstract}
The classical Gasch\"{u}tz Lemma states that for a surjective morphism $G\to H$ of finite groups, generators $h_1,\ldots,h_d$ of $H$ can be lifted to generators $g_1,\ldots,g_d$ of $G$, provided that $G$ can be generated by $d$ elements. We prove an analog of this statement for linear algebraic groups. This has applications to the algebraic-geometric theory of linear differential equations: We show that every regular singular differential embedding problem over the rational function field has a regular singular solution. We also establish a purely algebraic variant of the Riemann-Hilbert correspondence on the punctured projective line that works over any countable algebraically closed field of characteristic zero.
\end{abstract}

\tableofcontents

\section{Introduction}

For a finite subset $S$ of the Riemann sphere $\mathbb{P}^1(\C)$ and a base point $x_0\in X=\mathbb{P}^1(\C)\smallsetminus S$, the \emph{Riemann-Hilbert correspondence} asserts that the monodromy representation of the topological fundamental group $\pi_1(X,x_0)$ on the local solution space at $x_0$, defines an equivalence of tannakian categories
$$\Regsing(\C(x),S)\longrightarrow \Rep(\pi_1(X,x_0))$$
between the category $\Regsing(\C(x),S)$ of regular singular differential modules over $\C(x)$ with singularities in $S$ and the category $\Rep(\pi_1(X,x_0))$ of finite-dimensional $\C$-linear representations of $\pi_1(X,x_0)$. Hilbert's 21st problem, also known as the Riemann-Hilbert problem, concerns the surjectivity of this functor, although, strictly speaking, Hilbert asked to realize any representation of $\pi_1(X,x_0)$ via a Fuchsian differential equation, rather than a regular singular differential equation. For a discussion of these subtle differences and the interesting history of the problem see e.g., \cite[Sections 5 and 6]{SingerPut:differential}, \cite[Section 12.2]{Sauloy:DifferentialGaloisTheoryThroughRiemannHilbertCorrespondence} or \cite{AnasovBolibruch:TheRiemannHilbertProblem}.

 The Riemann-Hilbert correspondence is an intrinsically analytic statement, as it involves analytic continuation and the topological fundamental group as key ingredients. Nevertheless, it seems natural to wonder if there is a purely algebraic variant that works for any algebraically closed field $k$ of characteristic zero in place of the field of complex numbers. 
While regular singularities are classically defined via a growth condition on the solutions, there also is a purely algebraic characterization of regular singularities and so it is possible to make sense of the category $\Regsing(k(x),S)$ of regular singular differential modules over $k(x)$ with singularities in $S$, where $S$ is a finite subset of $\mathbb{P}^1(k)$.
In the analytic context, as an abstract group, $\pi_1(X,x_0)$ is a free group on $d=|S|-1$ generators. So it makes sense to replace $\Rep(\pi_1(X,x_0))$ with $\Rep(F_d)$, the category of finite-dimensional $k$-linear representations of the abstract free group $F_d$ on $d$ generators. We would then expect:
\begin{conj} \label{conj: RH}
There exists an equivalence of tannakian categories between $\Regsing(k(x),S)$ and $\Rep(F_d)$, where $d=|S|-1$.
\end{conj}	
An equivalent reformulation of Conjecture \ref{conj: RH} in the language of differential Galois theory is:
\begin{conj} \label{conj: equivalent}
	The differential Galois group of the family of all regular singular linear differential equations over $k(x)$ with singularities in $S$, is the free proalgebraic group on $|S|-1$ generators. 
\end{conj}
The term ``proalgebraic group'' here is synonymous with ``affine group scheme'' but emphasizes the analogy with profinite groups. We stress the fact that Conjecture~\ref{conj: RH} only asserts the existence of an equivalence, no specific (canonical) functor realizing the proposed equivalence is suggested. Conjecture ~\ref{conj: RH} may seem like a statement that P. Deligne and his school might have proved in much greater generality in the 70s or 80s. However, this does not seem to be the case. Indeed,  Conjecture~\ref{conj: RH} implies the solution of the inverse problem in differential Galois theory, i.e., the statement that every linear algebraic group is a differential Galois group over $k(x)$ and this has only been established in 2005 (\cite{Hartmann:OnTheInverseProblemInDifferentialGaloisTheory}).

On the other hand, the case $d=1$ of Conjecture~\ref{conj: RH} already appeared in \cite[Chapter VI, Section~1.2.6~ a)]{Saavedra:CategoriesTannakien} (cf. \cite{HaiDosSantosTam:AlgebraicTheoryOfFormalRegularSingularConnectionsWithParameters}) and the nilpotent variant of Conjecture \ref{conj: RH} does indeed hold in far greater generality (\cite[Section 10]{Deligne:LeGroupeFondamentalDeLaDroiteProjectiveMoinsTroisPoints}), i.e., in the context of Conjecture \ref{conj: RH} there exists an equivalence if we restrict to the nilpotent objects on both sides, i.e., objects that allow a filtration such that the corresponding quotients are trivial. In the language of differential Galois theory, this means that the differential Galois group of the family of all regular singular linear differential equations over $k(x)$ with singularities in $S$ and unipotent differential Galois group, is the free prounipotent group on $|S|-1$ generators. 

\medskip

In this article, we prove Conjecture \ref{conj: RH} in case $k$ is countable (so including the important special case $k=\overline{\mathbb{Q}}$). In fact, for $k$ countable, we prove a more general result, that allows $S$ to be an arbitrary (not necessarily finite) subset of $\mathbb{P}^1(k)$:

\begin{theo}[Theorem \ref{theo: main main}] \label{theo: main}
	Let $k$ be a countable algebraically closed field of characteristic zero and let $S$ be a subset of $\mathbb{P}^1(k)$. Then the differential Galois group of the family of all regular singular linear differential equations over $k(x)$ with singularities in $S$, is the free proalgebraic group on a set of cardinality $|S|-1$. In particular (choosing $S=\mathbb{P}^1(k)$), the differential Galois group of the family of all regular singular linear differential equations over $k(x)$ is the free proalgebraic group on a set of cardinality $|k|$.
\end{theo}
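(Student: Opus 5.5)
We will verify the characterization of free proalgebraic groups via embedding problems. Recall the criterion (analogous to Iwasawa's theorem for profinite groups, established for proalgebraic groups in earlier work of the second author and coauthors): if $k$ is countable and $\Gamma$ is a proalgebraic group over $k$ whose coordinate ring has countable dimension, then $\Gamma$ is free on a countably infinite set if and only if every embedding problem for $\Gamma$ (a surjection $\Gamma\to H$ together with an epimorphism of linear algebraic groups $G\to H$) has a solution, i.e., a lift $\Gamma\to G$ that is itself an epimorphism. For finite rank $d$, the analogous criterion would be: $\Gamma$ is free on $d$ generators if and only if an algebraic group $G$ is a quotient of $\Gamma$ exactly when $G$ is generated (as algebraic group) by $d$ elements, and every embedding problem $\Gamma\to H$, $G\to H$ with $G$ $d$-generated is solvable. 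Let $\Gamma_S$ be the differential Galois group of $\Regsing(k(x),S)$. Since $k$ is countable, $k(x)$ has countably many differential modules, so the coordinate ring of $\Gamma_S$ has countable dimension, and both criteria apply.

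The key steps are as follows. First, we establish the Gaschütz lemma for linear algebraic groups, which is the main algebraic input. It says that if $G\to H$ is an epimorphism and $G$ is topologically generated by $d$ elements (i.e., by a subgroup generated by $d$ points), then any $d$ generators of $H$ lift to $d$ generators of $G$. We would prove it by reduction to a statement about the variety of $d$-tuples, using that generating tuples form a nonempty open subset (or at least a dense constructible set, via finite-component considerations) of $G^d$, so that the fibre over a generating tuple of $H$ meets it; the finite component group part reduces to the classical Gaschütz lemma.

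Second, we translate to differential equations. For finite $S$ with $|S|=d+1$, the analytic Riemann–Hilbert correspondence over $\C$, transported to countable $k$ via an embedding $k\hookrightarrow\C$ and descent of regular singular modules with prescribed algebraic monodromy, shows that every $d$-generated $G$ occurs as a regular singular differential Galois group with singularities in $S$. Combining this with the Gaschütz lemma, every regular singular embedding problem is solved as follows. Given a Picard–Vessiot ring with group $H$ corresponding to a representation $F_d\to H(k)$ with Zariski-dense image, we lift the generator images to generators of $G$; the resulting representation of $F_d$ yields a regular singular module whose group is $G$ and whose quotient realizes the given $H$-extension. Here we need compatibility of the realization with the given one, which we would obtain from the full faithfulness of Riemann–Hilbert on the $H$-part. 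This verifies the criterion for finite $S$.

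Third, for infinite $S$ (necessarily countable), $\Gamma_S$ is the inverse limit over finite $S'\subset S$, and an embedding problem for $\Gamma_S$ factors through some $\Gamma_{S'}$. Enlarging $S'$ by at least one point gives extra free generators, so we can map the extra generator to anything. Hence all embedding problems are solvable, and $\Gamma_S$ is free on a countable set, which has cardinality $|S|-1$.

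The main obstacle is the second step: making the algebraic Riemann–Hilbert transfer for countable $k$ rigorous. In particular, one must realize a prescribed monodromy compatibly over $k$ rather than $\C$, which requires a specialization or descent argument, together with the Gaschütz lemma for nonreduced-free (connected plus finite) groups.
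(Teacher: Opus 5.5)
Your proposal has genuine gaps at two places: the Gasch\"{u}tz lemma itself, and the transfer from $\C$ to $k$. A few further ingredients are also missing.

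\textbf{The Gasch\"{u}tz step does not work as sketched.} The set of generating $d$-tuples is in general neither open nor constructible. In $\Gm$ the generators are exactly the elements of infinite order, an infinite and co-infinite set. For a torus $\Gm^r$, the non-generating tuples form a countable union of proper closed subsets, one for each nontrivial character. Over a countable $k$, the complement of countably many proper closed subsets can be empty, so genericity alone produces nothing.

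More importantly, even when the generating tuples contain a dense open subset of $G^d$ (as for connected semisimple groups), this does not give Gasch\"{u}tz. As soon as $\dim H>0$, the fibre of $G^d\to H^d$ over a \emph{prescribed} generating tuple $(h_1,\ldots,h_d)$ is a closed subset of smaller dimension. Density in $G^d$ says nothing about whether that fibre meets the generating locus. Lifting \emph{every} generating tuple is exactly the content of the lemma, which fails for $\mathbb{Z}\to\mathbb{Z}/5$ and for proalgebraic groups without the rank bound.

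The paper therefore reduces, via composition and Gasch\"{u}tz's counting argument for finite kernels, to almost-minimal connected kernels $N$. It then handles three cases by quite different arguments:
\begin{itemize}
\item $N=\Ga^r$: crossed homomorphisms, Mostow's decomposition $H=R_u(H)\rtimes R$, $H^1(R,N)=0$, and the bound $\dim\Hom(H,\Ga)<d(\Ga\times H)$.
\item $N=\Gm^r$: rigidity of tori, and lifts whose coordinates generate a free subgroup of $k^\times$ of rank $dr$ meeting a given finitely generated subgroup trivially.
\item $N=N_1^r$: generic $2$-generation of $N\rtimes A$ via regular semisimple elements, then an argument with crossed homomorphisms $H^o\to N$ and unipotent conjugacy classes.
\end{itemize}

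\textbf{The transfer step relies on a dictionary that does not exist for $k\neq\C$.} A Picard--Vessiot ring over $k(x)$ has no monodromy representation $F_d\to H(k)$. For example, for $y'=\frac{\alpha}{x}y$ with $\alpha\in\overline{\mathbb{Q}}\smallsetminus\mathbb{Q}$, the monodromy $e^{2\pi i\alpha}$ is transcendental. Moreover, a correspondence between regular singular modules over $k(x)$ and representations of $F_d$ over $k$ is essentially the theorem you are proving. So ``lifting generator images'', ``full faithfulness on the $H$-part'' and ``descent with prescribed monodromy'' are unavailable. A solution found over $\C(x)$ is in general not defined over $k(x)$ at all.

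The missing idea is specialization rather than descent:
\begin{enumerate}
\item Base change the embedding problem to $\C$, using $d(G_\C)\le d(G)$ and the fact that regular singularity with respect to $S$ is preserved.
\item Solve it over $\C$: Riemann--Hilbert gives $\Gamma_S\cong\Gamma_d$ there, and Gasch\"{u}tz then gives the embedding property.
\item Spread the solution out as a differential $G_{\mathcal{B}}$-torsor over $\mathcal{B}[x]_f$, with a map from the spread-out $R$ that is equivariant for $G_{\mathcal{B}}\to H_{\mathcal{B}}$. Here $\mathcal{B}\subseteq\C$ is a finitely generated $k$-algebra, chosen so that every specialization of the equation remains regular singular with respect to $S$.
\item Apply the Feng--Wibmer specialization theorem to find $c\colon\mathcal{B}\to k$ for which the specialized torsor is a Picard--Vessiot ring with group exactly $G$. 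This produces a genuinely different equation $\de(y)=A^cy$, not a descent of the one over $\C$.
\end{enumerate}

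\textbf{Further missing ingredients.}
\begin{itemize}
\item The bound $d(\Gamma_S)\le|S|-1$ over $k$ needs that differential Galois groups of regular singular equations over $k(x)$ with $d+1$ singularities are $d$-generated. This amounts to $d(G)\le d(G_\C)$, which the paper proves via Schur's theorem and Feng's density of the sets $W_X(\Gm,\Omega)$.
\item Your finite-$d$ criterion for freeness is not a known result. The paper proves it with a back-and-forth argument showing $d$-saturated groups are unique, using Gasch\"{u}tz along countable towers of quotients.
\item In the infinite case, ``mapping the extra generator to anything'' is unjustified, since $\Gamma_{S''}\to\Gamma_{S'}$ need not respect any free generators. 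Instead, enlarge $S'$ until $d(G)\le|S'|-1$ and use the finite case.
\end{itemize}
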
	 
We note that the statement in Theorem \ref{theo: main} has recently been proved for $k=\mathbb{C}$ in \cite{Wibmer:RegularSingularDifferentialEquationsAndFreeProalgebraicGroups} based on the Riemann-Hilbert correspondence and a study of projective limits of free proalgebraic groups. Moreover, in \cite[Section 4.1]{Wibmer:RegularSingularDifferentialEquationsAndFreeProalgebraicGroups} it was suggested that Theorem \ref{theo: main} should hold for any algebraically closed field $k$ of characteristic zero. In Section \ref{subsec: Future work} we outline a strategy for removing the assumption that $k$ is countable in Theorem \ref{theo: main}, so in particular, this includes a strategy for tackling Conjecture \ref{conj: RH} in full generality.

For uncountable base fields, our results are not strong enough to imply the freeness of the groups in Theorem \ref{theo: main} but they imply the weaker result that the groups in Theorem \ref{theo: main} are projective proalgebraic groups (Corollary \ref{cor: projective}). Recall that a proalgebraic group $P$ is projective if for every surjective morphism $G\twoheadrightarrow H$ of proalgebraic groups and every morphism $P\to H$, there exists a morphism $P\to G$ making
$$
\xymatrix{
	P \ar[rd] \ar@{..>}[d] & \\
	G \ar@{->>}[r] & H
}
$$
commutative.
%
%\begin{theo}
%	Let $k$ be an algebraically closed field of characteristic zero and  let $S$ be a subset of $\mathbb{P}^1(k)$. Then the differential Galois group of the family of all regular singular linear differential equations over $k(x)$ with singularities in $S$ is a projective proalgebraic group.
%\end{theo}	

In the general algebraic-geometric theory of linear differential equations on an algebraic variety one considers a smooth connected algebraic variety $X$ over $k$ and the category % $\operatorname{MIC}^{\operatorname{rs}}(X/k)$
 of all coherent $\mathcal{O}_X$-sheaves endowed with a regular integrable connection. This is a tannakian category and % any base point $x_0\in X(k)$ defines a fibre functor $\omega_{x_0}$ on $\operatorname{MIC}^{\operatorname{rs}}(X/k)$. 
 by tannakian duality it defines a proalgebraic group $\pi_1^{\operatorname{rs}}(X)$, the \emph{regular singular fundamental group of $X$}. For $k=\mathbb{C}$, the upshot of Deligne's Riemann-Hilbert correspondence (\cite{Deligne:EquationesDifferentiellesAPointsSingulierReguliers}) is that $\pi_1^{\operatorname{rs}}(X)$ agrees with the proalgebraic completion  $\pi_1(X(\mathbb{C}))^{\operatorname{alg}}$ of the topological fundamental group $\pi_1(X(\C))$ of $X(\C)$. For base fields different from $\mathbb{C}$, the determination of $\pi_1^{\operatorname{rs}}(X)$ has proven difficult. It seems so far there are essentially only two cases in which the group is known and non-trivial:
\begin{itemize}
%	\item $\mathbb{P}^1$, the group is trivial;
%	\item $\mathbb{P}^1$ with one point removed, the group is trivial;
	\item $\mathbb{P}^1$ with two points removed, the group is $\mathbb{Z}^{\operatorname{alg}}$, the free proalgebraic group on one generator (\cite[Chapter VI, Section~1.2.6~ a)]{Saavedra:CategoriesTannakien});
	\item an elliptic curve, the group is $(\mathbb{Z}\oplus\mathbb{Z})^{\operatorname{alg}}$ (\cite[Rem. 5.3]{BaoHaiThinh:CohomologyOfTheDifferentialFundamentalGroupOfAlgebraicCurves})\footnote{The result in \cite{BaoHaiThinh:CohomologyOfTheDifferentialFundamentalGroupOfAlgebraicCurves} is not formulated in this way, but the exact sequence $0\to H^0(X,\Omega^1_X)\to \mathbb{X}\to \operatorname{Pic}^0(X)\to 0 $ of abelian groups, determining the character group $\mathbb{X}$ of $\pi_1^{\operatorname{rs}}(X)$, is split and of the form $0\to \mathbb{Q}^{(|k|)} \to \mathbb{X}\to \mathbb{Q}^{(|k|)}\oplus \mathbb{Q}/\mathbb{Z}\oplus\mathbb{Q}/\mathbb{Z}\to 0$, so that $\mathbb{X}\cong \mathbb{Q}^{(|k|)}\oplus\mathbb{Q}/\mathbb{Z}\oplus\mathbb{Q}/\mathbb{Z}\cong k^\times\oplus k^\times\simeq\Hom(\mathbb{Z}\oplus\mathbb{Z}, k^\times)$.} 
\end{itemize}	
Our main result augments this picture by showing that for $\mathbb{P}^1$ with $d+1$ points removed, the group is $F_d^{\operatorname{alg}}$, the proalgebraic completion of the free group $F_d$ on $d$ generators, if $k$ is countable. One may wonder, if not in general, for an algebraically closed subfield $k$ of $\mathbb{C}$, the regular singular fundamental group $\pi_1^{\operatorname{rs}}(X)$ agrees with the proalgebraic completion $\pi_1(X(\mathbb{C}))^{\operatorname{alg}}$ of the topological fundamental group of $X(\mathbb{C})$. There are two major obstructions to such a result. Firstly, the formation of the regular singular fundamental group, as well as the formation of the proalgebraic completion, is not compatible with base change, so there is no obvious way to reduce the claim to the known case of the complex numbers (as one has for the \'{e}tale fundamental group, which is the profinite completion of $\pi_1(X(\C))$). Secondly, there is no canonical map from $\pi_1(X(\C))$ to $\pi_1^{\operatorname{rs}}(X)(k)$, because the monodromy representation need not be defined over $k$.

The example $X=\mathbb{P}^1\smallsetminus\{0,\infty\}$ serves well to illustrate the problem with the lack of canonical maps. In this case, $\pi^{\operatorname{rs}}(X)$ is canonically isomorphic to $\Ga\times D(k/\mathbb{Z})$, where for any abelian group $M$, $D(M)$ denotes the diagonalizable proalgebraic group with character group $M$, i.e., $D(M)(R)=\Hom(M,R^\times)$ for any $k$-algebra $R$. On the other hand, the free proalgebraic group $\mathbb{Z}^{\operatorname{alg}}$ on one generator is $\Ga\times D(k^\times)$ with free generator $(1,\id)\in k\times\Hom(k^\times,k^\times)$. So to give an isomorphism between $\pi^{\operatorname{rs}}(X)$ and $\mathbb{Z}^{\operatorname{alg}}$ amounts to choosing an isomorphism of abelian groups between $k/\mathbb{Z}$ and $k^\times$. For $k=\C$, we can choose the isomorphism induced by the morphism of groups $\C\to \C^\times,\ a\mapsto e^{2\pi i a}$ with kernel $\mathbb{Z}$, and this recovers the isomorphism between $\pi^{\operatorname{rs}}(X)$ and $\mathbb{Z}^{\operatorname{alg}}$ provided by the Riemann-Hilbert correspondence. However, for other base fields, like $k=\overline{\mathbb{Q}}$, although an isomorphism $k/\mathbb{Z}\cong k^\times$ always exists, there is no natural choice. 

We circumvent these difficulties by abandoning the idea of constructing an explicit isomorphism between $\pi^{\operatorname{rs}}(X)$ and $\pi_1(X(\mathbb{C}))^{\operatorname{alg}}$. Instead, we only establish the existence of an isomorphism, based on a characterization of the proalgebraic group $F_d^{\operatorname{alg}}$ that does not rely on free generators.

%
%
% To illustrate the latter point, consider the $k=\overline{\mathbb{Q}}$, $X=\mathbb{P}^1\smallsetminus\{0,\infty\}$ and the baby example $y'=\frac{\alpha}{x}y$ with $\alpha\in\overline{\mathbb{Q}}$. This differential equation is regular singular with two singularities $0$ and $\infty$. The fundamental group $\pi_1(X(\C))$ is generated by a simple loop $\gamma$ around $0$. The action of $\gamma$ on the solution $y=x^\alpha$ is given by multiplication with $e^{2\pi i\alpha}$.
%

With several notable exceptions, such as \cite{Deligne:LeGroupeFondamentalDeLaDroiteProjectiveMoinsTroisPoints}, \cite{EsnaultViehweg:SemistableBundlesOnCurvesAndIrreducibleRepresentationsOfTheFundamentalGroup}, \cite{EsnaultHai:TheGaussManinconnectionAnsTannakaDuality}, \cite{Esnault:OnFlatBundlesInCharacteristic0Andp}, \cite{Zhang:TheHomotopySequenceOfTheAlgebraicFundamentalGroup}, \cite{AndreBaldassarriCailotto:DeRhamCohomologyOfDifferentialModulesOnAlgebraicVarieties}, \cite{BiswasHaiDosSantos:ConnectionsOnTrivialVectorBundlesOverProjectiveSchemes}, \cite{BaoHaiThinh:CohomologyOfTheDifferentialFundamentalGroupOfAlgebraicCurves}, \cite{Xiadong:OnTheTannakianGroupScheme}
much of the literature around the regular singular fundamental group and the Riemann-Hilbert correspondence in characteristic zero, is restricted to the field of complex numbers. There has also been a significant interest in positive characteristic analogs (see e.g., \cite{Gieseker:FlatVectorBundlesAndTheFundamentalGroupInNonZeroCharacteristics}, \cite{MatzatPut:IterativeDifferentialEquationsAndTheAbhyankarConjecture}, \cite{DosSantos:FundamentalGroupSchemesForStratifiedSheaves},  \cite{Maurischat:GaloisTheoryForIterativeConnectionsAndNonreducedGaloisGroups}, \cite{EsnaultMehta:SimplyConnectedProjectiveManifoldsInCharacteristicp>0HaveNoNontrivialStratifiedBundles}, \cite{EsnaultHogadi:OnTheAlgebraicFundamentalGroupOfSmoothVarietiesInCharacteristicp},  \cite{Kindler:RegularSingularStratifiedBundlesAndTameRamification}, \cite{DosSantos:TheHomotopyExactSequenceForTheFundamentalGroupSchemeAndInfinitesimalEquivalenceRelations}, \cite{BattistonKindler:TheHomotopySequenceForRegularSingularStratifiedBundles}, \cite{VanDerPut:StratifiedBundlesOnCurvesAndDifferentialGaloisGroupsInPositiveCharacteristic}, \cite{EsnaultSrinivas:ARelativeVersionOfGiesekerProblemOnStratificationsInCharacteristicp}). While the results in this article are restricted to algebraically closed fields of characteristic zero, we hope that some of the ideas presented here, in particular the use of regular singular differential embedding problems, will also be useful for tackling the case of not necessarily algebraically closed fields of characteristic zero (e.g., the case of large fields, cf. \cite{BachmayrHartmannHarbaterPopLarge}) or fields of positive characteristic (cf. \cite{Ernst:IterativeDifferentialEmbeddingProblemsInPositiveCharacteristic}).
%A key idea is that for a smooth irreducible algebraic variety over an algebraically closed field of positive characteristic an appropriate positive characteristic analog of regular singular differential modules (or connections), such as regular singular stratified bundles, allows one to define, via the tannakian formalism, a fundamental group scheme that is seen as the positive characteristic analog of the proalgebraic completion of the topological fundamental group over the complex numbers. In the light of this philosophy, it may seem even more surprising that even in the basic case of  the projective line with finitely many points removed, this fundamental group scheme has thus far not been shown to be of the expected form over an algebraically closed field of characteristic zero (different from $\mathbb{C}$).

\medskip

Our proof of Theorem \ref{theo: main} is based on two key ingredients, both of which we deem of independent interest. The first key ingredient is the following purely group theoretic result: 
\begin{theo}[Theorem \ref{theo: Geschuetz}, Gasch\"{u}tz Lemma for linear algebraic groups] \label{theo: Gaschuetz intro}
	Let $k$ be an algebraically closed field of characteristic zero and let $\f\colon G\to H$ be a surjective morphism of linear algebraic groups over $k$. Assume that $h_1,\ldots,h_d$ generate $H$ and that $G$ can be generated by $d$ elements. Then there exist generators $g_1,\ldots,g_d$ of $G$ such that $\f(g_i)=h_i$ for $i=1,\ldots,d$.
\end{theo}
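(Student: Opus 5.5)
We argue by reducing to a dimension and density count on the fibre $\f^{-1}(h_1)\times\cdots\times\f^{-1}(h_d)$. Here ``generate'' means Zariski-dense subgroup. The plan is to show that the set
\[
U=\{(g_1,\ldots,g_d)\in G^d : g_1,\ldots,g_d \text{ generate } G\}
\]
is ``large'' in $G^d$, and that this largeness survives restriction to the fibre
\[
F=\f^{-1}(h_1)\times\cdots\times\f^{-1}(h_d)=(g_1^0N)\times\cdots\times(g_d^0N),
\]
where $N=\ker\f$ and $g_i^0$ are arbitrary lifts. Since $k$ may be uncountable and $U$ is typically only a countable intersection of open sets, ``large'' will mean: its complement is contained in a countable union of proper closed subvarieties. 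When $k$ is countable we instead need to exhibit a point directly, so care is required.

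\textbf{Step 1: reduction to the connected case and a criterion for generation.} A tuple generates $G$ iff its image generates $G/G^\circ$ (a finite group) and the closed subgroup it generates contains $G^\circ$. I would first use the classical Gasch\"utz Lemma for the finite map $G/G^\circ\to H/(H^\circ\f(\ldots))$ to choose the cosets modulo $G^\circ N$ appropriately. Concretely, I would apply it to $G/G^\circ\to H/H^\circ$, noting $\f(G^\circ)=H^\circ$. This adjusts the lifts so that their images generate $G/G^\circ$.

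Next I need a criterion for when the closed subgroup $\overline{\langle g\rangle}$ is all of $G$. It fails iff the tuple lies in $M^d$ for some proper closed subgroup $M\subsetneq G$ meeting every relevant component. Maximal such $M$ come in countably many conjugacy classes up to a bounded family: reductive parts by Borel–Tits/Dynkin-type finiteness, and unipotent radicals via Lie algebra data over $\overline{\mathbb{Q}}$-definable families. So I would prove the key lemma: the set of $d$-tuples in $F$ lying in a proper closed subgroup is contained in a countable union of closed subsets $Z_j=\bigcup_{x\in G} (xMx^{-1})^d\cap F$ (taking closures). The whole problem is then to show each $Z_j\neq F$, i.e.\ $\dim Z_j<\dim F$ or irreducibility arguments. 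Here the hypothesis that $G$ is $d$-generated enters: it shows $U\ne\emptyset$ in $G^d$, hence the analogous $Z_j^G\subsetneq G^d$.

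\textbf{Step 2 (main obstacle): transferring properness from $G^d$ to the fibre $F$.} One must rule out that the whole coset $F$ lies in some $Z_j$. I would argue by the structure of $N$. Induct on $\dim N$ and $|N/N^\circ|$, reducing to $N$ minimal normal: either finite, or a vector group (abelian unipotent, irreducible $G$-module), or a torus, or a product of simple groups. For $N$ finite, use a counting-style argument generalizing the classical Gasch\"utz proof to the finite-index cosets. For $N$ a vector group $V$, tuples in $F$ failing to generate but mapping onto generators of $H$ lie in complements of $V$, i.e.\ subgroups $M$ with $MV=G$, $M\cap V=0$. These correspond to $1$-cocycles, and the bad set is a union over $H^1$-classes of affine subspaces of $V^d$ of dimension $\dim Z^1(H,V)\le \dim V\cdot\ldots$. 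The $d$-generation of $G$ forces this to be a proper subset. For $N$ semisimple or a torus, use that generic elements generate and a dimension count.

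\textbf{Step 3: existence of a point.} For uncountable $k$, $F$ is not a countable union of proper closed subsets. For countable $k$ I would instead construct the tuple by successively avoiding each $Z_j$ via an enumeration and a base change argument, or I would note that the $Z_j$ are defined over a finitely generated field and pass to a transcendental specialization argument. I expect Step 2 to be the genuinely hard part, especially the non-abelian minimal normal case.
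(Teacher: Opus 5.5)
There is a genuine gap. The framework of Steps 1 and 3 is partly unsound, and Step 2, which carries the whole content of the theorem, is only asserted in the hard cases.

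\textbf{The framework.} You cannot describe non-generating tuples through maximal proper closed subgroups, because a proper closed subgroup need not lie in a maximal one. $\Gm$, and any group with a nontrivial torus quotient, has no maximal proper closed subgroups at all ($\mu_n\subsetneq\mu_{2n}\subsetneq\cdots$). The claimed countable classification ``up to a bounded family'' is also never established. More seriously, covering the bad locus by countably many proper closed subsets yields a point only when $k$ is uncountable. The theorem must hold for countable $k$ such as $\overline{\mathbb{Q}}$, which is exactly the case the paper uses later. Your remedies for countable $k$ are not arguments: showing that some specialization carries a generating tuple of $G_{k'}$ to one of $G$ is itself delicate. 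The paper needs Schur's theorem and Feng's density theorem for Hrushovski's sets $W_X(\Gm,\Omega)$ just to prove $d(G_{k'})=d(G)$. Note also that once you reduce to (almost-)minimal kernels and use that composites of Gasch\"utz maps are Gasch\"utz, which is the paper's route, the Baire-type framework is superfluous. Each case should produce an explicit lift valid over any $k$.

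\textbf{Step 2.} The finite case is indeed the classical counting argument. The vector-group case can be finished along your lines, but you omit the key point. If some lift $(g_i)$ fails, then $M=\langle g_i\rangle$ satisfies $M\cap V\trianglelefteq MV=G$, so $M\cap V=0$ and $G=V\rtimes H$. The map $\psi\mapsto(\psi(h_1),\ldots,\psi(h_d))$ is then an \emph{injective} linear map $Z^1(H,V)\to V^d$, because the vanishing locus of a cocycle is a closed subgroup. If every lift fails, this map is also surjective, so $\dim Z^1(H,V)=d\dim V$. The analogous injective map for the $H$-components of a generating tuple $(v_i',h_i')$ of $G$ is then bijective. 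This puts all $(v_i',h_i')$ in the graph of a single cocycle, a proper subgroup, which is a contradiction.

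The torus and semisimple cases are where the real difficulty lies, and ``generic elements generate plus a dimension count'' fails there. For $N=\Gm^r$ the non-generating tuples in the fibre are Zariski dense; already for $\Gm\to 1$ with $d=1$ they are the roots of unity. So no dimension count can separate them. The paper instead reduces to $H^\circ$ a torus, treats $H$ finite by a Laurent-monomial argument, and chooses lifts whose coordinates generate a free abelian subgroup of $k^\times$ of rank $dr$ meeting a prescribed finitely generated subgroup trivially.

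For semisimple $N$ you must first pass modulo a finite central subgroup to reach $N_1^r$. Then you need generic generation on the non-identity cosets $Na_i$ of $N\rtimes A$. The paper obtains this from the map $n\mapsto na(n)\cdots a^{s-1}(n)$, which hits conjugacy classes of suitable regular semisimple elements, combined with Guralnik--Saxl. You also need a further argument excluding subgroups that are graphs of homomorphisms $H^\circ\to N$ when $H^\circ$ centralizes $N$. None of this is supplied.
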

Here $h_1,\ldots,h_d$ are said to generate $H$ (as an algebraic group) if the smallest closed subgroup of $H$ containing $h_1,\ldots,h_d$ is equal to $H$. Recall that the classical Gasch\"{u}tz Lemma  (\cite{Gaschuetz:ZuEinemVonNeumannGestelltenProblem}) is a statement about finite groups: For a surjective morphism $\f\colon G\to H$ of finite groups and generators $h_1,\ldots,h_d$ of $H$, if $G$ can be generated by $d$ elements, then there exist generators $g_1,\ldots,g_d$ of $G$ such that $\f(g_i)=h_i$ for $i=1,\ldots,d$. It is well-known that the statement of Gasch\"{u}tz's Lemma does not generalize to infinite groups. For example, the generator $2$ of $\mathbb{Z}/\mathbb{Z}5$ does not lift to a generator of $\mathbb{Z}$, under the canonical projection $\mathbb{Z}\to \mathbb{Z}/\mathbb{Z}5$.

However, Gasch\"{u}tz's Lemma generalizes from finite groups to profinite groups rather directly (\cite[Lemma~17.7.2]{FriedJarden:FieldArithmetic}), as long as the morphism of groups is assumed to be continuous and generation is interpreted in the sense of topological groups. It was shown in \cite{TalTsachik:GaschuetzLemmaForCompactGroups} that the Gasch\"{u}tz Lemma also holds for all compact metrisable topological groups but not for all compact topological groups. The case of connected Lie groups is treated quite comprehensively in \cite{CohenVigdorovichLLiftingGeneratorsInconnectedLieGroups}. In particular, it is shown there that the Gasch\"{u}tz Lemma holds for connected perfect Lie groups but does not hold for a large class of connected abelian Lie groups.

We show below (Example \ref{ex: need rank assumption }) that the Gasch\"{u}tz Lemma does not hold for proalgebraic groups without additional assumptions. However, we conjecture that the Gasch\"{u}tz Lemma does hold for proalgebraic groups as long as we add a rather mild extra assumption (Conjecture \ref{conj: Gaschuetz for proalgebraic}). 
This extra assumption is not a condition on $G$, it is only a condition on the size of $H$ relative to the size of $k$.
The extra assumption asserts that $H$ can be written as a projective limit of linear algebraic groups over an index set of cardinality strictly less than $|k|$.
We prove this conjecture for the case that $k$ is countable (Theorem \ref{theo: Conjectures for k countable}) and for the case $d=1$ (Theorem \ref{theo: conjecture for d=1}). 

The connection between Theorem \ref{theo: Gaschuetz intro} and the proalgebraic group $F_d^{\operatorname{alg}}$ is that Theorem \ref{theo: Gaschuetz intro} precisely describes which morphisms of algebraic groups occur in the canonical projective system of algebraic groups whose projective limit is $F_d^{\operatorname{alg}}$. Indeed, if $G\to H$ is a surjective morphism of linear algebraic groups such that $G$ can be generated by $d$ elements, then Theorem \ref{theo: Gaschuetz intro} shows that for every surjective morphism $F_d^{\operatorname{alg}}\to H$, there exists a surjective morphism $F_d^{\operatorname{alg}}\to G$, making
$$
\xymatrix{F_d^{\operatorname{alg}}
	 \ar@{..>}[d] \ar[rd] & \\
	G \ar[r] & H	
}
$$ 
commutative. It turns out that, if $k$ is countable, $F_d^{\operatorname{alg}}$ is the unique (up to isomorphism) proalgebraic group that can be generated by $d$ elements and satisfies this lifting (or embedding) property. So this yields a characterization of $F_d^{\operatorname{alg}}$ that does not refer to a specific set of free generators.

\medskip

%\begin{conj}[Gasch\"{u}tz Lemma for proalgebraic groups] \label{conj: Gaschuetz for proalgebraic}
%		Let $k$ be an algebraically closed field of characteristic zero and let $\f\colon G\to H$ be a quotient map of proalgebraic groups over $k$ such that $\rank(H)<|k|$. Assume that $h_1,\ldots,h_d\in H(k)$ generate $H$ and that $G$ can be generated by $d$ elements. Then there exist $g_1,\ldots,g_d\in G(k)$ generating $G$ such that $\f(g_i)=h_i$ for $i=1,\ldots,d$.
%\end{conj}
%Here the rank $\rank(H)$ of a proalgebraic group $H$ is defined as the smallest cardinality of an index set that can be used to write $H$ as a projective limit of linear algebraic groups. We prove Conjecture \ref{conj: Gaschuetz for proalgebraic} below in the case that $k$ is countable (Theorem ??), as a rather straight forward consequence of the Gasch\"{u}tz Lemma for linear algebraic groups.

The second key ingredient of our proof of Theorem \ref{theo: main} is the solvability of all regular singular differential embedding problems. To explain this result, we need to recall some background from differential Galois theory:
% (See \cite{CrespoHajto:AlgebraicGroupsAndDifferentialGaloisTheory}, \cite{Magid:LecturesOnDifferentialGaloisTheory}, \cite{Sauloy:DifferentialGaloisTheoryThroughRiemannHilbertCorrespondence}, or \cite{SingerPut:differential} for introductory textbooks.)
% 
  To a linear differential equation $y'=Ay$, where $A\in k(x)^{n\times n}$, there is associated a \emph{Picard-Vessiot extension} $L$ of $k(x)$; the analog of the splitting field in classical Galois theory. The \emph{differential Galois group}, i.e., the group of automorphisms of the differential field extension $L/k(x)$, naturally carries the structure of a linear algebraic group. A \emph{differential embedding problem} over $k(x)$ consists of a surjective morphism $G\to H$ of linear algebraic groups together with a Picard-Vessiot extension $L/k(x)$ with differential Galois group $H$. A \emph{solution} is a Picard-Vessiot extension $M/k(x)$ containing $L$ and with differential Galois group $G$, such that the restriction morphism $G\to H$ of differential Galois groups coincides with the given morphism $G\to H$. Note that the question of solvability of a differential embedding problem with $L=k(x)$ reduces to the question ``Is $G$ a differential Galois group over $k(x)$?'', i.e., the inverse problem of differential Galois theory. Solving the inverse problem tells us which linear algebraic groups are quotients of the absolute differential Galois group. Solving differential embedding problems tells us, more generally, how these algebraic quotients fit together into a projective system of linear algebraic groups. Building on previous work that implied the special cases $k=\mathbb{C}$ (\cite{BachmayrHarbaterHartmannWibmer:DifferentialEmbeddingProblems}) and $k$ of infinite transcendence degree (\cite{BachmayrHartmannHarbaterPopLarge}), it was recently shown in \cite{FengWibmer:DifferentialGaloisGroupsSpecializationsAndMatzatsConjecture} that every differential embedding problem over $k(x)$ has a solution. 

Fixing a finite subset $S$ of $\mathbb{P}^1(k)$ of cardinality $d+1$, we call a differential embedding problem $(G\to H,\ L/k(x))$ over $k(x)$ \emph{regular singular} (with respect to $S$), if $G$ can be generated by $d$ elements and $L/k(x)$ is a Picard-Vessiot extension for a regular singular differential equation with singularities in $S$. A solution $M/k(x)$ of $(G\to H,\ L/k(x))$ is \emph{regular singular} (with respect to $S$) if $M/k(x)$ is a Picard-Vessiot extension for a regular singular differential equation with singularities in $S$. The following regular singular variant of the solvability of all differential embedding problems over $k(x)$ is our second key ingredient of the proof of Theorem \ref{theo: main}.

\begin{theo}[{Theorem \ref{theo: solve regular singular embedding problem}}] \label{theo: second key ingredient}
	Every regular singular differential embedding problem over $k(x)$ has a regular singular solution.
\end{theo}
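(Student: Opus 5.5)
The plan is to reduce Theorem \ref{theo: second key ingredient} to the Gasch\"{u}tz Lemma for linear algebraic groups (Theorem \ref{theo: Gaschuetz intro}), combined with an algebraic description of regular singular Picard--Vessiot extensions with singularities in $S$ via their local monodromy data. Write $S=\{s_0,\ldots,s_d\}$.

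The first step is to establish an algebraic substitute for the Riemann--Hilbert dictionary. The key fact to prove is a realization statement: for any linear algebraic group $G$ generated by $d$ elements $g_1,\ldots,g_d$, there is a regular singular Picard--Vessiot extension with singularities in $S$ and differential Galois group $G$. Moreover, the construction should be functorial in the data $(G;g_1,\ldots,g_d)$. Concretely, a surjection $(G;g_i)\to(H;h_i)$ that sends $g_i\mapsto h_i$ should yield an inclusion of the corresponding Picard--Vessiot extensions, and the restriction map should be the given surjection $G\to H$. Conversely, every regular singular Picard--Vessiot extension $L/k(x)$ with singularities in $S$ and group $H$ should arise this way from some choice of generators $h_1,\ldots,h_d$ of $H$.

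To get this over an arbitrary algebraically closed $k$ of characteristic $0$, I would use a descent or specialization argument from $\mathbb{C}$. The group $G$, the elements $g_i$, and the differential equations are all defined over a finitely generated subfield. Regular singular connections on $\mathbb{P}^1\smallsetminus S$ with given monodromy exist over $\mathbb{C}$ by Riemann--Hilbert, and their tannakian subcategories are determined by algebraic data, namely Deligne's canonical extension and its residues. Any algebraically closed $k$ of characteristic zero shares a common algebraically closed subfield with $\mathbb{C}$ over which everything can be defined after an embedding. Alternatively, I would invoke whatever algebraic characterization of regular singularity (Fuchsian systems) the paper develops later. The Galois group is invariant under extension of algebraically closed constant fields, so the statement transfers.

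Given this, the embedding problem is handled as follows. Take $(\f\colon G\to H,\ L/k(x))$ with $G$ generated by $d$ elements and $L$ regular singular with singularities in $S$. By the converse part, $L$ corresponds to generators $h_1,\ldots,h_d$ of $H$. Theorem \ref{theo: Gaschuetz intro} then provides generators $g_1,\ldots,g_d$ of $G$ with $\f(g_i)=h_i$. The functorial realization of $(G;g_i)$ gives a regular singular Picard--Vessiot extension $M\supseteq L$ with singularities in $S$, whose restriction map is $\f$. This $M$ is the required regular singular solution.

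The main obstacle is the first step over a field $k$ that does not embed in $\mathbb{C}$, or where the monodromy is not defined over $k$. There is no canonical monodromy map, and the identification "$L \leftrightarrow$ generators of $H$" depends on noncanonical choices, such as isomorphisms $k/\mathbb{Z}\cong k^\times$ for local exponents. I would handle this by working entirely inside a fixed embedding of a countable algebraically closed field $k_0\subseteq k$ into $\mathbb{C}$, over which $L$, $G$, $H$ and $\f$ are all defined. I would solve the problem over $k_0$ using the complex Riemann--Hilbert correspondence together with the Gasch\"{u}tz Lemma, keeping track that the resulting $M$ descends to $k_0(x)$ (its Galois group is algebraic over $k_0$, and the tannakian subcategory generated is defined over $k_0$). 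Finally I would base change to $k$. The delicate point is checking that both regular singularity and the Galois group are preserved under the descent from $\mathbb{C}$ to $k_0$ and the extension from $k_0$ to $k$.
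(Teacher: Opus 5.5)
Your final plan has the same architecture as the paper's proof. You descend the embedding problem to a countable algebraically closed $k_0\subseteq k$, embed $k_0$ into $\mathbb{C}$, and solve over $\mathbb{C}(x)$ by the Riemann--Hilbert correspondence plus the Gasch\"{u}tz Lemma. (The paper does this step via $\Gamma_S\cong\Gamma_d$ and Propositions \ref{prop: embedding property for Gammad} and \ref{prop: equivalence of embedding property and differential embedding problem}.) You then come back to $k_0(x)$ and base change to $k$. Your first two paragraphs, a functorial realization of $(G;g_1,\ldots,g_d)$ with a converse over arbitrary $k$, amount to Conjecture \ref{conj: RH} itself, which is open for uncountable $k$; only the route through $\mathbb{C}$ is available. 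The gap is the step back from $\mathbb{C}(x)$ to $k_0(x)$. You assert it rather than prove it, and as stated it is false: the solution $M/\mathbb{C}(x)$ obtained from lifted generators $g_1,\ldots,g_d\in G(\mathbb{C})$ has no reason to be defined over $k_0(x)$. The Riemann--Hilbert dictionary is transcendental: already $\delta(y)=\frac{\alpha}{x}y$ with $\alpha\in\overline{\mathbb{Q}}\smallsetminus\mathbb{Q}$ has transcendental monodromy $e^{2\pi i\alpha}$. So knowing that the Galois group is defined over $k_0$ says nothing about the equation. The claim ``the tannakian subcategory is defined over $k_0$'' is exactly what would have to be shown, and for the $M$ you construct it typically fails.

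What is needed is not a descent of $M$ but a specialization that produces a \emph{different} solution over $k_0(x)$. The paper spreads the complex solution out as a flat differential $G_{\mathcal{B}}$-torsor $\mathcal{T}$ over $\mathcal{B}[x]_f$, for a finitely generated $k_0$-algebra $\mathcal{B}\subseteq\mathbb{C}$. This comes with an equivariant map from $\mathcal{R}=(R_0\otimes_{k_0}\mathcal{B})_f$ (Lemma \ref{lemma: spread out solution to differential embedding problem}). It then applies the Feng--Wibmer specialization theorem (Theorem \ref{theo: specialization}) to find $c\colon\mathcal{B}\to k_0$ for which $\mathcal{T}\otimes_{\mathcal{B}[x]_f}k_0(x)$ is still a Picard--Vessiot ring. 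So the Galois group does not drop, and the equivariant map makes the specialized ring a solution of the original problem. This is a genuinely deep input, since the good specializations need not contain any nonempty Zariski open set (specialize $a$ in $\delta(y)=\frac{a}{x}y$ to rational values). Regular singularity with singularities in $S$ is also not preserved by arbitrary specializations: poles can collide, e.g.\ $\frac{1}{(x-a)(x-b)}\mapsto\frac{1}{x^2}$. The paper controls this by choosing $\mathcal{B}$ via Lemmas \ref{lemma: specialization of reg or reg sing} and \ref{lemma: specializaiton of reg sing}, so that every specialization stays regular singular with respect to $S$. Without these two ingredients your argument does not get back from $\mathbb{C}$ to $k_0$.
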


We note that Theorem \ref{theo: second key ingredient} is a new result, even in the classical case $k=\mathbb{C}$, that does not seem to follow from the Riemann-Hilbert correspondence in an obvious way.
In the case of a regular singular embedding problem with $L=k(x)$, Theorem \ref{theo: second key ingredient} reduces to the statement that every linear algebraic group that can be generated by $d$ elements is the differential Galois group of a regular singular linear differential equation over $k(x)$ with singularities in $S$. This special case of Theorem \ref{theo: second key ingredient} has already been established in \cite{SerafiniWibmer:RegularSingularInverseProblem}. 

There are no direct implications between Theorem \ref{theo: second key ingredient} and the solvability of all differential embedding problems over $k(x)$, because not every differential embedding problem of $k(x)$ is regular singular and not every solution of a regular singular differential embedding problem has to be regular singular.

\medskip

We now sketch our proof of Theorem \ref{theo: main}. Firstly, the case that $S$ is arbitrary can be reduced via a limit argument and the characterization of free proalgebraic groups on infinitely many generators via embedding problems, to the case that $S$ is finite. So we are reduced to proving Conjecture \ref{conj: equivalent}. Let $\Gamma_S$ denote the differential Galois group of the family of all regular singular differential equations over $k(x)$ with singularities in $S$. By the Riemann-Hilbert correspondence, Conjecture \ref{conj: equivalent} holds for $k=\mathbb{C}$, i.e., $\Gamma_S$ is the free proalgebraic group on $d$ generators for $k=\mathbb{C}$.
The Gasch\"{u}tz Lemma for linear algebraic groups (Theorem \ref{theo: Gaschuetz intro}) then shows that for $k=\mathbb{C}$, the proalgebraic group $\Gamma_S$ satisfies the following embedding property: 
For every surjective morphism $G\to H$ of linear algebraic groups such that $G$ can be generated by $d=|S|-1$ elements and every surjective morphism $\Gamma_S\to H$, there exists a surjective morphism $\Gamma_{S}\to G$ such that
\begin{equation} \label{eq: embedding prop for intro}
\xymatrix{
	\Gamma_{S} \ar@{..>}[d] \ar[rd] & \\
	G \ar[r] & H	
}
\end{equation}
commutes. Via the differential Galois correspondence, this embedding property for $\Gamma_S$ for $k=\mathbb{C}$ translates to the statement that all regular singular differential embedding problems over $\C(x)$ have a regular singular solution, i.e., Theorem \ref{theo: second key ingredient} holds for $k=\mathbb{C}$.
 Using the specialization theorem from \cite{FengWibmer:DifferentialGaloisGroupsSpecializationsAndMatzatsConjecture}, we show that the validity of Theorem \ref{theo: second key ingredient} for $k=\mathbb{C}$, implies the validity of Theorem \ref{theo: second key ingredient} for all (algebraically closed, characteristic zero) base fields $k$.
  Now Theorem \ref{theo: second key ingredient} translates back to $\Gamma_{S}$ having the embedding property illustrated in (\ref{eq: embedding prop for intro}).
 For $k$ countable, this is enough to deduce that $\Gamma_{S}$ is free on $d=|S|-1$ generators. %isomorphic to the for We stress the fact that our proof does not yield a specific (canonical) isomorphism between $\Gamma_{S,k}$ and $\Gamma_{d,k}$. We only show that there exists an isomorphism.

We conclude this introduction with an outline of the article. In Section \ref{sec: The Gaschuetz Lemma for algebraic groups} we prove the Gasch\"{u}tz Lemma for linear algebraic groups (Theorem \ref{theo: Gaschuetz intro}) via a reduction to three special cases concerning the structure of the kernel of the surjective morphism. In Section \ref{sec: Finite generation under base change} we show that the minimal number of generators of a linear algebraic group is invariant under base change. This is needed later to deduce that the differential Galois group of a regular singular differential equation with $d+1$ singularities can be generated by $d$ elements, which is clear for $k=\C$ but not otherwise. In Section \ref{sec: Embedding problems and free proalgebraic groups} we deduce from the Gasch\"{u}tz Lemma for linear algebraic groups the embedding property satisfied by $F_d^{\operatorname{alg}}$ and show that, for $k$ countable, this embedding property uniquely characterizes $F_d^{\operatorname{alg}}$. We also discuss the variant of the 
Gasch\"{u}tz Lemma for proalgebraic groups that would be needed to extend this characterization of $F_d^{\operatorname{alg}}$ to uncountable base fields. In Section \ref{sec: Regular singular differential embedding problems} we introduce and solve regular singular differential embedding problems and derive Theorem \ref{theo: main} from this. To deduce the solvability over $k$ from the solvability over $\C$, the key ingredient is that a regular singular differential equation over $\mathbb{C}(x)$ can be specialized to a regular singular differential equation, say over $\overline{\mathbb{Q}}(x)$, without changing the algebraic properties of the solutions.

\medskip

\vspace{5mm}

{\bf Notation:} Throughout this article $k$ is an algebraically closed field of characteristic zero. 
All group schemes are assumed to be over $k$ unless the contrary is clearly indicated. We use the term ``proalgebraic group'' (over $k$) for affine group scheme (over $k$). All algebraic groups are assumed to be affine (equivalently linear). A closed subgroup of a proalgebraic group is a closed subgroup scheme.
Following the nomenclature of \cite{Milne:AlgebraicGroupsTheTheoryOfGroupSchemesOfFiniteTypeOverAField}, a morphism  $\f\colon G\to H$ of proalgebraic groups is a quotient map if the dual map $\f^*\colon k[H]\to k[G]$ is injective. We use $G\twoheadrightarrow H$ to graphically indicate quotient maps. As usual, $\mathbb{G}_a$ denotes the additive group and $\mathbb{G}_m$ the multiplicative group.

If $G$ is a proalgebraic group and $g_1,\ldots,g_d\in G(k)$, then $\langle g_1,\ldots,g_d\rangle$ denotes the smallest closed subgroup of $G$ containing $g_1,\ldots,g_d$ in its $k$-points. If $G=\langle g_1,\ldots,g_d\rangle$ we say that $g_1,\ldots,g_d$ generate $G$ (as a proalgebraic group) and that $G$ is finitely generated. We let $d(G)$ denote the smallest natural number such that $G$ can be generated by $d(G)$ elements. If $G$ is not finitely generated, this is understood to be infinity. It is well-known that algebraic groups are finitely generated (see e.g., \cite[Lemma 5.13]{SingerPut:differential}) but note that this uses our characteristic zero assumption. In positive characteristic the additive group $\mathbb{G}_a$ is not finitely generated. If $\f\colon G\twoheadrightarrow H$ is a quotient map of proalgebraic groups and $G=\langle g_1,\ldots,g_d\rangle$, then $H=\langle \f(g_1),\ldots,\f(g_d)\rangle$. In particular, $d(H)\leq d(G)$.

A non-commutative connected algebraic group $G$ is \emph{almost-simple} if any closed normal subgroup of $G$ is equal to $G$ or finite.  A non-commutative connected algebraic group $G$ is \emph{simple} if any closed normal subgroup of $G$ is equal to $G$ or trivial. Semisimple and reductive algebraic groups are not assumed to be connected. As usual, $G^o$ denotes the identity component of the algebraic group $G$.

For an abelian group $M$ (usually written multiplicatively), we denote with $D(M)$ the diagonalizable proalgebraic group with character group $M$, i.e., for any $k$-algebra $R$ the abelian group $D(M)(R)=\Hom(M,R^\times)$ is defined as the group of morphisms of groups from $M$ to the multiplicative group of $R$. The functor $D(M)$ is represented by the group ring of $M$, i.e., $k[D(M)]=k[M]$.

As a matter of practicality, throughout Section \ref{sec: The Gaschuetz Lemma for algebraic groups} we identify an algebraic group $G$ with its $k$-points $G(k)$. As $k$ is algebraically closed and of characteristic zero, this is harmless. We do not usually make this identification in the later sections, especially when dealing with proalgebraic groups, which we typically treat as functors on the category of $k$-algebras.

\medskip

\vspace{5mm}

{\bf AI Declaration:} Most of the work presented in this article was conducted before AI usage in pure mathematics became of practical relevance. Google Gemini was used to research the background literature and to find references for intermediate results. The text was written by the authors in its entirety and all claims made have been verified by the authors. GPT-5.6 Sol was used to check for mistakes in the final manuscript.

\section{The Gasch\"{u}tz Lemma for algebraic groups}
\label{sec: The Gaschuetz Lemma for algebraic groups}

In this section we prove the Gasch\"{u}tz Lemma for algebraic groups (Theorem \ref{theo: Gaschuetz intro} from the introduction). The proof proceeds through a series of reduction steps to special cases. For the sake of clarity of exposition, we first treat the special cases.

\medskip

\subsection{Almost-minimal normal closed subgroups}

To shorten our statements, we make the following definition analogous to \cite[2. Section 1]{TalTsachik:GaschuetzLemmaForCompactGroups}.

\begin{defi} \label{defi: Gaschuetz}
	A quotient map $\f\colon G\twoheadrightarrow H$ of algebraic groups is \emph{Gasch\"{u}tz} if it satisfies the following property: For all $d\geq 1$ and all $h_1,\ldots,h_d\in H$ with $H=\langle h_1,\ldots,h_d\rangle$ and $d(G)\leq d$, there exist $g_1,\ldots,g_d\in G$ such that $\f(g_i)=h_i$ for $i=1,\ldots,d$ and $G=\langle g_1,\ldots,g_d\rangle$.
\end{defi}	

In other words, a quotient map $G\twoheadrightarrow H$ is Gasch\"{u}tz if and only if every family of generators of $H$ of size at least $d(G)$, lifts to a family of generators of $G$. 
With Definition \ref{defi: Gaschuetz}, the Gasch\"{u}tz Lemma for algebraic groups can be reformulated as follows: every quotient map of algebraic groups is Gasch\"{u}tz. For later use, we record two useful lemmas.

\begin{lemma} \label{lemma: composition of Gaschuetz is Gaschuetz}
	Let $\f\colon G\twoheadrightarrow G'$ and $\f'\colon G'\twoheadrightarrow G''$ be Gasch\"{u}tz quotient maps of algebraic groups. Then the composition $\f'\f\colon G\twoheadrightarrow G''$ is Gasch\"{u}tz.
\end{lemma}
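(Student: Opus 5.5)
We verify the defining property of a Gasch\"{u}tz quotient map directly for $\f'\f$, lifting in two stages. First, $\f'\f$ is a quotient map: both $\f^*$ and $\f'^*$ are injective, so $(\f'\f)^*=\f^*\f'^*$ is injective.

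Now let $d\geq 1$ and let $h_1,\ldots,h_d\in G''$ with $G''=\langle h_1,\ldots,h_d\rangle$ and $d(G)\leq d$. We lift these first to $G'$. Since $\f\colon G\twoheadrightarrow G'$ is a quotient map, the remark in the Notation paragraph gives $d(G')\leq d(G)\leq d$. Because $\f'$ is Gasch\"{u}tz, there exist $g'_1,\ldots,g'_d\in G'$ with $\f'(g'_i)=h_i$ and $G'=\langle g'_1,\ldots,g'_d\rangle$.

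Next we lift to $G$. The $g'_i$ generate $G'$ and $d(G)\leq d$, so since $\f$ is Gasch\"{u}tz there exist $g_1,\ldots,g_d\in G$ with $\f(g_i)=g'_i$ and $G=\langle g_1,\ldots,g_d\rangle$. Then $\f'\f(g_i)=h_i$ for all $i$, so $\f'\f$ is Gasch\"{u}tz.

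The only point needing care is the first lifting step: applying the Gasch\"{u}tz property of $\f'$ requires the bound $d(G')\leq d$, which is supplied by the inequality $d(G')\leq d(G)$ for quotient maps. Beyond that the argument is purely formal.
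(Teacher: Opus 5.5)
Your proposal is correct and follows the paper's proof: lift the generators to $G'$ using that $\f'$ is Gasch\"{u}tz (with $d(G')\leq d(G)\leq d$), then lift those to $G$ using that $\f$ is Gasch\"{u}tz. Your extra check that $\f'\f$ is a quotient map, which the paper leaves implicit, is harmless.
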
	
\begin{proof}
	Let $g_1'',\ldots,g_d''$ be generators of $G''$ and assume that $d(G)\leq d$. Then $d(G')\leq d$ and because $\f'$ is Gasch\"{u}tz, there exist generators $g_1',\ldots,g_d'$ of $G'$ with $\f'(g_i')=g_i''$ for $i=1,\ldots,d$. Since $\f$ is Gasch\"{u}tz, there exist generators $g_1,\ldots,g_d$ of $G$ with $\f(g_i)=g_i'$ for $i=1,\ldots,d$. Thus  $g_1,\ldots,g_d$ are generators of $G$ with $\f'(\f(g_i))=g_i''$ for $i=1,\ldots,d$.
\end{proof}

\begin{lemma} \label{lemma: finite kernel is Gaschuetz}
	A quotient map of algebraic groups with finite kernel is Gasch\"{u}tz.	
\end{lemma}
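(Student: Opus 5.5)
The plan is to reduce to the classical Gasch\"{u}tz Lemma for finite groups, applied to the component groups. Let $\f\colon G\twoheadrightarrow H$ be a quotient map with finite kernel $N$. Let $h_1,\ldots,h_d$ generate $H$ and assume $d(G)\leq d$.

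\emph{Step 1: any lift generates a subgroup containing $G^o$.} Since $N$ is finite, $\dim G=\dim H$, and $\f(G^o)=H^o$ because the image of a connected group is connected of the same dimension. Let $g_1,\ldots,g_d\in G$ be arbitrary with $\f(g_i)=h_i$, and put $K=\langle g_1,\ldots,g_d\rangle$. The image $\f(K)$ is a closed subgroup of $H$ containing all $h_i$, so $\f(K)=H$. Hence $KN=G$, so $K$ has finite index in $G$. It follows that $\dim K=\dim G$, so $K^o=G^o$ and therefore $G^o\subseteq K$.

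\emph{Step 2: generation is detected in $G/G^o$.} Let $\pi\colon G\to G/G^o$ be the projection to the finite group $G/G^o$. The preimage under $\pi$ of the abstract subgroup generated by $\pi(g_1),\ldots,\pi(g_d)$ is a closed subgroup containing the $g_i$. Conversely, $K$ contains $G^o$ by Step 1 and contains the $g_i$, so $K$ contains this preimage. Thus $K$ equals the preimage, and $K=G$ if and only if $\pi(g_1),\ldots,\pi(g_d)$ generate the finite group $G/G^o$.

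\emph{Step 3: apply classical Gasch\"{u}tz and adjust the lifts.} The map $\f$ induces an isomorphism $G/NG^o\cong H/H^o$, since $\f(G^o)=H^o$. We therefore have a surjection of finite groups $G/G^o\twoheadrightarrow G/NG^o\cong H/H^o$. The images $\bar h_i$ of the $h_i$ generate $H/H^o$, and $d(G/G^o)\leq d(G)\leq d$. The classical Gasch\"{u}tz Lemma therefore gives generators $x_1,\ldots,x_d$ of $G/G^o$ mapping to $\bar h_1,\ldots,\bar h_d$.

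Choose $g_i'\in G$ with $\pi(g_i')=x_i$. Then $\f(g_i')\in h_iH^o$, say $\f(g_i')=h_iu_i$ with $u_i\in H^o$. Since $\f(G^o)=H^o$, pick $v_i\in G^o$ with $\f(v_i)=u_i$ and set $g_i=g_i'v_i^{-1}$. Then $\f(g_i)=h_i$ and $\pi(g_i)=x_i$. By Step 2, $g_1,\ldots,g_d$ generate $G$.

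The only delicate point is Step 2, namely that the closed subgroup generated by the lifts is exactly the preimage of an abstract subgroup of the component group. This hinges on Step 1, which uses that a closed subgroup of finite index contains the identity component; the rest is routine.
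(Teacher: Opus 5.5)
Your proof is correct, but it takes a different route from the paper's. The paper does not pass to component groups. It re-runs Gasch\"{u}tz's original counting argument directly inside $G$. For a closed subgroup $G'\leq G$ mapping onto $H$ and a generating tuple $\mathbf{h}'$ of $H$, it considers the finite set $S(G',\mathbf{h}')$ of generating lifts in $G'$; this set is finite because each $h_i'$ has only $|N|$ preimages. A minimal-counterexample argument, using the descending chain condition on closed subgroups and the identity $|N\cap G'|^d=\sum_{G''}|S(G'',\mathbf{h}')|$ (summed over all closed $G''\leq G'$ mapping onto $H$, including $G'$ itself), shows that $|S(G',\mathbf{h}')|$ does not depend on $\mathbf{h}'$. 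Comparing with the image of a known generating tuple of $G$ then gives $|S(G,\mathbf{h})|\geq 1$.

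Your argument instead isolates a structural fact. Because $N$ is finite, the closed subgroup generated by \emph{any} lift of a generating tuple has finite index and so contains $G^o$. Hence generation of $G$ is decided entirely in the finite group $G/G^o$. The classical lemma applied to $G/G^o\twoheadrightarrow G/NG^o\cong H/H^o$ then does the work. Correcting your lifts by elements of $G^o$ (possible since $\f(G^o)=H^o$) restores the exact condition $\f(g_i)=h_i$ without changing the components.

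Your route is shorter and shows that the finite-kernel case is really a statement about component groups, but it uses the finite-group Gasch\"{u}tz Lemma as a black box. The paper's route is self-contained, reducing to Gasch\"{u}tz's own proof when $G$ is finite. It also yields the slightly stronger fact that the number of generating lifts is independent of the chosen generators of $H$.
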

\begin{proof}
	The original counting argument (see \cite{Gaschuetz:ZuEinemVonNeumannGestelltenProblem} or \cite[Lemma 17.7.2]{FriedJarden:FieldArithmetic}) of Gasch\"{u}tz applies in this context. For the sake of completeness and because the argument is quite nice, we include the proof. 
	
	Let $\f\colon G\twoheadrightarrow H$ be a quotient map of algebraic groups with finite kernel $N$ and let $h_1,\ldots,h_d$ be generators of $H$ with $d(G)\leq d$. 	
	For a generating tuple $\bold{h}'=(h'_1,\ldots,h'_d)$ of $H$ and a closed subgroup $G'$ of $G$ such that the restriction of $\f$ to $G'$ is a quotient map, we let $S(G',\bold{h}')$ be the set of all $(g_1',\ldots,g_d')\in G'^d$ such that $\langle g'_1,\ldots,g'_d\rangle=G'$ and $\f(g_i')=h_i'$ for $i=1,\ldots,d$. 
	Note that $S(G',\bold{h}')$ is a finite set because every $h_i'$ has only $|N|$ lifts to $G$. We have to show that $|S(G,\bold{h})|\geq 1$, where $\bold{h}=(h_1,\ldots,h_d)$.
	
	The key step of the proof is to show that $|S(G',\bold{h}')|$ does not depend on the choice of the generating set $\bold{h}'$ of $H$. Suppose, for a contradiction, that $S(G',\bold{h}')$ does depend on $\bold{h}'$. As every descending chain of closed subgroups of $G$ is finite, there then exists a minimal such $G'$, i.e., $S(G'',\bold{h}')$ does not depend on $\bold{h}'$ for every proper closed subgroup $G''$ of $G'$ such that $\f$ restricted to $G''$ is a quotient map. There are $|N\cap G'|^d$ tuples $(g'_1,\ldots,g'_d)\in G'^d$ with $\f(g'_i)=h'_i$ for $i=1,\ldots,d$ and each one of them generates some closed subgroup $G''$ of $G'$ such that the restriction of $\f$ to $G''$ is a quotient map. Thus
	\begin{equation} \label{eq: counting sum}
		|N\cap G'|^d=|S(G',\bold{h}')|+\sum_{G''}|S(G'',\bold{h}')|,
	\end{equation} 
	where the sum is taken over all proper closed subgroups $G''$ of $G'$ such that the restriction of $\f$ to $G''$ is a quotient map. As $|S(G'',\bold{h}')|$ does not depend on $\bold{h}'$, formula (\ref{eq: counting sum}) shows that $|S(G',\bold{h}')|$ does not depend on $\bold{h}'$; a contradiction. So, in particular, $|S(G,\bold{h}')|$ does not depend on $\bold{h}'$. 
	
	As $d(G)\leq d$, there exist generators $g_1,\ldots,g_d$ of $G$. For $\f(\bold{g})=(\f(g_1),\ldots,\f(g_d))$ we clearly have $|S(G,\f(\bold{g}))|\geq 1$. So $|S(G,\bold{h})|=|S(G,\f(\bold{g}))|\geq 1$ as desired.
\end{proof}

A non-trivial connected normal closed subgroup $N$ of an algebraic group $G$ is called \emph{almost-minimal} if for every normal closed subgroup $N'$ of $G$ with $N'\leq N$ we have $N'=N$ or $N'$ is finite. The following lemma gives a fairly precise description of the almost-minimal normal closed subgroups of an algebraic group.

\begin{lemma} \label{lemma: almost-minimal subgroups}
	Let $N$ be an almost-minimal normal closed subgroup of an algebraic group $G$. Then there exists an $r\geq 1$ such that $N=\Ga^r$, $N=\Gm^r$ or $N/Z=N_1^r$, where $Z$ is a finite closed subgroup of $N$ that is normal in $G$ and $N_1$ is a simple algebraic group.
\end{lemma}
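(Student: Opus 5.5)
We split into cases according to whether $N$ is solvable, using the structure theory of connected algebraic groups over the algebraically closed field $k$ of characteristic zero. The basic tool is that the characteristic subgroups of $N$ which are closed and connected are normal in $G$, since $N$ is normal in $G$. Almost-minimality then forces each such subgroup to be either trivial (being connected and finite) or all of $N$.

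\emph{Case 1: $N$ is solvable.} The derived subgroup $[N,N]$ is closed, connected and characteristic in $N$. Because $N$ is connected solvable it is a proper subgroup, so it is trivial and $N$ is commutative. Write $N=N_u\times N_s$ with $N_u$ unipotent and $N_s$ a torus. Both factors are characteristic and connected, so $N=N_u$ or $N=N_s$.
\begin{itemize}
\item If $N$ is a torus, then $N\cong\Gm^r$.
\item If $N$ is commutative unipotent, then in characteristic zero it is a vector group, $N\cong\Ga^r$ via $\exp$ of its Lie algebra.
\end{itemize}
This gives the first two alternatives.

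\emph{Case 2: $N$ is not solvable.} The radical $R(N)$ is connected, characteristic and proper, hence trivial, so $N$ is semisimple. We use the standard structure theorem: $N$ is the almost-direct product of its minimal connected normal closed subgroups $N_1,\ldots,N_r$. These are almost-simple, commute pairwise, and $N_1\times\cdots\times N_r\to N$ is an isogeny.

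The group $G$ acts on $N$ by conjugation and therefore permutes the $N_i$. For any $G$-orbit, the product of the $N_i$ in that orbit is connected, normal in $G$ and nontrivial, so it equals $N$. Hence $G$ acts transitively on $\{N_1,\ldots,N_r\}$ and all $N_i$ are isomorphic, say to a fixed almost-simple group $N_1'$.

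Now take $Z=Z(N)$. It is finite because $N$ is semisimple, and it is normal in $G$ because it is characteristic in $N$. The isogeny identifies $N/Z$ with the adjoint group $N_1'^{\,\mathrm{ad}}\times\cdots\times N_1'^{\,\mathrm{ad}}$. To see this, note that $N/Z(N)$ is the adjoint group of $N$, and the adjoint group of a product of almost-simple groups is the product of their adjoint groups. Each $N_1'^{\,\mathrm{ad}}$ is simple: an almost-simple group modulo its (finite) centre has trivial centre, and any finite normal subgroup of a connected group is central. Setting $N_1$ to be this adjoint group gives $N/Z\cong N_1^r$ with $N_1$ simple.

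\emph{Main obstacle.} The delicate step is Case 2. We must justify that the adjoint quotient of $N$ is exactly the product of the adjoint quotients of the almost-simple factors. We must also check that $G$-conjugation permutes the factors, i.e.\ that the set of minimal connected normal subgroups of $N$ is stable under automorphisms of $N$. Both follow from the uniqueness of the decomposition into almost-simple factors; we plan to cite Milne's treatment for this.
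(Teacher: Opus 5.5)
Your proof is correct and follows the paper's argument. You split via the radical into the solvable case (kill $[N,N]$, then use the unipotent/torus splitting of the commutative group) and the semisimple case (the almost-simple factors are permuted transitively by $G$, and you quotient by the finite characteristic centre). The one difference is that the paper first passes to $N/Z$, notes that it is almost-minimal in $G/Z$ and is a direct product of simple groups, and runs the orbit argument there. That order avoids your appeal to how centres behave under the isogeny from the product of the almost-simple factors onto $N$. It also makes the step ``the orbit product equals $N$, hence the action is transitive'' immediate; working upstairs, you need a quick dimension count for it.
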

\begin{proof}
	Let $R(N)$ be the (solvable) radical of $N$. As $R(N)$ is characteristic in $N$, it is normal in $G$. Because $N$ is an almost-minimal normal closed subgroup of $G$, we have two cases to consider. Firstly, $R(N)=N$, i.e., $N$ is solvable and secondly, $R(N)$ is finite and hence trivial (since $R(N)$ is connected), i.e., $N$ is semisimple.
	
	{\bf Case 1:} $N$ is solvable. Then the commutator subgroup $[N,N]$ is a proper closed subgroup of $N$. It is moreover characteristic in $N$ and therefore normal in $G$. Moreover, $[N,N]$ is connected because $N$ is connected (\cite[Cor. 6.19]{Milne:AlgebraicGroupsTheTheoryOfGroupSchemesOfFiniteTypeOverAField}). As $N$ is almost-minimal, this implies $[N,N]=1$ so that $N$ is commutative. Thus $N=\Ga^r\times \Gm^s$ for some integers $r,s\geq 0$ (\cite[Cor. 16.15]{Milne:AlgebraicGroupsTheTheoryOfGroupSchemesOfFiniteTypeOverAField}).
	
	As the unipotent radical $R_u(N)=\Ga^r$ is a characteristic subgroup of $N$, it is normal in $G$. By the almost-minimality of $N$, we either have $\Ga^r=1$ (so that $N=\Gm^s$) or $\Ga^r=N$. So in either case, $N$ is a power of the additive or of the multiplicative group. 
	
	{\bf Case 2:} $N$ is semisimple. Let us briefly recall some generalities concerning semisimple algebraic groups. A semisimple algebraic group $G'$ is an almost-direct product of its almost-simple factors (\cite[Theorem~21.51]{Milne:AlgebraicGroupsTheTheoryOfGroupSchemesOfFiniteTypeOverAField}), i.e., there are only finitely many minimal connected normal closed subgroups of $G'$, say $G_1',\ldots,G_r'$. They are almost-simple, $[G_i',G_j']=1$ for $i\neq j$, and the morphism $G'_1\times\ldots\times G'_r\to G'$ is a quotient map with finite kernel.
	An element of the center of some $G_i'$ lies in the center of $G'$. Thus, if $G'$ is centerless, then the $G_i'$'s are centerless and therefore simple. Furthermore, if $(g'_1,\ldots,g'_r)$ lies in the kernel of $G'_1\times\ldots\times G'_r\to G'$, then $g_i'$ must lie in the center of $G'_i$ for $i=1,\ldots,r$. Therefore, a centerless semisimple algebraic group is a finite direct product of simple algebraic groups.

	%	 g_{i-1}g_{i+1}\ldots g_n\in G'_i\cap G'_1\ldots G'_{i-1}G'_{i+1}\ldots G'_n$$ lies in the center of $G_i'$.

	Let $Z$ be the center of $N$. As $N$ is semisimple, $Z$ is finite and because $Z$ is characteristic in $N$, we see that $Z$ is normal in $G$. Moreover, $N/Z$ is an almost-minimal normal closed subgroup of $G/Z$. As $N/Z$ is a centerless semisimple algebraic group, by the previous paragraph, $N/Z=N_1\times\ldots\times N_r$ for some simple algebraic groups $N_1,\ldots,N_r$. The conjugation action of $G/Z$ on $N/Z$ permutes $N_1,\ldots,N_r$. If $O\subseteq \{N_1,\ldots,N_r\}$ is an orbit under the $G/Z$-action, then all elements of $O$ are isomorphic and $\prod_{N_i\in O} N_i$ is a normal closed subgroup of $G/Z$. As $N/Z$ is almost-minimal in $G/Z$, we must have $O=\{N_1,\ldots,N_r\}$ and $N/Z=N_1^r$.	
\end{proof}

In the following three subsections we will prove Gasch\"{u}tz's Lemma for the three special cases provided by Lemma \ref{lemma: almost-minimal subgroups}. More precisely, we show that a quotient map $\f\colon G\twoheadrightarrow H$ with almost-minimal kernel $N=\ker(\f)$ is Gasch\"{u}tz in each of the cases $N=\Ga^r$, $N=\Gm^r$ and $N=N_1^r$ with $N_1$ a simple algebraic group.

\subsection{The case $N=\Ga^r$}
%Let $H$ be an algebraic group acting on $\Ga^r$. 
In this subsection we show that every quotient map $G\twoheadrightarrow H$ of algebraic groups with almost-minimal kernel $N=\Ga^r$ is Gasch\"{u}tz.
%provided that $\Ga^r$ is almost-minimal.
The following lemma is essential for the special case of the projection $G= \Ga\times H\twoheadrightarrow H$.
%\mw{I feel like the subcase $G=\Ga\times H$ was somewhat overlooked in our previous argument (towards the end of the proof of Lemma \ref{lemma: case Ga semidirect}). So I added the following two lemmas to deal with this comparatively easy case.}
Note that for an algebraic group $H$, the set $\Hom(H,\Ga)$ of all morphisms of algebraic groups from $H$ to $\Ga$ naturally is a $k$-vector space.

\begin{lemma} \label{lemma: strict inequality}
	Let $H$ be an algebraic group. Then 
	$\dim(\Hom(H,\Ga))<d(\Ga\times H).$
\end{lemma}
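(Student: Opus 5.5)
Write $m=\dim(\Hom(H,\Ga))$. This is finite: a morphism $H\to\Ga$ factors through the maximal commutative unipotent quotient of $H$, which is $\Ga^m$ for some $m$. The plan is to show that $\Ga\times H$ surjects onto $\Ga^{m+1}$, and that $\Ga^{m+1}$ cannot be generated by fewer than $m+2$ elements. Since $d$ does not increase under quotient maps, this gives $d(\Ga\times H)\geq d(\Ga^{m+1})\geq m+2>m$.

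\textbf{Step 1: the quotient.} Choose a basis $\chi_1,\ldots,\chi_m$ of $\Hom(H,\Ga)$. Then
\[(a,h)\mapsto (a,\chi_1(h),\ldots,\chi_m(h))\]
is a morphism $\Ga\times H\to\Ga^{m+1}$. I claim it is a quotient map, i.e.\ its image is all of $\Ga^{m+1}$. The image is a closed subgroup containing $\Ga\times 0$. Suppose it were proper. Closed subgroups of vector groups in characteristic zero are linear subspaces, so some nonzero linear form would vanish on the image. That form vanishes on $\Ga\times0$, so it is a nontrivial combination $\sum c_i\chi_i$ with $\sum c_i\chi_i=0$, contradicting linear independence of the $\chi_i$. (Finite dimensionality of $\Hom(H,\Ga)$ also follows from this: any $m$ independent characters give a surjection $H\to\Ga^m$, so $m\leq\dim H$.)

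\textbf{Step 2: $d(\Ga^{n})=n+1$.} I would show this for $n\geq1$; in fact only the inequality $d(\Ga^n)\geq n+1$ is needed. Let $v_1,\ldots,v_n\in k^n$ be any $n$ elements. The closed subgroup they generate is the Zariski closure of $\mathbb{Z}v_1+\cdots+\mathbb{Z}v_n$. In characteristic zero, the closed subgroup generated by a single vector $v$ is the line $kv$ (or $0$ if $v=0$). Hence the group generated by $v_1,\ldots,v_n$ is their $k$-linear span. This span equals $k^n$ precisely when the $v_i$ form a basis, so $n$ elements do generate $\Ga^n$ in that case.

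This seems to contradict what we need: it says $d(\Ga^n)=n$, not $n+1$. So the argument has to be redone, because with $d(\Ga^{m+1})=m+1$ the bound above only gives $d(\Ga\times H)\geq m+1$, which still exceeds $m$. That suffices for the strict inequality: the chain is $d(\Ga\times H)\geq d(\Ga^{m+1})=m+1>m$. To justify $d(\Ga^{n})\geq n$, observe that $n-1$ vectors span a subspace of dimension at most $n-1$, which is a proper closed subgroup of $\Ga^n$.

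\textbf{Main obstacle.} The delicate point is the standard fact that the closed subgroup generated by $v\neq0$ in $\Ga^n$ is exactly $kv$, which uses characteristic zero: the Zariski closure of $\mathbb{Z}v$ is the line $kv$, since $\mathbb{Z}$ is infinite in $k$. Closely related is the fact that closed subgroups of $\Ga^n$ are linear subspaces. Both are classical, and I would cite them (e.g.\ from Milne) rather than reprove them.
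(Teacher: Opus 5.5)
Your final argument is correct and is essentially the paper's: exhibit $\Ga^{m+1}$ as a quotient of $\Ga\times H$ and conclude $d(\Ga\times H)\geq d(\Ga^{m+1})=m+1>m$. The only difference is how you get $H\twoheadrightarrow\Ga^m$: you build it directly from a basis of $\Hom(H,\Ga)$, using that closed subgroups of vector groups are subspaces, whereas the paper passes through the abelianization of the maximal unipotent quotient $\pi_u(H)$.

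Delete the claim in your opening paragraph that $\Ga^{m+1}$ needs at least $m+2$ generators. It is false, as your own Step 2 shows, and only $d(\Ga^{m+1})\geq m+1$ is used.
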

\begin{proof}
	Every algebraic group $H$ has a maximal unipotent quotient $H\twoheadrightarrow \pi_u(H)$. It is characterized by the property that every morphism from $H$ to a unipotent algebraic group factors uniquely through $H\twoheadrightarrow \pi_u(H)$ (see e.g., \cite[Lemma 2.4]{Wibmer:FreeProalgebraicGroups}). We then have 
	$$\Hom(H,\Ga)\simeq\Hom(\pi_u(H),\Ga)\simeq\Hom(\pi_u(H)/[\pi_u(H),\pi_u(H)],\Ga).$$
	As $\pi_u(H)/[\pi_u(H),\pi_u(H)]$ is commutative and unipotent, we have $\pi_u(H)/[\pi_u(H),\pi_u(H)]\simeq \Ga^m$ for some $m\geq 0$ (\cite[Prop. 14.32]{Milne:AlgebraicGroupsTheTheoryOfGroupSchemesOfFiniteTypeOverAField}). It follows that $\dim(\Hom(H,\Ga))=m$.
	
	On the other hand, as $\Ga^m$ is a quotient of $H$, we see that $\Ga^{m+1}$ is a quotient of $\Ga\times H$. So $$d(\Ga\times H)\geq d(\Ga^{m+1})=m+1>m=\dim(\Hom(H,\Ga))$$ as desired.
\end{proof}	

Recall that a closed subgroup $G$ of a direct product $G_1\times\ldots\times G_n$ of proalgebraic groups is a \emph{subdirect product} of $G_1,\ldots,G_n$ if all the projections $H\to G_i$ are quotient maps. Goursat's Lemma yields a description of a subdirect product as a fibre product. The following is a version of this for proalgebraic groups. 

\begin{lemma} \label{lemma: Goursat}
	Let $G_1,G_2$ be proalgebraic groups and let $H$ be a closed subgroup of $G_1\times G_2$ such that the projections $H\to G_1$ and $H\to G_2$ are quotient maps. Then there exists a proalgebraic group $G$ and quotient maps $G_1\twoheadrightarrow G$ and $G_2\twoheadrightarrow G$ such that $H=G_1\times_G G_2$.
\end{lemma}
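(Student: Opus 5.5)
We follow the classical proof of Goursat's Lemma, phrased with kernels so that it works scheme-theoretically. Let $p_1\colon H\twoheadrightarrow G_1$ and $p_2\colon H\twoheadrightarrow G_2$ be the projections. Set $K_1=\ker(p_2)$ and $K_2=\ker(p_1)$. Via the embedding $G_1\to G_1\times G_2$, $g\mapsto(g,1)$, $K_1$ is identified with a closed subgroup $N_1=H\cap(G_1\times 1)$ of $G_1$. Symmetrically, $K_2$ is identified with a closed subgroup $N_2=H\cap(1\times G_2)$ of $G_2$.

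The first step is to show that $N_1$ is normal in $G_1$. Take a $k$-algebra $R$, $g_1\in G_1(R)$ and $n\in N_1(R)$. Because $p_1$ is a quotient map, $g_1$ lifts to some $h=(g_1,g_2)\in H(R')$ after passing to a faithfully flat extension $R\to R'$. Conjugating, $h(n,1)h^{-1}=(g_1ng_1^{-1},1)$ lies in $H(R')\cap (G_1\times 1)(R')$. Since $N_1$ is a closed subscheme, fpqc descent gives $g_1ng_1^{-1}\in N_1(R)$. Alternatively, one can avoid points altogether: $N_1=p_1(\ker p_2)$, and the image of a normal subgroup under a quotient map is normal. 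The same argument shows $N_2$ is normal in $G_2$. We then put $G=G_1/N_1$ and $G'=G_2/N_2$.

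The second step is to construct an isomorphism $G\cong G'$. The composite $H\twoheadrightarrow G_1\twoheadrightarrow G_1/N_1$ has kernel $p_1^{-1}(N_1)$, and we claim this equals $K_1K_2=\ker(p_1)\ker(p_2)$. Indeed, on points $(a,b)\in H$ with $a\in N_1$ we have $(a,1)\in H$, hence $(1,b)=(a,1)^{-1}(a,b)\in H$, and so $(a,b)\in K_1K_2$. By symmetry, the kernel of $H\twoheadrightarrow G_2/N_2$ is also $K_1K_2$. Since both maps are quotient maps with the same kernel, the isomorphism theorem for quotients of affine group schemes gives $G_1/N_1\cong H/K_1K_2\cong G_2/N_2$. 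Let $\pi_1\colon G_1\twoheadrightarrow G$ and $\pi_2\colon G_2\twoheadrightarrow G$ be the resulting quotient maps; by construction $\pi_1p_1=\pi_2p_2$ on $H$.

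The third step is to show $H=G_1\times_G G_2$. The inclusion $H\subseteq G_1\times_G G_2$ follows from $\pi_1p_1=\pi_2p_2$. For the converse, note that the projection $G_1\times_G G_2\to G_1$ has kernel $1\times N_2$, which equals the kernel $K_2$ of $p_1|_H$. The resulting map is the inclusion $H\hookrightarrow G_1\times_G G_2$, compatible with the projections to $G_1$. It is a closed embedding, it induces an isomorphism on kernels over $G_1$, and $H\to G_1$ is a quotient map; from these, the inclusion is an isomorphism. Concretely, the five-lemma-type argument is: any point $(a,b)$ of the fibre product lifts fpqc-locally to some $(a,b')\in H$, and then $(1,b'^{-1}b)\in 1\times N_2\subseteq H$.

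The main obstacle is making these pointwise arguments rigorous for proalgebraic groups, where surjectivity holds only fpqc-locally. I would handle this uniformly by working with $R$-points for all $R$, lifting along faithfully flat extensions (a quotient map of affine group schemes is faithfully flat), and using that closed subgroups and equalities of closed subschemes descend. We also need the standard facts, valid for all affine group schemes over a field, that quotients by normal closed subgroups exist and that a quotient map induces $G/\ker\cong$ its image.
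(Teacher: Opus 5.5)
Your proof is correct and is essentially the paper's argument: both take $G=H/\ker(p_1)\ker(p_2)$ (your $G_1/N_1\cong H/K_1K_2\cong G_2/N_2$) and identify $H$ with $G_1\times_G G_2$. The paper does the identification by invoking the canonical isomorphism $H/N_1\times_{H/N_1N_2}H/N_2\simeq H/(N_1\cap N_2)$ for the two kernels. Your fpqc-lifting argument in the third step is exactly what proves the surjectivity half of that isomorphism, which the paper attributes to ``universal properties'' without further comment.
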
	
\begin{proof}
	For normal closed subgroups $N_1$, $N_2$ of $H$, it follows from the universal properties that
	\begin{equation}
		\label{eq: can isom}
		H/N_1\times_{H/N_1N_2}H/N_2\simeq H/(N_1\cap N_2).		
	\end{equation}	
	If we choose $N_1$ as the kernel of $H\twoheadrightarrow G_1$ and $N_2$ as the kernel of $H\twoheadrightarrow G_2$, then $N_1\cap N_2=1$ and (\ref{eq: can isom}) reduces to
	$G_1\times_G G_2= H$.
\end{proof}

We are now prepared to handle the special case of the projection $\Ga\times H\twoheadrightarrow H$.

\begin{lemma} \label{lemma: Gaschuetz for kernel Ga}
	Let $H$ be an algebraic group. Then the projection $\Ga\times H\twoheadrightarrow H$ is Gasch\"{u}tz.
\end{lemma}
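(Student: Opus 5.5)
Fix generators $h_1,\ldots,h_d$ of $H$ with $d(\Ga\times H)\leq d$. We look for lifts of the form $g_i=(a_i,h_i)$ with $a_i\in k$, and we choose $(a_1,\ldots,a_d)\in k^d$ so that $G':=\langle g_1,\ldots,g_d\rangle$ equals all of $\Ga\times H$.

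First we describe the closed subgroups $G'\leq \Ga\times H$ that project onto $H$. The projection of $G'$ to $\Ga$ is a closed subgroup of $\Ga$, hence either $0$ or $\Ga$ (characteristic zero). If it is $0$, then $G'\subseteq 0\times H$, and then $a_i=0$ for all $i$. Otherwise $G'$ is a subdirect product of $\Ga$ and $H$. By Lemma \ref{lemma: Goursat} there are a proalgebraic group $Q$ and quotient maps $\Ga\twoheadrightarrow Q$, $H\twoheadrightarrow Q$ with $G'=\Ga\times_Q H$. Quotients of $\Ga$ are $\Ga$ and $1$, so $Q\in\{1,\Ga\}$. If $Q=1$, then $G'=\Ga\times H$. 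If $Q=\Ga$, the map $\Ga\to Q$ is an isomorphism $a\mapsto \lambda a$ with $\lambda\in k^\times$, and $H\to \Ga$ is a nonzero morphism $\chi$. Rescaling, $G'$ is the graph $\{(\chi(h),h)\}$ of some nonzero $\chi\in\Hom(H,\Ga)$. In both degenerate cases $G'$ is the graph of some $\chi\in \Hom(H,\Ga)$ (with $\chi=0$ allowed). Hence $G'\neq \Ga\times H$ if and only if there is $\chi\in\Hom(H,\Ga)$ with $a_i=\chi(h_i)$ for all $i$.

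Next we show such a bad tuple can be avoided. Consider the linear map $\Hom(H,\Ga)\to k^d$, $\chi\mapsto(\chi(h_1),\ldots,\chi(h_d))$. It is injective, since $\chi$ vanishing on generators vanishes on $H$ (its kernel is closed and contains the $h_i$), but injectivity is not needed. The bad tuples form the image, a subspace $V$ of dimension at most $\dim\Hom(H,\Ga)$. By Lemma \ref{lemma: strict inequality}, $\dim\Hom(H,\Ga)<d(\Ga\times H)\leq d$, so $V$ is a proper subspace of $k^d$. Any $(a_1,\ldots,a_d)\notin V$ then gives generators $g_i=(a_i,h_i)$ of $\Ga\times H$ lifting $h_i$.

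The main obstacle is the classification of proper subgroups surjecting onto $H$ as graphs of homomorphisms. This requires care in applying Goursat's lemma: the quotients of $\Ga$ in characteristic zero are only $1$ and $\Ga$, and an automorphism of $\Ga$ is multiplication by a scalar. The counting step is then immediate from Lemma \ref{lemma: strict inequality}.
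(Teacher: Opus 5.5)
Your proposal is correct and is essentially the paper's own proof: you choose $(a_1,\ldots,a_d)$ outside the image of the linear map $\Hom(H,\Ga)\to k^d$, $\chi\mapsto(\chi(h_1),\ldots,\chi(h_d))$, which is a proper subspace by Lemma~\ref{lemma: strict inequality}. You then use Lemma~\ref{lemma: Goursat}, together with the fact that $\Ga$ has no quotients other than $1$ and $\Ga$, to see that a proper closed subgroup surjecting onto $H$ is the graph of some $\chi$. The only cosmetic difference is that you treat the trivial projection to $\Ga$ as a separate case, whereas the paper excludes it by noting that $0$ lies in the image, so not all $a_i$ vanish.
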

\begin{proof}
	Let $h_1,\ldots,h_d$ be generators of $H$ such that $d(\Ga\times H)\leq d$. 
	Because $\dim(\Hom(H,\Ga))<d(\Ga\times H)\leq d$ by Lemma \ref{lemma: strict inequality}, the $k$-linear map
	$\Hom(H,\Ga)\to \Ga^d,\ \psi\mapsto (\psi(h_1),\ldots,\psi(h_d))$ is not surjective. So we can pick $(n_1,\ldots,n_d)\in\Ga^d$ not in the image of this map. We claim that $H(n_1,\ldots,n_d)=\langle (n_1,h_1),\ldots,(n_d,h_d)\rangle=G$. Suppose, for a contradiction, that $H(n_1,\ldots,n_d)$ is a proper subgroup of $\Ga\times H$.
	
	As any non-zero element generates $\Ga$ and not all $n_i$'s can be zero, the projection $H(n_1,\ldots,n_d)\to\Ga$ is a quotient map. By construction, also the projection $H(n_1,\ldots,n_d)\to H$ is a quotient map. So Lemma~\ref{lemma: Goursat} applies. As the only non-trivial quotient of $\Ga$ is $\Ga$ itself, we find that $H(n_1,\ldots,n_d)=\{(\psi(h),h)|\ h\in H\}$ for some morphism $\psi\colon H\to \Ga$. This, however, contradicts our choice of $n_1,\ldots,n_d$.
\end{proof}

For the convenience of the reader, we recall the basic facts about crossed homomorphisms (\cite[Section 15 a]{Milne:AlgebraicGroupsTheTheoryOfGroupSchemesOfFiniteTypeOverAField}).
Let $H$ be an algebraic group acting on an algebraic group $N$ (via algebraic group automorphisms). A morphism $\psi\colon H\to N$ of algebraic varieties is a \emph{crossed homomorphism} if $\psi(h_1h_2)=\psi(h_1)h_1(\psi(h_2))$ for all $h_1,h_2\in H$. If $\psi\colon H\to N$ is a crossed homomorphism, then $s\colon H\to N\rtimes H,\ h\mapsto (\psi(h),h)$ is a section of the projection $\pi\colon N\rtimes H\twoheadrightarrow H$, i.e., $s$ is a morphism of algebraic groups with $\pi s=\id_H$. Conversely, any section $s\colon H\to N\rtimes H$ comes from a unique crossed homomorphism $\psi\colon H\to N$.

The following lemma establishes the case $N=\Ga^r$ in case the quotient map $G\twoheadrightarrow H$ is split.

\begin{lemma} \label{lemma: case Ga semidirect}
	Let $H$ be an algebraic group acting on $N=\Ga^r$ such that $N$ is an almost-minimal normal closed subgroup of $G=N\rtimes H$. Then the projection $G\twoheadrightarrow H$ is Gasch\"{u}tz.
\end{lemma}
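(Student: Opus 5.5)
The strategy is to reduce to a linear-algebra statement about crossed homomorphisms, and then to bound the relevant dimension using the hypothesis $d(G)\leq d$ applied to a \emph{different} generating tuple. Let $h_1,\ldots,h_d$ generate $H$ with $d(G)\leq d$. We look for $n_1,\ldots,n_d\in N$ such that $K=\langle (n_1,h_1),\ldots,(n_d,h_d)\rangle$ equals $G$.

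\textbf{Step 1: a proper $K$ is a complement.} Any such $K$ surjects onto $H$, so $G=NK$. Since $N$ is commutative, $K\cap N$ is normalized by $N$ and by $K$, hence is normal in $G$. In characteristic zero every closed subgroup of $\Ga^r$ is connected; indeed it is a vector subgroup. So almost-minimality of $N$ forces $K\cap N=N$ or $K\cap N=1$. In the first case $K=G$. In the second case $K$ is the image of a section of $G\twoheadrightarrow H$. By the recalled facts on crossed homomorphisms, this means $K=\{(\psi(h),h)\mid h\in H\}$ for a crossed homomorphism $\psi\colon H\to N$, and then $n_i=\psi(h_i)$ for all $i$.

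\textbf{Step 2: reduce to a dimension count.} Since $H$ acts on $\Ga^r$ by algebraic group automorphisms, and these are linear in characteristic zero, the set $Z^1(H,N)$ of crossed homomorphisms is a $k$-vector space. Consider the linear map
$$\rho_{\mathbf h}\colon Z^1(H,N)\to N^d,\quad \psi\mapsto(\psi(h_1),\ldots,\psi(h_d)).$$
This map is injective. Indeed, for $\psi_1,\psi_2\in Z^1(H,N)$ the locus $\{h\in H\mid \psi_1(h)=\psi_2(h)\}$ is a closed subgroup of $H$, by the cocycle identity. If it contains the generators $h_i$, it is all of $H$. Hence $\dim\rho_{\mathbf h}(Z^1(H,N))=\dim Z^1(H,N)$, and this number does not depend on the chosen generating tuple $\mathbf h$. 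By Step 1, it suffices to show $\dim Z^1(H,N)<rd=\dim N^d$. Then we pick $(n_1,\ldots,n_d)\notin\operatorname{im}\rho_{\mathbf h}$, and Step 1 gives $K=G$.

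\textbf{Step 3: the key inequality.} This is the step I expected to be the main obstacle, but the independence from $\mathbf h$ resolves it. Because $d(G)\leq d$, there are generators $g_1,\ldots,g_d$ of $G$, padding with $1$ if necessary. Write $g_i=(m_i,h_i')$; then $h_1',\ldots,h_d'$ generate $H$. If $(m_1,\ldots,m_d)=\rho_{\mathbf h'}(\psi)$ for some $\psi\in Z^1(H,N)$, then all $g_i$ would lie in the proper closed subgroup $\{(\psi(h),h)\mid h\in H\}$, which is proper since $r\geq 1$. This contradicts $\langle g_1,\ldots,g_d\rangle=G$. So $\rho_{\mathbf h'}$ is not surjective, and therefore $\dim Z^1(H,N)<rd$. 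Applying this to the original tuple $\mathbf h$ finishes the proof.

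Note that this argument also covers the trivial-action case $N=\Ga$ treated in Lemma \ref{lemma: Gaschuetz for kernel Ga}, where $Z^1(H,N)=\Hom(H,\Ga)$. The only points I would double-check are that closed subgroups of $\Ga^r$ are vector subgroups (true in characteristic zero), and that the equalizer of two crossed homomorphisms is a subgroup: if $\psi_1(a)=\psi_2(a)$ and $\psi_1(b)=\psi_2(b)$, then $\psi_j(ab)=\psi_j(a)\,a(\psi_j(b))$ agree for $j=1,2$, and inverses are handled similarly.
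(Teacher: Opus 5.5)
Your proof is correct, and it takes a genuinely different and noticeably shorter route than the paper's.

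Both arguments start the same way as your Step 1: a non-generating lift spans a complement $\{(\psi(h),h)\}$.

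\textbf{The paper's route.} From there the paper proceeds structurally:
\begin{enumerate}
\item It uses Mostow's decomposition $H=U\rtimes R$ and shows via almost-minimality that $U$ acts trivially on $N$.
\item It reduces to $U$ abelian by showing that $[U,U]$ lies in every $H(n_1,\ldots,n_d)$.
\item It uses $H^1(R,N)=0$ to parametrize crossed homomorphisms by $N\times\Hom_R(U,N)$.
\end{enumerate}
This parametrization is not injective; its kernel is $N^R\times 0$. So comparing the evaluation maps $\eta$ and $\eta'$ at the two generating tuples only produces a nonzero kernel element. That element forces $N^R\neq 0$, hence trivial action and $N=\Ga$. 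The paper must then fall back on the separate case $\Ga\times H\twoheadrightarrow H$, namely Lemma~\ref{lemma: Gaschuetz for kernel Ga}, which in turn rests on Lemma~\ref{lemma: strict inequality} and the maximal unipotent quotient.

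\textbf{Your route.} You work intrinsically with the vector space $Z^1(H,N)$. You observe that evaluation at any generating tuple of $H$ is injective, so $\dim Z^1(H,N)$ does not depend on the tuple. A single generating tuple of $G$ then bounds this dimension by $rd-1$, and that finishes the proof. This is exactly the classical Gasch\"{u}tz count for abelian chief factors, with dimension in place of cardinality.

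\textbf{What each buys.} Your argument avoids Mostow's theorem, the vanishing of $H^1(R,N)$ and the commutator reduction. It also subsumes Lemmas~\ref{lemma: strict inequality} and~\ref{lemma: Gaschuetz for kernel Ga}, since the trivial-action case $N=\Ga$ is covered directly. It gives a stronger conclusion as well: the non-generating lifts are exactly the tuples in the proper linear subspace $\operatorname{im}\rho_{\mathbf h}$. Hence the generating lifts form a nonempty Zariski-open subset of $N^d$, whereas the paper only obtains existence by contradiction. The paper's detour does yield extra structural information, for instance that $U$ acts trivially on an almost-minimal vector group, but none of it is needed for the lemma.
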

\begin{proof}
	Let $h_1,\ldots,h_d$ be generators of $H$ such that $d(G)\leq d$.
	For $n_1,\ldots,n_d\in N$ we set $H(n_1,\ldots,n_d)=\langle (n_1,h_1),\ldots,(n_d,h_d)\rangle\leq G$. We have to show that there exist $n_1,\ldots,n_d\in N$ such that $H(n_1,\ldots,n_d)=G$. Suppose, for a contradiction, that $H(n_1,\ldots,n_d)$ is a proper subgroup of $G$ for all $n_1,\ldots,n_d\in N$.
	
	The subgroup $N\cap H(n_1,\ldots,n_d)$ of $G$ is normalized by $H(n_1,\ldots,n_d)$ and by $N$, because $N$ is abelian. Therefore, it is normalized by $H(n_1,\ldots,n_d)N$. But $H(n_1,\ldots,n_d)N=G$ because $H(n_1,\ldots,n_d)N$ surjects onto $H$ and contains $N$. So $N\cap H(n_1,\ldots,n_d)$ is normal in $G$. As $N$ is almost-minimal, we have $N\cap H(n_1,\ldots,n_d)=N$ or $N\cap H(n_1,\ldots,n_d)$ is finite. In the first case, $N\subseteq H(n_1,\ldots,n_d)$ and so $G=H(n_1,\ldots,n_d)$. This contradicts our assumption that $H(n_1,\ldots,n_d)$ is a proper subgroup of $G$. Thus $N\cap H(n_1,\ldots,n_d)$ is finite and since $N=\Ga^r$ does not have any non-trivial finite subgroups, we have $N\cap H(n_1,\ldots,n_d)=1$ for all $n_1,\ldots,n_d\in N$.
	
	Therefore, for all $n_1,\ldots,n_d\in N$, the projection $G\twoheadrightarrow H$ restricts to an isomorphism $H(n_1,\ldots,n_d)\to H$ and so $H(n_1,\ldots,n_d)$ is of the form
	$$H(n_1,\ldots,n_d)=\{(\psi_{n_1,\ldots,n_d}(h), h)|\ h\in H\}\leq N\rtimes H$$
	for a crossed homomorphism $\psi_{n_1,\ldots,n_d}\colon H\to N$. %i.e., $\psi_{n_1,\ldots,n_d}$ is a morphism of varieties such that $\psi_{n_1,\ldots,n_d}(h_1h_2)=\psi_{n_1,\ldots,n_d}(h_1)h_1(\psi_{n_1,\ldots,n_d}(h_2))$ for all $h_1,h_2\in H$.% where $H$ acts on $N$ via conjugation.

	A classical result of Mostow (see \cite{Mostow:FullyReducibleSubgroupsOfAlgebraicGroups} for the original or \cite[Prop. 5.4.1]{Conrad:ReductiveGroupSchemes} for a modern treatment)
	%\cite[Chapter VII, Theorem~4.3]{Hochschild:BasicTheoryOfAlgebraicGroupsAndLieAlgebras} for a more modern treatment),
	relying on our characteristic zero assumption, states that there exists a reductive closed subgroup $R$ of $H$ such that $H=U\rtimes R$, where $U=R_u(H)$ is the unipotent radical of $H$. 
	
	Note that (in characteristic zero) every action of an algebraic group $G$ on a vector group $V$ is linear. (For example, because the exponential map $\exp\colon \Lie(V)\to V$ is a $G$-equivariant isomorphism of algebraic groups and the action of $G$ on $\Lie(V)$ is linear.) We can therefore regard $N=\Ga^r$ as a representation of $H$. 

	We next show that $U$ acts trivially on $N$, i.e., the elements of $U$ and $N$ commute so that $G=(N\times U)\rtimes R$. Regarding $N$ as a representation of $U$, it follows that there exists a non-zero $n\in N$ that is fixed by all $u\in U$. (In fact, in \cite[Paragraph 6.45]{Milne:AlgebraicGroupsTheTheoryOfGroupSchemesOfFiniteTypeOverAField}
	the existence of a non-zero fixed vector in every representation is taken as the definition of unipotent algebraic groups.)
	So $N'=\{n\in N|\ u(n)=n \ \forall \ u\in U\}$ is a non-trivial closed subgroup of $N$. Clearly it is normalized by $U$. Because $U$ is normalized by $R$, we have $u(r(n))=r((r^{-1}ur)(n))=r(n)$ for all $u\in U$, $r\in R$ and $n\in N'$ and so $r(n)\in N'$. Therefore $N'$ is normalized by $R$ and hence by $H$. Since $N$ is commutative, $N'$ is in fact normal in $G$. Because $N$ is an almost-minimal normal closed subgroup of $G$ and $N'$ is non-trivial, we must have $N=N'$, i.e., $U$ acts trivially on $N$ as claimed.

	We next show that $[U,U]\leq H(n_1,\ldots,n_d)$ for all $n_1,\ldots,n_d\in N$. For $u,u'\in U\leq H$, let $n, n'\in N$ be such that $(n,u), (n',u')\in H(n_1,\ldots,n_d)$. Then $[(n,u),(n',u')]\in H(n_1,\ldots,n_d)$ and 
	$$[(n,u),(n',u')]=([n,n'],[u,u'])=(0,[u,u'])$$
	because $U$ acts trivially on $N$ and $N$ is commutative. So $[U,U]\leq H(n_1,\ldots,n_d)$ as claimed.
	
	Because $[U,U]$ is a characteristic subgroup of $U$, we see that $[U,U]$ is normal in $H$. As, moreover, $U$ commutes with $N$, it follows that $[U,U]$ is normal in $G$. 	
	If the images of $(n_1,h_1),\ldots,(n_d,h_d)$ in $G/[U,U]=(N\times U/[U,U])\rtimes R$ generate $G/[U,U]$, then the contradiction $H(n_1,\ldots,n_d)=G$ follows, because $[U,U]\subseteq H(n_1,\ldots,n_d)$. We can thus assume, without loss of generality, that $U$ is abelian.
	
	So $U=\Ga^s$ for some $s\geq 0$ (\cite[Cor. 14.33]{Milne:AlgebraicGroupsTheTheoryOfGroupSchemesOfFiniteTypeOverAField}). As noted above, we can consider $N=\Ga^r$ and $U=\Ga^s$ as representations of $R$.
	
	%	
	%	Note that (in characteristic zero) every action of an algebraic group $G$ on a vector group $V$ is linear. (For example, because the exponential map $\exp\colon \Lie(V)\to V$ is a $G$-equivariant isomorphism of algebraic groups and the action of $G$ on $\Lie(V)$ is linear.??) We can therefore regard $N=\Ga^r$ and $U=\Ga^s$ as representations of $R$.
	%	
	
	We next describe the crossed homomorphisms $H\to N$. Again using our characteristic zero assumption, we have $H^1(R,N)=0$ (\cite[Prop. 15.15 and Cor. 22.43]{Milne:AlgebraicGroupsTheTheoryOfGroupSchemesOfFiniteTypeOverAField}), i.e., every crossed homomorphism $\psi\colon R\to N$ is principal, that is, of the form $\psi(r)=n^{-1}r(n)$ for some $n\in N$. Thus the restriction of a crossed morphism $H\to N$ to $R\to N$ is principal. On the other hand, the restriction $U\to N$ of a crossed homomorphism $H\to N$ is an $R$-equivariant morphism of algebraic groups because $U$ acts trivially on $N$. 
	Conversely, given an $R$-equivariant morphism $\varphi\in W=\Hom_R(U,N)$ and an element $n\in N$, we can define a crossed homomorphism $\psi_{\varphi,n}\colon H\to N$ by $\psi_{\varphi,n}((u,r))=\varphi(u)n^{-1}r(n)$ for $u\in U$ and $r\in R$.
	
	The crossed homomorphisms $\psi_{n_1,\ldots,n_d}\colon H\to N$ constructed above satisfy $\psi_{n_1,\ldots,n_d}(h_i)=n_i$ for $i=1,\ldots,d$. This implies that the map $$\eta\colon N\times W\to N^d,\ (n,\varphi)\mapsto(\psi_{\varphi,n}(h_1),\ldots,\psi_{\varphi,n}(h_d))$$ is surjective. Moreover, as $\psi_{\varphi+\varphi',nn'}=\psi_{\varphi,n}+\psi_{\varphi',n'}$ and $\psi_{\lambda\varphi,\lambda n}=\lambda\psi_{\varphi,n}$ for $\varphi,\varphi'\in W,\ n,n'\in N$ and $\lambda\in k$, we see that $\eta$ is $k$-linear.
	
	%	
	%	So, for all $n_1,\ldots,n_d\in N$, the crossed morphism $\psi_{n_1,\ldots,n_d}\colon H\to N$ restricts to a principal crossed  morphism $R\to N$. On the other hand, the restriction $U\to N$ of $\psi_{n_1,\ldots,n_d}$ to $U$ is a morphism of algebraic groups (and hence $k$-linear) because $U$ acts trivially on $N$. 
	%	So if $W=\Hom(U,N)$ is the finite dimensional $k$-vector space of morphism of algebraic groups from $U$ to $N$, then the map
	%	 
	%	 $\varphi: N \times W \to N^d$ given by sending $(x,\phi)$ to the $d$-tuple $((x,\phi)(x_1),\ldots ,(x,\phi)(x_d))$ (where $(x,\phi)$ is the crossed homomorphism from $H$ to $N$ defined by sending $r\to x^r-x$ on the reductive part and $u\mapsto \phi(u)$ on the unipotent part of $H=R\ltimes U$) must be onto. 
	%	
	%	Note that, by construction, $\psi_{n_1,\ldots,n_d}(h_i)=n_i$ for $i=1,\ldots,d$.
	%	
	
	By assumption, $G$ can be generated by $d$ elements. Say, $G=\langle (n_1',h_1'),\ldots,(n_d',h_d')\rangle$. For every crossed homomorphism $\psi\colon H\to N$ we have a proper closed subgroup $H_\psi=\{(n,h)\in G|\ \psi(h)=n\}$ of $G$. As not all $(n_i',h_i')$ can lie in $H_\psi$, we see that the $k$-linear map 
	$$\eta'\colon N\times W\to N^d,\ (n,\varphi)\mapsto(\psi_{\varphi,n}(h'_1),\ldots,\psi_{\varphi,n}(h'_d))$$ 
	is not surjective, because its image does not contain $(n_1',\ldots,n_d')$. Therefore, as $\eta$ is surjective, the kernel of $\eta'$ is non-trivial, i.e., there exists a non-zero $(n,\varphi)\in N\times W$ such that $\psi_{\varphi,n}(h'_1)=\ldots=\psi_{\varphi,n}(h'_d)=1$. As the kernel of a crossed homomorphism $H\to N$ is a closed subgroup of $H$ and $h_1',\ldots,h_d'$ generate $H$, we see that $\psi_{\varphi,n}$ is trivial. So $\varphi$ is trivial and $r(n)=n$ for all $r\in R$. Since $(n,\varphi)$ is non-zero, $n$ is non-zero and so 
	$N'=\{n'\in N|\ r(n')=n' \ \forall \ r\in R\}$ is non-trivial. Because $U$ acts trivially on $N$, we see that $N'$ is a normal closed subgroup of $G$. As $N$ is an almost-minimal normal closed subgroup of $G$, we find $N=N'$. That is, $R$ acts trivially on $N$. Since also $U$ acts trivially on $N$, we see that in fact $H$ acts trivially on $N$, so that $G=N\rtimes H=N\times H$.
	
	Because $N$ is an almost-minimal normal closed subgroup of $G$, we must have $N=\Ga$. So $N\rtimes H\twoheadrightarrow H$ is Gasch\"{u}tz by Lemma \ref{lemma: Gaschuetz for kernel Ga}; a contradiction.
\end{proof}

The following proposition establishes the case $N=\Ga^r$.

\begin{prop} \label{prop: Gaschuetz for Ga kernel}
	Let $\f\colon G\twoheadrightarrow H$ be a quotient map of algebraic groups with almost-minimal kernel $N=\Ga^r$. Then $\f$ is Gasch\"{u}tz.
\end{prop}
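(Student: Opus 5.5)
We reduce to the split case treated in Lemma \ref{lemma: case Ga semidirect}. Since $N=\Ga^r$ is a normal unipotent subgroup, we have $N\leq R_u(G)$. By Mostow's theorem (characteristic zero), $G=R_u(G)\rtimes R$ with $R$ reductive. The unipotent radical of $H$ is $\f(R_u(G))$, and $H=\f(R_u(G))\rtimes \f(R)$, where $\f|_R$ is injective since $R\cap N\subseteq R\cap R_u(G)=1$. Hence the obstruction to splitting $\f$ lies entirely in the unipotent extension $1\to N\to R_u(G)\to \f(R_u(G))\to 1$. Because this extension need not split, we do not try to split $\f$ directly. Instead we argue as in Lemma \ref{lemma: case Ga semidirect}, by contradiction.

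Let $h_1,\ldots,h_d$ generate $H$ with $d(G)\leq d$, and fix lifts $g_1,\ldots,g_d\in G$. Suppose that for every $n_1,\ldots,n_d\in N$ the subgroup $K=\langle n_1g_1,\ldots,n_dg_d\rangle$ is proper. Exactly as in the proof of Lemma \ref{lemma: case Ga semidirect}, $N\cap K$ is normalized by $K$ and by the abelian group $N$, hence by $KN=G$. By almost-minimality, $N\cap K$ is finite, and therefore trivial because $\Ga^r$ has no nontrivial finite subgroups. So $\f|_K\colon K\to H$ is an isomorphism, and $K$ is the image of a section $s\colon H\to G$ of $\f$.

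In particular $\f$ splits, $G\simeq N\rtimes H$ with $H$ acting by conjugation through the section, and $N$ remains almost-minimal in $N\rtimes H$. Lemma \ref{lemma: case Ga semidirect} now produces generators of $G$ of the form $(n_i,h_i)$. These are lifts $n_i s(h_i)$ of the $h_i$ that generate $G$, which contradicts our assumption. Note that if $d(G)\le d$ fails there is nothing to prove, and the Gasch\"{u}tz property depends only on $\f$ up to isomorphism over $H$.

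The only delicate point is the transfer of the hypotheses through the isomorphism $G\simeq N\rtimes H$. This isomorphism is induced by $(n,h)\mapsto n\,s(h)$ and commutes with the projections to $H$, so lifts of the $h_i$ correspond to lifts. The isomorphism is the identity on $N$ and is compatible with the $H$-actions, so almost-minimality of $N$ is preserved. It also preserves $d(G)$. Once these are checked, the proposition follows directly from Lemma \ref{lemma: case Ga semidirect}. The Mostow decomposition in the first paragraph is in fact unnecessary: the dichotomy "either some lift generates, or every candidate subgroup is a complement" carries the whole argument.
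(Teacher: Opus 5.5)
Your proposal is correct and is essentially the paper's proof. Any subgroup generated by lifts meets $N=\Ga^r$ in a closed subgroup that is normal in $G$ and finite, hence trivial, so $\f$ splits and Lemma~\ref{lemma: case Ga semidirect} yields the contradiction. The Mostow paragraph is superfluous and can be dropped, as you yourself observe.
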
	
\begin{proof}
	Suppose, for a contradiction, that $\f$ is not Gasch\"{u}tz. Then there exist $h_1,\ldots,h_d\in H$ with $H=\langle h_1,\ldots,h_d\rangle$ and $d(G)\leq d$ such that for all $g_1,\ldots,g_d\in G$ with $\f(g_i)=h_i$ for $i=1,\ldots,d$ the group $\langle g_1,\ldots,g_d\rangle$ is a proper closed subgroup of $G$.
	
	Let us fix $g_1,\ldots,g_d\in G$ with $\f(g_i)=h_i$ for $i=1,\ldots,d$ and set $H(g_1,\ldots,g_d)=\langle g_1,\ldots,g_d\rangle$. Then $H(g_1,\ldots,g_d)\cap N$ is normalized by $H(g_1,\ldots,g_d)$ and $N$, because $N$ is commutative. Thus $H(g_1,\ldots,g_d)\cap N$ is normal in $H(g_1,\ldots,g_d)N=G$. As $N$ is an almost-minimal normal closed subgroup of $G$, we have $H(g_1,\ldots,g_d)\cap N=N$ or $H(g_1,\ldots,g_d)\cap N$ is finite. In the first case, $N\subseteq H(g_1,\ldots,g_d)$ so that $G=H(g_1,\ldots,g_d)N=H(g_1,\ldots,g_d)$; a contradiction. In the second case, $H(g_1,\ldots,g_d)\cap N=1$, because $\Ga^r$ has no non-trivial finite subgroups. So $\f$ induces an isomorphism $H(g_1,\ldots,g_d)\to H$ and $\f$ is split, i.e., $\f$ can be identified with the projection $G=N\rtimes H\twoheadrightarrow H$. It follows from Lemma \ref{lemma: case Ga semidirect} that $\f$ is Gasch\"{u}tz; a contradiction. %This contradicts our asumption that $h_1,\ldots,h_d$ do not lift to generators of $G$. 
\end{proof}

\subsection{The case $N=\Gm^r$}
%Let $H$ be an algebraic group acting on $\Gm^r$. In this section we show that the projection $\Gm^r\rtimes H\twoheadrightarrow H$ is Gasch\"{u}tz, provided that $\Gm^r$ is almost-minimal. We first treat the special case that $H$ is finite.
In this section we show that every quotient map $G\twoheadrightarrow H$ of algebraic groups with almost-minimal kernel $N=\Gm^r$ is Gasch\"{u}tz. We need some preparatory lemmas.

\begin{lemma} \label{lemma: identity component}
	Let $T$ be an (abstract) subgroup of an algebraic group $G$ with Zariski closure $\overline{T}$ equal to $G$. Then $\overline{T\cap G^o}=G^o$.
\end{lemma}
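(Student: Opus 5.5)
The plan is to exploit that $G^o$ has finite index in $G$ and to use a coset decomposition. Write $G=\bigcup_{i=1}^m g_iG^o$ as a finite disjoint union of cosets. First I claim that $T$ meets every coset of $G^o$. Indeed, $TG^o$ is a finite union of cosets of $G^o$, hence closed, and it contains $T$. Since $\overline{T}=G$, we get $TG^o=G$. So we may choose the coset representatives $g_1,\ldots,g_m$ inside $T$.

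Next I decompose $T$ along these cosets. Put $T_0=T\cap G^o$. For each $i$ we have $T\cap g_iG^o=g_iT_0$. To see this, note that if $t\in T\cap g_iG^o$, then $g_i^{-1}t\in T\cap G^o$, because $g_i\in T$. Hence
$$T=\bigcup_{i=1}^m g_iT_0.$$
Taking Zariski closures, which commute with finite unions, and using that left translation by $g_i$ is a homeomorphism of $G$, gives
$$G=\overline{T}=\bigcup_{i=1}^m g_i\overline{T_0}.$$
Here $\overline{T_0}\subseteq G^o$ because $G^o$ is closed.

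Now intersect both sides with $G^o$. Since the cosets $g_iG^o$ are pairwise disjoint, the sets $g_i\overline{T_0}\subseteq g_iG^o$ are disjoint from $G^o$ for every $i$ with $g_iG^o\neq G^o$. Only the index with $g_i\in G^o$ survives, and for that index we can take $g_i=1$, since we may choose $1$ as the representative of the identity coset. This gives $G^o=\overline{T_0}$, which is the claim.

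There is no real obstacle here. The only point needing care is the first step, namely that $T$ meets every coset of $G^o$ so that the representatives can be taken in $T$. This rests on the closedness of finite unions of cosets of $G^o$. An equivalent route is to note that the image of $T$ in the finite group $G/G^o$ has closure equal to everything, and a subset of a finite discrete group is already closed.
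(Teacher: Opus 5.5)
Your proof is correct and is essentially the paper's argument: decompose $T$ along the cosets (connected components) of $G^o$, take closures of the finite union, and use that the resulting closed pieces are pairwise disjoint. The paper skips your first step and works directly with the pieces $T\cap G_i$ instead of translates $g_iT_0$: since $\overline{T\cap G_i}\subseteq G_i$ and the components are disjoint, the equality $\bigcup_i\overline{T\cap G_i}=G$ already forces $\overline{T\cap G_i}=G_i$ for every $i$, so there is no need to show first that $T$ meets every coset.
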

\begin{proof}
	Writing $G=\cup_{i=1}^n G_i$ as the disjoint union of its connected components with $G_1=G^o$, we have $T=\cup_{i=1}^n(T\cap G_i)$ and therefore $G=\overline{T}=\overline{\cup_{i=1}^n(T\cap G_i)}=\cup_{i=1}^n\overline{T\cap G_i}.$ This implies $\overline{T\cap G_i}=G_i$ for $i=1,\dots,n$.
\end{proof}

\begin{lemma} \label{lemma: case Gm with H finite}
	Let $H$ be a finite group acting on $N=\Gm^r$ such that $N$ is an almost-minimal normal closed subgroup of $G=N\rtimes H$. Let $h_1,\ldots,h_d$ be generators of $H$ such that $d(G)\leq d$. Then $\langle (n_1,h_1),\ldots,(n_d,h_d)\rangle= G$ for all $n_1,\ldots,n_d\in N=\Gm^r$ whose coordinates generate a free abelian subgroup of $k^\times$ of rank $dr$.
\end{lemma}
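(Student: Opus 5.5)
The plan is to argue by contradiction. Put $g_i=(n_i,h_i)$ and $K=\langle g_1,\ldots,g_d\rangle$, and suppose $K\neq G$.

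\textbf{Step 1: $K$ is finite.} As in the proof of Lemma \ref{lemma: case Ga semidirect}, $K\cap N$ is normalized by $K$ and, since $N$ is commutative, by $N$. Because $KN=G$, the subgroup $K\cap N$ is normal in $G$. Almost-minimality of $N$ then gives $K\cap N=N$ or $K\cap N$ finite. The first case forces $K=G$, so $F:=K\cap N$ is a finite subgroup of the torsion of $N$, normal in $G$. Since $K/F\cong H$ is finite, $K$ is finite. Passing to $G/F=(N/F)\rtimes H$, with $N/F\cong\Gm^r$ via some isogeny, $K/F$ is the graph of a crossed homomorphism $\psi\colon H\to N/F$ with $\psi(h_i)=n_i \bmod F$.

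\textbf{Step 2: translate into monomial identities.} Write $H=F_d/R$ with $x_i\mapsto h_i$. The action of $H$ on $N=\Gm^r$ is by matrices in $\Gl_r(\mathbb{Z})$. For each $w\in R$, expand the word $w$ evaluated at $(n_1,h_1),\ldots,(n_d,h_d)$; its $N$-component is $\prod_{i,j}c_{ij}^{e_{ij}(w)}$, where $c_{ij}$ are the $dr$ coordinates of the $n_i$. The exponents $e_{ij}(w)\in\mathbb{Z}$ (Fox-derivative type) depend only on $w$, the $h_i$ and the action, not on the $n_i$. Since $w$ maps to $1\in H$, this $N$-component lies in $K\cap N=F$, so it is torsion. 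The $c_{ij}$ generate a free abelian group of rank $dr$, so a monomial in them is a root of unity only if all exponents vanish. Hence $e_{ij}(w)=0$ for all $w\in R$, and these identities hold for \emph{every} choice of $n_1',\ldots,n_d'\in N$. Consequently, for every tuple of lifts $(n_i',h_i)$ of $(h_1,\ldots,h_d)$, the generated group meets $N$ trivially and is isomorphic to $H$.

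\textbf{Step 3: contradict $d(G)\leq d$.} This is where I expect the main difficulty, because generators $(n_j',h_j')$ of $G$ lie over a possibly different generating tuple $(h_1',\ldots,h_d')$ of $H$. The plan is to reduce to finite groups. Choose $m$ large and work in the finite subgroup $G_m=N[m]\rtimes H$; here $N[m]$ is characteristic in $N$, hence $H$-stable.

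First I need $d(G_m)\leq d$ for suitable $m$. I would try to obtain this from a $d$-generating tuple of $G$. Note that $\langle (n_j',h_j')\rangle$, being a closed subgroup, is all of $G$, so the abstract subgroup generated by the $(n_j',h_j')$ is Zariski-dense in $G$. By Lemma \ref{lemma: identity component}, its intersection with $N$ is Zariski-dense in $N$. The idea is to specialize the finitely generated coordinate group of the $n_j'$ onto $m$-torsion, via a homomorphism of the coordinate group into $\mu_m$, while preserving generation of $G_m$. This uses that a subgroup of $N[m]$ stable under $H$ which is proper would be contained in a proper subgroup of $N$ defined by characters, contradicting density.

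Granting $d(G_m)\leq d$, the classical Gasch\"{u}tz Lemma (Lemma \ref{lemma: finite kernel is Gaschuetz}, applied to the finite-kernel map $G_m\twoheadrightarrow H$) lifts $h_1,\ldots,h_d$ to generators $(n_i',h_i)$ of $G_m$ with $n_i'\in N[m]$. The group these generate meets $N$ in all of $N[m]$, which is nontrivial, contradicting the conclusion of Step 2. Hence $K=G$.
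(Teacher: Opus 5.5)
Steps 1 and 2 are correct. Step 2 even proves more than the paper does at the corresponding point: every lift of $(h_1,\ldots,h_d)$ generates a complement to $N$. The final contradiction in Step 3 is also valid, provided some $m\geq 2$ satisfies $d(N[m]\rtimes H)\leq d$.

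That claim is the whole content of Step 3, however, and your justification for it does not work. Let $T'$ be the abstract group generated by your generators $(n'_j,h'_j)$ of $G$, let $\Lambda=T'\cap N$, let $A'\subseteq k^\times$ be the group generated by the coordinates of the $n'_j$, and let $\varphi\colon A'\to\mu_m$ be a homomorphism. Since $H$ acts by Laurent monomials, applying $\varphi$ coordinatewise is a homomorphism $A'^r\rtimes H\to N[m]\rtimes H$. So the specialized elements generate $N[m]\rtimes H$ if and only if $\varphi^r(\Lambda)=N[m]$.

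If this fails, then $\varphi^r(\Lambda)\subseteq\ker\chi$ for some character $\chi$ of $N$ not in $mX(N)$. That only says $\chi(\Lambda)\subseteq\ker\varphi$. Zariski density of $\Lambda$ tells you just that $\chi(\Lambda)$ is infinite, while $\ker\varphi$ has finite index in $A'$, so there is no contradiction. In fact your argument never uses almost-minimality, and it would apply verbatim to $H=1$, $N=\Gm^2$, $d=1$. There $(a,b)$ with $a,b$ multiplicatively independent generates $\Gm^2$, yet $d(\mu_m^2)=2$ for every $m\geq 2$.

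The missing idea is almost-minimality, used in the following form: $\mathbb{Q}^r$, with $H$ acting through $\Gl_r(\mathbb{Z})$, is an irreducible $\mathbb{Q}[H]$-module. Otherwise a proper nonzero $H$-stable subspace would give a proper nontrivial $H$-stable subtorus, which is normal in $G$. With this the step can be repaired as follows.
\begin{itemize}
\item Write $A'^r=A'\otimes_{\mathbb{Z}}\mathbb{Z}^r$.
\item Choose $\varphi\colon A'\to\mathbb{Z}$, factoring through $A'/A'_{\mathrm{tors}}$, with $(\varphi\otimes 1)(\Lambda)\neq 0$. This is possible because $\Lambda$ is infinite and finitely generated.
\item Since $\Lambda$ is $H$-stable, its image is an $H$-stable subgroup of $\mathbb{Z}^r$, hence by irreducibility a sublattice $L$ of finite index.
\item For a prime $p\nmid[\mathbb{Z}^r:L]$, reducing $\varphi$ modulo $p$ gives $\varphi^r(\Lambda)=N[p]$, hence $d(N[p]\rtimes H)\leq d$.
\end{itemize}
So ``$m$ large'' has to become ``$m=p$ a prime avoiding $[\mathbb{Z}^r:L]$''.

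With this repair your route is a genuine alternative to the paper's. The paper never leaves $G$. It first shows that lifts of the tuple $(h'_1,\ldots,h'_d)$ underlying a given generating set of $G$, with coordinates free of rank $dr$, again generate $G$; this works because generation is a non-triviality condition on Laurent monomials. It then uses $H(n_1,\ldots,n_d)\cong H$ to transport $(n_i,h_i)$ along words to such lifts, whose coordinates generate the same free group. You replace this transport by the classical Gasch\"{u}tz lemma for the finite group $N[p]\rtimes H$.
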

\begin{proof}
	For $h_1',\ldots,h_d'\in H$ with $H=\langle h_1',\ldots,h_d'\rangle$ and $n_1',\ldots,n_d'\in N$, we first describe the conditions that $n_1',\ldots,n_d'$ must satisfy in order that $\langle (n'_1,h_1'),\ldots,(n'_d,h_d')\rangle=G$.
	
	Let $F_{x_1,\ldots,x_d}$ be the (abstract) free group with free generators $x_1,\ldots,x_d$. For a word $w\in F_{x_1,\ldots,x_d}$ and elements $g_1,\ldots,g_d$ of $G$, we write $w(g_1,\ldots,g_d)\in G$ for the image of $w$ under the group homomorphism $F_{x_1,\ldots,x_d}\to G$, determined by sending $x_i$ to $g_i$ for $i=1,\ldots,d$.
	Write $n'_i=(a'_{i,1},\ldots,a'_{i,r})\in \Gm^r$ for $i=1,\ldots,d$.
	Recall that every endomorphism of $\Gm^r$ can be described by Laurent monomials and that also all closed subgroups of $\Gm^r$ can be defined by equations of the form $P=1$ with $P$ a Laurent monomial in $r$ variables (\cite[Theorem 12.9]{Milne:AlgebraicGroupsTheTheoryOfGroupSchemesOfFiniteTypeOverAField}). It follows that, for every word $w\in F_{x_1,\ldots,x_d}$, we have $w((n'_1,h'_1),\ldots,(n'_d,h'_d))=(L_w,w(h'_1,\ldots,h'_d))\in N\rtimes H$ with $L_w\in\Gm^r$ an $r$-tuple of Laurent monomials in the $a'_{i,j}$'s. Let $T'$ be the (abstract) subgroup of $G$ generated by $(n_1',h_1'),\ldots,(n_d',h_d')$. Then $\overline{T'}=G$ if and only if $N=G^o\subseteq \overline{T'}$ and the latter condition is, via Lemma \ref{lemma: identity component}, equivalent to $\overline{T'\cap N}=N$. The elements of $T'\cap N$ are exactly those of the form $L_w$ with $w(h_1',\ldots,h_d')=1$ and these are Zariski dense in $N$ if and only if there is no Laurent monomial $P$ such that $P(L_w)=1$ for all $w$ with $w(h_1',\ldots,h_d')=1$. In summary, we see that $\langle (n'_1,h_1'),\ldots,(n'_d,h_d')\rangle=G$ is equivalent to certain collections of Laurent monomials in the $a'_{i,j}$'s not being equal to $1$.
	
	By assumption, there exist $(n_1',h_1'),\ldots,(n_d',h_d')\in N\rtimes H=G$ such that $\langle (n_1',h_1'),\ldots,(n_d',h_d')\rangle=G$. Thus some relevant Laurent monomials do not evaluate to $1$ on the coordinates of $n_1',\ldots,n_d'$. Let $n_1,\ldots,n_d\in \Gm^r$ be such that their coordinates generate a free subgroup of $k^\times$ of rank $dr$. Then the Laurent monomials considered above (for the $n_i'$'s) cannot evaluate to $1$ on the coordinates of $n_1,\ldots,n_d$. So $\langle(n_1,h_1'),\ldots,(n_d,h_d')\rangle=G$.
	
	Set $H(n_1,\ldots,n_d)=\langle (n_1,h_1),\ldots,(n_d,h_d)\rangle$. We have to show that $H(n_1,\ldots,n_d)=G$. Suppose, for a contradiction, that $H(n_1,\ldots,n_d)$ is a proper subgroup of $G$. The group $H(n_1,\ldots,n_d)\cap N$ is normalized by $H(n_1,\ldots,n_d)$ and by $N$, because $N$ is commutative. Thus $H(n_1,\ldots,n_d)\cap N$ is normal in $G=H(n_1,\ldots,n_d)N$. Because $N$ is an almost-minimal normal closed subgroup of $G$, we have $H(n_1,\ldots,n_d)\cap N=N$ or $H(n_1,\ldots,n_d)\cap N$ is finite. In the first case, $N\subseteq H(n_1,\ldots,n_d)$ and so $G=H(n_1,\ldots,n_d)N=H(n_1,\ldots,n_d)$; a contradiction. Therefore $H(n_1,\ldots,n_d)\cap N$ must be finite.
	As $H(n_1,\ldots,n_d)^o\leq H(n_1,\ldots,n_d)\cap G^o=H(n_1,\ldots,n_d)\cap N$, this implies that $H(n_1,\ldots,n_d)$ is finite. So $H(n_1,\ldots,n_d)$ agrees with the (abstract) group generated by $(n_1,h_1),\ldots,(n_d,h_d)$ and $H(n_1,\ldots,n_d)\cap N$ consists of elements of the form $L_w$, where $L_w$ is an $r$-tuple of Laurent monomials in the coordinates of $n_1,\ldots,n_d$. As there is no non-trivial torsion in the subgroup of $k^\times$ generated by the coordinates of $n_1,\dots,n_d$, such an $L_w$ can only be torsion if it is trivial. As $H(n_1,\ldots,n_d)\cap N$ is finite, this shows that $H(n_1,\ldots,n_d)\cap N=1$. So the projection $N\rtimes H\twoheadrightarrow H$ induces an isomorphism between $H(n_1,\ldots, n_d)$ and $H$.

	Because the $h_i$'s generate $H$, we can find words $w_1,\ldots,w_d\in F_{x_1,\ldots,x_d}$ such that $h_i'=w_i(h_1,\ldots,h_d)$ for $i=1,\ldots,d$. Define $m_1',\ldots,m_d'\in N$ by $(m_i',h'_i)=w_i((n_1,h_1),\ldots,(n_d,h_d))\in H(n_1,\ldots,n_d)$ for $i=1,\ldots,d$. Then the coordinates of the $m_i'$'s cannot generate a free abelian subgroup of $k^\times$ of rank $dr$ because otherwise, by the argument explained in the beginning of this proof, we would have $\langle (m_1',h_1'),\ldots,(m_d',h_d')\rangle=G$, yielding the contradiction $H(n_1,\ldots,n_d)=G$.
	
	As the $h'_i$'s generate $H$, we can also write $h_i=w_i'(h_1',\ldots,h_d')$ for $i=1,\ldots,d$, with words $w_1',\ldots,w_d'\in F_{x_1,\ldots,x_n}$. Then $w_i'((m_1',h_1'),\ldots,(m_d',h_d'))\in H(n_1,\ldots,n_d)$ and $w_i'((m_1',h_1'),\ldots,(m_d',h_d'))=(m_i,h_i)$ with $m_1,\ldots,m_d\in N$ for  $i=1,\ldots,d$. As $(m_i,h_i),(n_i,h_i)\in H(n_1,\ldots,n_d)$ and the projection $N\rtimes H\twoheadrightarrow H$ induces an isomorphism between  $H(n_1,\ldots,n_d)$ and $H$, we see that $m_i=n_i$ for $i=1,\ldots,d$. By construction, the $m_i$'s have coordinates in the subgroup of $k^\times$ generated by the coordinates of the $m_i'$'s, which, as noted above, do not generate a free abelian group of rank $dr$. This contradicts the assumption that the coordinates of the $n_i$'s generate a free abelian group of rank $rd$.
\end{proof}

We are now in a position to handle the split case of $N=\Gm^r$.

\begin{lemma} \label{lemma: case Gm semidirect product}
	Let $H$ be an algebraic group acting on $N=\Gm^r$ such that $N$ is an almost-minimal normal closed subgroup of $G=N\rtimes H$. Then the projection $G\twoheadrightarrow H$ is Gasch\"{u}tz.
\end{lemma}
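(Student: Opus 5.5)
The plan is to reduce to the finite case, Lemma~\ref{lemma: case Gm with H finite}, using the rigidity of tori. Let $h_1,\ldots,h_d$ generate $H$ with $d(G)\leq d$. The automorphism group of $\Gm^r$ is the discrete group $\Gl_r(\mathbb{Z})$, so the connected group $H^o$ acts trivially on $N$. Hence $H^o$ centralizes $N$. It is also normal in $H$, so it is normal in $G=N\rtimes H$, and
$$\bar G:=G/H^o\cong N\rtimes (H/H^o).$$
Since $H^o$ acts trivially, a closed subgroup of $N$ is normal in $G$ if and only if it is $H/H^o$-stable, i.e.\ normal in $\bar G$. So $N$ remains almost-minimal in $\bar G$. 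Moreover $d(\bar G)\leq d(G)\leq d$, and the images $\bar h_i$ generate $H/H^o$. Lemma~\ref{lemma: case Gm with H finite} therefore shows that for every tuple $n_1,\ldots,n_d\in N$ whose $dr$ coordinates generate a free abelian subgroup of $k^\times$ of rank $dr$, the elements $(n_i,\bar h_i)$ generate $\bar G$.

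Fix such $n_i$ and put $K=\langle (n_1,h_1),\ldots,(n_d,h_d)\rangle$. By the previous paragraph $KH^o=G$. As in the earlier proofs, $K\cap N$ is normalized by $K$ and by the commutative group $N$, hence it is normal in $KN=G$. By almost-minimality, either $N\subseteq K$, in which case $K=G$ and we are done, or $K\cap N$ is finite. We must rule out the second alternative.

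Assume $K\cap N$ is finite. From $KH^o=G$ we get $K^oH^o=G^o=N\times H^o$, so the projection $G^o\to N$ maps $K^o$ onto $N$. Because $K\cap N$ is finite, the map $K\to H$ is an isogeny onto $H$. Composing, we obtain a surjection from a finite cover of a closed subgroup $M\leq H^o$ onto the torus $N$, which is a nontrivial torus quotient of $M$. Concretely, $K^o$ is an almost-graph $\{(\chi(m),m)\}$ of a multi-valued character datum. Because $K$ is generated by the $(n_i,h_i)$, the coordinates of the $n_i$ would be tied, up to finite order and Laurent monomial relations, to values of finitely many characters of (a finite cover of) $H$ at the $h_i$ and words in them. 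These values generate a finitely generated subgroup $\Lambda\leq k^\times$. This is the main obstacle: making this dependence precise, tracking how $K^o$ meets the cosets $K\cap (N\times h H^o)$ via words $w$ with $w(\bar h_1,\ldots,\bar h_d)=1$. The first attempt would mimic the Laurent monomial bookkeeping of Lemma~\ref{lemma: case Gm with H finite}, with $H$ replaced by the finite group $H/H^o$ and the $H^o$-parts recorded through characters.

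The way out is to strengthen the genericity of the $n_i$. The character values involved form a finitely generated subgroup $\Lambda$, determined by $H$ and the $h_i$ alone and independent of the $n_i$. Since $k^\times$ has infinite rank, we can choose the coordinates of $n_1,\ldots,n_d$ to be multiplicatively independent modulo the divisible hull of $\Lambda$, which preserves the rank-$dr$ hypothesis. Then any relation forced by $K\cap N$ being finite becomes a nontrivial Laurent monomial relation among these coordinates modulo $\Lambda$, which is a contradiction. Hence $K=G$, and the projection $G\twoheadrightarrow H$ is Gasch\"{u}tz.
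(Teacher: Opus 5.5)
\textbf{There is a genuine gap.} Your preliminary steps are correct: rigidity of tori, almost-minimality of $N$ in $N\rtimes(H/H^o)$, the use of Lemma~\ref{lemma: case Gm with H finite} to get $KH^o=G$, and the dichotomy ``$N\subseteq K$ or $K\cap N$ finite''. But the case $K\cap N$ finite is the entire content of the lemma, and there you only gesture. You yourself call making the dependence precise ``the main obstacle''. You then simply assert that it yields a nontrivial Laurent monomial relation modulo $\Lambda$. Two things are wrong with this.

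\emph{First, $\Lambda$ is never defined.} As you describe it (character values of ``a finite cover of $H$'' at the $h_i$ and at words in them), it is not clearly independent of the $n_i$: the cover and the characters come from $K$, hence from the $n_i$. It is also not clearly finitely generated, which is essential, e.g.\ for $k=\overline{\mathbb{Q}}$. Moreover, the $h_i$ need not lie in $H^o$. On all of $H$ the relevant map is only a crossed homomorphism, not a character, so the coordinates of the $n_i$ themselves are not tied to character values.

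\emph{Second, the contradiction is misidentified.} What finiteness of $K\cap N$ actually forces is that certain Laurent monomials in the coordinates of the $n_i$ are torsion, hence equal to $1$. For any individual word this may just be the trivial monomial, so no single relation is contradictory.

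\textbf{The missing mechanism.}
\begin{enumerate}
\item Put $Z=K\cap N$. In $G/Z=(N/Z)\rtimes H$ the image of $K$ maps isomorphically onto $H$. So it is the graph of a crossed homomorphism $\psi\colon H\to N/Z$.
\item Since $H^o$ acts trivially, $\psi|_{H^o}$ is a morphism of algebraic groups. Once $N/Z\cong\Gm^r$ is fixed, it is given by $r$ characters of $H^o$.
\item Let $T$ be the abstract group generated by the $h_i$. By Schreier's lemma, $T\cap H^o$ is finitely generated. Hence $\Lambda=\langle \chi(t)\mid \chi\in X(H^o),\ t\in T\cap H^o\rangle\leq k^\times$ is finitely generated and depends only on $H$ and the $h_i$.
\item Choose the $n_i$ \emph{afterwards}, so that their coordinates generate a free group $B$ of rank $dr$ with $B\cap\Lambda=1$.
\item Take a word $w$ with $w(\overline{h_1},\ldots,\overline{h_d})=1$ and write $w((n_1,h_1),\ldots,(n_d,h_d))=(L_w,w(h_1,\ldots,h_d))$. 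The image of $L_w$ in $N/Z\cong\Gm^r$ lies in $B^r$. It also equals $\psi(w(h_1,\ldots,h_d))\in\Lambda^r$. Hence $L_w\in Z$.
\item The contradiction now comes from density, not from any single relation. The $L_w$ are exactly the elements of $T'\cap N$, where $T'$ is the abstract group generated by the $(n_i,\overline{h_i})$ in $N\rtimes(H/H^o)$. Since these elements generate $N\rtimes(H/H^o)$, Lemma~\ref{lemma: identity component} shows $T'\cap N$ is Zariski dense in $N$. That is impossible inside the finite group $Z$.
\end{enumerate}

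\textbf{Comparison with the paper.} The paper uses the same idea but proceeds differently:
\begin{itemize}
\item It first reduces to $H^o=\Gm^s$, by showing that $[H^o,H^o]$ and the unipotent part of $H^o$ lie in every $H(n_1,\ldots,n_d)$.
\item It works with coordinates instead of characters.
\item It raises the elements over $T\cap H^o$ to the $m$-th power ($m$ the exponent of $Z$) and uses density of these powers to get $H^o\leq K$.
\item It invokes Lemma~\ref{lemma: case Gm with H finite} only at the end.
\end{itemize}
With the ingredients above, your outline would give a valid and somewhat shorter proof that avoids those reductions. As written, it is not a proof.
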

\begin{proof}
	Let us fix $h_1,\ldots,h_d\in H$ such that $H=\langle h_1,\ldots,h_d\rangle$ and $d(G)\leq d$. For $n_1,\ldots,n_d\in N$ we set $H(n_1,\ldots,n_d)=\langle (n_1,h_1),\ldots,(n_d,h_d)\rangle\leq G$.

	We first show that $[H^o,H^o]\leq H(n_1,\ldots,n_d)$ for all $n_1,\ldots,n_d\in N$. 
	Let $h_1',h_2'\in H^o$. Because $H(n_1,\ldots,n_d)$ projects onto $H$, there exist $n_1',n_2'\in N$ such that $(n_1',h_1'), (n_2',h_2')\in H(n_1,\ldots,n_d)$. Then also the commutator $[(n_1',h_1'),(n'_2,h'_2)]$ lies in $H(n_1,\ldots,n_d)$.	
	By the rigidity of tori (\cite[Cor. 12.37]{Milne:AlgebraicGroupsTheTheoryOfGroupSchemesOfFiniteTypeOverAField}), the identity component $H^o$ of $H$ acts trivially on $\Gm^r$. Therefore $[(n_1',h_1'),(n'_2,h'_2)]=([n_1',n_2'],[h_1',h_2'])=(1,[h_1',h'_2])$. It follows that $[h'_1,h_2']\in H(n_1,\ldots,n_d)$ and so $[H^o,H^o]\leq H(n_1,\ldots,n_d)$ as claimed.
	
	Because $[H^o,H^o]$ is a characteristic subgroup of $H^o$, it is normal in $H$. As $H^o$ acts trivially on $N$, it is in fact normal in $G$. If the images $\overline{h_1},\ldots,\overline{h_d}$ of $h_1,\ldots,h_d$ in $H/[H^o,H^o]$ have lifts to generators $(n_1,\overline{h_1}),\ldots,(n_d,\overline{h_d})$ of $G/[H^o,H^o]=N\rtimes H/[H^o,H^o]$, then $H(n_1,\ldots,n_d)[H^o,H^o]=G$. But as $[H^o,H^o]\leq H(n_1,\ldots,n_d)$, this means $H(n_1,\ldots,n_d)=G$. Moreover, $N$ is an almost-minimal normal closed subgroup of $N\rtimes H/[H^o,H^o]$. In summary, this shows that we can assume, without loss of generality, that $H^o$ is abelian.
	
	So $H^o=\Gm^s\times\Ga^t$ for some $s,t\geq 0$ (\cite[Cor. 16.15]{Milne:AlgebraicGroupsTheTheoryOfGroupSchemesOfFiniteTypeOverAField}). We next show that $\Ga^t\leq H(n_1,\ldots,n_d)$ for all $n_1,\ldots,n_d\in N$. Let $h\in\Ga^t\leq H$. Then there exists an $n\in N$ such that $(n,h)\in H(n_1,\ldots,n_d)$. Recall that, according to the Jordan decomposition in algebraic groups (\cite[Section 9, b]{Milne:AlgebraicGroupsTheTheoryOfGroupSchemesOfFiniteTypeOverAField}), an element $g$ of an algebraic group can uniquely be written as a commuting product $g=g_sg_u=g_ug_s$ of a semisimple and a unipotent element. Moreover, since morphisms of algebraic groups preserve the Jordan decomposition, if $g$ belongs to a closed subgroup, also $g_u$ and $g_s$ must belong to it. Now $n$ is semisimple, $h$ is unipotent and $n$ and $h$ commute. Therefore, $n=g_s$ and $h=g_u$ are the semisimple and unipotent parts of $g=(n,h)\in H(n_1,\ldots,n_d)$. Consequently, $h$ belongs to $H(n_1,\ldots,n_d)$ and $\Ga^t\leq H(n_1,\ldots,n_d)$ as claimed.
	
	As $\Ga^t$ is a characteristic subgroup of $H^o$, it is normal in $H$ and hence normal in $G$ because its elements commute with elements of $N$. As above, by considering the morphism $G/\Ga^t=N\rtimes H/\Ga^t\twoheadrightarrow H/\Ga^t$, we can reduce to the case $t=0$. So we can assume, without loss of generality, that $H^o=\Gm^s$.
	
	Let $T$ be the (abstract) subgroup of $H$ generated by $h_1,\ldots,h_d$. As $T/T\cap H^o$ injects into $H/H^o$, we see that $T\cap H^o$ has finite index in $T$. As a finite index subgroup of a finitely generated group is itself finitely generated (Schreier's lemma), it follows that $T\cap H^o\leq \Gm^s$ is a finitely generated abelian group. Fix a generating set $(a_{1,1},\ldots,a_{1,s}),\ldots,(a_{m,1},\ldots,a_{m,s})\in \Gm^s$ of $T\cap H^o$ and let $A$ be the (abstract) subgroup of $\Gm=k^\times$ generated by all $a_{i,j}$'s. As an abelian group, $k^\times$ is isomorphic to $\mathbb{Q}/\mathbb{Z}\oplus V$, where $V$ is $\mathbb{Q}$-vector space of dimension $|k|$. In particular, for a finitely generated subgroup $A$ of $k^\times$, one can find a free abelian subgroup $B$ of $k^\times$ of arbitrary high rank such that $A\cap B=1$. We can thus choose $n_1,\ldots,n_d\in \Gm^r$ in such a way that the coordinates of $n_1,\ldots,n_d$ generate a free abelian subgroup $B$ of $k^\times$ of rank $dr$ with $B\cap A=1$.
	
	Eventually we will show that $H(n_1,\ldots,n_d)=G$, so that $G\twoheadrightarrow H$ is Gasch\"{u}tz as desired. To this end, we first show that $H^o\leq H(n_1,\ldots,n_d)$. Set $Z=H(n_1,\ldots,n_d)\cap N$. Then $Z$ is normalized by $H(n_1,\ldots,n_d)$ and by $N$ (because $N$ is commutative). Thus $Z$ is normal in $H(n_1,\ldots,n_d)N=G$. Because $N$ is an almost-minimal normal closed subgroup of $G$, we have $Z=N$ or $Z$ is finite. In the first case, $N\subseteq H(n_1,\ldots,n_d)$ and so $G=H(n_1,\ldots,n_d)N=H(n_1,\ldots,n_d)$ shows that $G\twoheadrightarrow H$ is Gasch\"{u}tz. For a contradiction, we can thus suppose that $Z$ is finite. The image $\overline{H(n_1,\ldots,n_d)}$ of $H(n_1,\ldots,n_d)$ in $G/Z=(N/Z)\rtimes H$ then maps isomorphically onto $H$ under the projection $(N/Z)\rtimes H\twoheadrightarrow H$. Thus there exists a crossed homomorphism $\psi\colon H\to N/Z$ such that $\overline{H(n_1,\ldots,n_d)}=\{(\psi(h),h)|\ h\in H\}\leq (N/Z)\rtimes H$. 
	
	As in the proof of Lemma \ref{lemma: case Gm with H finite}, we will use words $w$ from the abstract free group $F_{x_1,\dots,x_d}$.	Let $h\in T\cap H^o\leq \Gm^s$. Then $h\in A^s$ and $h=w(h_1,\ldots,h_d)$ for some word $w$. Define $n\in N$ by $(n,h)=w((n_1,h_1),\ldots,(n_d,h_d))\in H(n_1,\ldots,n_d)$. Because the $h_i$'s act on $N=\Gm^r$ via Laurent monomials, we see that $n\in B^r$. As $(n,h)\in H(n_1,\dots,n_d)$, the image $\overline{(n,h)}=(\overline{n},h)\in \overline{H(n_1,\ldots,n_d)}\leq (N/Z)\rtimes H$ of $(n,h)$ in $(N/Z)\rtimes H$ satisfies $\psi(h)=\overline{n}$. As $H^o$ acts trivially on $N$, the restriction of $\psi\colon H\to N/Z$ to $H^o$ is a morphism of algebraic groups. Thus, fixing an isomorphism $N/Z\simeq \Gm^r$, the restriction $H^o\to \Gm^r$ of $\psi$ is given by Laurent monomials. Therefore $\psi(h)\in A^r$. As the morphism $N\to N/Z\simeq \Gm^r$ is also given by Laurent monomials and $n\in B^r$, we see that $\overline{n}\in B^r$. So $\psi(h)=\overline{n}\in A^r\cap B^r$. As $A\cap B=1$, this shows that $n\in Z$. 
	
	Because $Z$ is finite, there exists an integer $m\geq 1$ such that $z^m=1$ for all $z\in Z$. Remembering that $H^o$ acts trivially on $N$, we have $(n,h)^m=(n^m,h^m)=(1,h^m)\in H(n_1,\ldots,n_d)$. We have thus shown that $h^m\in H(n_1,\ldots,n_d)$ for all $h\in T\cap H^o$. As $T$ is Zariski dense in $H$, also $T\cap H^o$ is Zariski dense in $H^o=\Gm^s$ (Lemma \ref{lemma: identity component}). This means that there is no non-zero $\alpha=(\alpha_1,\ldots,\alpha_s)\in\mathbb{Z}^s$ such that $h_1^{\alpha_1}\ldots h_s^{\alpha_s}=1$ for all $h=(h_1,\ldots,h_s)\in T\cap H^o$. Thus there is also no non-zero $\alpha=(\alpha_1,\ldots,\alpha_s)\in\mathbb{Z}^s$ such that $h_1^{m\alpha_1}\ldots h_s^{m\alpha_s}=1$ for all $h=(h_1,\ldots,h_s)\in T\cap H^o$, i.e., the group $\{h^m|\ h\in T\cap H^o\}$ is Zariski dense in $H^o$. Therefore $H^o\leq H(n_1,\ldots,n_d)$ as claimed.
	
	As $H^o$ acts trivially on $N$, $H^o$ is normal in $G$ and we can consider the projection $G/H^o=N\rtimes (H/H^o)\twoheadrightarrow H/H^o$. As $N$ is an almost-minimal normal closed subgroup of $N\rtimes (H/H^o)$, Lemma \ref{lemma: case Gm with H finite} shows that $\langle (n_1,\overline{h_1}),\ldots,(n_d,\overline{h_d})\rangle =N\rtimes (H/H^o)$, i.e., the image of $H(n_1,\ldots,n_d)$ in $G/H^o$ is all of $G/H^o$. Since $H^o\leq H(n_1,\ldots,n_d)$, we find $H(n_1,\ldots,n_d)=G$ as desired.
\end{proof}

The following proposition establishes the case $N=\Gm^r$.

\begin{prop} \label{prop: Gaschuetz for Gm kernel}
	Let $\f\colon G\twoheadrightarrow H$ be a quotient map with almost-minimal kernel $N=\Gm^r$. Then $\f$ is Gasch\"{u}tz.
\end{prop}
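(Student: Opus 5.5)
The plan mirrors Proposition \ref{prop: Gaschuetz for Ga kernel}: reduce to the split case and invoke Lemma \ref{lemma: case Gm semidirect product}. The difficulty compared to the $\Ga^r$ case is that $\Gm^r$ has nontrivial finite subgroups, so a proper subgroup $\langle g_1,\ldots,g_d\rangle$ may meet $N$ in a nontrivial finite group. We therefore cannot conclude directly that $\f$ splits. Instead we will split modulo a finite normal subgroup and then use Lemma \ref{lemma: finite kernel is Gaschuetz} together with Lemma \ref{lemma: composition of Gaschuetz is Gaschuetz}.

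Suppose $\f$ is not Gasch\"{u}tz, and fix generators $h_1,\ldots,h_d$ of $H$ with $d(G)\leq d$ that admit no generating lift. Choose lifts $g_i$ and set $K=\langle g_1,\ldots,g_d\rangle$, so that $\f(K)=H$. As in the earlier proofs, $K\cap N$ is normalized by $K$ and by the commutative group $N$, hence is normal in $KN=G$. Almost-minimality forces either $K\cap N=N$, which gives $K=G$ and a contradiction, or $Z:=K\cap N$ finite. In the latter case $Z$ is a finite normal subgroup of $G$, and $G\twoheadrightarrow G/Z$ is Gasch\"{u}tz by Lemma \ref{lemma: finite kernel is Gaschuetz}.

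Now factor $\f$ as $G\twoheadrightarrow G/Z\twoheadrightarrow H$. The kernel of the second map is $N/Z\simeq\Gm^r$, and the image $\overline{K}$ of $K$ in $G/Z$ meets $N/Z$ trivially while surjecting onto $H$. Hence $G/Z\twoheadrightarrow H$ is split, i.e.\ $G/Z\simeq (N/Z)\rtimes H$. We must check that $N/Z$ is almost-minimal in $G/Z$: a normal closed subgroup of $G/Z$ contained in $N/Z$ pulls back to a normal closed subgroup of $G$ between $Z$ and $N$. That preimage is therefore either $N$ or finite, so its image is $N/Z$ or finite. Moreover $N/Z$ is nontrivial and connected. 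Lemma \ref{lemma: case Gm semidirect product} then shows that $G/Z\twoheadrightarrow H$ is Gasch\"{u}tz, after identifying $N/Z$ with $\Gm^r$ compatibly with the $H$-action, which is harmless. By Lemma \ref{lemma: composition of Gaschuetz is Gaschuetz} the composition $\f$ is Gasch\"{u}tz, a contradiction.

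The only step needing care is the verification of almost-minimality of $N/Z$ and of the splitting. Splitting holds because $\overline{K}\to H$ is bijective on points with trivial kernel in characteristic zero, hence an isomorphism. This is routine, so I expect no serious obstacle. The real work was already done in Lemma \ref{lemma: case Gm semidirect product}, and the proposition follows by this finite-quotient reduction.
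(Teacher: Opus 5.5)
Your proof is correct and takes essentially the same route as the paper. Both intersect the subgroup generated by arbitrary lifts with $N$ and use almost-minimality to reduce to the case where $D=G'\cap N$ is finite. Both then note that $G/D\twoheadrightarrow H$ splits with almost-minimal kernel $N/D\simeq\Gm^r$, and conclude via Lemma \ref{lemma: case Gm semidirect product}, Lemma \ref{lemma: finite kernel is Gaschuetz} and Lemma \ref{lemma: composition of Gaschuetz is Gaschuetz}. The differences are only presentational: you argue by contradiction where the paper argues directly, and you explicitly check the almost-minimality of $N/Z$ and the splitting, which the paper simply asserts.
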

\begin{proof}
	Let $h_1,\ldots, h_d$ be generators of $H$ with $d(G)\leq d$. We have to show that $h_1,\ldots,h_d$ lift to generators of $G$. Let $g_1,\ldots,g_d\in G$ with $\f(g_i)=h_i$ for $i=1,\ldots,d$ and set $G'=\langle g_1,\ldots,g_d\rangle$. Then $G'\cap N$ is normalized by $G'$ and $N$ because $N$ is abelian. Thus $G'\cap N$ is normal in $G'N=G$. As $N$ is almost-minimal, we must have $G'\cap N=N$ or $G'\cap N$ is finite. In the first case, $N\leq G'$ and so $G'=G'N=G$. Thus $g_1,\ldots,g_d$ are generators of $G$ that lift $h_1,\ldots,h_d$.
	
	Hence we may assume that $D=G'\cap N$ is finite. Then $N/D\simeq \Gm^r$ is an almost-minimal normal closed subgroup of $G/D$ and the induced morphism $\overline{\f}\colon G/D\twoheadrightarrow H$ with kernel $N/D$ is split because $\overline{\f}$ restricts to an isomorphism $G'/D\to H$. So it follows from Lemma \ref{lemma: case Gm semidirect product} that $\overline{\f}$ is Gasch\"{u}tz. Now $\f\colon G\twoheadrightarrow H$ factors as $G\twoheadrightarrow G/D\twoheadrightarrow H$. As the kernel $D$ of $G\twoheadrightarrow G/D$ is finite, it follows from Lemma \ref{lemma: finite kernel is Gaschuetz} that $G\twoheadrightarrow G/D$ is Gasch\"{u}tz. Therefore $\f\colon G\twoheadrightarrow H$ is Gasch\"{u}tz by Lemma \ref{lemma: composition of Gaschuetz is Gaschuetz} and consequently $h_1,\ldots,h_d$ can be lifted to a generating set of $G$.
\end{proof}

\subsection{The case $N=N_1^r$} In this section we show that every quotient map $G\twoheadrightarrow H$ of algebraic groups with almost-minimal kernel $N=N_1^r$, where $N_1$ is a simple algebraic group, is Gasch\"{u}tz.

\medskip

To treat the case where $G=N\rtimes A$ with $A$ a finite group acting on $N$, we will need some preparatory results about automorphisms of $N$ and regular elements. 
Recall (see e.g. \cite{Steinberg:RegularElementsOfSemisimpleAlgebraicGroups}) that a semisimple element of a connected semisimple algebraic group $N$ is \emph{regular} if its centralizer has minimal dimension, or equivalently, its conjugacy class has maximal dimension. This minimal dimension is then equal to the rank of $N$. A semisimple element of $N$ is regular if and only if it is contained in a unique maximal torus (\cite[2.11]{Steinberg:RegularElementsOfSemisimpleAlgebraicGroups})). 

The main ingredient to show that the projection $N\rtimes A\twoheadrightarrow A$ is Gasch\"{u}tz, is a kind of generic $2$\=/generation for not necessarily connected semisimple algebraic groups (Lemma \ref{lemma: generic 2 generation}), and this is our main goal for now. 

\begin{lemma} \label{lemma: Springer Steinberg}
	Let $a$ be an automorphism of a connected semisimple algebraic group $N$ of finite order. Then $N^a=\{n\in N|\ a(n)=n\}$ is a reductive closed subgroup of $N$ and every maximal torus of $N^a$ is contained in a unique maximal torus of $N$.
\end{lemma}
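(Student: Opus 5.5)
The statement is essentially due to Steinberg (\emph{Endomorphisms of linear algebraic groups}). I would prove its two parts separately, using throughout that in characteristic zero an automorphism of finite order is semisimple and that $\langle a\rangle$ is a linearly reductive finite group.

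\emph{Step 1: $N^a$ is reductive.} Set $\mathfrak{n}=\Lie(N)$. Then $\Lie(N^a)=\mathfrak{n}^a$, the fixed space of the finite-order linear automorphism $da$. Since $da$ preserves the Killing form $\kappa$ of $\mathfrak{n}$, two eigenspaces $\mathfrak{n}_\lambda$ and $\mathfrak{n}_\mu$ of $da$ are $\kappa$-orthogonal unless $\lambda\mu=1$. Hence $\mathfrak{n}=\mathfrak{n}_1\oplus\bigoplus_{\lambda\neq1}\mathfrak{n}_\lambda$ is a $\kappa$-orthogonal decomposition, and $\kappa$ restricts to a nondegenerate form on $\mathfrak{n}^a=\mathfrak{n}_1$. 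This restriction is the trace form of the faithful representation $\operatorname{ad}\colon\mathfrak{n}^a\to\mathfrak{gl}(\mathfrak{n})$.

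Now let $U=R_u((N^a)^o)$. Then $\Lie(U)$ is an ideal of $\mathfrak{n}^a$ consisting of elements that act nilpotently on $\mathfrak{n}$. By Lie's/Engel's theorem applied to the representation of the solvable algebra $\Lie(U)+k x$ on $\mathfrak{n}$, we get $\operatorname{tr}(\operatorname{ad}X\operatorname{ad}Y)=0$ for all $X\in\Lie(U)$ and $Y\in\mathfrak{n}^a$, using $[Y,X]\in\Lie(U)$ in the usual Cartan-criterion manner. Nondegeneracy then forces $\Lie(U)=0$, so $U=1$ and $N^a$ is reductive.

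\emph{Step 2: maximal tori.} Let $S$ be a maximal torus of $N^a$ and put $C=C_N(S)$. This is a connected reductive group, it is $a$-stable because $a$ fixes $S$ pointwise, and it contains every maximal torus of $N$ that contains $S$. So it suffices to show that $C$ is a torus: then $C$ is the unique maximal torus of $N$ containing $S$.

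Suppose $C$ is not a torus. Then its derived group $D=[C,C]$ is a nontrivial connected semisimple group, and $a$ restricts to a finite-order, hence semisimple, automorphism of $D$. Here I would invoke Steinberg's theorem that a semisimple automorphism of a nontrivial connected semisimple group has a fixed-point group of positive dimension, and in fact that $(D^a)^o$ contains a nontrivial torus $T'$ (Steinberg, Memoir 80, \S\S 7--10). Alternatively, one can use that $a$ stabilizes a Borel pair $(B,T)$ of $D$ and analyze the action on the simple roots.

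Given such a $T'$, note that $T'\subseteq C$ centralizes $S$. Moreover $T'\cap S$ is finite, since $T'\subseteq D$ and $S\subseteq Z(C)^o$ with $Z(C)^o\cap D$ finite. Therefore $ST'$ is a torus in $N^a$ strictly larger than $S$, contradicting the maximality of $S$.

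The main obstacle is the positive-dimensionality of $D^a$ for a semisimple automorphism of a nontrivial semisimple group. I would cite it from Steinberg rather than reprove it. If a self-contained argument is needed, one can pass to the Lie algebra: $\mathfrak{d}^{da}\neq0$ because $da$ is a semisimple automorphism of a semisimple Lie algebra, and such an automorphism has a nonzero fixed vector in a Cartan subalgebra. This is shown by taking a $da$-stable Borel subalgebra and observing that the sum of the $da$-orbit of a coroot is fixed. Then one takes a semisimple element of $\mathfrak{d}^{da}$ and the torus it generates.
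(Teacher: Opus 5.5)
Your proof is correct, but it takes a different route from the paper. The paper's proof is only the remark that a finite-order automorphism is semisimple in characteristic zero, followed by a citation of the beginning of the proof of \cite[Chapter II, Theorem~5.16]{SpringerSteinberg:ConjugacyClasses}. You instead reconstruct the argument behind that citation.

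\emph{Step 1.} Reductivity comes from nondegeneracy of the Killing form on $\mathfrak{n}^{da}$ together with Lie's theorem for $\Lie(U)+kY$ (the ``$kx$'' in your text should be $kY$). This step is elementary, though it uses characteristic zero throughout: smoothness of $N^a$, diagonalizability of $da$, nondegeneracy of $\kappa$, and Lie's theorem.

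\emph{Step 2.} Showing that $C_N(S)$ is a torus correctly reduces the uniqueness statement to a single nonelementary fact: the automorphism $a|_D$ of the nontrivial semisimple group $D=[C,C]$ fixes a nontrivial torus pointwise. Once $\dim D^a>0$ is known, your Step~1 applied to $D$ already supplies such a torus, and the argument with $ST'$ is sound.

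The paper's version buys brevity. Yours makes transparent exactly where the depth lies.

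Be aware that your proposed Lie-algebra fallback is not more self-contained than citing Steinberg. It presupposes a $da$-stable Borel subalgebra containing a $da$-stable Cartan subalgebra, and the existence of such a pair is precisely the nontrivial theorem (Steinberg's result that a semisimple automorphism stabilizes a Borel subgroup and a maximal torus in it, or Borel--Mostow for Lie algebras). Granting that pair, the orbit-sum argument works. It can even be run directly at the group level: the orbit sum of a simple coroot is an $a$-invariant nonzero cocharacter, whose image is a one-dimensional torus fixed pointwise by $a$.
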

\begin{proof}
	We note that (in characteristic zero) an automorphism of finite order is semisimple. The claim of the lemma is then contained in the beginning of the proof of \cite[Chapter II, Theorem~ 5.16]{SpringerSteinberg:ConjugacyClasses}. 
\end{proof}

\begin{lemma} \label{lemma: get nice regular element}
	Let $N_1$ be a simple algebraic group and $N=N_1^r$ for some $r\geq 1$. Let $a$ be an automorphism of $N$ of finite order. Then there exists a regular semisimple element $g=(g_1,\ldots,g_r)\in N_1^r=N$ such that $a(g)=g$, $g_i^2\neq 1$ for $i=1,\ldots,r$ and $g$ is contained in a maximal torus of $N^a=\{n\in N|\ a(n)=n\}$.	
\end{lemma}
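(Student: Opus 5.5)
By Lemma \ref{lemma: Springer Steinberg}, $N^a$ is reductive and every maximal torus $S$ of $N^a$ lies in a unique maximal torus $T$ of $N$. Fix such $S$ and $T$. The plan is to take $g$ to be a sufficiently generic element of $S$. The first step is to show that a generic element of $S$ is regular in $N$. The set of elements of $S$ that are regular semisimple in $N$ is open in $S$: it is the complement in $S$ of the kernels of the roots of $N$ with respect to $T$, restricted to $S$. It remains to show this set is nonempty, i.e.\ that no root $\alpha$ of $(N,T)$ is trivial on $S$. If some root $\alpha$ were trivial on $S$, then $S$ would centralize the root group $U_\alpha$, so $S$ would lie in the maximal tori of the reductive group $C_N(S)$, which strictly contains $T$ as a non-toral subgroup. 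Since $C_N(S)$ is connected reductive of rank equal to that of $N$ and is not a torus, it has more than one maximal torus. A conjugate $T'\neq T$ of $T$ in $C_N(S)^o$ would then also contain $S$, contradicting uniqueness. Hence a dense open subset $S^{\mathrm{reg}}\subseteq S$ consists of regular elements of $N$.

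The second step handles the conditions $a(g)=g$, membership in a maximal torus of $N^a$, and $g_i^2\neq 1$. The first two are automatic for $g\in S\subseteq N^a$. For the third, consider the projection $p_i\colon N\to N_1$ for each $i$. The set $\{g\in S \mid p_i(g)^2=1\}$ is a closed subgroup of $S$. It is proper unless $p_i(S)$ consists of elements of order dividing $2$, i.e.\ unless $p_i(S)$ is trivial, since a torus has no nontrivial $2$-torsion image. If $p_i(S)$ is trivial for some $i$, the worry is that $S$ might be trivial, or might project trivially to some factor. I would rule this out as follows. The element $a$ permutes the factors of $N$ (up to automorphisms of $N_1$), so the orbit of the factor $i$ under $\langle a\rangle$ gives $N^a\supseteq$ the fixed points of a finite-order automorphism of $N_1^{\,m}$ permuting the factors cyclically. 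These fixed points are isomorphic to $N_1^{b}$ for a finite-order automorphism $b$ of $N_1$, embedded diagonally-twisted. Since $N_1^{b}$ has positive rank for a semisimple automorphism $b$ of a simple group (a standard fact: $(N_1^{b})^o$ is reductive of rank $\geq 1$, e.g.\ by Steinberg, or because a fixed-point free finite-order automorphism cannot exist on a connected semisimple group in characteristic zero), $S$ projects onto a nontrivial torus in each factor. So each of the finitely many proper closed subgroups above is nowhere dense, and a generic $g\in S^{\mathrm{reg}}$ avoids them all.

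The main obstacle is the first step: deducing from the uniqueness statement of Lemma \ref{lemma: Springer Steinberg} that $S$ is not contained in any root kernel. If the argument above about $C_N(S)$ is awkward, I would instead use that $C_N(S)$ is connected reductive containing $T$, with root system the roots vanishing on $S$. Uniqueness of the maximal torus containing $S$ then forces $C_N(S)^o=T$, which gives exactly this. The nontriviality of $p_i(S)$ in the second step is the secondary point to check.
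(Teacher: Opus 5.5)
Your argument is correct, but it turns the uniqueness in Lemma~\ref{lemma: Springer Steinberg} into regularity by a different mechanism than the paper.

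The paper takes $g$ to be a generator of the torus $S$, so $\langle g\rangle=S$. Any maximal torus of $N$ containing $g$ then contains $S$, so $g$ lies in a unique maximal torus. Steinberg's criterion (a semisimple element is regular if and only if it lies in a unique maximal torus) then gives regularity immediately. For the coordinates, $p_i(S)=\langle g_i\rangle$ is a torus. If $g_i$ had finite order, this torus would be finite and connected, hence $g_i=1$, and the whole $i$-th factor would centralize $g$, contradicting regularity. This even yields that $g_i$ has infinite order.

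You instead work with roots. You show that no root of $(N,T)$ is trivial on $S$, equivalently $C_N(S)=T$. Hence the regular elements form a dense open subset of $S$, and you then avoid the finitely many proper closed subgroups $\{g\in S\mid p_i(g)^2=1\}$ by genericity. This gives a stronger conclusion: a dense open set of good elements in $S$, together with $C_N(S)=T$. The cost is using the root-group description of centralizers of torus elements in place of the generator trick.

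Your second step takes an unnecessary detour, though. The nontriviality of $p_i(S)$ follows directly from your first step, exactly as in the paper: if $p_i(S)=1$, then a regular $g\in S$ has $g_i=1$ and is centralized by the $i$-th factor. So you need neither the orbit decomposition of $a$ on the factors nor the positive-rank fact for $N_1^b$. Note also that your parenthetical alternative justification (nonexistence of fixed-point-free automorphisms) would only give $N_1^b\neq 1$, not that $N_1^b$ contains a nontrivial torus.
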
	
\begin{proof}
	Let $S$ be a maximal torus of $N^a=\{n\in N|\ a(n)=n\}$ and $g\in S$ a generator of $S$ (as an algebraic group). If $T$ is a maximal torus of $N$ containing $g$, then $T$ contains $S$ and so $T$ is the unique torus of $N$ containing $g$ by Lemma~\ref{lemma: Springer Steinberg}. Thus $g$ is a regular semisimple element of $N$, contained in a maximal torus of $N^a$ satisfying $a(g)=g$. Suppose one of the coordinates $g_i$ of $g$ has finite order. Then the image of $S$ under the projection to the $i$-th coordinate is disconnected unless $g_i=1$. As the image is connected, we must have $g_i=1$. But this contradicts the assumption that $g$ is regular because the $i$-th component of $N_1^r$ would then centralize $g$. This shows that $g_i$ does not have finite order for $i=1,\ldots,r$. In particular, $g_i^2\neq 1$ for $i=1,\ldots,r$.
\end{proof}

\begin{lemma} \label{lemma: get conjugacy class of regular element}
	Let $N_1$ be a simple algebraic group and $N=N_1^r$ for some $r\geq 1$.
	Let $a$ be an automorphism of $N$ of finite order $s$ and consider the morphism $\psi\colon N\to N,\ n\mapsto na(n)\ldots a^{s-1}(n)$ of varieties.	
	 Then there exists a regular semisimple element $g=(g_1,\ldots,g_r)\in N_1^r=N$ such that $g_i^2\neq 1$ for $i=1,\ldots,r$ and the conjugacy class of $g$ in $N$ is contained in $\psi(N)$.	
\end{lemma}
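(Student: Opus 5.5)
We take the element $g$ supplied by Lemma~\ref{lemma: get nice regular element} and show that its whole $N$-conjugacy class lies in $\psi(N)$. Since $\psi(N)$ need not be stable under conjugation, the strategy is to twist the conjugation so that it interacts well with $\psi$.

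Concretely, let $g=(g_1,\ldots,g_r)$ be regular semisimple with $a(g)=g$ and $g_i^2\neq 1$, and contained in a maximal torus $S$ of $N^a$, as in Lemma~\ref{lemma: get nice regular element}. For $m\in N$ define the twisted conjugation
$$m\cdot n=m\,n\,a(m)^{-1}.$$
A direct telescoping computation, using $a^s=\id$, gives
$$\psi(m\cdot n)=m\,\psi(n)\,m^{-1}.$$
So $\psi(N)$ is a union of conjugacy classes of $N$: if $x=\psi(n)$, then every conjugate $mxm^{-1}$ equals $\psi(m\cdot n)$. It therefore suffices to find a single $n\in N$ with $\psi(n)$ conjugate to an element $g$ having the required properties.

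Step two restricts to the torus $S$. For $t\in S\subseteq N^a$ we have $a(t)=t$, so $\psi(t)=t^s$. The map $S\to S$, $t\mapsto t^s$, is surjective because $S$ is a torus and $k$ is algebraically closed. Hence $S\subseteq \psi(N)$, and in particular $g\in\psi(N)$. By the first step, the full conjugacy class of $g$ then lies in $\psi(N)$. The element $g$ already has the required properties (regular semisimple, $g_i^2\neq 1$), so this finishes the argument.

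The main point needing care is the identity $\psi(m n a(m)^{-1})=m\psi(n)m^{-1}$. Expanding the product $\prod_{j=0}^{s-1}a^j\bigl(m n a(m)^{-1}\bigr)$, the consecutive factors $a^{j}(a(m)^{-1})\,a^{j+1}(m)$ cancel. What remains is $m\,\psi(n)\,a^s(m)^{-1}=m\,\psi(n)\,m^{-1}$. Everything else follows directly from Lemma~\ref{lemma: get nice regular element}.
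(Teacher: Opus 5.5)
Your proof is correct and is essentially the paper's argument: take $g$ from Lemma~\ref{lemma: get nice regular element} inside a maximal torus of $N^a$, write $g=t^s=\psi(t)$ using surjectivity of $t\mapsto t^s$ on that torus, and use the telescoping identity $\psi(m\,t\,a(m)^{-1})=m\,\psi(t)\,m^{-1}$ to reach every conjugate of $g$. The only cosmetic difference is that you prove the identity for arbitrary $n$, which shows $\psi(N)$ is in fact conjugation-stable (so your opening caveat can be dropped), whereas the paper applies it directly with $n=t$.
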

\begin{proof}
By Lemma \ref{lemma: get nice regular element} there exists a maximal torus $T$ of $N^a=\{n\in N|\ a(n)=n\}$ and an element $g\in T$ such that $g=(g_1,\ldots,g_r)\in N_1^r$ is a regular semisimple element of $N$ with $g_i^2\neq 1$ for $i=1,\ldots,r$. As the morphism $T\to T,\ t\mapsto t^s$ is surjective, we can find a $t\in T$ such that $t^s=g$. So $\psi(t)=t^s=g$.

	Let $m\in N$. We have to show that $mgm^{-1}\in\psi(N)$. For $n=mta(m)^{-1}\in N$ we have 
	\begin{align*}
		\psi(n)&=mta(m)^{-1}a(mta(m)^{-1})\ldots a^{s-2}(mta(m)^{-1})a^{s-1}(mta(m)^{-1})\\
		&=mta(t)\ldots a^{s-1}(t)a^s(m)^{-1}=mt^sm^{-1}=mgm^{-1}
	\end{align*}
	so that  $mgm^{-1}\in\psi(N)$ as desired.	
%	
%	
%	 for some $n\in N$. To this end, we consider the  algebraic group $G=N\rtimes A$. Note that $A_m=mAm^{-1}$ is a finite subgroup of $G$ and $G$ can also be written as the semidirect product $G=N\rtimes A_m$. Furthermore, the component $(N,a)$ of $N\rtimes A$ and the component $(N,mam^{-1})$ of $N\rtimes A_m$ represent the same connected component of $G$. We have $mN^am^{-1}=N^{mam^{-1}}$ and for $(msm^{-1},mam^{-1})\in N\rtimes A_m$ we have $(msm^{-1},mam^{-1})^e=(mgm^{-1},1)$. So if we write $(msm^{-1},mam^{-1})\in  N\rtimes A_m$ as an element $(n,a)$ of $N\rtimes A$, then $(n,a)^e=mgm^{-1}=c$. Thus $c\in \langle (n,a)\rangle$.    
\end{proof}

We will need the following basic result of the subdirect products in $N_1^r$.

\begin{lemma} \label{lemma: special Goursat}
	Let $N_1$ be a simple algebraic group and $H$ a closed proper subgroup of $N_1^r$ ($r\geq 2$) that is a subdirect product, i.e., all the projections $H\to N_1$ are surjective. Then there exist indices $1\leq i<j\leq r$ and an automorphism $\psi$ of $N_1$ such that $H$ is contained in the closed subgroup $\{(n_1,\ldots,n_r)\in N_1^r|\ \psi(n_i)=n_j\}$ of $N_1^r$.
\end{lemma}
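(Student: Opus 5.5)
We argue by induction on $r$, using Goursat's Lemma (Lemma \ref{lemma: Goursat}) to reduce to the case of two factors. The key input is that a simple algebraic group has no non-trivial proper normal closed subgroups. Consequently, every quotient map $N_1\twoheadrightarrow Q$ is either an isomorphism or has $Q$ trivial.

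\textbf{Case $r=2$.} Here $H\leq N_1\times N_1$ is a proper subdirect product. By Lemma \ref{lemma: Goursat}, $H=N_1\times_G N_1$ for quotient maps $\pi_1,\pi_2\colon N_1\twoheadrightarrow G$. If $G$ were trivial, $H$ would equal $N_1\times N_1$, which is not proper. Hence $\pi_1$ and $\pi_2$ are both isomorphisms. Setting $\psi=\pi_2^{-1}\pi_1$, an automorphism of $N_1$, gives $H=\{(n_1,n_2)\mid \psi(n_1)=n_2\}$, so $i=1$, $j=2$ works.

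\textbf{Case $r>2$.} Let $H'$ be the image of $H$ under the projection $N_1^r\to N_1^{r-1}$ onto the first $r-1$ coordinates. This image is closed and is a subdirect product of $N_1^{r-1}$.

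\emph{Subcase $H'$ proper.} By induction, $H'$ satisfies a relation $\psi(n_i)=n_j$ with $i<j\leq r-1$, and then $H$ satisfies the same relation.

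\emph{Subcase $H'=N_1^{r-1}$.} View $H$ as a closed subgroup of $N_1^{r-1}\times N_1$ whose projections to both factors are quotient maps. By Lemma \ref{lemma: Goursat}, $H=N_1^{r-1}\times_G N_1$ for quotient maps $N_1^{r-1}\twoheadrightarrow G$ and $N_1\twoheadrightarrow G$. As $H$ is proper, $G$ is nontrivial, so $N_1\to G$ is an isomorphism. The composite $N_1^{r-1}\twoheadrightarrow G\simeq N_1$ is then a quotient map $\rho$ whose kernel $K$ has quotient $N_1$.

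The main obstacle is to show that such a $\rho$ factors through a single coordinate projection. The plan is as follows.
\begin{itemize}
\item Let $N^{(l)}\leq N_1^{r-1}$ denote the $l$-th factor. Since $N^{(l)}$ is normal in $N_1^{r-1}$, its image $\rho(N^{(l)})$ is normal in $N_1$, hence trivial or all of $N_1$.
\item The images of distinct factors commute, and $N_1$ is non-commutative. So at most one factor, say the $l$-th, can map onto $N_1$.
\item Some factor must map nontrivially, because the factors together generate $N_1^{r-1}$.
\item Therefore $\rho$ kills every factor except the $l$-th, so $\rho$ factors through the $l$-th projection. The induced map $N_1\to N_1$ is surjective, and its kernel is a normal subgroup of $N_1$ that is not all of $N_1$, hence trivial. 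So it is an automorphism $\psi$.
\end{itemize}
It follows that $H\subseteq\{(n_1,\ldots,n_r)\mid \psi(n_l)=n_r\}$, which gives the claim with $i=l$ and $j=r$.

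The only points to check are routine: that the image of $H$ is closed (the image of a morphism of algebraic groups is closed), and that Lemma \ref{lemma: Goursat} applies to algebraic groups, which it does since they are proalgebraic.
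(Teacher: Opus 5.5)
Your proof is correct and is essentially the paper's argument: the paper picks the maximal $i$ with $H\to N_1^i$ surjective and applies Lemma \ref{lemma: Goursat} to the image of $H$ in $N_1^i\times N_1$, and your induction on $r$ unwinds to exactly this. Your justification that the quotient map $N_1^{r-1}\twoheadrightarrow N_1$ factors through a single coordinate spells out a step the paper only asserts.
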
	
\begin{proof}
	The projection $H\to N_1$ to the first factor is surjective, and because $H$ is a proper subgroup of $N_1^r$, the projection $H\to N_1^r$ to all factors is not surjective. Let $1\leq i<r$ be maximal such that the projection $H\to N_1^i$ to the first $i$ factors is surjective. Let $H'\leq N_1^{i+1}$ be the image of $H$ under the projection to the first $i+1$ factors. Then $H'$ projects surjectively onto the first $i$ factors and onto the last factor of $N_1^{i+1}$. In other words, $H'$ is a subdirect product of $N_1^{i}$ and $N_1$. By Lemma \ref{lemma: Goursat}, there exist an algebraic group $G$ and quotient maps $N_1^i\twoheadrightarrow G$ and $N_1\twoheadrightarrow G$ such that $H'=N_1^i\times_G N_1$. But because $N_1$ is simple, the quotient map $N_1\twoheadrightarrow G$ is either trivial (i.e., $G=1$) or an isomorphism. In the first case, we have $H'=N_1^{i+1}$, contradicting the maximality of $i$. Thus $G=N_1$. Furthermore, exactly one factor, say the $\ell$-th one ($1\leq \ell\leq i$) does not lie in the kernel of the quotient map $N_1^i\twoheadrightarrow N_1$. This quotient map then factors through the projection $\pi_\ell\colon N_1^i\to N_1$ to the $\ell$-th coordinate. So we can write $N_1^i\twoheadrightarrow N_1$ as $N_1^i\xrightarrow{\pi_\ell}N_1\xrightarrow{\psi}N_1$ for some automorphism $\psi$ of $N_1$. By construction, this shows that $\psi(n_\ell)=n_{i+1}$ for all $(n_1,\ldots,n_r)\in H$.
\end{proof}

It is well known that any connected semisimple algebraic group can be generated by two elements. The earliest reference for this seems to be \cite[Theorem 3]{Tits:FreeSubgroupsInLinearGroups}, where, in particular, it is shown that a connected semisimple algebraic group contains a Zariski dense free group on two generators.
We will need some more precise results on generic $2$-generation of connected semisimple algebraic groups. Indeed, we need a generalization to not necessarily connected groups (Lemma \ref{lemma: generic 2 generation}).

\begin{lemma} \label{lemma: generic two generation}
	Let $N$ be a connected semisimple algebraic group. Then the set 
	$$\{(n,m)\in N^2|\ \langle n,m\rangle=N \}$$
	is a non-empty open subset of $N^2$.	
\end{lemma}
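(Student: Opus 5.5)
The set $\Omega=\{(n,m)\in N^2\mid \langle n,m\rangle=N\}$ is non-empty by the result of Tits quoted above (\cite[Theorem 3]{Tits:FreeSubgroupsInLinearGroups}): $N$ contains a Zariski dense free subgroup on two generators $n,m$, so $\langle n,m\rangle=N$. The work lies in openness. I would show that the complement $N^2\smallsetminus\Omega$ is closed by writing it as a finite union of closed subsets. Such a union arises from the finitely many \emph{types} of maximal proper closed subgroups, each contributing the set of pairs lying in some conjugate of a fixed subgroup of that type.

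First I reduce to maximal subgroups. If $\langle n,m\rangle\neq N$, then $\langle n,m\rangle$ is a proper closed subgroup of $N$. I would use the following classification input: for connected semisimple $N$ in characteristic zero, every proper closed subgroup is contained in either (a) a proper closed subgroup of positive dimension normalizing a proper nontrivial connected subgroup, or (b) a finite subgroup. For (a), such a subgroup lies in the normalizer $N_N(M)$ of a proper connected closed subgroup $M\neq 1$; I would replace $M$ by $\langle n,m\rangle^o$ when that is nontrivial. A more economical route is to take $H=\langle n,m\rangle$, note that if $\dim H>0$ then $H\le N_N(H^o)$, and then pass from $H^o$ to the associated proper Lie subalgebra $\mathfrak h\subset\mathfrak n=\Lie(N)$. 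The condition becomes that $\mathrm{Ad}(n)$ and $\mathrm{Ad}(m)$ both stabilize a proper nonzero subspace of $\mathfrak n$ that is a subalgebra. For each dimension $1\le e<\dim\mathfrak n$, the set of pairs $(n,m)$ stabilizing some $e$-dimensional subalgebra is the projection to $N^2$ of a closed subset of $N^2\times\mathrm{Gr}(e,\mathfrak n)$, since being a subalgebra and being $\mathrm{Ad}$-stable are closed conditions. Because the Grassmannian is projective, this projection is closed.

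The finite case (b) is the obstacle I expect to be hardest. Here I would invoke Jordan's theorem: every finite subgroup of $\Gl_D(k)$, with $D=\dim\mathfrak n$ via the faithful adjoint representation (faithful modulo the finite center $Z$, which may be harmlessly absorbed), has an abelian normal subgroup of index at most $J(D)$. Hence if $\langle n,m\rangle$ is finite, there is a word-based closed condition: for some subgroup $A$ of index at most $J(D)$ in the abstract group $\langle n,m\rangle$, $A$ is abelian. The subgroups of bounded index of $F_2$ are finitely many, each finitely generated by words $w_1,\dots,w_t$. Commutativity of $A$ is the closed condition $[w_i(n,m),w_j(n,m)]=1$, possibly after passing modulo $Z$, which is again closed by taking the condition $[\,\cdot,\cdot\,]\in Z$. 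Conversely, if this closed condition holds, then $\langle n,m\rangle$ has an abelian-by-bounded dense subgroup. Its Zariski closure is therefore virtually abelian, hence proper in the non-commutative group $N$. So the bad locus from case (b) is contained in a finite union of closed proper-generation sets. The same argument covers the positive-dimensional case whenever $\langle n,m\rangle^o$ is abelian.

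Putting the two cases together, $N^2\smallsetminus\Omega$ equals the finite union of the Grassmannian projections for $1\le e<\dim\mathfrak n$ and the Jordan-type word conditions. Each piece consists only of non-generating pairs, and every non-generating pair lies in some piece. The complement is therefore closed and $\Omega$ is open. The delicate points to check are, first, that every proper $H$ of positive dimension yields a proper nonzero $\mathrm{Ad}(H)$-stable subalgebra, namely $\Lie(H^o)$, which is proper because $H\neq N$ and $N$ is connected; and second, the handling of the center in the Jordan step.
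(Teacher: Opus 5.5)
Your nonemptiness argument via Tits and your Jordan-type treatment of finite $\langle n,m\rangle$ are fine (any faithful representation of $N$ works there). The positive-dimensional step, however, has a genuine gap. You assert that each Grassmannian piece ``consists only of non-generating pairs'', and this is false unless $\mathfrak n=\Lie(N)$ is simple.

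If $\mathrm{Ad}(n)$ and $\mathrm{Ad}(m)$ stabilize a proper nonzero subalgebra $V$, you only get $\langle n,m\rangle\le\mathrm{Stab}_N(V)$. This stabilizer equals $N$ exactly when $V$ is an ideal. As soon as $N$ has two almost-simple factors, say $N=N_1\times N_2$, the ideal $\Lie(N_1)$ is $\mathrm{Ad}(N)$-stable. Your closed piece for $e=\dim N_1$ is then all of $N^2$, and the claimed equality of $N^2\smallsetminus\Omega$ with the union of the pieces would contradict nonemptiness.

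So your argument only proves the lemma for almost-simple $N$. The paper uses it exactly for $N=N_1^r$ with $r\geq 2$, in Lemmas \ref{lemma: Guralnik} and \ref{lemma: generic 2 generation}. The paper itself gives no argument; it cites \cite[Theorem 4.1]{BreuillardGreenGuralnikTao:StronglyDenseFreeSubgroupsOfSemisimpleAlgebraicGroups}.

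Two things are missing: a way to discard the ideals while keeping the bad locus closed, and a treatment of subgroups whose Lie algebra is an ideal.

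For the first, one route is rigidity. An ideal $\mathfrak i$ is semisimple, so $H^1(\mathfrak i,\mathfrak n/\mathfrak i)=0$ by Whitehead's lemma. Richardson's rigidity theorem then makes its $\mathrm{Ad}(N)$-orbit, which is a single point, open in the variety of $e$-dimensional subalgebras. Hence the non-ideal subalgebras form a closed subset of $\mathrm{Gr}(e,\mathfrak n)$, and projecting only over that subset gives closed sets of genuinely non-generating pairs. Concretely, near $\mathfrak i$ the subalgebras are graphs of Lie homomorphisms from $\mathfrak i$ to the complementary ideal, and one checks that $0$ is isolated among these because $[\mathfrak i,\mathfrak i]=\mathfrak i$.

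For the second, suppose $H=\langle n,m\rangle\neq N$ has $\Lie(H)=\mathfrak i$ a nonzero ideal. Then $H^o$ is the normal subgroup $N_{\mathfrak i}$ with Lie algebra $\mathfrak i$, and the image of $H$ in the nontrivial connected semisimple group $N/N_{\mathfrak i}$ is finite. These pairs are caught by your Jordan word conditions pulled back from $N/N_{\mathfrak i}$; there are only finitely many ideals, and those closed sets again contain only non-generating pairs.

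With these two additions your strategy goes through; without them the proof is incomplete.
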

\begin{proof}
	This is a special case of \cite[Theorem 4.1]{BreuillardGreenGuralnikTao:StronglyDenseFreeSubgroupsOfSemisimpleAlgebraicGroups}.
	%	
	%	In the introduction to \cite{BurnessGerhardtGuralnik:TopologicalGenerationOfExceptionalAlgebraicGroups} it is claimed that the above set is non-empty and open for any semisimple algebraic group $N$. However, only the case when $N$ is a simply connected almost-simple algebraic group is formally stated as a theorem and that result is attributed to \cite{Guralnik:SomeApplicationsOfSubgroupStructureToProbabilisticGenerationAndCoversOfCurves}.
\end{proof}

\begin{lemma} \label{lemma: Guralnik}
	Let $N_1$ be a simple algebraic group and $N=N_1^r$ for some $r\geq 1$. Let $g=(g_1,\ldots,g_r)\in N_1^r$ be a regular semisimple element of $N$ with $g_i^2\neq 1$ for $i=1,\ldots,r$. Similarly, let $h=(h_1,\ldots,h_r)\in N_1^r$ be a regular semisimple element of $N$ with $h_i^2\neq 1$ for $i=1,\ldots,r$ and let $C_g,C_h$ be the conjugacy classes of $g$ and $h$ respectively. Then the set $\{(n,m)\in C_g\times C_h|\ \langle n,m\rangle=N\}$ is a non-empty Zariski open subset of $C_g\times C_h$.  
\end{lemma}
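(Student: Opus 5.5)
The approach is to separate openness from non-emptiness, to settle non-emptiness one simple factor at a time, and then to glue the factors together using Lemma \ref{lemma: special Goursat}.

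\emph{Openness and reduction to non-emptiness.} By Lemma \ref{lemma: generic two generation}, the set $\Omega=\{(n,m)\in N^2\mid \langle n,m\rangle=N\}$ is open in $N^2$. The set in the statement is $\Omega\cap(C_g\times C_h)$, so it is open in $C_g\times C_h$. Since $N$ is connected, $C_g$ and $C_h$ are irreducible, being orbits of a connected group, and hence so is $C_g\times C_h$. It therefore suffices to show that the set is non-empty. More usefully, it suffices to exhibit finitely many non-empty open subsets of $C_g\times C_h$ whose intersection consists of generating pairs; irreducibility then guarantees that this intersection is non-empty.

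\emph{The case $r=1$.} Here $N=N_1$ is simple, and $g,h$ are regular semisimple elements with $g^2\neq 1\neq h^2$. I would invoke the known result on generation from conjugacy classes: Guralnik's theorem, or the variant of \cite[Theorem 4.1]{BreuillardGreenGuralnikTao:StronglyDenseFreeSubgroupsOfSemisimpleAlgebraicGroups} for irreducible subvarieties. It states that a generic pair in $C_g\times C_h$ generates $N_1$, provided one excludes the case where both $g$ and $h$ are involutions; that case genuinely fails, since two involutions generate a dihedral-type group. If I had to argue this directly, I would bound, for each of the finitely many conjugacy classes of maximal positive-dimensional closed subgroups $M$, the dimension of the set of pairs of $C_g\times C_h$ lying in a common conjugate of $M$. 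This dimension is at most
\[
\dim N_1-\dim M+\dim(C_g\cap M)+\dim(C_h\cap M),
\]
and one checks it is strictly less than $\dim C_g+\dim C_h$ using regularity. Finite closures would then be excluded via Jordan's theorem, since a generic pair does not generate a virtually abelian subgroup of bounded index. This is the main obstacle, and I would rely on the literature for it rather than reprove it.

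\emph{The case $r\geq 2$.} Write $C_g=\prod_i C_{g_i}$ and $C_h=\prod_i C_{h_i}$, where $C_{g_i}$ and $C_{h_i}$ are conjugacy classes in $N_1$. Each $g_i$ and $h_i$ is regular semisimple in $N_1$ with square $\neq 1$, because regularity in $N_1^r$ is coordinatewise. By the case $r=1$, for each $i$ the condition $\langle n_i,m_i\rangle=N_1$ defines a non-empty open subset $U_i$ of $C_g\times C_h$. On $\bigcap_i U_i$ the group $\langle n,m\rangle$ is a subdirect product. Suppose it is proper. Then Lemma \ref{lemma: special Goursat} gives $i<j$ and $\psi\in\Aut(N_1)$ with $\psi(n_i)=n_j$ and $\psi(m_i)=m_j$. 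Now $\Aut(N_1)$ is an algebraic group of dimension $\dim N_1$, since it is a finite extension of the adjoint group. Hence for fixed $(n_i,m_i)$ the set of possible $(n_j,m_j)$ has dimension at most $\dim N_1$. This is strictly less than $\dim C_{g_j}+\dim C_{h_j}=2(\dim N_1-\rank N_1)$, because $\rank N_1<\dim N_1/2$ for a simple group. Consequently, for each pair $i<j$ the bad locus $B_{ij}$ is a constructible set of smaller dimension, indeed the image of $\Aut(N_1)\times C_{g_i}\times C_{h_i}$, so its closure is proper. Therefore
\[
\bigcap_i U_i\;\cap\;\bigcap_{i<j}\bigl(C_g\times C_h\smallsetminus\overline{B_{ij}}\bigr)
\]
is a finite intersection of non-empty open subsets of the irreducible variety $C_g\times C_h$. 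It is therefore non-empty, and every pair in it generates $N$.
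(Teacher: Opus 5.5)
Your proposal is correct and follows the paper's proof essentially step for step. You get openness from Lemma~\ref{lemma: generic two generation}, take the case $r=1$ from the literature (the paper cites Guralnik--Saxl, Theorem 8.6), and for $r\geq 2$ use the same sets $U_i$, the same bad loci $V_{i,j}$ bounded via $\dim N_1>2\rank N_1$, and the same contradiction via Lemma~\ref{lemma: special Goursat}, while also spelling out the irreducibility of $C_g\times C_h$ that the paper uses implicitly.
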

\begin{proof}
	The openness follows from Lemma \ref{lemma: generic two generation}. So we only need to prove the non-emptiness.
	For $r=1$ this is Theorem 8.6 in \cite{GuralnikSaxl:GenerationOfFiniteAlmostSimpleGroupsByConjugates} and we will reduce to this case via Lemma \ref{lemma: special Goursat}. 
	
	Henceforth we assume $r\geq 2$.
	Note that each $g_i$ and $h_i$ is a regular semisimple element of $N_1$. Furthermore, $C_g=C_{g_1}\times\ldots\times C_{g_r}$ and similarly for $h$.
	By the $r=1$ case, there exists for every $i=1,\ldots,r$ a non-empty open subset $U_i$ of $C_{g_i}\times C_{h_i}$ such that $\langle n_1,m_1\rangle=N_1$ for all $(n_1,m_1)\in U_i$. Recall that the automorphism group $\Aut(N_1)$ of $N_1$ naturally has the structure of an algebraic group with identity component, the group of inner automorphisms equal to $N_1$ (\cite[Expose XXIV]{SGA3.3} or \S 5, in particular \S 5.3. (c), in \cite{BorelSerre:TheoremsDeFinitudeEnCohomologieGaloisienne})). For any pair $1\leq i<j\leq r$ consider the set
	$$V_{i,j}=\big\{(n_1,\ldots,n_r,m_1,\ldots,m_r)\in C_g\times C_h\subseteq N^2=N_1^{2r}|\ \psi(n_i)=n_j,\ \psi(m_i)=m_j \text{ for some } \psi\in\Aut(N_1)\big\}.$$
	Then $V_{i,j}$ is contained in the image of the morphism
	$$\alpha_{i,j}\colon C_{g_1}\times\ldots\times C_{g_{j-1}}\times C_{g_{j+1}}\times\ldots\times C_{g_r}\times C_{h_1}\times\ldots\times C_{h_{j-1}}\times C_{h_{j+1}}\times\ldots\times C_{h_r}\times\Aut(N_1)\to N^2$$
	of varieties, that inserts $\psi(n_i)$ and $\psi(m_i)$ in the $j$-th positions.
	Because $g_i$ is regular in $N_1$, the dimension of $C_{g_i}$ has the maximal possible value $\dim(N_1)-\rank(N_1)$. Therefore, the dimension of the domain of $\alpha_{i,j}$ is $$2(r-1)(\dim(N_1)-\rank(N_1))+\dim(N_1)=(2r-1)\dim(N_1)-2(r-1)\rank(N_1).$$ It follows that $V_{i,j}$ is contained in a closed subset of $C_g\times C_h$ of dimension at most $(2r-1)\dim(N_1)-2(r-1)\rank(N_1)$. The dimension of $C_g\times C_h$ is $2r(\dim(N_1)-\rank(N_1))$. 
	We have $(2r-1)\dim(N_1)-2(r-1)\rank(N_1)<2r(\dim(N_1)-\rank(N_1))$ if and only if $\dim(N_1)>2\rank(N_1)$.
	As $\dim(N_1)=\rank(N_1)+|R|$, where $R$ is the set of roots for $N_1$ (with respect to some maximal torus), this is equivalent to $|R|>\rank(N_1)$ and the latter is clearly a true statement because $\rank(N_1)$ is the number of simple roots.
%	
%	
%	, a quick inspection of a classification table for the irreducible root systems shows that  $\dim(N_1)>2\rank(N_1)$ does indeed hold for any simple algebraic group $N_1$. 
Thus $V_{i,j}$ is contained in a closed proper subset of $C_g\times C_h$. Therefore, removing all the $V_{i,j}$'s from $U_1\times\ldots\times U_r$ will leave us with a non-empty subset $V$ of $C_g\times C_h$.
	
	We claim that $\langle n,m\rangle=N$ for every $(n,m)\in V$. 
	Suppose, for a contradiction, that $H=\langle n,m\rangle$ is a proper subgroup of $N$ for some $(n,m)\in V$.	
	By the choice of $U_i$, the image of $H$ under each projection is $N_1$. So $H$ is a subdirect product and it follows from Lemma \ref{lemma: special Goursat} that there exist indices $1\leq i<j\leq r$ and an automorphism $\psi$ of $N_1$ such that $\psi(h_i)=h_j$ for every $h=(h_1,\ldots,h_r)\in H$. But that would mean that $(n,m)\in V_{i,j}$; a contradiction.
\end{proof}	

We are now prepared to prove generic $2$-generation for some not necessarily connected semisimple groups.

\begin{lemma} \label{lemma: generic 2 generation}
	Let $N_1$ be a simple algebraic group and $N=N_1^r$. Let $A$ be a finite group acting on $N$ such that $A$ is generated by $a,a'\in A$. Then the set $\{(n,n')\in N^2|\ \langle (n,a),(n',a')\rangle=N\rtimes A\}$ contains a non-empty  open subset of $N^2$.
\end{lemma}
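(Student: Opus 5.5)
Let $s$ and $s'$ be the orders of $a$ and $a'$, and let $\psi\colon N\to N,\ n\mapsto na(n)\cdots a^{s-1}(n)$ and $\psi'\colon N\to N,\ n\mapsto na'(n)\cdots a'^{s'-1}(n)$ be the morphisms from Lemma \ref{lemma: get conjugacy class of regular element}. In $N\rtimes A$ we have $(n,a)^s=(\psi(n),1)$ and $(n',a')^{s'}=(\psi'(n'),1)$. So the closed subgroup $\langle (n,a),(n',a')\rangle$ contains $\langle \psi(n),\psi'(n')\rangle\leq N$. If the latter equals $N$, then the subgroup contains $N$ and surjects onto $A=\langle a,a'\rangle$, hence equals $N\rtimes A$. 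It therefore suffices to show that
$$\Omega=\{(n,n')\in N^2|\ \langle \psi(n),\psi'(n')\rangle=N\}=(\psi\times\psi')^{-1}(U),\qquad U=\{(x,y)\in N^2|\ \langle x,y\rangle=N\},$$
contains a non-empty open subset of $N^2$.

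By Lemma \ref{lemma: generic two generation}, $U$ is open in $N^2$ ($N$ is connected semisimple), so $\Omega$ is open as the preimage of an open set under the morphism $\psi\times\psi'$. What remains is non-emptiness. By Lemma \ref{lemma: get conjugacy class of regular element}, there are regular semisimple elements $g,h\in N_1^r$ with all coordinates satisfying $g_i^2\neq1$ and $h_i^2\neq 1$, such that $C_g\subseteq\psi(N)$ and $C_h\subseteq\psi'(N)$. By Lemma \ref{lemma: Guralnik}, there are $(x,y)\in C_g\times C_h$ with $\langle x,y\rangle=N$. Choose $n,n'$ with $\psi(n)=x$ and $\psi'(n')=y$. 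Then $(n,n')\in\Omega$, so $\Omega\neq\emptyset$.

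The substance is all in the earlier lemmas, chiefly Lemma \ref{lemma: Guralnik}, which needs the generation result for conjugacy classes of regular semisimple elements, and Lemma \ref{lemma: get conjugacy class of regular element}, which guarantees that whole conjugacy classes lie in the image of the norm map $\psi$. The only point needing care here is the reduction from generation of $N\rtimes A$ to generation of $N$ by the $s$-th and $s'$-th powers. That reduction uses only that $A$ is finite and generated by $a,a'$. The degenerate cases $a=1$ or $a'=1$ are covered, since then $\psi$ or $\psi'$ is the identity.
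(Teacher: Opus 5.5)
Your proof is correct and is essentially the paper's argument. You reduce to $N\leq\langle (n,a),(n',a')\rangle$ via $(n,a)^s=(\psi(n),1)$ and $(n',a')^{s'}=(\psi'(n'),1)$, take the preimage under $\psi\times\psi'$ of the open set $U$ from Lemma \ref{lemma: generic two generation} (the paper writes this as a single morphism $N^2\to N^2$), and get non-emptiness from Lemmas \ref{lemma: get conjugacy class of regular element} and \ref{lemma: Guralnik}.
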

\begin{proof}
	Let $s,s'\geq 1$ be the order of $a$ and $a'$ respectively. Note that $(n,a)^s=(na(n)\ldots a^{s-1}(n),1)$ so that $na(n)\ldots a^{s-1}(n)\in N\cap \langle (n,a),(n',a')\rangle$ for all $n\in N$. Similarly, $n'a'(n')\ldots a'^{s'-1}(n')\in N\cap \langle (n,a),(n',a')\rangle$ for all $n'\in N$. We consider the morphism $$\psi\colon N^2\to N^2,\ (n,n')\mapsto (na(n)\ldots a^{s-1}(n),n'a'(n')\ldots a'^{s'-1}(n'))$$
	of varieties. Let $U=\{(n,n')\in N^2|\ \langle n,n'\rangle=N \}$ be the non-empty open subset from Lemma~\ref{lemma: generic two generation}. If $\psi(N^2)\cap U\neq\emptyset$, then $\psi^{-1}(U)$ is a non-empty open subset of $N^2$ satisfying the requirement of the lemma, because $N=\langle \psi(n,n')\rangle\leq \langle (n,a),(n',a')\rangle$ for all $(n,n')\in \psi^{-1}(U)$.
	
	It thus suffices to show that $\psi(N^2)\cap U\neq\emptyset$. By Lemma~\ref{lemma: get conjugacy class of regular element} there exist regular semisimple elements $g=(g_1,\ldots,g_r)\in N_1^r$ and $h=(h_1,\ldots,h_r)\in N_1^r$ with $g_i^2\neq 1$ and $h_i^2\neq 1$ for $i=1,\ldots,r$ such that for their conjugacy classes $C_g,C_h$ in $N$, we have $C_g\times C_h\subseteq\psi(N^2)$. On the other hand, $(C_g\times C_h)\cap U\neq\emptyset$ by Lemma \ref{lemma: Guralnik}. Thus $\psi(N^2)\cap U\neq\emptyset$ as desired.
\end{proof}

%\mw{There might/should be a reference for the following lemma but I did not find anything.}
%
%\begin{lemma}
%	Let $k_0$ be an algebraically closed subfield of $\C$ and let $X$ be an affine algebraic variety over $k_0$. Then $X(k_0)$ is dense in $X(\C)$ with respect to the analytic topology on $X(\C)$.
%\end{lemma}
%\begin{proof}
%	?? Maybe just assume smooth and then use etale cover $U\to \A^n_{k_0}$ The local analytic  inverse maps $k_0$ points to $k_0$ points. 
%\end{proof}
%

With Lemma \ref{lemma: generic 2 generation} in hand, it is now a simple matter to show that the projection $N\rtimes A\twoheadrightarrow A$ is Gasch\"{u}tz.

\begin{lemma} \label{lemma: case N1 finite}
	Let $N_1$ be a simple algebraic group and $A$ a finite group acting on $N=N_1^r$. Furthermore, let $a_1,\ldots,a_d$ be generators of $A$ with $d(N\rtimes A)\leq d$. 
	Then the set $$\{(n_1,\ldots,n_d)\in N^d|\ \langle (n_1,a_1),\ldots,(n_d,a_d)\rangle=N\rtimes A\}$$ contains a non-empty open subset of $N^d$. In particular, the projection $N\rtimes A\twoheadrightarrow A$ is Gasch\"{u}tz.  
\end{lemma}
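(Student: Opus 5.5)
The plan is to reduce to Lemma~\ref{lemma: generic 2 generation} by picking two of the $a_i$'s that already generate $A$, or failing that, to combine the generic $2$-generation of $N$ with a separate argument for the finite part. Two cases come up. If $d\geq 2$ and some two of the given generators, say $a_1,a_2$, already generate $A$, then Lemma~\ref{lemma: generic 2 generation} gives a non-empty open $U\subseteq N^2$ with $\langle (n_1,a_1),(n_2,a_2)\rangle=N\rtimes A$ for $(n_1,n_2)\in U$. Then $U\times N^{d-2}$ is the desired open set. In general, however, $A$ need not be generated by two of the $a_i$'s, so I would argue more directly.

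For the general case, let $\Omega\subseteq N^d$ be the set in question. I would first show that $\Omega$ contains a non-empty open subset; it need not be open itself. The key observation is that for $(n_1,\ldots,n_d)$, the group $\Gamma=\langle (n_1,a_1),\ldots,(n_d,a_d)\rangle$ surjects onto $A$. Hence $\Gamma=N\rtimes A$ if and only if $\Gamma\cap N=N$, which by Lemma~\ref{lemma: identity component} is the statement that $\Gamma^o=N$.

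To produce elements of $\Gamma\cap N$, let $e$ be the exponent of $A$ (or the order of each relevant word). Fix two words $w,w'$ in the free group $F_{x_1,\ldots,x_d}$ with $w(a_1,\ldots,a_d)=1=w'(a_1,\ldots,a_d)$; for instance $w=x_1^{s_1}$. Evaluating these words on the $(n_i,a_i)$ gives a morphism of varieties $\Psi_{w,w'}\colon N^d\to N^2$ whose image lies in $(\Gamma\cap N)^2$. If $\Psi_{w,w'}(N^d)$ meets the open set $U$ of Lemma~\ref{lemma: generic two generation}, then $\Psi_{w,w'}^{-1}(U)$ is a non-empty open subset of $\Omega$. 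So the task reduces to choosing $w,w'$ whose image is rich enough. Following Lemma~\ref{lemma: get conjugacy class of regular element}, I would take $w=x_1^{s}$ and $w'=x_j^{s'}$ when $a_1,a_j$ are nontrivial; if all $a_i$ are trivial, take $w=x_1$ and $w'=x_2$. That lemma then gives that $\Psi$ contains $C_g\times C_h$ for suitable regular semisimple $g,h$, and Lemma~\ref{lemma: Guralnik} gives $(C_g\times C_h)\cap U\neq\emptyset$.

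The main obstacle is the degenerate case $d=1$, or more generally the case where only one coordinate is available. With one generator, $\langle (n,a)\rangle$ is abelian-by-finite, so it cannot contain the non-abelian group $N$. But then $d(N\rtimes A)\geq 2$, so the hypothesis $d(N\rtimes A)\leq d$ forces $d\geq 2$, and two coordinates are always available. There is a second subtlety: in $\Psi$ the two coordinates $n_1$ and $n_j$ vary independently, since $j\neq 1$, so the product structure gives image $\supseteq C_g\times C_h$. This needs $j\neq1$, which is possible since $d\geq 2$. If $a_1=1$, I would use $w=x_1$ and get all of $N$ in that factor.

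Finally, the Gasch\"{u}tz claim follows. Given generators $a_1,\ldots,a_d$ of $A$ with $d(N\rtimes A)\leq d$, the non-empty open subset of $\Omega$ has a point $(n_1,\ldots,n_d)$, and the elements $(n_i,a_i)$ are generators of $N\rtimes A$ lifting the $a_i$.
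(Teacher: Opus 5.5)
Your proof is correct and is essentially the paper's. The paper applies Lemma \ref{lemma: generic 2 generation} directly to the subgroup $\langle a_1,a_2\rangle$ of $A$, with the restricted action on $N$. This gives $N\leq\langle (n_1,a_1),(n_2,a_2)\rangle$ on a non-empty open $U\subseteq N^2$; the paper then takes $U\times N^{d-2}$ and concludes by surjectivity onto $A$. So your case split is unnecessary, and your ``general case'' just re-runs the proof of that lemma inside $N^d$. Likewise your special handling of trivial $a_i$ is not needed, since Lemma \ref{lemma: get conjugacy class of regular element} already covers automorphisms of order $s=1$.
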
	
\begin{proof}
	Because $N$ is non-commutative, we have $2\leq d(N\rtimes A)\leq d$.
	By Lemma \ref{lemma: generic 2 generation}, there exists a non-empty open subset $U$ of $N^2$ such that $\langle (n_1,a_1) ,(n_2,a_2)\rangle =N\rtimes\langle a_1,a_2\rangle$ for all $(n_1,n_2)\in U$. Set $V=U\times N^{d-2}\subseteq N^d$. Then $V$ is a non-empty open subset of $N^d$ and $\langle (n_1,a_1),\ldots,(n_d,a_d)\rangle=N\rtimes A$ for all $(n_1,\ldots,n_d)\in V$, because $N\leq \langle (n_1,a_1) ,(n_2,a_2)\rangle\leq \langle (n_1,a_1),\ldots,(n_d,a_d)\rangle$ and $\langle (n_1,a_1),\ldots,(n_d,a_d)\rangle$ surjects onto $A$.	
\end{proof}

We note that the proof of Lemma \ref{lemma: case N1 finite} shows that $d(N\rtimes A)=\max\{2,d(A)\}$. To tackle the general $N=N_1^r$ case, we need three more preparatory lemmas.

%
%\begin{lemma}
%	Let $L$ be a finite dimensional semisimple Lie algebra. Then the set 
%	$$\{(x,y)\in L\times L|\ k\langle x,y\rangle=L\}$$ is non-empty and Zariski-open in $L\times L$.
%\end{lemma}
%\begin{proof}
%	?? Use \cite[Prop. 1.1.3]{Bois:GeneratorsOfSimpleLieAlgebrasInArbitraryCharacteristic} and \cite[Theorem 6]{Kuranishi:OnEverywhereDenseImbeddingOfFreeGroupsInLieGroups}.
%\end{proof}

\begin{lemma} \label{lemma: subdirect product of simple}
	Let $H=H^o$ be a subdirect product of $r\geq 1$ copies of $N_1$, where $N_1$ is a simple algebraic group. Then $H\simeq N_1^s$ for some $s\geq 1$.
\end{lemma}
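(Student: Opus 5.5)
We argue by induction on $r$, using Lemma \ref{lemma: Goursat} and Lemma \ref{lemma: special Goursat} in the same way as in the proof of Lemma \ref{lemma: Guralnik}. For $r=1$ there is nothing to prove, since $H\leq N_1$ surjects onto $N_1$ and hence $H=N_1$. For $r\geq 2$, if $H=N_1^r$ we are done with $s=r$. Otherwise, Lemma \ref{lemma: special Goursat} provides indices $i<j$ and an automorphism $\psi$ of $N_1$ with $\psi(n_i)=n_j$ for all $(n_1,\ldots,n_r)\in H$.

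Now consider the projection $\pi\colon N_1^r\to N_1^{r-1}$ that forgets the $j$-th coordinate. On $H$ the $j$-th coordinate is determined by the $i$-th one, so $\pi|_H$ is injective on $k$-points. Being injective on points is enough to conclude that $\pi|_H$ is an isomorphism onto its image, because we are in characteristic zero: a closed immersion is the same as a morphism with trivial kernel, and the kernel of $\pi|_H$ is a smooth group scheme with trivial $k$-points. Concretely, the inverse map is $(n_1,\ldots,\widehat{n_j},\ldots,n_r)\mapsto(\ldots,\psi(n_i),\ldots)$, inserting $\psi(n_i)$ in position $j$, so $H\simeq\pi(H)$.

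The image $\pi(H)$ is a closed connected subgroup of $N_1^{r-1}$, since images of morphisms of algebraic groups are closed and images of connected groups are connected. It still surjects onto each of the $r-1$ factors, because the remaining coordinate projections of $H$ are unchanged. So $\pi(H)=\pi(H)^o$ is a subdirect product of $r-1$ copies of $N_1$, and by induction $\pi(H)\simeq N_1^s$ for some $s\geq 1$. Hence $H\simeq N_1^s$.

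The only point needing care is that $\pi|_H$ is an isomorphism of algebraic groups and not merely a bijection. The explicit inverse above settles this directly. The connectedness hypothesis $H=H^o$ is in fact not needed for this argument, though it is harmless to keep.
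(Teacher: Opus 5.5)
Your proof is correct, and it takes a different route from the paper's.

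\textbf{The paper's route.} The paper argues structurally. Under each surjection $\pi_i\colon H\twoheadrightarrow N_1$, the radical $R(H)$ maps to a connected normal solvable subgroup of $N_1$, and the center of $H$ maps to a normal abelian subgroup of $N_1$; both images are therefore trivial. So $H$ is a centerless semisimple group. By the decomposition recalled in the proof of Lemma~\ref{lemma: almost-minimal subgroups}, it is then a direct product $H_1\times\cdots\times H_s$ of simple groups, and each $H_j$ maps isomorphically onto $N_1$ under a suitable $\pi_i$.

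\textbf{Your route.} You induct on $r$. Lemma~\ref{lemma: special Goursat} gives a coordinate $j$ that is an automorphic image of another coordinate on $H$, and you delete it. The insertion map $N_1^{r-1}\to N_1^r$, which puts $\psi(n_i)$ in position $j$, is a morphism inverse to $\pi|_H$, so $H\simeq\pi(H)$. The image $\pi(H)$ is again a closed connected subdirect product, so the induction applies.

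\textbf{Comparison.} Your argument avoids the structure theory of semisimple groups (radical, center, almost-direct product decomposition). It also gives slightly more: the isomorphism $H\simeq N_1^s$ is realized by a coordinate projection. As you observe, it does not need connectedness, since Lemma~\ref{lemma: special Goursat} does not use it. In fact your induction shows that every closed subdirect product of copies of $N_1$ is automatically connected. The paper's proof is shorter because it reuses facts already established. However, it genuinely uses $H=H^o$, because the decomposition of a centerless semisimple group into simple factors is a statement about connected groups. Your aside about the kernel being a smooth group scheme is correct but redundant once you have the explicit inverse.
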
	
\begin{proof}
	First note that $H$ is semisimple. Indeed, if $R=R(H)$ is the radical of $H$ and $\pi_i\colon H\twoheadrightarrow N_1$ is the projection onto the $i$-th factor ($i=1,\ldots,r$), then $\pi_i(R)$ is a connected normal closed solvable subgroup of $N_1$ and therefore trivial. Thus $R$ must be trivial.
	
	Similarly, if $Z$ is the center of $H$, then $\pi_i(Z)$ is a normal closed abelian subgroup of $N_1$ and therefore trivial. Hence $H$ is a centerless semisimple algebraic group and therefore $H=H_1\times\ldots\times H_s$ is a direct product of simple algebraic groups $H_1,\ldots,H_s$.
	
	It suffices to show that $H_j\simeq N_1$ for $j=1,\ldots,s$. As $H_j$ is normal in $H$, also $\pi_i(H_j)$ is normal in $N_1$. So $\pi_i(H_j)=1$ or $\pi_i(H_j)=N_1$. As $\pi_i(H_j)=1$ for $i=1,\ldots,r$ is impossible, there exists an $i$ (depending on $j$) such that $\pi_{i}(H_j)=N_1$. In other words, the composition $H_j\to H\xrightarrow{\pi_i} N_1$ is surjective. As $H_j$ is simple, this composition is an isomorphism.
\end{proof}

\begin{lemma} \label{lemma: Zariski closure of conjugacy class}
	Let $G$ be an algebraic group and $g\in G$. If the Zariski closure of the conjugacy class of $g$ contains $1$, then $g$ is unipotent.
\end{lemma}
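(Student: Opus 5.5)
We use the Jordan decomposition $g=g_sg_u$ and show that $g_s=1$. Conjugation preserves the Jordan decomposition: the conjugacy class $C$ of $g$ consists of the elements $xg_sx^{-1}\cdot xg_ux^{-1}$. The plan is to find a class function that detects nontriviality of the semisimple part and takes the same value at every point of $\overline{C}$, hence at $1$.

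First embed $G$ as a closed subgroup of some $\Gl_n$ via a faithful representation. Closures of subsets of $G$ are the same whether taken in $G$ or in $\Gl_n$, and Jordan decompositions agree in $G$ and in $\Gl_n$. So it suffices to prove: if $g\in\Gl_n$ and $1$ lies in the Zariski closure of $\{xgx^{-1}\mid x\in G\}$, then $g$ is unipotent.

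Next consider the characteristic polynomial map $\chi\colon\Gl_n\to\mathbb{A}^n$, which sends a matrix to the coefficients of its characteristic polynomial. It is a morphism of varieties and is constant on $\Gl_n$-conjugacy classes, hence in particular on $C$. Being continuous, it is also constant on $\overline{C}$. Since $1\in\overline{C}$, we get $\chi(g)=\chi(1)$, i.e.\ the characteristic polynomial of $g$ is $(T-1)^n$. So all eigenvalues of $g$ equal $1$, and $g-1$ is nilpotent, i.e.\ $g$ is unipotent in $\Gl_n$. Because the Jordan decomposition is intrinsic (preserved by the closed embedding), $g$ is unipotent in $G$.

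There is no real obstacle. The only point needing care is the passage between $G$ and $\Gl_n$: unipotence of an element of $G$ must be independent of the chosen faithful representation, which is the standard fact that morphisms preserve the Jordan decomposition and the paper has already used it. The rest is the continuity of the characteristic polynomial map.
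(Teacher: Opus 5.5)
Your proof is correct and is essentially the paper's argument: embed $G$ in $\Gl_n$, note that the characteristic polynomial is a conjugation-invariant polynomial function of the matrix entries, so it is constant on the closure of the conjugacy class, giving $\chi_g(t)=(t-1)^n$ and hence $g$ unipotent. Your remarks about closures in $G$ versus $\Gl_n$ and about unipotence being independent of the embedding make explicit what the paper leaves implicit.
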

\begin{proof}
	We may embed $G$ into some $\Gl_n$. Conjugate elements in $\Gl_n$ have the same characteristic polynomial and the coefficients of the characteristic polynomial $\chi_h(t)$ of $h\in\Gl_n$ are polynomials in the matrix entries of $h$. It follows that $\chi_g(t)=\chi_1(t)=(t-1)^n$. So $1$ is the only eigenvalue of $g$ and therefore $g$ is unipotent.	
%	Let $C[G]\subseteq k[G]$ denote the algebra of class-functions, i.e., the set of all regular functions on $G$ that are constant on all conjugacy classes of $G$. Let $g_s\in G$ denote the semisimple part of $g$. According to \cite[Corollary 1, Section 3.4]{Steinberg:ConjugacyClassesInAlgebraicGroups}, we have $f(g)=f(g_s)$ for all $f\in C[G]$.
%	On the other hand, by assumption, $f(g)=f(1)$ for all $f\in C[G]$. Thus $f(g_s)=f(1)$ for all $f\in C[G]$. By \cite[Corollary 3, Section 3.4]{Steinberg:ConjugacyClassesInAlgebraicGroups}, two semisimple elements of $G$ that yield the same value for all class functions are conjugate. So $g_s$ and $1$ are conjugate. Therefore $g_s=1$ and $g$ is unipotent.
\end{proof}

\begin{lemma} \label{lemma: finitely many conjugacy classes}
	Let $N_1$ be a simple algebraic group, $r,s\geq 1$ integers and $b\in N_1^s$. Then there exist finitely many conjugacy classes $C_1,\ldots,C_m$ of $N_1^r$ such that $\psi(b)\in C_1\cup\ldots\cup C_m$ for all morphisms $\psi\colon N_1^s\to N_1^r$ of algebraic groups. 
\end{lemma}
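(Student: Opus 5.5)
Write $b=(b_1,\ldots,b_s)$. The plan is to reduce to understanding morphisms $N_1^s\to N_1$ coordinatewise, and then to understand those via the structure of normal subgroups of $N_1^s$.

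First, let $\psi\colon N_1^s\to N_1^r$ be any morphism. Composing with the projections $\pi_j\colon N_1^r\to N_1$ gives morphisms $\psi_j=\pi_j\psi\colon N_1^s\to N_1$. Since conjugacy classes of $N_1^r$ are products $C_{c_1}\times\ldots\times C_{c_r}$ of conjugacy classes of $N_1$, it suffices to show the following for $r=1$: there are finitely many conjugacy classes $D_1,\ldots,D_p$ of $N_1$ containing $\phi(b)$ for every morphism $\phi\colon N_1^s\to N_1$. Then the products $D_{i_1}\times\ldots\times D_{i_r}$, which are $p^r$ in number, do the job for general $r$.

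Next, classify morphisms $\phi\colon N_1^s\to N_1$. The image $\phi(N_1^s)$ is a connected closed subgroup, and the kernel $K$ is a normal closed subgroup of $N_1^s$. The normal closed subgroups of $N_1^s$ are exactly the subproducts $\prod_{i\in I}N_1$: since $N_1$ is simple and centerless, each such subgroup is connected by the argument with projections as in Lemma \ref{lemma: subdirect product of simple}, and a normal connected subgroup of a centerless semisimple group is a product of some of the simple factors. Hence $\phi$ factors as a projection $N_1^s\to N_1^{t}$ onto the complementary coordinates, followed by an injective morphism $N_1^t\to N_1$. Because $\dim$ must match or be smaller and the image is semisimple of rank at most $\rank N_1$, we use that $N_1^t$ embeds in $N_1$ at all only for boundedly many $t$. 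In fact $t\le\dim N_1/\dim N_1$, so $t\in\{0,1\}$. Thus either $\phi$ is trivial, giving $\phi(b)=1$, or $\phi=\alpha\circ\pi_i$ for some $i$ and some injective endomorphism $\alpha$ of $N_1$. An injective morphism $N_1\to N_1$ is bijective onto a closed connected subgroup of equal dimension, hence surjective, and in characteristic zero it is an isomorphism; so $\alpha\in\Aut(N_1)$.

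So $\phi(b)\in\{1\}\cup\bigcup_{i=1}^s\{\alpha(b_i)\mid \alpha\in\Aut(N_1)\}$. The final step is to show that the $\Aut(N_1)$-orbit of $b_i$ is a finite union of $N_1$-conjugacy classes. Here I would use, as recalled in the proof of Lemma \ref{lemma: Guralnik}, that $\Aut(N_1)$ is an algebraic group whose identity component is the group of inner automorphisms, identified with $N_1$, so $\Aut(N_1)/N_1$ is finite. Choosing coset representatives $\beta_1,\ldots,\beta_q$, we get $\alpha(b_i)\in C_{\beta_l(b_i)}$ for some $l$. Together with the class of $1$, this yields at most $1+sq$ classes in $N_1$, which proves the case $r=1$.

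The main obstacle is the classification of normal subgroups and morphisms in the second step, in particular ruling out exotic non-injective images. I expect this to be handled cleanly by the centerless semisimple structure already recalled in Lemma \ref{lemma: almost-minimal subgroups} and Lemma \ref{lemma: subdirect product of simple}.
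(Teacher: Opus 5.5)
Your proof is correct and follows essentially the same route as the paper: normal closed subgroups of $N_1^s$ are subproducts, so every morphism $N_1^s\to N_1$ is trivial or of the form $\alpha\circ\pi_i$ with $\alpha\in\Aut(N_1)$, and then finiteness of $\Aut(N_1)/\mathrm{Inn}(N_1)$ together with the product structure of conjugacy classes in $N_1^r$ finishes the argument. The one soft spot is your claim that normal subgroups are connected: the projection argument of Lemma~\ref{lemma: subdirect product of simple} does not give this by itself. The paper uses a short commutator argument instead, and you can also get it directly: $K^o$ is a subproduct, and $K/K^o$ is a finite normal, hence central, subgroup of the connected centerless group $N_1^s/K^o$, so it is trivial.
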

\begin{proof}
	First note that a normal closed subgroup $N$ of $N_1^s$ is a product of some of the factors. (This is not hard to see: If $(n_1,\ldots,n_s)\in N_1^s$ with $n_1\neq 1$ say, then there exists an $n'\in N_1$ with $n_1n'\neq n'n_1$ and so
	$$(n',1,\ldots,1)(n_1,\ldots,n_s)(n',1,\ldots,1)^{-1}(n_1,\ldots,n_s)^{-1}=(n'n_1n'^{-1}n_1^{-1},1,\ldots,1)$$ is a non-trivial element of $N\cap(N_1\times \{1\}^{s-1})$. Thus $N_1\times\{1\}^{s-1}\leq N$.)
	
	Let $\psi\colon N_1^s\to N_1$ be a non-trivial morphism of algebraic groups. As the kernel is a product of some of the factors, for dimension reasons, it must be a product of all but one factor. It follows that $\psi$ is of the form $\psi(n_1,\ldots,n_s)=\psi'(n_i)$ for a unique $i\in\{1,\ldots,s\}$ and a non-trivial morphism $\psi'\colon N_1\to N_1$. As $N_1$ is simple, the kernel of $\psi'$ is trivial and for dimension reasons $\psi'$ must be an isomorphism. In summary, we see that every morphism $\psi\colon N_1^s\to N_1^r$ is of the form
	$\psi(n_1,\ldots,n_s)=(\psi_1(n_{\sigma(1)}),\ldots,\psi_r(n_{\sigma(r)}))$ for a map $\sigma\colon\{1,\ldots,r\}\to\{1,\ldots,s\}$ and $\psi_1,\ldots,\psi_r\colon N_1\to N_1$ morphisms that are either trivial or automorphisms.
	
	For $n\in N_1$, the set $D(n)=\{\psi(n)|\ \psi\colon N_1\to N_1 \text{ a morphism}\}$ is a finite union of conjugacy classes of $N_1$, because the outer automorphism group of $N_1$ is finite (\cite[Section 27.4]{Humphreys:LinearAlgebraicGroups}).
	
	Write $b=(b_1,\ldots,b_s)\in N_1^s$. The above discussion shows that $\psi(b)\in (D(b_1)\cup\ldots\cup D(b_s))^r$ for all morphisms $\psi\colon N_1^s\to N_1^r$. The set $(D(b_1)\cup\ldots\cup D(b_s))^r$ is an $r$-fold product of a finite union of conjugacy classes of $N_1$. It therefore also is a finite union of conjugacy classes of $N_1^r$.	
\end{proof}

The following proposition establishes the case $N=N_1^r$.

\begin{prop} \label{prop: Gaschuetz for N1 kernel}
	Let $\f\colon G\twoheadrightarrow H$ be a quotient map of algebraic groups with almost-minimal kernel $N=N_1^r$, where $N_1$ a simple algebraic group. Then $\f$ is Gasch\"{u}tz.
\end{prop}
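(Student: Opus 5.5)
The plan is to split off the part of $G$ that centralises $N$. This reduces the problem to a fibre product of $H$ with a group of the form $N\rtimes A$, $A$ finite, where Lemma \ref{lemma: case N1 finite} applies. Fix generators $h_1,\ldots,h_d$ of $H$ with $d(G)\leq d$.

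\emph{Step 1: reduction to a fibre product.} Since $N=N_1^r$ is centerless, $K=C_G(N)$ is a normal closed subgroup of $G$ with $K\cap N=1$. Conjugation gives an embedding $G/K\hookrightarrow\Aut(N)$. The identity component of $\Aut(N)$ is the group of inner automorphisms, i.e.\ $N$, and $\Aut(N)=N\rtimes\Gamma$ is split with $\Gamma$ finite: $\Gamma$ is generated by permutations of the factors and diagram automorphisms with respect to a pinning. Hence $G/K\simeq N\rtimes A$ for a finite group $A\leq\Gamma$ acting faithfully on $N$ by outer automorphisms. In particular $C_{N\rtimes A}(N)=1$, and $A\simeq G/NK$. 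Applying the isomorphism (\ref{eq: can isom}) from the proof of Lemma \ref{lemma: Goursat} with $N_1=K$ and $N_2=N$ gives
$$G\simeq (N\rtimes A)\times_A H,$$
where $H=G/N\twoheadrightarrow A$. Let $a_i\in A$ be the image of $h_i$. Since $N\rtimes A$ is a quotient of $G$, we have $d(N\rtimes A)\leq d$, so Lemma \ref{lemma: case N1 finite} gives a non-empty open $U\subseteq N^d$ such that $\langle (n_1,a_1),\ldots,(n_d,a_d)\rangle=N\rtimes A$ for all $(n_i)\in U$. For $(n_i)\in U$ put $g_i=((n_i,a_i),h_i)\in G$ and $L=\langle g_1,\ldots,g_d\rangle$.

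\emph{Step 2: classifying the failure.} By construction $L$ surjects onto both $N\rtimes A$ and $H$. By Lemma \ref{lemma: Goursat}, $L=(N\rtimes A)\times_Q H$ for a common quotient $Q$; let $M$ be the kernel of $N\rtimes A\twoheadrightarrow Q$. The subgroup $M\cap N$ is normal in $N\rtimes A$. Since $N$ is almost-minimal and centerless, it has no non-trivial finite normal subgroups, so either $N\leq M$ or $M\cap N=1$.
\begin{itemize}
\item If $N\leq M$, then $Q$ is a quotient of $A$, so $L\supseteq(N\rtimes A)\times_A H=G$, i.e.\ $L=G$.
\item If $M\cap N=1$, then $M$ centralises $N$, hence $M=1$. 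Then $L$ is the graph of a morphism $\theta\colon H\to N\rtimes A$ over $A$ with $\theta(h_i)=(n_i,a_i)$; moreover $\theta$ is surjective, so $\theta(H^o)=N$.
\end{itemize}
So it remains to find $(n_i)\in U$ that is not of the form $(\theta(h_i))_i$ for any such $\theta$.

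\emph{Step 3: avoiding the graphs (main obstacle).} Unlike the torus case, the morphisms $\theta$ form no obvious finite-dimensional family, so I would use a dimension count. The maximal quotient of $H^o$ through which all surjections $H^o\twoheadrightarrow N$ factor is a centerless semisimple group; by Lemma \ref{lemma: subdirect product of simple} I expect it to be of the form $N_1^s$ (restricting to the relevant isotypic part). Next, fix a word $w$ such that $w(h_1,\ldots,h_d)=:b\in H^o$ and $w(a_1,\ldots,a_d)=1$. Then $\theta(b)=w\big((n_1,a_1),\ldots,(n_d,a_d)\big)$, and this lies in $N$. By Lemma \ref{lemma: finitely many conjugacy classes} applied to the image of $b$ in $N_1^s$, the element $\theta(b)$ lies in a fixed finite union $C_1\cup\ldots\cup C_m$ of conjugacy classes of $N$, independent of $\theta$. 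Every conjugacy class of $N$ has dimension at most $\dim N-\rank N<\dim N$, so $\overline{C_1\cup\ldots\cup C_m}$ is a proper closed subset of $N$. It then suffices to choose $w$ so that the word map
$$N^d\to N,\quad (n_i)\mapsto w\big((n_1,a_1),\ldots,(n_d,a_d)\big)$$
is dominant. Then the preimage of $N\smallsetminus\overline{C_1\cup\ldots\cup C_m}$ meets $U$, and any $(n_i)$ in that intersection yields $L=G$.

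For the choice of $w$, I would first try a commutator-type word in $x_1^{s_1}$ and $x_2^{s_2}$, where $s_j$ is the order of $a_j$. The power maps $n\mapsto n a(n)\cdots a^{s-1}(n)$ already hit full regular conjugacy classes by Lemma \ref{lemma: get conjugacy class of regular element}, and commutators of generic elements of two such classes should be dominant by Lemma \ref{lemma: Guralnik}. Making this dominance argument precise, together with checking that the conjugacy-class bound really depends only on $b$ (and not on $\theta$), is where I expect the real work to lie.
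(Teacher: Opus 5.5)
Your Steps 1 and 2 are correct. The reduction via $C_G(N)$ and $\Aut(N)=N\rtimes\Gamma$ to $G\simeq (N\rtimes A)\times_A H$ is a tidy substitute for the paper's Borel--Serre reduction to $G=(N\times H^o)\rtimes A$. Your graphs $\theta$ are the paper's crossed homomorphisms $\psi_{n_1,\ldots,n_d}$, and intersecting the kernels of all surjections $H^o\twoheadrightarrow N$ works as well as the paper's $K$.

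The gap is in Step 3. It rests on the word map $\psi\colon N^d\to N$, $(n_i)\mapsto w((n_1,a_1),\ldots,(n_d,a_d))$, being dominant, and you leave this unproved. Lemma \ref{lemma: Guralnik} is about generation, not about images of word maps. On $C_g\times C_h$ the commutator takes values in $C_gC_{g^{-1}}$, and density of such products of classes is not established anywhere in the paper. The power maps alone do not help either. The image of $n\mapsto na(n)\cdots a^{s-1}(n)$ is essentially the union of the conjugates of a maximal torus of $N^a$, which is not dense when $\rank N^a<\rank N$ (e.g.\ when $a$ permutes factors).

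There is also a smaller issue. You need $b=w(h_1,\ldots,h_d)\in H^o$, not merely $w(a_1,\ldots,a_d)=1$, so that $\theta(b)$ comes from $N_1^s$. But $\ker(H\to A)\simeq C_G(N)$ may be disconnected, so the exponents $s_j=\operatorname{ord}(a_j)$ do not guarantee this.

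The missing idea is that dominance is not needed: a single non-unipotent value of $\psi$ suffices.
Since $A$ is a subgroup of $N\rtimes A$ and $w(a_1,\ldots,a_d)=1$, we have $\psi(1,\ldots,1)=1$. Suppose every $(n_i)\in U$ gave $L\neq G$. Then $\psi(U)\subseteq C_1\cup\cdots\cup C_m$, so $\overline{\psi(N^d)}=\overline{\psi(U)}\subseteq\overline{C_1}\cup\cdots\cup\overline{C_m}$. As $N^d$ is irreducible, $\overline{\psi(N^d)}$ lies in a single $\overline{C_j}$, which contains $1$. By Lemma \ref{lemma: Zariski closure of conjugacy class}, $C_j$ is unipotent, so $\psi$ takes only unipotent values.

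A suitable word comes from Lemma \ref{lemma: identity component}; it has a non-unipotent value and satisfies $w(h_1,\ldots,h_d)\in H^o$. Fix $(n_i)\in U$. Your Goursat argument shows that the images of the $g_i$ generate $(N\rtimes A)\times_A (H/H^o)$, since a finite quotient of $N\rtimes A$ kills $N$. The identity component of this group is $N$, so the abstract group generated by these images meets $N$ densely. It therefore contains some non-unipotent $w((n_1,a_1),\ldots,(n_d,a_d))$ with $w(\overline{h_1},\ldots,\overline{h_d})=1$. This is exactly how the paper finishes.
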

\begin{proof}
	As a first step, we show that we can assume that the exact sequence $1\to G^o\to G\to G/G^o\to 1$ is split. By \cite[Lemma 5.11]{BorelSerre:TheoremsDeFinitudeEnCohomologieGaloisienne}, there exists a finite closed subgroup $A$ of $G$ that meets every connected component of $G$. Among all such $A$'s, let us choose one such that $|A|$ is minimal. Then the morphism $G^o\rtimes A\to G$ is a quotient map with finite kernel. Moreover, $A$ acts on $N$, as $N$ is normal in $G$, and we have an induced action of $A$ on $G^o/N$ and the morphism $(G^o/N)\rtimes A\to H$ is also a quotient map with finite kernel. In fact,
	\begin{equation} \label{eq: comm diagram for BorelSerre}
		\xymatrix{
			G^o\rtimes A \ar@{->>}[r]  \ar@{->>}[d] & (G^o/N)\rtimes A  \ar@{->>}[d] \\
			G \ar@{->>}[r] & H	
		}
	\end{equation}
	is a commutative diagram of quotient maps. Assuming that $G^o\rtimes A \twoheadrightarrow (G^o/N)\rtimes A$ is Gasch\"{u}tz, we will show that $\f\colon G\twoheadrightarrow H$ is Gasch\"{u}tz. Let $h_1,\ldots,h_d$ be generators of $H$ such that $d(G)\leq d$.
	% Let $h'_1,\ldots,h_d'$ be lifts of $h_1,\ldots,h_d$ to $(G^o/N)\rtimes A$. We claim that $\langle h_1',\ldots,h_d'\rangle=(G^o/N)\rtimes A$. As $\langle h_1',\ldots,h_d'\rangle\to H$ is a quotient map, the dimension of $\langle h_1',\ldots,h_d'\rangle$ is that of $H$, and so $\langle h_1',\ldots,h_d'\rangle$ contains the identity component $G^o/N$ of $(G^o/N)\rtimes A$. Therefore, $\langle h_1',\ldots,h_d'\rangle$ is of the form $\langle h_1',\ldots,h_d'\rangle=(G^o/N)\rtimes A'$ for some subgroup $A'$ of $A$. 
	
	%We claim that $A'$ meets every connected component of $G$. Let $G_1$ be a connected component of $G$. Then $\f(G_1)$ is a connected component of $H$. Because $\langle h_1',\ldots,h_d'\rangle=(G^o/N)\rtimes A'\to H$ is a quotient map, there exists an $a'$ in $A'$ such that $\f(a')$ lies in $\f(G_1)$. Thus there exists a $g_1$ in $G_1$ with $\f(a')=\f(g_1)$, i.e., $a'g_1^{-1}$ lie in $N\leq G^o$. Hence $a'$ and $g_1$ lie in the same connected component, namely $G_1$. By the minimality of $|A|$, we have $A'=A$ so that $\langle h_1',\ldots,h_d'\rangle=(G^o/N)\rtimes A$ as claimed. 
	
	We first show that $d(G^o\rtimes A)\leq d$. Because $d(G)\leq d$, the algebraic group $G$ can be generated by $d$ elements. Lifting these generators to elements $g'_1,\ldots,g_d'$ of $G^o\rtimes A$, we see that $\langle g_1',\ldots,g_d'\rangle$ contains $G^o$ for dimension reasons. Thus  $\langle g_1',\ldots,g_d'\rangle=G^o\rtimes A'$ for some subgroup $A'$ of $A$. As $\langle g_1',\ldots,g_d'\rangle=G^o\rtimes A'\to G$ is a quotient map, the group $A'$ must meet every connected component of $G$ and so $A'=A$ by minimality of $|A|$. Therefore $\langle g_1',\ldots,g_d'\rangle=G^o\rtimes A$ and $d(G^o\rtimes A)\leq d$.
	
	The right vertical map in (\ref{eq: comm diagram for BorelSerre}) is Gasch\"{u}tz by Lemma \ref{lemma: finite kernel is Gaschuetz} and the top horizontal map in (\ref{eq: comm diagram for BorelSerre}) is Gasch\"{u}tz by assumption. Thus the composition $G^o\rtimes A\twoheadrightarrow H$ is Gasch\"{u}tz (Lemma \ref{lemma: composition of Gaschuetz is Gaschuetz}). As $d(G^o\rtimes A)\leq d$, this implies that $h_1,\ldots,h_d$ lift to generators $g_1',\ldots,g_d'$ of $G^o\rtimes A$. The images $g_1,\ldots,g_d$ of $g_1',\ldots,g_d'$ in $G$, then are generators of $G$ and lifts of $h_1,\ldots,h_d$ by the commutativity of (\ref{eq: comm diagram for BorelSerre}).	
%	Assuming that $G^o\rtimes A \twoheadrightarrow (G^o/N)\rtimes A$ is Gasch\"{u}tz, we can thus lift the generators $h_1',\ldots,h_d'$ of $(G^o/N)\rtimes A$ to generators $g_1',\ldots,g_d'$ of $G^o\rtimes A$. The images $g_1,\ldots,g_d$ in $G$ of $g_1',\ldots,g_d'$ under $G^o\rtimes A\twoheadrightarrow G$ then generate $G$. Moreover, $\f(g_i)=h_i$ for $i=1,\ldots,d$ by the commutativity of~(\ref{eq: comm diagram for BorelSerre}). 
		Thus $\f$ is Gasch\"{u}tz. 
		
		Furthermore, the kernel of $G^o\rtimes A \twoheadrightarrow (G^o/N)\rtimes A$ is $N$ and $N$ is an almost-minimal normal closed subgroup of $G^o\rtimes A$. In summary, we see that we can assume, without loss of generality, that $G=G^o\rtimes A$ for some finite algebraic group $A$. Then $H$ can be identified with $H=G/N=(G^o/N)\rtimes A=H^o\rtimes A$.

	\medskip
	
	The automorphism group $\Aut(N_1^r)$ of $N_1^r$ is naturally an algebraic group with identity component $\Aut(N_1^r)^o=N_1^r$ acting on $N_1^r$ via conjugation (see \cite[Expose XXIV]{SGA3.3} or \S 5, in particular \S 5.3. (c), in \cite{BorelSerre:TheoremsDeFinitudeEnCohomologieGaloisienne}) and the map $G\to \Aut(N_1^r)$ that maps $g\in G$ to the automorphism of $N_1^r$ given by conjugation with $g$, is a morphism of algebraic groups. We thus have an induced morphism $G^o\to \Aut(N_1^r)^o=N_1^r$. So, for every $g\in G^o$, there exists an $n\in N=N_1^r$ such that conjugation with $g$ and conjugation with $n$ define the same automorphism of $N_1^r$, i.e., $n^{-1}g$ centralizes $N_1^r$. Let $C=C_{G^o}(N)$ be the centralizer of $N$ in $G^o$. We have just seen that $G^o=NC$. Because the center of $N$ is trivial, $N\cap C=1$ so that in fact $G^o=N\times C$. Moreover, $A$ acts on $N$ and on $C$. Indeed, as $H^o$ can be identified with $G^o/N=C$, we have $G=(N\times H^o)\rtimes A=N\rtimes H$.
	
	\medskip
	
	Let us now fix generators $h_1,\ldots,h_d$ of $H$ with $d(G)\leq d$. For $n_1,\ldots,n_d\in N$ we set $H(n_1,\ldots,n_d)=\langle (n_1,h_1),\ldots,(n_d,h_d)\rangle\leq N\rtimes H=G$. We have to show that there exist $n_1,\ldots,n_d\in N$ with $H(n_1,\ldots,n_d)=G$. Suppose, for a contradiction, that this is not the case, i.e., $H(n_1,\ldots,n_d)$ is a proper subgroup of $G$ for all $n_1,\ldots,n_d\in N$.
	
	Write $h_i=(b_i,a_i)\in H^o\rtimes A=H$ for $i=1,\ldots,d$. Then $A=\langle a_1,\ldots,a_d\rangle$ and $d(N\rtimes A)\leq d$ because $N\rtimes A$ is a quotient of $G$.
	% Since a closed subgroup of $N$ that is normal in $N\rtimes A$ is also normal in $G$, we see that $N$ is almost-minimal in $N\rtimes A$.
	 So by Lemma \ref{lemma: case N1 finite}, the set 
	$$P=\{(n_1,\ldots,n_d)\in N^d|\ \langle (n_1,a_1),\ldots,(n_d,a_d)\rangle=N\rtimes A\}$$ 
	is Zariski dense in $N^d$. Let $(n_1,\ldots,n_d)\in P$. The group $H(n_1,\ldots,n_d)\cap N$ is normalized by the elements $(n_1,h_1),\ldots,(n_d,h_d)$ of $H(n_1,\ldots,n_d)$ and by $b_1,\ldots,b_d\in H^o$ (because $H^o$ commutes with $N$). As $(n_i,h_i)=(n_i,b_i,a_i)\in (N\times H^o)\rtimes A=N\rtimes H=G$ for $i=1,\ldots,d$, we see that $(n_1,1,a_1),\ldots,(n_d,1,a_d)\in (N\times H^o)\rtimes A=G$ normalize $H(n_1,\ldots,n_d)\cap N$. Because $\langle (n_1,a_1),\ldots,(n_d,a_d)\rangle=N\rtimes A$, we find that moreover $N\rtimes A$ normalizes $H(n_1,\ldots,n_d)\cap N$. But as $H^o$ normalizes $H(n_1,\ldots,n_d)\cap N$, this implies that $H(n_1,\ldots,n_d)\cap N$ is normal in $G$. Because $N$ is an almost-minimal normal closed subgroup of $G$, we have $H(n_1,\ldots,n_d)\cap N=N$ or $H(n_1,\ldots,n_d)\cap N$ is finite. The first case leads to $N\leq H(n_1,\ldots,n_d)$ and the contradiction $G=H(n_1\ldots,n_d)N=H(n_1,\ldots,n_d)$. Therefore $H(n_1,\ldots,n_d)\cap N$ must be finite. In fact, because $N_1$ is simple, we have $H(n_1,\ldots,n_d)\cap N=1$. This means that $\f\colon N\rtimes H\twoheadrightarrow H$ restricts to an isomorphism between $H(n_1,\ldots,n_d)$ and $H$. Therefore $H(n_1,\ldots,n_d)=\{(\psi_{n_1,\ldots,n_d}(h),h)|\ h\in H\}$ for a crossed homomorphism $\psi_{n_1,\ldots,n_d}\colon H\to N$. Because $H^o$ acts trivially on $N$, the restriction $\psi_{n_1,\ldots,n_d}'\colon H^o\to N$ of $\psi_{n_1,\ldots,n_d}$ to $H^o$ is a morphism of algebraic groups.
	%We claim that $\psi_{n_1,\ldots,n_d}'$ is surjective. 
	The morphism $H(n_1,\ldots,n_d)\to (N\times  H^o)\rtimes A\to N\rtimes A$ sends $(n_i,h_i)$ to $(n_i,a_i)$. It therefore must be surjective, because  $\langle(n_1,a_1),\ldots,(n_d,a_d)\rangle=N\rtimes A$. Thus also the restriction $H(n_1,\ldots,n_d)^o\to (N\rtimes A)^o=N$ is surjective. This shows that $\psi_{n_1,\ldots,n_d}'$ is surjective.
	
	Set $K=\bigcap_{p\in P}\ker(\psi_p')$. Then $K$ is a closed normal  subgroup of $H^o$. Moreover, for $p\in P$, $b\in\ker(\psi'_p)$ and $a\in A$ we have
	$$\psi'_p(aba^{-1})=\psi_p(a)a(\psi_p(ba^{-1}))=\psi_p(a)a(b(\psi_p(a^{-1})))=\psi_p(a)a(\psi_p(a^{-1}))=\psi_p(aa^{-1})=1 $$
	because $b\in H^o$ acts trivially on $N$. This shows that $\ker(\psi'_p)$ is normalized by $A$. So $K$ is normalized by $A$ and therefore normal in $G$. Note that $\ker(\psi'_{n_1,\ldots,n_d})\leq H(n_1,\ldots,n_d)$ and therefore $K$ is contained in every $H(n_1,\ldots,n_d)$ with $(n_1,\ldots,n_d)\in P$.

	We will show that $H^o/K\simeq N_1^s$ for some $s\geq 1$. As $H^o$ is a Noetherian topological space, there exist $p_1,\ldots,p_m\in P$ such that $K=\ker(\psi'_{p_1})\cap\ldots\cap\ker(\psi'_{p_m})$. We thus get a closed  embedding of $H^o/K$ into $H^o/\ker(\psi'_{p_1})\times \ldots \times H^o/\ker(\psi'_{p_m})=N^m=N_1^{rm}$, such that $H^o/K$ is a subdirect product of $m$ copies of $N$. But then $H^o/K$ is also a subdirect product of $rm$ copies of $N_1$. So it follows from Lemma \ref{lemma: subdirect product of simple} that $H^o/K\simeq N_1^s$ for some $s\geq 1$ as claimed. 
	
	Now consider the induced morphism $\f'\colon G/K\twoheadrightarrow H/K$. We have $d(G/K)\leq d$, $$G/K=((N\times H^o)\rtimes A)/K=(N\times H^o/K)\rtimes A, \quad H/K=(H^o\rtimes A)/K=H^o/K\rtimes A$$ and $N$ is an almost-minimal normal closed subgroup of $G/K$. For every $(n_1,\ldots,n_d)\in P$ the images $\overline{h_1},\ldots,\overline{h_d}$ of $h_1,\ldots,h_d$ in $H/K$ lift to elements $(n_1,\overline{h_1}),\ldots, (n_d,\overline{h_d})$ of $G/K=N\rtimes H/K$, generating a closed subgroup $\langle (n_1,\overline{h_1}),\ldots, (n_d,\overline{h_d})\rangle$ of $G/K$. In fact, $\langle (n_1,\overline{h_1}),\ldots, (n_d,\overline{h_d})\rangle=H(n_1,\ldots,n_d)K/K$.
	If $\langle (n_1,\overline{h_1}),\ldots, (n_d,\overline{h_d})\rangle$ equals $G/K$, then $H(n_1,\ldots,n_d)K=G$, yielding $H(n_1,\ldots,n_d)=G$ because $K\leq H(n_1,\ldots,n_d)$. But, by assumption, $H(n_1,\ldots,n_d)$ is a proper closed subgroup of $G$ and therefore $\langle (n_1,\overline{h_1}),\ldots, (n_d,\overline{h_d})\rangle$ must be a proper closed subgroup of $G/K$ for all $(n_1,\ldots,n_d)\in P$. In summary, we see that $\f'\colon G/K\twoheadrightarrow H/K$ along with the generators $\overline{h_1},\ldots,\overline{h_d}$ satisfies similar properties as $\f\colon G\twoheadrightarrow H$ along with the generators $h_1,\ldots,h_d$.

	More precisely, we can assume that $H^o=N_1^s$, $G=(N\times H^o)\rtimes A$, $H=H^o\rtimes A$, $N$ is almost minimal in $G$ and $\f\colon G\twoheadrightarrow H$ restricts to an isomorphism from $H(n_1,\ldots,n_d)=\langle(n_1,h_1),\ldots,(n_d,h_d)\rangle\leq N\rtimes H=G$ to $H$ for all $(n_1,\ldots,n_d)\in P$.
	As before, we write $h_i=(b_i,a_i)\in H^o\rtimes A$ for $i=1,\ldots,d$ and $\psi_{n_1,\ldots,n_d}'\colon H^o\to N$ retains its meaning, as above, for all $(n_1,\ldots,n_d)\in P$.

	Fix $(n_1,\ldots,n_d)\in P$ and let $T$ be the (abstract) group generated by $(n_1,a_1),\ldots,(n_d,a_d)\in N\rtimes A$. As $T\cap N$ is Zariski dense in $N$ (Lemma \ref{lemma: identity component}) and the set of unipotent elements of $N$ is a Zariski closed proper subset of $N$, there exists an $n\in T\cap N$ that is not unipotent. In other words, there exists a word $w\in F_{x_1,\ldots,x_d}$ such that $w((n_1,a_1),\ldots,(n_d,a_d))=(n,1)\in N\rtimes A$ with $n\in N$ not unipotent. Then
	$$w((n_1,h_1),\ldots,(n_d,h_d))=w((n_1,b_1,a_1),\ldots,(n_d,b_d,a_d))=(n,b,1)\in (N\times H^o)\rtimes A$$
	for some $b\in H^o$. For $n_1',\ldots,n_d'\in N$ we have
	$$w((n'_1,b_1,a_1),\ldots,(n'_d,b_d,a_d))=(\psi(n_1',\ldots,n'_d),b,1)$$ with $\psi\colon N^d\to N$ a morphism of varieties. Moreover, if $(n'_1,\ldots,n_d')\in P$, then $\psi(n'_1,\ldots,n'_d)=\psi'_{n'_1,\ldots,n'_d}(b)$. 
	Recall that $\psi'_{n'_1,\ldots,n_d'}\colon H^o=N_1^s\to N=N_1^r$ is a morphism of algebraic groups for all $(n'_1,\ldots,n_d')\in P$. By Lemma~\ref{lemma: finitely many conjugacy classes}, there exist conjugacy classes $C_1,\ldots, C_m$ of $N=N_1^r$ such that $\psi'_{n'_1,\ldots,n_d'}(b)\in C_1\cup\ldots \cup C_m$ for all $(n_1',\ldots,n_d')\in P$. So $\psi(P)\subseteq C_1\cup\ldots\cup C_m$. As $P$ is Zariski dense in $N^d$, we see that $\psi(N^d)$ is contained in the Zariski closure of $C_1\cup\ldots\cup C_m$. In fact, since $N^d$ is an irreducible topological space, we see that $\psi(N^d)$ must be contained in the Zariski closure of a single conjugacy class $C$ of $N$. By construction, we have $\psi(n_1,\ldots,n_d)=n$ and $\psi(1,\ldots,1)=1$.
	So, by Lemma~\ref{lemma: Zariski closure of conjugacy class}, the conjugacy class $C$ is unipotent. As the set of unipotent elements of $N$ is Zariski closed, we see that $\psi(N^d)$ consists of unipotent elements. This contradicts our assumption that $n=\psi(n_1,\ldots,n_d)\in\psi(N^d)$ is not unipotent. 	
\end{proof}

\subsection{The general case} In this subsection we prove the Gasch\"{u}tz Lemma for algebraic groups by combining the three special cases from the previous three subsections. To this end, we need some preparatory results.

\begin{lemma} \label{lemma: almost-minimal kernel is Gaschuetz}
	Every quotient map of algebraic groups with almost-minimal kernel is Gasch\"{u}tz.
\end{lemma}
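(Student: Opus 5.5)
We want to show: if $\f\colon G\twoheadrightarrow H$ is a quotient map whose kernel $N$ is almost-minimal, then $\f$ is Gasch\"{u}tz. The plan is to combine the structural description of Lemma~\ref{lemma: almost-minimal subgroups} with the three special cases (Propositions~\ref{prop: Gaschuetz for Ga kernel}, \ref{prop: Gaschuetz for Gm kernel} and \ref{prop: Gaschuetz for N1 kernel}). Glue is supplied by Lemma~\ref{lemma: finite kernel is Gaschuetz} and Lemma~\ref{lemma: composition of Gaschuetz is Gaschuetz}.

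By Lemma~\ref{lemma: almost-minimal subgroups}, one of three things happens: $N=\Ga^r$, $N=\Gm^r$, or $N/Z=N_1^r$ with $Z$ finite, $Z$ normal in $G$, and $N_1$ simple. In the first two cases, Propositions~\ref{prop: Gaschuetz for Ga kernel} and \ref{prop: Gaschuetz for Gm kernel} apply directly, and there is nothing further to do.

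In the third case, factor $\f$ as
$$G\twoheadrightarrow G/Z\twoheadrightarrow (G/Z)/(N/Z)\cong H.$$
The first map has finite kernel $Z$, so it is Gasch\"{u}tz by Lemma~\ref{lemma: finite kernel is Gaschuetz}. The second map has kernel $N/Z\cong N_1^r$. To apply Proposition~\ref{prop: Gaschuetz for N1 kernel} to it, I need $N/Z$ to be almost-minimal in $G/Z$.

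To check this, first note that $N/Z$ is connected and non-trivial, since $N$ is connected and $\dim N>0$. Now let $M'\leq N/Z$ be a closed normal subgroup of $G/Z$. Its preimage $M$ in $N$ is a closed normal subgroup of $G$ contained in $N$. By almost-minimality of $N$, either $M=N$, giving $M'=N/Z$, or $M$ is finite, giving $M'$ finite. So $N/Z$ is almost-minimal in $G/Z$, and Proposition~\ref{prop: Gaschuetz for N1 kernel} shows the second map is Gasch\"{u}tz. Lemma~\ref{lemma: composition of Gaschuetz is Gaschuetz} then gives that $\f$ is Gasch\"{u}tz.

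The only point needing care is this almost-minimality of the quotient $N/Z$. I do not expect any real obstacle there: the argument is the short preimage check above, using that preimages of normal subgroups under the quotient map are normal.
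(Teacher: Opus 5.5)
Your proof is correct and matches the paper's argument: you use the same three-case split from Lemma~\ref{lemma: almost-minimal subgroups}, and in the third case you factor through $G/Z$ and combine Lemma~\ref{lemma: finite kernel is Gaschuetz}, Proposition~\ref{prop: Gaschuetz for N1 kernel} and Lemma~\ref{lemma: composition of Gaschuetz is Gaschuetz}. The paper simply asserts that $N/Z$ is almost-minimal in $G/Z$; your explicit preimage check of this is sound.
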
	
\begin{proof}
	Let $\f\colon G\twoheadrightarrow H$ be a quotient map with almost-minimal kernel $N$. According to Lemma \ref{lemma: almost-minimal subgroups} there are three possible cases. In the first case ($N=\Ga^r$), it follows from Proposition \ref{prop: Gaschuetz for Ga kernel} that $\f$ is Gasch\"{u}tz. In the second case ($N=\Gm^r$), it follows from Proposition \ref{prop: Gaschuetz for Gm kernel} that $\f$ is Gasch\"{u}tz. In the third case, there exists a finite closed subgroup $Z$ of $N$, normal in $G$, such that $N/Z=N_1^r$, where $N_1$ is a simple algebraic group. Thus $\f$ factors as $G\twoheadrightarrow G/Z\twoheadrightarrow G/N=H$. The morphism  $G\twoheadrightarrow G/Z$ is Gasch\"{u}tz by Lemma \ref{lemma: finite kernel is Gaschuetz}. The morphism $G/Z\twoheadrightarrow G/N$ has almost-minimal kernel $N/Z=N_1^r$. It is therefore Gasch\"{u}tz by Proposition \ref{prop: Gaschuetz for N1 kernel}. It thus follows from Lemma \ref{lemma: composition of Gaschuetz is Gaschuetz} that $\f$ is Gasch\"{u}tz. 
\end{proof}

\begin{lemma} \label{lemma: Gaschuetz for connected kernel}
	Every quotient map of algebraic groups with connected kernel is Gasch\"{u}tz. 
\end{lemma}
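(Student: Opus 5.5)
I will argue by induction on $\dim N$, where $N=\ker(\f)$ is connected, using Lemma \ref{lemma: almost-minimal kernel is Gaschuetz} and Lemma \ref{lemma: composition of Gaschuetz is Gaschuetz} to peel off one almost-minimal layer at a time. If $\dim N=0$, then $N$ is trivial because it is connected, so $\f$ is an isomorphism and trivially Gasch\"{u}tz.

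Now let $\f\colon G\twoheadrightarrow H$ be a quotient map with connected kernel $N$ of positive dimension. First I choose a normal closed subgroup of $G$ that is almost-minimal and contained in $N$. Among all non-trivial connected normal closed subgroups of $G$ contained in $N$ (for instance, $N$ itself), pick one, $M$, of minimal dimension. Let $N'\leq M$ be normal in $G$ and infinite. Then $N'^o$ is a characteristic subgroup of $N'$, hence normal in $G$, and it is non-trivial and connected. By minimality of $\dim M$ we get $\dim N'^o=\dim M$. Since $M$ is connected, this forces $N'^o=M$, and therefore $N'=M$. So $M$ is almost-minimal in $G$.

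Next I factor $\f$ as $G\twoheadrightarrow G/M\twoheadrightarrow G/N=H$.
\begin{itemize}
\item The first map has almost-minimal kernel $M$, so it is Gasch\"{u}tz by Lemma \ref{lemma: almost-minimal kernel is Gaschuetz}.
\item The second map has kernel $N/M$. This is connected, being the image of the connected group $N$, and $\dim(N/M)<\dim N$ because $\dim M>0$. By the induction hypothesis, the second map is Gasch\"{u}tz.
\end{itemize}
Lemma \ref{lemma: composition of Gaschuetz is Gaschuetz} then shows that $\f$ is Gasch\"{u}tz.

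The only step needing care is the existence of an almost-minimal subgroup inside $N$, in particular ensuring that the connected component of a normal subgroup is again normal in $G$. That holds because the identity component is characteristic. Everything else is formal, since the heavy lifting is already contained in Lemma \ref{lemma: almost-minimal kernel is Gaschuetz}.
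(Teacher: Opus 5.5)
Your proof is correct and follows essentially the paper's argument: split the connected kernel into connected layers, each almost-minimal, and compose using Lemmas \ref{lemma: almost-minimal kernel is Gaschuetz} and \ref{lemma: composition of Gaschuetz is Gaschuetz}. The only difference is the direction of the chain. The paper builds it from the top, taking $N_2$ to be the identity component of a maximal-dimensional proper subgroup of $N$ normal in $G$, so that $N/N_2$ is almost-minimal in $G/N_2$. You instead remove a minimal-dimensional non-trivial connected normal subgroup $M$ from the bottom and induct on $\dim N$, which works equally well.
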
	
\begin{proof}
		Let $\f\colon G\twoheadrightarrow H$ be a quotient map with connected kernel $N$. There exists a descending chain $N=N_1\supsetneqq N_2\supsetneqq \ldots\supsetneqq N_r=1$ of connected normal closed subgroups $N_i$ of $G$ such that $N_i/N_{i+1}$ is almost-minimal in $G/N_{i+1}$. To see this, first choose among all proper closed subgroups of $N$ that are normal in $G$, one of maximal dimension, say $N_2'$. As the identity component $N_2$ of $N_2'$ is a characteristic subgroup of $N_2'$, it follows that $N_2$ is normal in $G$. Moreover, $N/N_2$ is almost-minimal in $G/N_2$ because a closed subgroup of $N/N_2$, normal in $G/N_2$, corresponds to a closed subgroup $N'$ of $N$, normal in $G$ and containing $N_2$. As the dimension of $N'$ cannot exceed the dimension of $N_2$, we see that $N'/N_2$ is finite. Thus $N_2$ has all the required properties. To construct $N_3$, we choose among all proper closed subgroups of $N_2$ that are normal in $G$, one of maximal dimension, say $N_3'$, and we let $N_3$ be the identity component of $N_3'$. As above, we see that $N_3$ is normal in $G$ and that $N_2/N_3$ is almost-minimal in $G/N_3$. Continuing like this, we construct the desired chain.

	Since $\f$ factors as
	$$G=G/N_r\twoheadrightarrow G/N_{r-1}\twoheadrightarrow \ldots\twoheadrightarrow G/N_2\twoheadrightarrow G/N_1=H$$ and every morphism $G/N_{i+1}\twoheadrightarrow G/N_{i}$ has an almost-minimal kernel $N_{i}/N_{i+1}$, it follows from Lemmas~\ref{lemma: almost-minimal kernel is Gaschuetz} and \ref{lemma: composition of Gaschuetz is Gaschuetz} that $\f$ is Gasch\"{u}tz.
\end{proof}	

With all of the above results in hand, it is now a simple matter to prove the Gasch\"{u}tz lemma for algebraic groups.

\begin{theo} \label{theo: Geschuetz}
	Every quotient map $\f\colon G\to H$ of algebraic groups is Gasch\"{u}tz, i.e., if $h_1,\ldots,h_d$ generate $H$ and $d(G)\leq d$, then there exist generators $g_1,\ldots,g_d$ of $G$ such that $\f(g_i)=h_i$ for $i=1,\ldots,d$.	
\end{theo}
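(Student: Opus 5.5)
The plan is to factor $\f$ through the quotient by the identity component of its kernel and combine the two cases already established. Let $N=\ker(\f)$ and let $N^o$ be its identity component. Since $N^o$ is characteristic in $N$ and $N$ is normal in $G$, the subgroup $N^o$ is a connected normal closed subgroup of $G$. We then write $\f$ as the composition
$$G\twoheadrightarrow G/N^o\twoheadrightarrow G/N=H.$$

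For the first map, $G\twoheadrightarrow G/N^o$ has connected kernel $N^o$. By Lemma \ref{lemma: Gaschuetz for connected kernel} it is therefore Gasch\"{u}tz.

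For the second map, $G/N^o\twoheadrightarrow H$ has kernel $N/N^o$, which is finite. By Lemma \ref{lemma: finite kernel is Gaschuetz} it is Gasch\"{u}tz as well.

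Lemma \ref{lemma: composition of Gaschuetz is Gaschuetz} now shows that $\f$ is Gasch\"{u}tz. Unwinding Definition \ref{defi: Gaschuetz}, this is exactly the assertion of the theorem: if $h_1,\ldots,h_d$ generate $H$ and $d(G)\leq d$, then they lift to generators of $G$. The only point to check is that the intermediate step is legitimate, since applying the Gasch\"{u}tz property to $G/N^o\twoheadrightarrow H$ requires $d(G/N^o)\leq d$. This holds because $d(G/N^o)\leq d(G)$, as noted in the notation section, and the proof of Lemma \ref{lemma: composition of Gaschuetz is Gaschuetz} already handles it.

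There is no genuine obstacle at this stage. All the difficulty lies in the earlier case analysis (the $\Ga^r$, $\Gm^r$ and $N_1^r$ cases) and in the d\'evissage of Lemma \ref{lemma: Gaschuetz for connected kernel}. The one thing to be careful about is the order of the factorization: the connected kernel must be treated first, on top, and the finite part second. Otherwise the intermediate quotient would not have a connected, respectively finite, kernel of the required form.
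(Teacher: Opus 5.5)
Your proposal is correct and is exactly the paper's argument: factor $\f$ as $G\twoheadrightarrow G/N^o\twoheadrightarrow G/N=H$, apply Lemma~\ref{lemma: Gaschuetz for connected kernel} to the first map and Lemma~\ref{lemma: finite kernel is Gaschuetz} to the second, and conclude with Lemma~\ref{lemma: composition of Gaschuetz is Gaschuetz}. Your remark that the needed bound $d(G/N^o)\leq d(G)$ is already handled inside the proof of the composition lemma is also accurate.
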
	
\begin{proof}
	Set $N=\ker(\f)$. As $N^o$ is a characteristic closed subgroup of $N$, it is normal in $G$. We thus have a factorization $G\twoheadrightarrow G/N^o\twoheadrightarrow G/N=H$ of $\f$. The kernel $N^o$ of $G\twoheadrightarrow G/N^o$ is connected, thus $G\twoheadrightarrow G/N^o$ is Gasch\"{u}tz by Lemma \ref{lemma: Gaschuetz for connected kernel}. On the other hand, the kernel $N/N^o$ of $G/N^o\twoheadrightarrow G/N$ is finite, and therefore $G/N^o\twoheadrightarrow G/N$ is Gasch\"{u}tz (Lemma \ref{lemma: finite kernel is Gaschuetz}). Hence it follows from Lemma \ref{lemma: composition of Gaschuetz is Gaschuetz} that $\f$ is Gasch\"{u}tz.	
\end{proof}

\section{Finite generation under base change}
\label{sec: Finite generation under base change}

 In this section we show that the minimal number of generators of an algebraic group is invariant under base change, i.e., if $G$ is an algebraic group over $k$, then $d(G_{k'})=d(G)$ for any algebraically closed field $k'$ containing $k$ (Proposition \ref{prop: d under base change}).
This is needed to show that the differential Galois group of a regular singular differential equation with $d+1$ singularities can be generated by $d$ elements (Lemma \ref{lemma: Galois of reg sing generated}), which is well-known for $k=\mathbb{C}$ but not otherwise. The proof of Proposition \ref{prop: d under base change} may seem surprisingly difficult. It relies on an old result of I. Schur and a fairly recent result of R. Feng.

\medskip

%Let $F_d$ be the abstract free group on $x_1,\ldots,x_d$. For an algebraic group $G$ and elements $g_1,\ldots,g_d\in G(k)$, we write $w(g)$ for the image of $w$ under the morphism of $F_d\to G(k)$ determined by sending $x_i$ to $g_i$ for $i=1,\ldots,d$. 

The Burnside problem asks if a finitely generated group in which every element has finite order is actually finite. In general the answer is no but for subgroups of algebraic groups the answer is yes.

\begin{theo}[I. Schur] \label{theo: Schur}
	Let $\Omega$ be an (abstract) subgroup of an algebraic group $G$ such that $\Omega$ is finitely generated and every element of $\Omega$ has finite order. Then $\Omega$ is finite.
\end{theo}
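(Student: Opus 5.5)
The plan is to follow the classical route through Schur's theorem on finitely generated periodic linear groups, since an algebraic group $G$ over $k$ embeds as a closed subgroup of some $\Gl_n(k)$. We may therefore assume $\Omega\leq \Gl_n(k)$ is finitely generated and torsion. The argument has two stages: first bound the orders of elements of $\Omega$ uniformly, then conclude finiteness from Burnside's theorem for linear groups of bounded exponent.

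\textbf{Reduction to a number field.} Let $F\subseteq k$ be the field generated over $\mathbb{Q}$ by the matrix entries of a finite generating set of $\Omega$ and their inverses. Then $\Omega\leq\Gl_n(F)$ with $F$ a finitely generated field. Each $g\in\Omega$ has finite order, so $g$ is diagonalizable over $k$ and all its eigenvalues are roots of unity. The characteristic polynomial of $g$ has coefficients in $F$. Hence each eigenvalue $\zeta$ satisfies $[F(\zeta):F]\leq n$.

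\textbf{Uniform bound on orders.} Since $F$ is finitely generated over $\mathbb{Q}$, its algebraic closure of $\mathbb{Q}$ inside $F$ is a number field $F_0$. The extension $F(\zeta)/F$ has degree at least $[F_0(\zeta):F_0]$ up to a bounded factor. Indeed, $F/F_0$ is regular, so $F$ and $\overline{\mathbb{Q}}$ are linearly disjoint over $F_0$, giving $[F(\zeta):F]=[F_0(\zeta):F_0]$. Therefore $\varphi(\operatorname{ord}\zeta)\leq [F_0:\mathbb{Q}]\cdot n$. Because $\varphi(m)\to\infty$, only finitely many root-of-unity orders occur. Since $g$ is diagonalizable, its order is the lcm of its eigenvalue orders, so $\Omega$ has finite exponent $e$.

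\textbf{Burnside step.} This is the main obstacle. To show that a group of exponent $e$ in $\Gl_n(k)$ is finite, I would first reduce to the case where $\Omega$ acts irreducibly. I would use a composition series of $k^n$ as an $\Omega$-module. The kernel of the action on the associated graded module is unipotent. Unipotent elements of finite order in characteristic zero are trivial, so this kernel is trivial and $\Omega$ embeds into a product of irreducible images.

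In the irreducible case, by Burnside's theorem the $k$-span of $\Omega$ is all of $M_n(k)$. Choose a basis $\omega_1,\ldots,\omega_{n^2}$ of $M_n(k)$ inside $\Omega$. The trace form is nondegenerate, so $g\in\Omega$ is determined by the values $\operatorname{tr}(g\omega_i)$. Each of these values is a sum of $n$ $e$-th roots of unity, so only finitely many values occur. Hence $\Omega$ is finite.

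Alternatively, one could simply cite Schur's theorem (as the paper likely intends, stating it as a classical result) via e.g. Curtis--Reiner.
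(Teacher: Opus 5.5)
Your proof is correct, but it takes a different route from the paper. The paper gives no argument of its own. It observes that one may take $G=\Gl_n$ and $k=\C$ (implicitly, the finitely many generators have entries in a finitely generated field, which embeds into $\C$), and then cites Schur's theorem from Curtis--Reiner.

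You instead reprove Schur's theorem along the classical lines:
\begin{itemize}
\item \textbf{Bounded exponent.} You get a uniform bound on element orders from the finitely generated field of definition. Your linear-disjointness step could equally be done by noting that $F_0(\zeta)/F_0$ is Galois and $F\cap F_0(\zeta)=F_0$, so $[F(\zeta):F]=[F_0(\zeta):F_0]$.
\item \textbf{Burnside step.} You reduce to irreducible constituents, using that unipotent elements of finite order are trivial in characteristic zero. You then finish with Burnside's density theorem and the nondegeneracy of the trace form.
\end{itemize}

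Your route is self-contained modulo standard facts. It also works directly over any algebraically closed $k$ of characteristic zero, without transporting the problem into $\C$. The paper's route buys brevity, which is reasonable since the result is only an auxiliary input for Lemma~\ref{lemma: finite step generation}.

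One sentence of yours is loosely worded (``degree at least \ldots\ up to a bounded factor''). Your exact equality $[F(\zeta):F]=[F_0(\zeta):F_0]$ immediately supersedes it, so it does no harm. Your closing remark that one could simply cite Curtis--Reiner is exactly what the paper does.
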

\begin{proof}
	There is no harm in assuming $G=\Gl_n$ and $k=\mathbb{C}$. In this form the result is stated in \cite[Theorem 36.2]{CurtisReiner:RepresentationTheoryOfFiniteGroupsAndAssociateAlgebras}.
\end{proof}

\begin{lemma} \label{lemma: finite step generation}
	Let $G$ be an algebraic group with generators $g_1,\ldots,g_d$ and let $\Omega$ be the (abstract) subgroup of $G$ generated by $g_1,\ldots,g_d$. Then there exist $\omega_1,\ldots,\omega_n\in \Omega$ such that $G=\langle \omega_{1}\rangle\ldots \langle \omega_{n}\rangle$. %$G=G_{i_1}\ldots G_{i_m}$, where $G_i=\langle \omega_i(g)\rangle$.
	% $$G^\circ=\langle w_m(g)\langle w_{m-1}(g)\langle\ldots \langle w_2(g)\langle w_1(g)\rangle^\circ\rangle\rangle\ldots\rangle\rangle.$$
\end{lemma}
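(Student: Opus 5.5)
We prove first that $G^o$ is a finite product of groups $\langle \omega\rangle^o$ with $\omega\in\Omega$, and then account for the finitely many components of $G$ using a finite set of coset representatives from $\Omega$.

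\textbf{Step 1: a maximal product subvariety.} Consider the closed connected subgroups $\langle\omega\rangle^o$ for $\omega\in\Omega$. For any finite family $\omega_1,\ldots,\omega_m\in\Omega$, the product set $X=\langle\omega_1\rangle^o\cdots\langle\omega_m\rangle^o$ is the image of an irreducible variety under multiplication. Hence its closure $\overline{X}$ is irreducible, contains $1$, and has dimension at most $\dim G$. Choose a family for which $\dim\overline{X}$ is maximal.

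\textbf{Step 2: $\overline{X}$ is a subgroup and contains $\Omega\cap G^o$.} For any $\omega\in\Omega$, the set $\overline{X}\,\langle\omega\rangle^o$ is irreducible and contains $\overline{X}$. By maximality it equals $\overline{X}$, so $\overline{X}$ is stable under right multiplication by every $\langle\omega\rangle^o$. The same holds for $\langle\omega\rangle^o\,\overline{X}$, and, using conjugates $\omega'\omega\omega'^{-1}\in\Omega$, for conjugation by elements of $\Omega$. The standard argument (Borel's lemma on subgroups generated by irreducible subvarieties through $1$; cf. Humphreys §7.5) then shows that a long enough product of copies of $X$ and $X^{-1}$ is a closed connected subgroup $K$, and that $K$ is normalized by $\Omega$. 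By maximality of dimension, $K=\overline{X}$.

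We next show $K=G^o$. Since $K$ is normalized by $\Omega$, which is dense in $G$, $K$ is normal in $G$. In $G/K$, every image of $\omega\in\Omega$ has finite order, because $\langle\omega\rangle^o\subseteq K$ and $\langle\omega\rangle/\langle\omega\rangle^o$ is finite. So the image of $\Omega$ is a finitely generated torsion subgroup of the algebraic group $G/K$. By Schur's theorem (Theorem~\ref{theo: Schur}) it is finite. Since the image of $\Omega$ is dense in $G/K$, the group $G/K$ is finite, and as $K$ is connected, $K=G^o$.

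\textbf{Step 3: from closure to equality, and the components.} By Borel's lemma, the subgroup generated by the irreducible varieties $\langle\omega_i\rangle^o$ is already a finite product of them. After repeating the $\omega_i$ suitably, which is allowed since repetitions are permitted, we obtain $G^o=\langle\omega_1\rangle^o\cdots\langle\omega_m\rangle^o$ exactly, not merely up to closure.

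Finally, $\Omega$ is dense in $G$, so $\Omega$ meets every connected component of $G$. Pick representatives $\tau_1,\ldots,\tau_t\in\Omega$ of $G/G^o$. We need to write $\{\tau_1,\ldots,\tau_t\}G^o$ in the required form. Each $\langle\tau_j\rangle$ contains $1$, so
$$\langle\tau_1\rangle\cdots\langle\tau_t\rangle\,G^o$$
contains $\tau_jG^o$ for every $j$. It also lies in $G$, hence equals $G$. This gives
$$G=\langle\tau_1\rangle\cdots\langle\tau_t\rangle\langle\omega_1\rangle\cdots\langle\omega_m\rangle,$$
using that $\langle\omega_i\rangle\supseteq\langle\omega_i\rangle^o$ and that the product is contained in $G$.

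\textbf{Main obstacle.} The delicate part is Step 2: showing that the maximal-dimensional product closure is a subgroup normalized by $\Omega$, and invoking Schur correctly. That requires checking that the torsion claim holds for images of all elements of $\Omega$, not just of the generators, which it does by the argument given. The passage from closure to actual equality is handled by Borel's finite-product lemma.
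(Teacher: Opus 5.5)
Your proof is correct and follows essentially the same route as the paper. The paper maximizes $\dim H_\Psi$ over the closed connected subgroups generated by finitely many $\langle\omega\rangle^o$, citing Borel's lemma in Springer's form, whereas you maximize over closures of finite products, which amounts to the same thing. From there both arguments show the resulting group is normalized by $\Omega$, use Schur's theorem to get that it equals $G^o$, and finish by adjoining coset representatives from $\Omega$ (you place them on the left rather than the right, which makes no difference).
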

\begin{proof}
	For a finite subset $\Psi$ of $\Omega$ let $H_\Psi$ denote the (abstract) subgroup of $G$ generated by all $\langle \omega\rangle^o$ for $\omega\in \Psi$. Then $H_\Psi$ is a closed connected subgroup of $G$ and $H_\Psi=\langle \omega_1\rangle^o\ldots \langle \omega_n\rangle^o$ for some (not necessarily distinct) $\omega_1,\ldots,\omega_n\in \Psi$ (\cite[Cor. 2.2.7]{Springer:LinearAlgebraicGroups}). Among all finite subsets of $\Omega$ choose $\Psi$ such that $\dim(H_\Psi)$ is maximal. As $H_\Psi\subseteq H_{\Psi'}$ for $\Psi\subseteq\Psi'$, it then follows that $H_\Psi$ is the union of all $H_{\Psi'}$, where $\Psi'$ ranges over all finite subsets of $\Omega$. In other words, $H_\Psi$ is the subgroup of $G$ generated by all $\langle\omega\rangle^o$ for $\omega\in\Omega$. We will eventually see that $H_\Psi=G^o$.
	
	We claim that $H_\Psi$ is normal in $G$. For $\omega$, $\omega'\in \Omega$ we have $\omega\langle \omega'\rangle^o\omega^{-1}=(\omega\langle\omega'\rangle\omega^{-1})^o=\langle\omega\omega'\omega^{-1}\rangle^o$. This shows that $\Omega$ normalizes $H_\Psi$. Since the normalizer of $H_\Psi$ is closed, it follows that $G$ normalizes $H_\Psi$, i.e., $H_\Psi$ is normal in $G$.
	
	Note that $\omega$ generates the finite group $\langle \omega\rangle/\langle \omega\rangle^o$. Thus for every $\omega\in \Omega$, there exists an $m\geq 1$ such that $\omega^m\in \langle \omega\rangle^o\leq H_\Psi$. Therefore every element of $\Omega/(\Omega\cap H_\Psi)$ has finite order. Now $\Omega/(\Omega\cap H_\Psi)$ embeds into $G/H_\psi$ and so it follows from Theorem \ref{theo: Schur} that $\Omega/(\Omega\cap H_\Psi)$ is finite. Because $\Omega/(\Omega\cap H_\Psi)$ is Zariski dense in $G/H_\Psi$ it follows that $G/H_\Psi$ is finite. As $H_\Psi$ is connected, this shows that $H_\Psi=G^o$.
	
	Recall (from the first paragraph) that $H_\psi=\langle \omega_1\rangle^o\ldots \langle \omega_n\rangle^o$ for appropriate $\omega_1,\ldots,\omega_n\in \Psi$. Let $G_1,\ldots,G_m$ be the connected components of $G$. As $\Omega$ is Zariski dense in $G$, the group $\Omega$ intersects every $G_i$. So we can choose $\omega_i'\in G_i\cap \Omega$ for $i=1,\ldots,m$.
	
	Set $H=\langle \omega_1\rangle\ldots \langle \omega_n\rangle\langle \omega'_1\rangle\ldots \langle \omega'_m\rangle$. We claim that $H=G$. We have $G^o=H_\Psi=\langle \omega_1\rangle^o\ldots \langle \omega_n\rangle^o\subseteq \langle \omega_1\rangle\ldots \langle \omega_n\rangle$ and therefore $G_i=G^o\omega_i'\subseteq H$ for $i=1,\ldots,m$. Thus $G=H$ as desired.
\end{proof}

Let $R$ be a finitely generated $k$-algebra that is an integral domain and set $X=\spec(R)$. Any $\xi\in X(k)=\Hom_k(R,k)$ defines specialization morphism $\xi\colon \Gm(R)\to \Gm(k)$. Let $\Omega$ be a finitely generated subgroup of $\Gm(R)=R^\times$. We set
$$W_X(\Gm,\Omega)=\{\xi\in X(k)|\ \xi\colon \Gm(R)\to \Gm(k) \text{ is injective on } \Omega\}.$$ 
More generally, in the above definition, $\Gm$ can be replaced by a (not necessarily affine) commutative group scheme of finite type of $R$. The corresponding subsets of $X(k)$ were first introduced by Hrushovski in \cite{Hrushovski:ComputingTheGaloisGroupOfaLinearDifferentialEequation}. They play an important role in the study of linear differential and difference equations under specialization. The main result concerning these sets that we shall need is:

\begin{theo}[R. Feng] \label{theo: Feng}
	The set $W_X(\Gm,\Omega)$ is Zariski dense in $X(k)$.
\end{theo}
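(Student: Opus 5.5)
The plan is to reduce to a statement about multiplicative independence of finitely many values, dispose of the uncountable case by a cardinality argument, and handle countable $k$ (e.g.\ $k=\overline{\mathbb{Q}}$) by an arithmetic argument with $p$-adic valuations on curves.

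\textbf{Reduction.} Fix a non-empty open subset $U\subseteq X$; we must find $\xi\in U(k)\cap W_X(\Gm,\Omega)$. Since $\Omega$ is finitely generated abelian, write $\Omega_0=\Omega\cap k^\times$. Every $\xi$ is the identity on $k\subseteq R$, so it is injective on $\Omega_0$. Moreover $\Omega/\Omega_0$ embeds into $R^\times/k^\times$, which is torsion-free: if $\omega^m\in k^\times$ then $\omega$ is algebraic over $k$ in the domain $R$, hence lies in $k$. So $\Omega/\Omega_0$ is free with basis the images of some $\omega_1,\ldots,\omega_n\in\Omega$. Then $\xi$ is injective on $\Omega$ if and only if
$$\xi(\omega_1)^{a_1}\cdots\xi(\omega_n)^{a_n}\notin\Omega_0 \quad\text{for all } 0\neq a\in\mathbb{Z}^n .$$
Each such condition fails only on the set where $\omega_1^{a_1}\cdots\omega_n^{a_n}=c$ for some $c\in\Omega_0$. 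Since $\omega^a\notin k$, this is a proper closed subset of $X$. There are only countably many such subsets, so if $k$ is uncountable their union cannot cover $U(k)$, and we are done. The real content is therefore the case that $k$ is countable, which I handle next.

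\textbf{Countable case: cutting down to a curve.} First shrink $U$ so that all $\omega_i$ are units on it. By a Bertini-type argument, choose an irreducible curve $C\subseteq U$ such that the restrictions $\omega_1|_C,\ldots,\omega_n|_C$ remain multiplicatively independent modulo $k^\times$. This requires avoiding finitely many conditions, because the relevant lattice of relations is determined by the divisors, so a sufficiently general complete-intersection curve through a general point works. We now work on the smooth model of $C$. Descend everything ($C$, the $\omega_i$, and $\Omega_0$) to a number field $K$, or to a finitely generated field $K$ if $k$ is not algebraic over $\mathbb{Q}$. In the latter case we first specialize the transcendental parameters while preserving independence; this is a Hilbert-irreducibility-type step.

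\textbf{Countable case: valuation argument.} Independence of the $\omega_i$ modulo constants means that the divisor map $a\mapsto\operatorname{div}(\omega^a)$ is injective, so we can choose points $Q_1,\ldots,Q_n\in C(\overline K)$ for which the matrix $(\operatorname{ord}_{Q_j}\omega_i)$ is non-singular. Enlarging $K$ so that the $Q_j$ are rational, pick distinct primes $\mathfrak p_1,\ldots,\mathfrak p_n$ of $K$ at which all elements of $\Omega_0$ are units and the model of $C$ has good reduction. By weak approximation / strong approximation on the curve, we then want a point $P\in C(K')$, with $K'$ a finite extension of $K$ in which the $\mathfrak p_j$ remain essentially unramified, such that $P$ is $\mathfrak p_j$-adically close to $Q_j$ for each $j$. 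For such a $P$ we get $\operatorname{ord}_{\mathfrak p_j}\omega_i(P)=e\cdot\operatorname{ord}_{Q_j}\omega_i$ with a common factor $e$. Hence any relation $\prod\omega_i(P)^{a_i}\in\Omega_0$ forces, after taking $\mathfrak p_j$-valuations (the elements of $\Omega_0$ being $\mathfrak p_j$-units), $a^{T}(\operatorname{ord}_{Q_j}\omega_i)=0$, so $a=0$. The resulting $P$ lies in $U(k)\cap W_X(\Gm,\Omega)$.

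\textbf{Expected main obstacle.} The hard step is producing $P$ with prescribed local behaviour at several primes simultaneously while keeping $P$ algebraic of controlled ramification, so that the valuation computation above is legitimate. I would first try to realize $C$, after shrinking, as a finite cover $\pi\colon C\to\mathbb{A}^1$. Then choose $t\in K$ that is $\mathfrak p_j$-adically close to $\pi(Q_j)$ for each $j$, and take $P$ in the fibre over $t$, using Hensel's lemma at each $\mathfrak p_j$ to control the branch through $Q_j$. If that approach becomes messy, the fallback is Néron/Silverman specialization via heights, applied to the split torus $\Gm^n$.
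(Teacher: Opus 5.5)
The paper does not prove this statement itself; it cites \cite[Theorem 2.16]{Feng:DifferenceGaloisGroupsUnderSpecialization}, so you are attempting an independent proof of the $\Gm$ case. Several parts are sound: the reformulation via $\Omega=\Omega_0\times\langle\omega_1,\ldots,\omega_n\rangle$, the cardinality argument for uncountable $k$, the cut-down to a curve, and the valuation criterion. For the curve step, injectivity of $a\mapsto\operatorname{div}(\omega^a)$ along the boundary divisors of a normal compactification survives on a general complete-intersection curve meeting them transversally.

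\textbf{Main gap: producing $P$.} Choosing $t\equiv\pi(Q_j)\pmod{\mathfrak p_j}$ and applying Hensel at each $\mathfrak p_j$ only gives, for each $j$ separately, a local point $\tilde Q_j$ of the fibre $\pi^{-1}(t)$ near $Q_j$. A prime $\mathfrak P_j\mid\mathfrak p_j$ of $K(P)$ at which your single global point $P$ reduces to $Q_j$ exists only if $\tilde Q_j$ lies in the $\operatorname{Gal}(\overline K/K)$-orbit of $P$. If $\pi^{-1}(t)$ is reducible over $K$, this orbit can miss the branch through $Q_j$ for some $j$. Then no prime above $\mathfrak p_j$ sees the $j$-th column of $(\operatorname{ord}_{Q_j}\omega_i)$ (typically the valuation vector there is $0$), and the rank argument collapses.

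The missing input is Hilbert irreducibility with local conditions. Hilbert subsets of a number field are dense in $\prod_j K_{\mathfrak p_j}$: substitute $t=a+bs$ with $b$ divisible by the $\mathfrak p_j$, and use that Hilbert sets contain integers. So you can take $t$ satisfying the congruences with $\pi^{-1}(t)$ irreducible; alternatively, invoke Rumely/Moret-Bailly. Then each $\tilde Q_j$ is a conjugate of $P$, and at the corresponding $\mathfrak P_j$ the vector $(\operatorname{ord}_{\mathfrak P_j}\omega_i(P))_i$ is a positive multiple of the $j$-th column. So neither unramifiedness nor a common factor $e$ is needed.

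Your height fallback is not a substitute. For $X=\Gm$, $\omega_1=x$ and $\Omega_0=\langle 2\rangle$, the bad points $x=2^m$ have unbounded height.

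\textbf{Second gap: countable $k$ of positive transcendence degree.} After specializing the transcendental parameters, you obtain $\overline{\mathbb{Q}}$-points of a \emph{different} variety, not points of $X(k)$. You still have to lift a good point $\xi_0$ of the special fibre to some $\xi\in X(k)$; this is possible where the spread-out family is smooth. You must also note that specialization can only create multiplicative relations, so $\omega(\xi)^a\in\Omega_0$ would force $\omega(\xi_0)^a$ into the image of $\Omega_0$. What must survive the specialization is the independence of the $\omega_i$ modulo constants. That is an open condition on the parameters (the order matrix along the boundary divisors), not a Hilbert-irreducibility phenomenon. Alternatively, run the valuation argument directly over the finitely generated field of definition, using discrete valuations attached to height-one primes of a normal model over which $\Omega_0$ consists of units.
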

\begin{proof}
	This is a special case of \cite[Theorem 2.16]{Feng:DifferenceGaloisGroupsUnderSpecialization}.
\end{proof}	

Roughly speaking, the following lemma shows that we can specialize an element in an algebraic group without changing the nature of the closed subgroup it generates. 

\begin{lemma} \label{lemma: specialize generator}
	Let $k\subseteq k'$ be an inclusion of algebraically closed fields and let $G$ be an algebraic group over $k$. For $g\in G(k')$ there exists a finitely generated $k$-subalgebra $R$ of $k'$, a closed subgroup scheme $H$ of $G_R$ and a finitely generated subgroup $\Omega$ of $\Gm(R)$ such that
	\begin{enumerate}
		\item $g\in H(R)$,
		\item $H_{k'}=\langle g\rangle$ (as closed subgroups of $G_{k'}$),
		\item $H_\xi=\langle \xi(g)\rangle$ (as closed subgroups of $G$) for all $\xi\in W_X(\Gm,\Omega)$, (where $X=\spec(R)$). 
	\end{enumerate}	   
\end{lemma}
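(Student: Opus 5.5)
The plan is to make the structure of the commutative group $\langle g\rangle$ completely explicit over $k'$ in terms of finitely many elements of $k'$. We then take $R$ to be the $k$-algebra they generate, together with a few inverses. Fix a closed embedding $G\hookrightarrow \Gl_n$ over $k$.

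\textbf{Structure of $\langle g\rangle$ over $k'$.} Write $g=g_sg_u$ for the Jordan decomposition in $G(k')$. Then $\langle g\rangle=\langle g_s\rangle\times\langle g_u\rangle$, since the two factors commute and a diagonalizable group meets a unipotent group trivially. Choose $P\in\Gl_n(k')$ with $g_s=P\,\mathrm{diag}(\lambda_1,\ldots,\lambda_n)\,P^{-1}$. Let $\Omega_0\leq k'^\times$ be the group generated by the $\lambda_i$, and let $L\leq\mathbb{Z}^n$ be the lattice of relations $\lambda^\alpha=1$. By the Laurent monomial description of closed subgroups of tori already used in Lemma \ref{lemma: case Gm with H finite}, we have $\langle g_s\rangle=PD_LP^{-1}$, where $D_L=\{\mathrm{diag}(t)\mid t^\alpha=1 \ \forall\,\alpha\in L\}$. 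This $D_L$ is defined over $\mathbb{Z}$, hence over $k$.

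Next write $g_u=\exp(M)$ with $M$ nilpotent. Then $\langle g_u\rangle=\{\exp(cM)\mid c\in\Ga\}$, which is $\Ga$ if $M\neq 0$ and trivial otherwise. Set $M'=P^{-1}MP$. It commutes with $\mathrm{diag}(\lambda)$, hence with all of $D_L$, because the centralizer is closed and $\mathrm{diag}(\lambda)$ generates $D_L$.

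\textbf{Definition of $R$, $\Omega$ and $H$.} Let $R$ be the $k$-subalgebra of $k'$ generated by the following elements:
\begin{itemize}
\item the entries of $P$ and $1/\det P$;
\item the $\lambda_i^{\pm1}$;
\item the entries of $M$;
\item the inverse of one nonzero entry of $M$, if $M\neq0$.
\end{itemize}
Set $\Omega=\Omega_0\leq R^\times$. Define $H\subseteq\Gl_{n,R}$ to be $P\,(D_{L,R}\cdot U_R)\,P^{-1}$, where $U_R=\{\exp(cM')\mid c\in\Ga\}$. The product morphism $D_{L,R}\times\Ga\to\Gl_{n,R}$ is a closed immersion, because from $x=\mathrm{diag}(t)\exp(cM')$ one recovers everything polynomially. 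Indeed, $\exp(cM')$ is the unipotent factor, and $c$ is read off from $\log$ of that factor (a polynomial, since $M'$ is nilpotent) at the chosen entry with inverted coefficient. Alternatively, $H$ can be presented as the closed subscheme cut out by $L$-monomial equations on the conjugated diagonal part together with the linear conditions forcing the unipotent part to lie on the line $kM'$.

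\textbf{Verification of (1)–(3).}
\begin{enumerate}
\item We have $g=P\,\mathrm{diag}(\lambda)\,P^{-1}\exp(M)\in H(R)$ directly from the definitions.
\item Base change to $k'$ gives $H_{k'}=\langle g_s\rangle\langle g_u\rangle=\langle g\rangle$ by the structure computed above.
\item The inclusion $H\subseteq G_R$ holds because the equations of $G$ vanish on $H_{k'}=\langle g\rangle\subseteq G_{k'}$, and $k[H]$ embeds into $k'[H_{k'}]$. Flatness, or at least the torsion-free presentation, is what makes this embedding work.

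For $\xi\in W_X(\Gm,\Omega)$ we have $\xi(g)=\xi(P)\,\mathrm{diag}(\xi(\lambda))\,\xi(P)^{-1}\exp(\xi(M))$, and this is again a Jordan decomposition. Injectivity of $\xi$ on $\Omega$ means the $\xi(\lambda_i)$ have exactly the same relation lattice $L$, so $\langle\xi(g)_s\rangle=\xi(P)D_L\xi(P)^{-1}$. Moreover $\xi(M)\neq0$ exactly when $M\neq0$, because one entry of $M$ was inverted in $R$. Hence $\langle\xi(g)\rangle=H_\xi$.
\end{enumerate}

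\textbf{Main obstacle.} I expect the main obstacle to be the scheme-theoretic part: showing that $H$ really is a closed subgroup scheme of $G_R$ whose fibre at each $\xi$ is the reduced group $\xi(P)D_L\xi(P)^{-1}\cdot\exp(k\,\xi(M))$. I would handle this by writing $H$ explicitly as the image of the closed immersion $(t,c)\mapsto P\,\mathrm{diag}(t)\exp(cM')\,P^{-1}$ from the flat $R$-group $D_{L,R}\times\Ga$ (omitting $\Ga$ if $M=0$). Taking fibres then commutes with this description.
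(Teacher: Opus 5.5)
Your proposal is correct and follows the paper's strategy: Jordan decomposition, the relation lattice of the eigenvalues with $\Omega$ generated by them, $\exp/\log$ for the unipotent part, and injectivity of $\xi$ on $\Omega$ to preserve the lattice under specialization. Two technical steps are handled differently.

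\textbf{Choice of conjugating matrix.} The paper conjugates $g$ into Jordan canonical form $J=hgh^{-1}$. Then $J_u$ is upper unitriangular with superdiagonal entries $\lambda_i^{-1}$, which are already units in $R$. This makes $\xi(J_u)\neq 1$ automatic. It also makes the product $H'_sH'_u$ visibly a closed subgroup scheme, because the torus factor of an element $du$ is just its diagonal. With your arbitrary diagonalizing $P$, the matrix $M'$ need not be triangular. So the phrase ``$\exp(cM')$ is the unipotent factor'' does not by itself give a polynomial inverse, since the Jordan decomposition is not a morphism on $\Gl_n$. You can close this in either of two ways:
\begin{enumerate}
\item choose $P$ so that it also triangularizes $M$ on each eigenspace of $g_s$; or
\item note that $M'$ is block diagonal for the eigenvalue partition and $t$ is a scalar $\tau_B$ on each block $B$. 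Then $\tau_B=\operatorname{tr}(x_B)/|B|$, because positive powers of a nilpotent matrix have trace zero (this uses characteristic zero).
\end{enumerate}

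\textbf{The inclusion $H\subseteq G_R$.} The paper avoids your flatness argument by taking $H$ to be the smallest closed subgroup scheme of $\Gl_{n,R}$ containing $g$. This $H$ lies in $G_R$ and in the explicit group $\widetilde H=h^{-1}H'h$. Parts (ii) and (iii) then follow from the sandwiches $\langle g\rangle\le H_{k'}\le\widetilde H_{k'}=\langle g\rangle$ and $\langle\xi(g)\rangle\le H_\xi\le\widetilde H_\xi=\langle\xi(g)\rangle$. Your direct route also works: $R[H]\cong R[\mathbb{Z}^n/L]\otimes_R R[c]$ is free over $R$, so it injects into $k'[H_{k'}]$, and $H_{k'}$ is reduced.
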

\begin{proof}
	We fix an embedding of $G$ into some $\Gl_n$. Let $h\in \Gl_n(k')$ be such that $J=hgh^{-1}\in\Gl_n(k')$ is in Jordan canonical form. So $J$ is a block diagonal matrix with Jordan blocks $J_1,\ldots,J_m$ on the diagonal, where $$J_i=\begin{bmatrix}\lambda_i & 1 & & \\
		& \lambda_i & \ddots &  \\
		& & \ddots & 1 \\
		& & & \lambda_i
	\end{bmatrix}$$ with $\lambda_i\in k'$ for $i=1,\ldots,m$. The (multiplicative) Jordan decomposition of $J$ is then of the form $J=J_sJ_u$, where $J_s$ is the diagonal of $J$ and $J_u$ is a block diagonal matrix with blocks $I_1,\ldots,I_m$, where
	$$I_i=\begin{bmatrix}1 & \lambda_i^{-1} & & \\
		& 1 & \ddots &  \\
		& & \ddots & \lambda_i^{-1} \\
		& & & 1
	\end{bmatrix}$$
	for $i=1,\ldots,m$.	Let $\mu_1,\ldots,\mu_n\in k'$ be the diagonal entries of $J_s$ and set $M=\{(\alpha_1,\ldots,\alpha_n)\in \mathbb{Z}^n|\ \mu_1^{\alpha_1}\ldots\mu_n^{\alpha_n}=1\}$. Then $\langle J_s\rangle$ consists of all diagonal matrices with diagonal entries $d_1,\ldots,d_n\in k'$ such that  $d_1^{\alpha_1}\ldots d_n^{\alpha_n}=1$ for all  $(\alpha_1,\ldots,\alpha_n)\in M$. 
	
	Recall (see e.g. \cite[14.28]{Milne:AlgebraicGroupsTheTheoryOfGroupSchemesOfFiniteTypeOverAField}) that for a $k$-algebra $S$, the assignment $N\mapsto \exp(N)=\sum_{j=0}^\infty \frac{1}{j!}N^j$ defines a bijection between the nilpotent matrices $N\in S^{n\times n}$ and the unipotent matrices $U\in S^{n\times n}$, with inverse $U\mapsto \log(U)=\log(I+(U-I))=\sum_{j=1}^\infty(-1)^{j+1}\frac{(U-I)^j}{j}$, where $I$ is the $n\times n$ identity matrix.
	
	%Write $J_u=I+N$, where $I$ is the $n\times n$ identity matrix. Then $N$ is nilpotent and $\log(J_u-I)=$

	If $J$ is not a diagonal matrix, i.e., $J_u$ is not the identity (equivalently not all Jordan blocks have size one), then $\langle J_u\rangle$ is the image of the closed embedding $\mathbb{G}_{a,k'}\to\Gl_{n,k'}$ that sends $t\in \Ga(k')$ to $\exp(t\log(J_u))$ (cf. \cite[14.29, 14.30]{Milne:AlgebraicGroupsTheTheoryOfGroupSchemesOfFiniteTypeOverAField}).
%	
%	 the block diagonal matrix with blocks $I_1(a),\ldots,I_m(a)$, where 
%	\begin{equation} \label{eq: Ia}	
%		I_i(a)=\begin{bmatrix}1 & \lambda_i^{-1}a & & \\
%			& 1 & \ddots &  \\
%			& & \ddots & \lambda_i^{-1}a \\
%			& & & 1
%		\end{bmatrix}
%	\end{equation}
	%for $i=1,\ldots,m$. 
	Because the Jordan decomposition of $J$ exists within the algebraic group $\langle J\rangle\leq\Gl_n(k')$, we have $\langle J\rangle=\langle J_s\rangle \langle J_u\rangle$.
	
	Let $R$ be the $k$-subalgebra of $k'$ generated by the entries of $h$, $h^{-1}$, $\lambda_1,\ldots,\lambda_m$,  $\lambda_1^{-1},\ldots,\lambda_m^{-1}$.	Let $H'_s$ be the closed subgroup scheme of $\Gl_{n,R}$ consisting of all diagonal matrices whose diagonal entries $d_1,\ldots,d_n$ satisfy $d_1^{\alpha_1}\ldots d_n^{\alpha_n}=1$ for all $(\alpha_1,\ldots,\alpha_n)\in M$. Similarly, if $J$ is not a diagonal matrix, let $H'_u$ denote the closed subgroup scheme of $\Gl_{n,R}$, that is the image of the closed embedding $\mathbb{G}_{a,R}\to \Gl_{n,R}$, that, for an $R$-algebra $S$, sends $t\in S$ to $\exp(t\log(J_u))$.
	
	Set $H'=H'_sH'_u$. As $H'_s$ and $H'_u$ commute and have trivial intersection, this is in fact a direct product. By the above discussion, we have $H'_{k'}=(H'_s)_{k'}(H'_u)_{k'}=\langle J_s\rangle\langle J_u\rangle=\langle J\rangle$. Set $\widetilde{H}=h^{-1}H'h$. Then $\widetilde{H}$ is a closed subgroup scheme of $\Gl_{n,R}$ containing $g=h^{-1}Jh$.
	Moreover, $$\widetilde{H}_{k'}=(h^{-1}H'h)_{k'}=h^{-1}H'_{k'}h=h^{-1}\langle J\rangle h=\langle h^{-1} J h\rangle=\langle g\rangle.$$
	
	Let $H$ be the smallest closed subgroup scheme of $\Gl_{n,R}$ containing $g$. Then $g\in H(R)$ and $H$ is contained in $\widetilde{H}$ and in $G_R$. Clearly, $\langle g\rangle \leq H_{k'}$. As $H_{k'}\leq \widetilde{H}_{k'}=\langle g\rangle$, we see that $H_{k'}=\langle g\rangle$.
	
	Let $\Omega$ be the subgroup of $R^\times$ generated by $\mu_1,\ldots,\mu_n$ and let $\xi\in W_X(\Gm,\Omega)$. As $g\in H(R)$, we have $\xi(g)\in H_\xi(k)$ and so $\langle \xi(g)\rangle\leq H_\xi$. For the reverse inclusion, note that $\xi(J)=\xi(h)\xi(g)\xi(h)^{-1}$ is a Jordan canonical form of $\xi(g)$ and that $\xi(J)_s=\xi(J_s)$ and $\xi(J)_u=\xi(J_u)$. The diagonal entries $\xi(\mu_1),\ldots,\xi(\mu_n)$ of $\xi(J)_s$ satisfy $\xi(\mu_1)^{\alpha_1}\ldots\xi(\mu_n)^{\alpha_n}=1$ for all $(\alpha_1,\ldots,\alpha_n)\in M$. On the other hand, as $\xi\colon \Omega\to k^\times$ is injective, $\xi(\mu_1)^{\alpha_1}\ldots\xi(\mu_n)^{\alpha_n}\neq 1$ for $(\alpha_1,\ldots,\alpha_n)\notin M$. This shows that $\langle \xi(J)_s\rangle=(H'_s)_\xi$. If $J$ is not a diagonal matrix, then $\xi(J)_u=\xi(J_u)$ is not the identity matrix and generates an additive group. In fact, $\langle \xi(J)_u\rangle=(H'_u)_\xi$. So $\langle \xi(J)\rangle=\langle \xi(J)_s\rangle\langle \xi(J)_u\rangle=(H'_s)_\xi(H'_u)_\xi=(H'_sH'_u)_\xi=H'_\xi$. Therefore
	$$\langle\xi(g)\rangle=\langle\xi(h)^{-1}\xi(J)\xi(h)\rangle=\xi(h)^{-1}\langle\xi(J)\rangle\xi(h)=\xi(h)^{-1}H'_\xi\xi(h)=(h^{-1}H'h)_\xi=\widetilde{H}_\xi.$$
	Thus $H_\xi\leq \widetilde{H}_\xi=\langle \xi(g)\rangle$ and $\langle\xi(g)\rangle=H_\xi$ as desired.	
\end{proof}	

We are now prepared to show that the minimal number of generators of an algebraic group is invariant under base change.

\begin{prop} \label{prop: d under base change}
	Let $k\subseteq k'$ be an inclusion of algebraically closed fields (of characteristic zero) and let $G$ be an algebraic group over $k$. Then $d(G_{k'})=d(G)$. 
\end{prop}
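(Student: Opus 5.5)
We prove the two inequalities separately. The inequality $d(G_{k'})\leq d(G)$ should be the easy direction. If $g_1,\ldots,g_d\in G(k)$ generate $G$, then the smallest closed subgroup of $G_{k'}$ containing them is defined over $k$. Indeed, the ideal of the Zariski closure of the abstract group $\Omega$ they generate is spanned by the polynomials over $k$ vanishing on $\Omega\subseteq G(k)$. Since $\Omega$ is dense in $G$, this closure base-changes to $G_{k'}$. So the $g_i$ still generate $G_{k'}$.

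For the reverse inequality $d(G)\leq d(G_{k'})$, let $g_1,\ldots,g_d\in G(k')$ generate $G_{k'}$ and let $\Omega\subseteq G(k')$ be the abstract group they generate. By Lemma \ref{lemma: finite step generation} there exist $\omega_1,\ldots,\omega_n\in\Omega$ with $G_{k'}=\langle\omega_1\rangle\cdots\langle\omega_n\rangle$. Each $\omega_j$ is a word $w_j(g_1,\ldots,g_d)$.

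Next we apply Lemma \ref{lemma: specialize generator} to each $\omega_j$. This gives finitely generated $k$-algebras, closed subgroup schemes $H_j$ and finitely generated groups $\Omega_j$ of units. We enlarge everything to a single finitely generated $k$-subalgebra $R\subseteq k'$ containing the entries of all $g_i$ and $g_i^{-1}$, with $\Omega'=\prod_j\Omega_j\subseteq R^\times$. Since $W_X(\Gm,\Omega')\subseteq W_X(\Gm,\Omega_j)$ and injectivity is only easier on subgroups, conditions (1)--(3) continue to hold over $R$, after base changing the $H_j$ to $R$. Here $X=\spec(R)$. The multiplication morphism $H_1\times\cdots\times H_n\to G_R$ has image dense on the generic fibre, because over $k'$ it is surjective onto $G_{k'}$.

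By generic flatness and Chevalley's constructibility theorem, there is a nonempty open $U\subseteq X$ such that for all $\xi\in U(k)$ the fibre map $(H_1)_\xi\times\cdots\times(H_n)_\xi\to G$ is dominant. Concretely, the fibre dimension and the dimension of the image stay constant, and the number of components of $G$ is hit. A cleaner way to say this is that the image contains a dense open subset of $G_R$ over $U$, since dominance of a morphism of finite type $R$-schemes spreads out over an open of the base. By Theorem \ref{theo: Feng}, $W_X(\Gm,\Omega')\cap U(k)\neq\emptyset$; pick $\xi$ in it. Then $(H_j)_\xi=\langle\xi(\omega_j)\rangle$ by Lemma \ref{lemma: specialize generator}. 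Moreover $\xi(\omega_j)=w_j(\xi(g_1),\ldots,\xi(g_d))$ lies in $\langle\xi(g_1),\ldots,\xi(g_d)\rangle$. Hence $\langle\xi(g_1),\ldots,\xi(g_d)\rangle$ contains the dense constructible set $\langle\xi(\omega_1)\rangle\cdots\langle\xi(\omega_n)\rangle$, so it equals $G$. This gives $d(G)\leq d$.

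The main obstacle is the spreading-out step: we must ensure that surjectivity (or dominance) of the product map onto $G_{k'}$ persists at the specialization $\xi$, and in particular that all connected components of $G$ are still met. First I would try arguing via dimension: $\dim$ of the image is upper-semicontinuous in the wrong direction, so instead I would use that the generic fibre map is dominant, hence surjective onto a dense open subset $V\subseteq G_{k'}$. Then I would descend $V$ to an open $V_R\subseteq G_R$ contained in the image after shrinking $X$, using that the image of a finite type morphism is constructible and that taking images commutes with base change along points. Since $G_R=G\times X$ is constant, $V_R$ restricted to a fibre is a nonempty open subset of $G$ meeting every component, provided we first shrink $X$ so that $V_R$ meets each component $G_i\times X$ in a fibrewise-dense set.
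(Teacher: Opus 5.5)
Your proposal is correct and follows essentially the same route as the paper. The shared steps are: Lemma \ref{lemma: finite step generation}; then Lemma \ref{lemma: specialize generator} for each $\omega_j$ over a common $R$, with $\Omega'$ generated by the $\Omega_j$; spreading the multiplication map out over a nonempty open $U\subseteq X$; and Theorem \ref{theo: Feng} to pick $\xi\in W_X(\Gm,\Omega')\cap U$. For the easy inequality the paper just cites \cite[Lemma 2.8]{SerafiniWibmer:RegularSingularInverseProblem}, which your vanishing-ideal argument reproves.

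The spreading-out step is simpler than your last paragraph makes it. Since $\phi_{k'}$ is surjective (not merely dominant), so is the generic-fibre map. The set of $\xi$ with surjective fibre map is constructible: the paper cites \cite[Prop.~9.6.1]{Grothendieck:EGAIV3}, or equivalently you can apply Chevalley to the projection to $X$ of the complement of the image. So this set contains a nonempty open $U$, and no component-by-component density argument is needed.
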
	
\begin{proof}
	Assume that $g_1,\ldots,g_d\in G(k)$ generate $G$. Then $g_1,\ldots,g_d\in G(k)\subseteq G(k')=G_{k'}(k')$ generate $G_{k'}$ by \cite[Lemma 2.8]{SerafiniWibmer:RegularSingularInverseProblem}. Thus $d(G_{k'})\leq d=d(G)$.
	
	For the reverse inequality, let $g_1,\ldots,g_d\in G(k')$ be generators of $G_{k'}$. We have to show that $G$ can be generated by $d$ elements. The idea for the proof is to find a specialization that specializes $g_1,\ldots,g_d$ to generators of $G$. Let $\Omega$ be the (abstract) subgroup of $G(k')$ generated by $g_1,\ldots,g_d$. %and let $R$ be a finitely generated $k$-subalgebra of $k'$ such that $g_1,\ldots,g_d\in G(R)$. Then also $\Omega\subseteq G(R)$.
	By Lemma \ref{lemma: finite step generation} there exist $\omega_1,\ldots,\omega_n\in \Omega$ such that $G_{k'}=\langle \omega_{1}\rangle\ldots \langle \omega_{n}\rangle$. By Lemma \ref{lemma: specialize generator} there exists, for every $i=1,\ldots,n$, a finitely generated $k$-subalgebra $R_i$ of $k'$, a closed subgroup scheme $H_i$ of $G_{R_i}$ and a finitely generated subgroup $\Omega_i$ of $R_i^\times$ such that $\omega_i\in H_i(R_i)$, $(H_i)_{k'}=\langle\omega_i\rangle$ and $(H_i)_\xi=\langle\xi(\omega_i)\rangle$ for all $\xi\in W_{X_i}(\Gm,\Omega_i)$, where $X_i=\spec(R_i)$. Let $R$ be a finitely generated $k$-subalgebra of $k'$ such that $g_1,\ldots,g_d\in G(R)$ and $R_i\subseteq R$ for $i=1,\ldots,n$. Set $G_i=(H_i)_R$ for $i=1,\ldots,n$. Then $\omega_i\in H_i(R_i)\subseteq H_i(R)=G_i(R)$ and $(G_i)_{k'}=(H_i)_{k'}=\langle\omega_i\rangle$ for $i=1,\ldots,n$. Let $\Omega'$ denote the subgroup of $R^\times$ generated by $\Omega_1,\ldots,\Omega_n$. Then a $\xi\in W_X(\Gm,\Omega')$, where $X=\spec(R)$, restricts to an element of $W_{X_i}(\Gm,\Omega_i)$ for $i=1,\ldots,n$. Therefore $(G_i)_\xi=(H_i)_\xi=\langle \xi(\omega_i)\rangle$ for all $\xi\in W_X(\Gm,\Omega')$ and all $i=1,\ldots,n$.
	
	Consider the multiplication map $\f\colon G_1\times\ldots\times G_n\to G_R$ of schemes over $R$. Then $\f_{k'}$ is the multiplication map $\langle\omega_1\rangle\times\ldots\times \langle \omega_n\rangle\to G_{k'}$, which is surjective. (Note that a morphism of schemes of finite type over $k'$ is surjective on $k'$-points if and only if it is surjective on the underlying topological spaces.) Let $\zeta$ denote the generic point of $X$ so that $\f_\zeta=\f_K$, where $K\subseteq k'$ is the field of fractions of $R$. As $(\f_\zeta)_{k'}=\f_{k'}$ is surjective, it follows that also $\f_\zeta$ is surjective (\cite[Prop. 2.6.1]{Grothendieck:EGAIV2}). The set of all $\xi\in X$ such that $\f_\xi$ is surjective is constructible \cite[Prop. 9.6.1]{Grothendieck:EGAIV3}. Because it contains $\zeta$, it therefore contains a non-empty open subset $U$ of $X$. By Theorem \ref{theo: Feng}, the set $W_X(\Gm,\Omega')\cap U$ is non-empty. Let $\xi\in W_X(\Gm,\Omega')\cap U$. Then $\f_\xi\colon (G_1)_\xi\times\ldots\times (G_n)_\xi\to G$ is surjective. Because $(G_i)_\xi=\langle \xi(\omega_i)\rangle$, this shows that $G=\langle \xi(\omega_{1})\rangle\ldots \langle\xi(\omega_{n})\rangle$. Therefore $G=\langle \xi(g_1),\ldots,\xi(g_d)\rangle$ as desired.
\end{proof}

\section{Embedding problems and free proalgebraic groups}

\label{sec: Embedding problems and free proalgebraic groups}

Recall that throughout $k$ is an algebraically closed field of characteristic zero and that all proalgebraic groups (=affine group schemes) are over $k$ unless the contrary is indicated.

An \emph{embedding problem} for a proalgebraic group $\Gamma$ is a pair $(G\twoheadrightarrow H,\ \Gamma\twoheadrightarrow H)$ of quotient maps of proalgebraic groups.
The embedding problem is \emph{algebraic} if $G$ (and therefore also $H$) is an algebraic group.
 A \emph{solution} is a quotient map $\Gamma\twoheadrightarrow G$ such that
$$
	\xymatrix{
		\Gamma \ar@{..>>}[d] \ar@{->>}[rd] & \\
		G \ar@{->>}[r]& H
	}
$$
commutes. A characterization of free proalgebraic groups on infinitely many generators in terms of embedding problems was obtained in \cite{Wibmer:FreeProalgebraicGroups}. The purpose of this section is to discuss a characterization of free proalgebraic groups on finitely many generators closely connected to the Gasch\"{u}tz Lemma for (pro)algebraic groups.

\subsection{Free proalgebraic groups} We begin by recalling the definition of free proalgebraic groups. For more background see \cite{Wibmer:FreeProalgebraicGroups}. Let $X$ be a set and $\Gamma$ a proalgebraic group. A map $X\to \Gamma(k)$ \emph{converges to $1$} if for every quotient map $\Gamma\twoheadrightarrow G$ with $G$ algebraic (i.e. of finite type), the composition $X\to \Gamma(k)\to G(k)$ maps all but finitely many elements of $X$ to $1$.

The \emph{free proalgebraic group} $\Gamma_X$ on $X$ (over $k$) comes equipped with a map $X\to\Gamma_X(k)$ converging to $1$ satisfying the following universal property: If $G$ is a proalgebraic group and $X\to G(k)$ a map converging to $1$, then there exists a unique morphism $\Gamma_X\to G$ of proalgebraic groups such that
$$
\xymatrix{
	X \ar[r] \ar[rd] & \Gamma_X(k) \ar@{..>}[d] \\
	& G(k)	
}
$$   
commutes. The map $X\to \Gamma_X(k)$ is injective, in fact, even the map $F_X\to\Gamma_X(k)$ is injective (\cite[Lemma 1.8]{Wibmer:SubgroupsOfFreeProalgebraicGroupsAndMatzatsconjecture}), so that we can identify $X$ with a subset of $\Gamma_X(k)$. With this identification, $X$ generates $\Gamma_X$ (\cite[Theorem 2.17]{Wibmer:FreeProalgebraicGroups}).

We are mainly interested in the case of finitely many generators, i.e., $X=\{x_1,\ldots,x_d\}$ is a finite set. In this case, the ``convergence to $1$'' condition is always satisfied and can therefore be omitted. So for $X$ finite, $\Gamma_X$ is the proalgebraic completion of the (abstract) free group $F_X$ on $X$. %It can be constructed as the fundamental group of the neutral tannakian category of all finite dimensional $k$-linear representations of $F_X$.
Of course only the cardinality $d$ of $X$ is relevant and so we will usually write $\Gamma_d$ for the free proalgebraic group on a set of cardinality $d$, and refer to it as the \emph{free proalgebraic group on $d$ generators}. A morphism $\f\colon\Gamma_d\to G$ of proalgebraic groups is determined by the images $g_1,\ldots,g_d\in G(k)$ of the free generators $x_1,\ldots,x_d$ of $\Gamma_d$. As $\f(\Gamma_d)=\f(\langle X\rangle)=\langle \f(X)\rangle$ (\cite[Lemma 2.16]{Wibmer:FreeProalgebraicGroups}), it is a quotient map if and only if $g_1,\ldots,g_d$ generate $G$.

Recall that for an abelian group $M$, we denote with $D(M)$ the diagonalizable proalgebraic group with character group $M$, i.e., $D(M)(R)=\Hom(M,R^\times)$ for any $k$-algebra $R$.

\begin{ex} \label{ex: Gamma1}
The free proalgebraic group $\Gamma_1$ on one generator is of the form $\Gamma_1=\Ga\times D(k^\times)$ with generator $x=(1,\id)\in\Gamma_1(k)=k\times\Hom(k^\times,k^\times)$. See \cite[Appendix A]{Sauloy:2002galoistheoryfuchsianqdifference} or \cite[Cor. 16.26]{Sauloy:DifferentialGaloisTheoryThroughRiemannHilbertCorrespondence}.
\end{ex}
As a word of warning, note that the above example shows that the formation of the free proalgebraic group does not commute with base change.

The Gasch\"{u}tz Lemma for algebraic groups (Theorem \ref{theo: Geschuetz}) translates into the following embedding property for the free proalgebraic group $\Gamma_d$ on $d$ generators. In fact, this embedding property is what motivated our interest in the Gasch\"{u}tz Lemma for algebraic groups in the first place.

\begin{prop} \label{prop: embedding property for Gammad}
	Every algebraic embedding problem $(G\twoheadrightarrow H,\ \Gamma_d\twoheadrightarrow H)$ for $\Gamma_d$ with $d(G)\leq d$ has a solution.
\end{prop}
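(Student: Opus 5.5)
The plan is to translate the statement directly into Theorem~\ref{theo: Geschuetz}, using the universal property of $\Gamma_d$ recalled above. Let $(\alpha\colon G\twoheadrightarrow H,\ \beta\colon\Gamma_d\twoheadrightarrow H)$ be an algebraic embedding problem with $d(G)\leq d$, and let $x_1,\ldots,x_d\in\Gamma_d(k)$ be the free generators. Set $h_i=\beta(x_i)\in H(k)$ for $i=1,\ldots,d$. Since $\beta$ is a quotient map and $\beta(\Gamma_d)=\langle\beta(x_1),\ldots,\beta(x_d)\rangle$ by \cite[Lemma 2.16]{Wibmer:FreeProalgebraicGroups}, the elements $h_1,\ldots,h_d$ generate $H$.

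Next I apply Theorem~\ref{theo: Geschuetz} to the quotient map $\alpha$ of algebraic groups. This is legitimate because $H$ is algebraic, being a quotient of the algebraic group $G$. The theorem yields generators $g_1,\ldots,g_d$ of $G$ with $\alpha(g_i)=h_i$ for all $i$. By the universal property of $\Gamma_d$ (no convergence condition is needed since $X$ is finite), there is a unique morphism $\gamma\colon\Gamma_d\to G$ with $\gamma(x_i)=g_i$. Because the $g_i$ generate $G$, $\gamma$ is a quotient map, by the remark preceding Example~\ref{ex: Gamma1}.

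Finally, I need $\alpha\gamma=\beta$. Both are morphisms $\Gamma_d\to H$ that send $x_i$ to $h_i$, so they agree by the uniqueness in the universal property. Hence $\gamma$ solves the embedding problem. There is no real obstacle: all the substance lies in Theorem~\ref{theo: Geschuetz}, and the only thing to check is that generation of proalgebraic quotients corresponds exactly to the images of the free generators, which is the cited lemma from \cite{Wibmer:FreeProalgebraicGroups}.
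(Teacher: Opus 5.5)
Your proposal is correct and follows the paper's proof: take the images $h_i$ of the free generators in $H$, lift them to generators $g_i$ of $G$ via Theorem~\ref{theo: Geschuetz}, and define the solution $\Gamma_d\to G$ by $x_i\mapsto g_i$. You also spell out why this map is a quotient map and why it commutes with $\beta$ (via the uniqueness in the universal property); the paper leaves both points implicit.
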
	 
\begin{proof}
Let $x_1,\ldots,x_d$ be the free generators of $\Gamma_d$. Then the images $h_1,\ldots,h_d$ of $x_1,\ldots,x_d$ in $H$ generate $H$. As $d(G)\leq d$, Theorem \ref{theo: Geschuetz} shows that $h_1,\ldots,h_d$ can be lifted to generators $g_1,\ldots,g_d$ of $G$. By construction, the morphism $\Gamma_d\to G$ determined by sending $x_i$ to $g_i$ for $i=1,\ldots,d$ is a solution of the  embedding problem $(G\twoheadrightarrow H,\ \Gamma_d\twoheadrightarrow H)$. \end{proof}

It is natural to wonder if Proposition \ref{prop: embedding property for Gammad} characterizes $\Gamma_d$. In other words, if $\Gamma$ is a proalgebraic group with $d(\Gamma)\leq d$ such that every algebraic embedding problem $(G\twoheadrightarrow H,\ \Gamma\twoheadrightarrow H)$ for $\Gamma$ with $d(G)\leq d$ has a solution, is $\Gamma$ isomorphic to $\Gamma_d$? We will see below that the answer is yes if $k$ is countable (Proposition~ \ref{prop: characterization for countable}) but no if $k$ is uncountable (Example \ref{ex: counterexample for algebraic characterization}). Note that this is a significant difference to what happens for free profinite groups. If $\Gamma$ is a profinite group that can be generated (as a topological group) by $d$ elements and every finite embedding problem $(G\to H,\ \Gamma\to H)$ such that $G$ can be generated by $d$ elements has a solution, then $\Gamma$ is isomorphic to the free profinite group on $d$ generators. In fact, if $\Gamma$ is a profinite group that can be generated by $d$ elements and such that every finite group that can be generated by $d$ elements is a quotient of $\Gamma$, then $\Gamma$ is isomorphic to the free profinite group on $d$ generators (\cite[Lemma 17.7.1]{FriedJarden:FieldArithmetic}).

As the property of Proposition \ref{prop: embedding property for Gammad} is not strong enough to characterize free proalgebraic groups for uncountable $k$, we suggest below a (conjectural) property that works for all (algebraically closed, characteristic zero) base fields. To this end we recall the definition of the \emph{rank} of a proalgebraic group $G$ from \cite{Wibmer:FreeProalgebraicGroups}. For brevity, we call a normal closed subgroup $N$ of $G$ \emph{coalgebraic} if the quotient $G/N$ is algebraic (i.e., of finite type over $k$). A set $\mathcal{N}$ of coalgebraic subgroups of $G$ is a \emph{neighborhood basis at $1$} for $G$ if for every coalgebraic subgroup $N'$ of $G$ there exists an $N\in\mathcal{N}$ such that $N\subseteq N'$. One then has $G=\varprojlim_{N\in\mathcal{N}} G/N$.

The rank $\rank(G)$ of a non-trivial proalgebraic group $G$ is defined as the smallest cardinality of a neighborhood basis at $1$. The rank of the trivial algebraic group is defined to be zero. In particular, $\rank(G)<\infty$ if and only if $G$ is algebraic, and in this case $\rank(G)=1$ unless $G$ is trivial. For other characterizations of the rank see \cite[Prop. 3.1]{Wibmer:FreeProalgebraicGroups}. For example, if $G$ is not algebraic, it can also be defined as the dimension of $k[G]$ as a $k$-vector space.

\begin{ex} \label{ex: rank of Gammad}
	For $d\geq 1$, we have $\rank(\Gamma_d)=|k|$. For $d=1$ this follows from Example \ref{ex: Gamma1}. As $\Gamma_1$ is a quotient of $\Gamma_d$ and the rank can only go down when taking quotients (\cite[Lemma 3.5]{Wibmer:FreeProalgebraicGroups}), we see that $\rank(\Gamma_d)\geq\rank(\Gamma_1)=|k|$. On the other hand, $\rank(\Gamma_d)\leq |k|d=|k|$ by \cite[Cor.~3.11]{Wibmer:FreeProalgebraicGroups}. So $\rank(\Gamma_d)=|k|$ as claimed.
\end{ex}

\begin{lemma} \label{lemma: rank of finitely gen G}
	If $G$ is a proalgebraic group with $d(G)$ finite, then $\rank(G)\leq |k|$.
\end{lemma}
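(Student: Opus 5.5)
The plan is to realize $G$ as a quotient of a free proalgebraic group and then use monotonicity of rank under quotients. Let $d=d(G)$ and pick $g_1,\ldots,g_d\in G(k)$ with $G=\langle g_1,\ldots,g_d\rangle$. If $d=0$, then $G$ is trivial and $\rank(G)=0\leq|k|$. So assume $d\geq 1$.

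By the universal property of the free proalgebraic group $\Gamma_d$ on $d$ generators, there is a unique morphism $\f\colon\Gamma_d\to G$ sending the free generators $x_i$ to $g_i$. No convergence condition is needed, since $d$ is finite. As recalled above, $\f(\Gamma_d)=\langle g_1,\ldots,g_d\rangle=G$ (\cite[Lemma 2.16]{Wibmer:FreeProalgebraicGroups}), so $\f$ is a quotient map.

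Now use that rank can only go down under quotients (\cite[Lemma 3.5]{Wibmer:FreeProalgebraicGroups}), together with Example \ref{ex: rank of Gammad}. This gives
$$\rank(G)\leq\rank(\Gamma_d)=|k|.$$

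There is no real obstacle. The only points to check are the trivial case $d=0$ and that the quotient-map property of $\f$ is justified by the cited lemma. If one prefers to avoid Example \ref{ex: rank of Gammad}, there is a direct alternative: when $G$ is not algebraic, $\rank(G)=\dim_k k[G]$, and $k[G]$ injects into $k[\Gamma_d]$, which is a union of countably many finitely generated algebras of dimension at most $|k|$. Either way the bound is $|k|$.
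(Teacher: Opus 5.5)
Your main argument is correct and is exactly the paper's proof: $G$ is a quotient of $\Gamma_d$, so $\rank(G)\leq\rank(\Gamma_d)=|k|$ by Example~\ref{ex: rank of Gammad}. Treating $d=0$ separately is a welcome detail, since that example is stated only for $d\geq 1$.

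Only your optional aside needs correcting. For uncountable $k$, $k[\Gamma_d]$ is not a union of countably many finitely generated subalgebras; already $k[\Gamma_1]=k[\Ga]\otimes_k k[k^\times]$ is not even countably generated. A direct proof of $\dim_k k[\Gamma_d]\leq|k|$ should instead count: $F_d$ has at most $|k|$ finite-dimensional representations, and $k[\Gamma_d]$ is spanned by their finitely many matrix coefficients each.
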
	
\begin{proof}
If $d=d(G)$ is finite, then $G$ is a quotient of $\Gamma_d$. Thus $\rank(G)\leq \rank(\Gamma_d)=|k|$ by Example~\ref{ex: rank of Gammad}.
\end{proof}	

As $d(G)\leq d$ is a necessary condition for an embedding problem $(G\twoheadrightarrow H,\ \Gamma_d\twoheadrightarrow H)$ for $\Gamma_d$ to have a solution, the obvious way to strengthen the property from Proposition \ref{prop: embedding property for Gammad} is to allow $G$ and $H$ to be proalgebraic groups and not just algebraic groups. However, Example \ref{ex: need rank assumption } below, shows that not every embedding problem $(G\twoheadrightarrow H,\ \Gamma_d\twoheadrightarrow H)$ for $\Gamma_d$ with $d(G)\leq d$ has a solution. We therefore make the following definition:

\begin{defi} \label{defi: d-saturated}
	Let $d\geq 1$ be an integer. A proalgebraic group $\Gamma$ is \emph{$d$-saturated} if $d(\Gamma)\leq d$ and every embedding problem $(G\twoheadrightarrow H,\ \Gamma\twoheadrightarrow H)$ for $\Gamma$ with $d(G)\leq d$ and $\rank(H)<|k|$ has a solution.
\end{defi}

Clearly, the property of Definition \ref{defi: d-saturated} is stronger than the property of Proposition \ref{prop: embedding property for Gammad}. Our next goal is to show that it is strong enough in the sense that any two $d$-saturated proalgebraic groups are isomorphic.

\begin{lemma} \label{lemma: rank of d-saturated}
	If $\Gamma$ is $d$-saturated, then $\rank(\Gamma)=|k|$.
\end{lemma}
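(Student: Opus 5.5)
Since $\Gamma$ is $d$-saturated, $d(\Gamma)\leq d$ is finite, so Lemma \ref{lemma: rank of finitely gen G} gives $\rank(\Gamma)\leq |k|$. The work is the reverse inequality $\rank(\Gamma)\geq|k|$. My plan is to show that $\Gamma_1=\Ga\times D(k^\times)$ is a quotient of $\Gamma$, and then combine Example \ref{ex: rank of Gammad} ($\rank(\Gamma_1)=|k|$) with the fact that rank does not increase under quotients (\cite[Lemma 3.5]{Wibmer:FreeProalgebraicGroups}).

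To realize $\Gamma_1$ as a quotient, I would solve an embedding problem with trivial $H$. Take $G=\Gamma_1$ and $H=1$, with the trivial maps $\Gamma_1\twoheadrightarrow 1$ and $\Gamma\twoheadrightarrow 1$. Then $\rank(H)=0<|k|$, and $d(\Gamma_1)\leq 1\leq d$. By $d$-saturation this embedding problem has a solution, namely a quotient map $\Gamma\twoheadrightarrow\Gamma_1$. Hence $\rank(\Gamma)\geq\rank(\Gamma_1)=|k|$.

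I expect no real obstacle. The only points to check are that the definition of $d$-saturated allows $G$ to be proalgebraic, which it does since only $\rank(H)<|k|$ is imposed, and that $|k|>0$, which holds trivially. The case of uncountable versus countable $k$ plays no role.
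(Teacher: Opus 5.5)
Your proposal is correct and is essentially the paper's proof: the upper bound comes from Lemma~\ref{lemma: rank of finitely gen G}, and the lower bound from solving the embedding problem with trivial $H$ and bounding $\rank$ by that of a quotient. The only cosmetic difference is that the paper solves $(\Gamma_d\twoheadrightarrow 1,\ \Gamma\twoheadrightarrow 1)$ and uses $\rank(\Gamma_d)=|k|$, whereas you use $\Gamma_1$. This works equally well because $d(\Gamma_1)=1\leq d$.
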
	
\begin{proof} It follows from Lemma \ref{lemma: rank of finitely gen G} that $\rank(\Gamma)\leq|k|$. As the embedding problem $(\Gamma_d\twoheadrightarrow 1,\ \Gamma\twoheadrightarrow 1)$ for $\Gamma$ has a solution, we see that $\Gamma_d$ is a quotient of $\Gamma$. Thus $\rank(\Gamma)\geq\rank(\Gamma_d)=|k|$ by Example~\ref{ex: rank of Gammad}.
\end{proof}
For a fixed $d\geq 1$, $d$-saturated proalgebraic groups are unique up to a non-canonical isomorphism. 

\begin{prop} \label{prop: uniqueness of d-saturated}
	Let $\Gamma$ and $\Gamma'$ be $d$-saturated proalgebraic groups. Then $\Gamma$ and $\Gamma'$ are isomorphic.
\end{prop}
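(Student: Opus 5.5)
We construct an isomorphism by a transfinite back-and-forth argument, modelled on the classical uniqueness proofs for free profinite groups and for free proalgebraic groups on infinitely many generators via embedding problems (\cite{Wibmer:FreeProalgebraicGroups}).

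By Lemma \ref{lemma: rank of d-saturated}, both $\Gamma$ and $\Gamma'$ have rank $|k|=:\kappa$. So we may fix neighborhood bases at $1$ of coalgebraic subgroups, $(N_\alpha)_{\alpha<\kappa}$ for $\Gamma$ and $(N'_\alpha)_{\alpha<\kappa}$ for $\Gamma'$. We then build, by transfinite recursion on $\alpha<\kappa$, the following data:
\begin{itemize}
\item closed normal subgroups $M_\alpha\trianglelefteq\Gamma$ and $M'_\alpha\trianglelefteq\Gamma'$, decreasing in $\alpha$;
\item isomorphisms $\sigma_\alpha\colon\Gamma/M_\alpha\to\Gamma'/M'_\alpha$, compatible with the projections for $\beta<\alpha$;
\item $\rank(\Gamma/M_\alpha)\le|\alpha|+\aleph_0$, together with $M_\alpha\subseteq N_\beta$ and $M'_\alpha\subseteq N'_\beta$ for $\beta<\alpha$.
\end{itemize}

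At limit stages we take intersections. The quotient $\Gamma/\bigcap_{\beta<\alpha} M_\beta$ is the projective limit of the $\Gamma/M_\beta$, so the $\sigma_\beta$ glue to an isomorphism. Its rank is at most $\sum_{\beta<\alpha}\rank(\Gamma/M_\beta)\le|\alpha|+\aleph_0$; this uses the rank estimates for projective limits from \cite{Wibmer:FreeProalgebraicGroups}.

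At a successor stage we first go forth, then back.
\begin{itemize}
\item \emph{Forth.} Set $M=M_\alpha\cap N_\alpha$ and $G=\Gamma/M$. Then $G\twoheadrightarrow\Gamma/M_\alpha\cong\Gamma'/M'_\alpha=:H$, and $d(G)\le d(\Gamma)\le d$. Consider the embedding problem $(G\twoheadrightarrow H,\ \Gamma'\twoheadrightarrow H)$ for $\Gamma'$.
\item \emph{Rank condition.} This embedding problem is admissible provided $\rank(H)<\kappa$. We have $\rank(H)\le|\alpha|+\aleph_0$, which is $<\kappa$ exactly when $\kappa$ is uncountable, or $\kappa$ countable and $H$ algebraic.
\item \emph{Back.} Since $\Gamma'$ is $d$-saturated, we get a solution $\Gamma'\twoheadrightarrow G$. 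Its kernel defines a new $M'$, and $\sigma$ extends. Then we repeat symmetrically with $N'_\alpha$ to obtain $M_{\alpha+1}$, $M'_{\alpha+1}$ and $\sigma_{\alpha+1}$.
\end{itemize}
Taking the limit over all $\alpha<\kappa$ gives $\bigcap M_\alpha\subseteq\bigcap N_\alpha=1$, and likewise on the other side. Hence $\Gamma\cong\Gamma'$.

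The main obstacle is keeping every intermediate $H$ of rank strictly less than $|k|$, so that the saturation hypothesis applies at each step. When $k$ is countable, $\kappa=\aleph_0$, and we must instead keep all quotients algebraic. In that case the recursion runs over $\mathbb{N}$ only, there are no limit stages before the end, and finite intersections of coalgebraic subgroups stay coalgebraic, so each $H$ is algebraic with rank $1<\aleph_0$.

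In the uncountable case, the successor steps must not increase rank beyond $|\alpha|+\aleph_0$. This holds because adjoining one coalgebraic subgroup, or the preimage of one, raises the rank by at most one index; I would verify this using \cite[Lemma 3.5]{Wibmer:FreeProalgebraicGroups} and the characterization of rank as $\dim k[G]$. Then $|\alpha|<\kappa$ for all $\alpha<\kappa$ gives $\rank(H)<|k|$, as required.
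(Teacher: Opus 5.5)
Your proposal is correct and is essentially the paper's proof. Both run the same transfinite back-and-forth over neighborhood bases of cardinality $|k|$: intersect with $N_\alpha$ (resp.\ $N'_\alpha$) at successor stages, solve the two embedding problems alternately in $\Gamma'$ and $\Gamma$ using $d$-saturation, and take intersections at limit stages with Wibmer's rank estimates. The only cosmetic difference is the rank bound: the paper's $\rank(\Gamma/M_\mu)\le\mu$ treats countable and uncountable $k$ uniformly (finite stages automatically give algebraic quotients), whereas your bound $|\alpha|+\aleph_0$ forces you to handle countable $k$ separately.
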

\begin{proof}
	The proof is similar to the proof of \cite[Theorem~3.38]{Wibmer:FreeProalgebraicGroups}. For the sake of completeness, we include the details.
	
	Note that $\rank(\Gamma)=\rank(\Gamma')=|k|$ by Lemma \ref{lemma: rank of d-saturated}. 
	We can thus choose neighborhood bases $\mathcal{N}$ and $\mathcal{N}'$ at $1$ for $\Gamma$ and $\Gamma'$ 
	such that $|\mathcal{N}'|=|\mathcal{N}|=|k|$. Well-ordering these sets, we may write $\mathcal{N}=\{N_\mu|\ \mu<|k|\}$ and  $\mathcal{N}'=\{N'_\mu|\ \mu<|k|\}$. We may assume, without loss of generality, that $N_0=\Gamma$ and $N_0'=\Gamma'$. By transfinite induction, we will construct descending chains $\{M_\mu|\ \mu<|k|\}$ and $\{M'_\mu|\ \mu<|k|\}$ of normal closed subgroups of $\Gamma$ and $\Gamma'$ respectively, together with isomorphisms $\f_\mu\colon \Gamma/M_\mu\to \Gamma'/M'_\mu$ (for all $\mu<|k|$) such that
	\begin{enumerate}
		\item $$\xymatrix{
			\Gamma/M_\mu \ar^-{\f_\mu}[r] \ar@{->>}[d] & \Gamma'/M'_\mu  \ar@{->>}[d] \\
			\Gamma/M_\lambda \ar^-{\f_\lambda}[r] & \Gamma'/M'_\lambda 
		}$$
		commutes for all $\lambda<\mu$,
		\item $\rank(\Gamma/M_\mu)\leq \mu$ and
		\item $M_\mu\leq N_\lambda$ and $M'_\mu\leq N'_\lambda$ for $\lambda<\mu$.
	\end{enumerate}	
	
	We set $M_0=N_0=\Gamma$, $M'_0=N'_0=\Gamma'$ and we let $\f_0\colon \Gamma/M_0=1\to\Gamma'/M_0'=1$ be the trivial morphism. Then conditions (i), (ii) and (iii) above are satisfied for $\mu=0$. 
	
	Assume that for a fixed $\mu<|k|$ we have already constructed $M_\lambda$, $M'_\lambda$ and $\f_\lambda$ for all $\lambda<\mu$ such that (i), (ii) and (iii) hold for all $\lambda<\mu$. We have to construct appropriate $M_\mu, M'_\mu$ and $\f_\mu$.
	
	\medskip
	
	We first treat the case that $\mu$ is a successor ordinal, say $\mu=\lambda+1$. Define $P_\mu=M_\lambda\cap N_\lambda$ and consider the embedding problem $(\Gamma/P_\mu\twoheadrightarrow \Gamma/M_\lambda,\ \Gamma'\twoheadrightarrow \Gamma'/M'_\lambda\xrightarrow{\f_\lambda^{-1}}\Gamma/M_\lambda)$ for $\Gamma'$. As $d(\Gamma/P_\mu)\leq d(\Gamma)\leq d$, $\rank(\Gamma/M_\lambda)\leq \lambda<\mu<|k|$ and $\Gamma'$ is $d$-saturated, it has a solution $\f'\colon\Gamma'\twoheadrightarrow\Gamma/P_\mu$. For $P_\mu'=\ker(\f')\leq M'_\lambda$ we have an induced isomorphism $\overline{\f'}\colon \Gamma'/P'_\mu\to \Gamma/P_\mu$ such that
	\begin{equation}\label{eq: square2 commutes}
		\xymatrix{
			\Gamma/P_\mu \ar@{->>}[d]  & \Gamma'/P'_\mu \ar_-{\overline{\f'}}[l]  \ar@{->>}[d]\\
			\Gamma/M_\lambda & \Gamma'/M'_\lambda  \ar_-{\f_\lambda^{-1}}[l]
		}
	\end{equation}
	commutes. Define $M'_\mu=P'_\mu\cap N'_\lambda\leq M'_\lambda$ and consider the embedding problem 
	\begin{equation} \label{eq: embedding problem}
		(\Gamma'/M'_\mu\twoheadrightarrow \Gamma'/P'_\mu,\ \Gamma\twoheadrightarrow \Gamma/P_\mu\xrightarrow{\overline{\f'}^{-1}}\Gamma'/P'_\mu)
	\end{equation}
	for $\Gamma$. As the kernel $M_\lambda/P_\mu=M_\lambda/(M_\lambda\cap N_\lambda)=M_\lambda N_\lambda/N_\lambda\hookrightarrow \Gamma/N_\lambda$ of $\Gamma/P_\mu\twoheadrightarrow\Gamma/M_\lambda$ is algebraic, it follows from \cite[Lemma 3.6]{Wibmer:FreeProalgebraicGroups} that $\rank(\Gamma/P_\mu)=\rank(\Gamma/M_\lambda)\leq \lambda<\mu<|k|$. So $\rank(\Gamma'/P'_\mu)=\rank(\Gamma/P_\mu)<|k|$. As $d(\Gamma'/M'_\mu)\leq d(\Gamma')\leq d$ and $\Gamma$ is $d$-saturated, this implies that (\ref{eq: embedding problem}) has a solution $\f\colon \Gamma\twoheadrightarrow \Gamma'/M'_\mu$. Set $M_\mu=\ker(\f)\leq P_\mu=M_\lambda\cap N_\lambda$ and let $\f_\mu=\overline{\f}\colon \Gamma/M_\mu\to \Gamma'/M'_\mu$ be the isomorphism induced by $\f$. Then
	\begin{equation} \label{eq: square1 commutes}
		\xymatrix{
			\Gamma/M_\mu \ar^-{\f_\mu}[r] \ar@{->>}[d] & \Gamma'/M'_\mu \ar@{->>}[d] \\
			\Gamma/P_\mu \ar^-{\overline{\f'}^{-1}}[r] & \Gamma'/P'_\mu 	
		}
	\end{equation}
	commutes. Combining (\ref{eq: square1 commutes}) and (\ref{eq: square2 commutes}), we see that
	$$\xymatrix{
		\Gamma/M_\mu \ar^-{\f_\mu}[r] \ar@{->>}[d] & \Gamma'/M'_\mu  \ar@{->>}[d] \\
		\Gamma/M_\lambda \ar^-{\f_\lambda}[r] & \Gamma'/M'_\lambda 
	}$$
	commutes. As above, we see that the kernel of $\Gamma'/M'_\mu\twoheadrightarrow\Gamma'/P'_\mu$ is algebraic and therefore
	$\rank(\Gamma/M_\mu)=\rank(\Gamma'/M'_\mu)=\rank(\Gamma'/P'_\mu)\leq \mu$.

	This completes the construction of the $M_\mu$'s, $M'_\mu$'s and $\f_\mu$'s in the case that $\mu$ is a successor ordinal.
	
	In case $\mu$ is a limit ordinal, we set $M_\mu=\bigcap_{\lambda<\mu}M_\lambda$ and $M'_\mu=\bigcap_{\lambda<\mu}M'_\lambda$.
	Then $\Gamma/M_\mu=\varprojlim_{\lambda<\mu}\Gamma/M_\lambda$,  $\Gamma'/M'_\mu=\varprojlim_{\lambda<\mu}\Gamma'/M'_\lambda$ and we can define $\f_\mu\colon \Gamma/M_\mu\to \Gamma'/M'_\mu$ as the isomorphism induced by the $\f_\lambda$ for $\lambda<\mu$. Then (i) and (iii) are satisfied. Moreover, as $\rank(\Gamma/M_\lambda)\leq \mu$ for all $\lambda <\mu$ and $|\{\lambda|\ \lambda<\mu\}|\leq \mu$, it follows from \cite[Lemma 3.7]{Wibmer:FreeProalgebraicGroups} that $\rank(\Gamma/M_\mu)\leq \mu$. So also (ii) is satisfied. This completes the construction of the $M_\mu$'s, $M'_\mu$'s and $\f_\mu$'s.

	It follows from (iii) that $\bigcap_{\mu<|k|}M_\mu\subseteq \bigcap_{\mu<|k|}N_\mu=1$ and similarly for the corresponding subgroups of $\Gamma'$. Thus $\Gamma=\varprojlim_{\mu<|k|}\Gamma/M_\mu$,  $\Gamma'=\varprojlim_{\mu<|k|}\Gamma'/M'_\mu$ and the $\f_\mu$ ($\mu<|k|$) induce an isomorphism between $\Gamma$ and $\Gamma'$.
\end{proof}

The crux with Definition \ref{defi: d-saturated} is that we don't know if there exists a $d$-saturated proalgebraic group. However, there is a natural candidate:

\begin{conj} \label{conj: Gamma_d is d-saturated}
	The free proalgebraic group on $d$ generators is $d$-saturated.
\end{conj}	

Just like Proposition \ref{prop: embedding property for Gammad} corresponds to the Gasch\"{u}tz Lemma for algebraic groups, Conjecture~\ref{conj: Gamma_d is d-saturated} corresponds to a fairly general Gasch\"{u}tz Lemma for proalgebraic groups.

\begin{conj}[Gasch\"{u}tz Lemma for proalgebraic groups] \label{conj: Gaschuetz for proalgebraic}
	Let $\f\colon G\twoheadrightarrow H$ be a quotient map of proalgebraic groups such that $\rank(H)<|k|$. Assume that $h_1,\ldots,h_d\in H(k)$ generate $H$ and that $d(G)\leq d$. Then there exist $g_1,\ldots,g_d\in G(k)$ generating $G$ such that $\f(g_i)=h_i$ for $i=1,\ldots,d$.
\end{conj}

\begin{lemma} \label{lemma: conjectures equivalent}
	For a fixed $d\geq 1$, Conjecture \ref{conj: Gamma_d is d-saturated} and Conjecture \ref{conj: Gaschuetz for proalgebraic} are equivalent.	
\end{lemma}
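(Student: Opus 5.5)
We prove both implications directly, using the dictionary between morphisms $\Gamma_d\to G$ and $d$-tuples in $G(k)$: such a morphism is a quotient map exactly when the tuple generates $G$ (\cite[Lemma 2.16]{Wibmer:FreeProalgebraicGroups}). We also use $d(\Gamma_d)\leq d$, since the free generators generate $\Gamma_d$.

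\emph{Conjecture \ref{conj: Gaschuetz for proalgebraic} implies Conjecture \ref{conj: Gamma_d is d-saturated}.} Let $(\f\colon G\twoheadrightarrow H,\ \alpha\colon\Gamma_d\twoheadrightarrow H)$ be an embedding problem with $d(G)\leq d$ and $\rank(H)<|k|$. Set $h_i=\alpha(x_i)$, where $x_1,\ldots,x_d$ are the free generators. Since $\alpha$ is a quotient map, the $h_i$ generate $H$. Conjecture \ref{conj: Gaschuetz for proalgebraic} gives generators $g_1,\ldots,g_d$ of $G$ with $\f(g_i)=h_i$. Let $\beta\colon\Gamma_d\to G$ be the morphism with $x_i\mapsto g_i$; it is a quotient map. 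Both $\f\beta$ and $\alpha$ send $x_i$ to $h_i$, so the uniqueness part of the universal property gives $\f\beta=\alpha$.

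\emph{Conjecture \ref{conj: Gamma_d is d-saturated} implies Conjecture \ref{conj: Gaschuetz for proalgebraic}.} Let $\f\colon G\twoheadrightarrow H$, generators $h_1,\ldots,h_d$ of $H$ with $\rank(H)<|k|$, and $d(G)\leq d$ be given. Let $\alpha\colon\Gamma_d\to H$ be defined by $x_i\mapsto h_i$; it is a quotient map because the $h_i$ generate $H$. Then $(G\twoheadrightarrow H,\ \Gamma_d\twoheadrightarrow H)$ satisfies the hypotheses of Definition \ref{defi: d-saturated}, so it has a solution $\beta\colon\Gamma_d\twoheadrightarrow G$ with $\f\beta=\alpha$. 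Set $g_i=\beta(x_i)$. Then $\f(g_i)=h_i$, and $G=\beta(\Gamma_d)=\langle g_1,\ldots,g_d\rangle$ by \cite[Lemma 2.16]{Wibmer:FreeProalgebraicGroups}.

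Both directions are formal. The only point needing care is the use of the free generators of $\Gamma_d$ in the last paragraph: the solution $\beta$ must be evaluated at these specific generators $x_i$, and it is the identity $\f\beta=\alpha$ that makes the $g_i=\beta(x_i)$ lift the prescribed $h_i$. Implicitly, we also use that $k$-points of proalgebraic groups behave as expected, since the $h_i$ and $g_i$ are $k$-points.
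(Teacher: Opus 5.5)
Your proof is correct and is essentially the paper's argument: in both directions it translates between solutions $\Gamma_d\twoheadrightarrow G$ and generating lifts, via the images of the free generators. You are slightly more careful than the paper in two places: you check explicitly that $d(\Gamma_d)\leq d$, which Definition \ref{defi: d-saturated} requires, and you justify $\f\beta=\alpha$ by the uniqueness in the universal property rather than just ``by construction''.
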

\begin{proof}
	First assume that Conjecture \ref{conj: Gamma_d is d-saturated} holds. Let $\f\colon G\to H$ be a quotient map of proalgebraic groups such that $\rank(H)<|k|$, $d(G)\leq d$ and assume that $h_1,\ldots,h_d\in H(k)$ generate $H$. Let $\Gamma_d\twoheadrightarrow H$ be the quotient map defined by sending the free generators $x_1,\ldots,x_d$ of $\Gamma_d$ to $h_1,\ldots,h_d$. According to Conjecture  \ref{conj: Gamma_d is d-saturated}, the embedding problem $(G\twoheadrightarrow H,\ \Gamma_d\twoheadrightarrow H)$ for $\Gamma_d$ has a solution $\psi\colon \Gamma_d\twoheadrightarrow G$. Set $g_i=\psi(x_i)$ for $i=1,\ldots,d$. Then $\f(g_i)=h_i$ and $g_1,\ldots,g_d$ generate $G$.
	
	Conversely, assume that Conjecture \ref{conj: Gaschuetz for proalgebraic} holds. Let $(G\twoheadrightarrow H,\ \Gamma_d\twoheadrightarrow H)$ be an embedding problem for $\Gamma_d$ such that $\rank(H)<|k|$ and $d(G)\leq d$. Let $h_1,\ldots,h_d\in H(k)$ be the images of the free generators $x_1,\ldots,x_d$ of $\Gamma_d$. By Conjecture \ref{conj: Gaschuetz for proalgebraic} there exist $g_1,\ldots,g_d\in G(k)$ such that $G\twoheadrightarrow H$ maps $g_i$ to $h_i$ and $g_1,\ldots,g_d$ generate $G$. The morphism $\Gamma_d\twoheadrightarrow G$ determined by sending $x_i$ to $g_i$ is then a solution of the embedding problem $(G\twoheadrightarrow H,\ \Gamma_d\twoheadrightarrow H)$.
\end{proof}	

Below we will prove the above conjectures for $k$ countable (Theorem \ref{theo: Conjectures for k countable}), based on the Gasch\"{u}tz Lemma for algebraic groups (Theorem~\ref{theo: Geschuetz}). We also show that the conjectures hold for $d=1$ (Theorem~\ref{theo: conjecture for d=1}). Example \ref{ex: need rank assumption } below shows that the Gasch\"{u}tz Lemma for proalgebraic groups does not hold without the assumption on the rank of $H$.

\begin{cor} \label{cor: characterization of Gammad}
	If Conjecture \ref{conj: Gaschuetz for proalgebraic} holds, a proalgebraic group is $d$-saturated if and only if it is isomorphic to a free proalgebraic group on $d$ generators.
\end{cor}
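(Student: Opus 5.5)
Assuming Conjecture \ref{conj: Gaschuetz for proalgebraic}, Lemma \ref{lemma: conjectures equivalent} (for the given $d$) shows that the free proalgebraic group $\Gamma_d$ is $d$-saturated. Both implications then reduce to this fact together with the uniqueness statement of Proposition \ref{prop: uniqueness of d-saturated}.

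\emph{If $\Gamma$ is $d$-saturated, then $\Gamma\cong\Gamma_d$.} Since $\Gamma_d$ is $d$-saturated, Proposition \ref{prop: uniqueness of d-saturated} applied to $\Gamma$ and $\Gamma_d$ gives an isomorphism $\Gamma\cong\Gamma_d$.

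\emph{If $\Gamma\cong\Gamma_d$, then $\Gamma$ is $d$-saturated.} It is enough to check that $d$-saturation is invariant under isomorphism, since $\Gamma_d$ itself is $d$-saturated. Let $\alpha\colon\Gamma\to\Gamma_d$ be an isomorphism.
\begin{enumerate}
\item The images under $\alpha^{-1}$ of the $d$ free generators of $\Gamma_d$ generate $\Gamma$, so $d(\Gamma)\leq d$.
\item Let $(G\twoheadrightarrow H,\ \beta\colon\Gamma\twoheadrightarrow H)$ be an embedding problem for $\Gamma$ with $d(G)\leq d$ and $\rank(H)<|k|$. Then $(G\twoheadrightarrow H,\ \beta\alpha^{-1}\colon\Gamma_d\twoheadrightarrow H)$ is an embedding problem for $\Gamma_d$ satisfying the same conditions.
\item By $d$-saturation of $\Gamma_d$ it has a solution $\gamma\colon\Gamma_d\twoheadrightarrow G$.
\item The composite $\gamma\alpha\colon\Gamma\twoheadrightarrow G$ is a quotient map, and composing it with $G\twoheadrightarrow H$ gives $\beta\alpha^{-1}\alpha=\beta$. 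So $\gamma\alpha$ solves the original embedding problem.
\end{enumerate}

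The only substantive input is the equivalence of the two conjectures in Lemma \ref{lemma: conjectures equivalent}, which is already established; everything else is formal.
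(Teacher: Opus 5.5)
Your proof is correct and follows the same route as the paper. The paper also derives the corollary directly from Lemma \ref{lemma: conjectures equivalent}, which gives that $\Gamma_d$ is $d$-saturated, together with Proposition \ref{prop: uniqueness of d-saturated}; it leaves implicit the isomorphism-invariance of $d$-saturation that you spell out.
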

\begin{proof}
	This follows from Proposition \ref{prop: uniqueness of d-saturated} and Lemma \ref{lemma: conjectures equivalent}.
\end{proof}

The point of the above corollary is that in some situations it might be difficult to directly show that a given proalgebraic group is isomorphic to the free proalgebraic group on $d$ generators because this would require a more or less explicit description of the free generators. The characterization of the free proalgebraic group on $d$-generators in terms of embedding problems yields an alternative approach that does not require the construction of generators.

%This is not true. Need some condition to ensure that $\Gamma$ to $H$ is not an isomorphism, maybe $\rank(H)<\rank(\Gamma)$? Get counterexample for $d=1$. The passage from algebraic to proalgebraic embedding problems may be non-trivial.

\begin{theo} \label{theo: Conjectures for k countable}
		Conjectures \ref{conj: Gamma_d is d-saturated} and \ref{conj: Gaschuetz for proalgebraic} hold for $k$ countable.	
\end{theo}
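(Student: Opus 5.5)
By Lemma \ref{lemma: conjectures equivalent} it suffices to prove Conjecture \ref{conj: Gaschuetz for proalgebraic} for countable $k$. Since $|k|=\aleph_0$, the hypothesis $\rank(H)<|k|$ says that $\rank(H)$ is finite, i.e., $H$ is an algebraic group. So let $\f\colon G\twoheadrightarrow H$ be a quotient map with $H$ algebraic, let $h_1,\ldots,h_d$ generate $H$, and assume $d(G)\leq d$. By Lemma \ref{lemma: rank of finitely gen G}, $\rank(G)\leq|k|=\aleph_0$. We can therefore fix a countable neighborhood basis at $1$ for $G$ and intersect successively to obtain a descending chain $N_0\supseteq N_1\supseteq N_2\supseteq\cdots$ of coalgebraic subgroups with $\bigcap_i N_i=1$. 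Choosing $N_0=\ker(\f)$, and noting that $\ker(\f)$ is coalgebraic because $H$ is algebraic, we may replace each $N_i$ by $N_i\cap\ker(\f)$. Then $G=\varprojlim_i G/N_i$ with algebraic $G_i=G/N_i$, $G_0=H$, and every transition map $G_{i+1}\twoheadrightarrow G_i$ is a quotient map of algebraic groups. Moreover, $d(G_i)\leq d(G)\leq d$ for all $i$.

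The construction is now inductive. Put $g^{(0)}_j=h_j$. Given generators $g^{(i)}_1,\ldots,g^{(i)}_d$ of $G_i$, Theorem \ref{theo: Geschuetz} applied to $G_{i+1}\twoheadrightarrow G_i$ (valid since $d(G_{i+1})\leq d$) gives generators $g^{(i+1)}_1,\ldots,g^{(i+1)}_d$ of $G_{i+1}$ lifting them. The compatible families $(g^{(i)}_j)_i$ define elements $g_j\in G(k)=\varprojlim_i G_i(k)$, using that $G(k)=\varprojlim G_i(k)$ for $G=\varprojlim G_i$. We have $\f(g_j)=h_j$ by construction.

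It remains to show that $\langle g_1,\ldots,g_d\rangle=G$. Let $G'=\langle g_1,\ldots,g_d\rangle$. For each $i$, the image of $G'$ in $G_i$ is a closed subgroup containing the generators $g^{(i)}_j$, so it equals $G_i$. A closed subgroup of $G=\varprojlim G_i$ that surjects onto every $G_i$ is all of $G$. Dually, $k[G]=\bigcup_i k[G_i]$, and the ideal of $G'$ meets each $k[G_i]$ trivially, so that ideal is zero. Hence $G'=G$.

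The main obstacle is only bookkeeping: we must make sure that countability of the neighborhood basis really gives a cofinal chain indexed by $\mathbb{N}$, and we must justify lifting $k$-points through the inverse limit. Because the index set is a chain, this limit argument needs no compactness; that is exactly why countability matters, and why an uncountable chain would break down at limit stages.
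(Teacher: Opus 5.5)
Your proposal is correct and follows the paper's proof. You reduce via Lemma~\ref{lemma: conjectures equivalent} to Conjecture~\ref{conj: Gaschuetz for proalgebraic}, use countability to see that $H$ is algebraic and that $G$ is the limit of a sequence of algebraic quotients starting at $H$, and lift the generators step by step with Theorem~\ref{theo: Geschuetz}. Your added details, namely building the cofinal chain through $\ker(\phi)$ and the dual argument that a closed subgroup surjecting onto every $G_i$ is all of $G$, fill in steps the paper only states briefly.
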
	
\begin{proof}
	By Lemma \ref{lemma: conjectures equivalent}, it suffices to prove Conjecture \ref{conj: Gaschuetz for proalgebraic}. So let $G\twoheadrightarrow H$ be a quotient map of proalgebraic groups with $\rank(H)<|k|$. Furthermore, let $h_1,\ldots,h_d$ be generators of $H$ and $d(G)\leq d$. As $k$ is countable, the condition $\rank(H)<|k|$ means that $H$ is an algebraic group. Because $\rank(G)\leq |k|$ (Lemma \ref{lemma: rank of finitely gen G}) is countable, we see that $G$ can be written as the projective limit of a sequence of algebraic groups of the form
	\begin{equation} \label{eq: quotient maps}
	H\twoheadleftarrow G_1\twoheadleftarrow G_2\twoheadleftarrow\ldots .
	\end{equation}
	As the $G_n$'s are quotients of $G$, we have $d(G_n)\leq d$. By the Gasch\"{u}tz Lemma for algebraic groups (Theorem \ref{theo: Geschuetz}) we can lift $h_1,\ldots,h_d$ to generators $g^1_1,\ldots,g_d^1$ of $G_1$. Applying Theorem \ref{theo: Geschuetz} again, we can lift $g^1_1,\ldots,g_d^1$ to generators $g^2_1,\ldots,g_d^2$ of $G_2$. Continuing this way, we find for every $n\geq 1$ generators $g_1^n,\ldots,g_d^n$ of $G_n$ compatible with the quotient maps in (\ref{eq: quotient maps}). For $i=1,\ldots,d$, the sequence $(g_i^n)_{n\geq 1}$ defines an element $g_i$ of $\varprojlim_n G_n(k)=G(k)$. As the images of $g_1,\ldots,g_d$ in $G_n$ generate $G_n$ for all $n\geq 1$, we see that $g_1,\ldots,g_d$ generate $G$. They are therefore the sought for lifts of $h_1,\ldots,h_d$ to a generating set of $G$.
\end{proof}	

For a countable base field $k$, there is a particularly nice characterization of free proalgebraic groups that only requires algebraic embedding problems.

\begin{prop} \label{prop: characterization for countable}
For $d\geq 1$, if $k$ is countable, the following conditions on a proalgebraic group $\Gamma$ are equivalent:
	\begin{enumerate}
		\item The proalgebraic group $\Gamma$ is isomorphic to the free proalgebraic group on $d$ generators.
		\item We have $d(\Gamma)\leq d$ and every algebraic embedding problem $(G\twoheadrightarrow H,\ \Gamma\twoheadrightarrow H)$ for $\Gamma$ with $d(G)\leq d$ has a solution.
		\end{enumerate}
\end{prop}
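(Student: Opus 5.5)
For (i)$\Rightarrow$(ii) I would simply invoke earlier results. $\Gamma_d$ is generated by its $d$ free generators, so $d(\Gamma_d)\leq d$, and Proposition~\ref{prop: embedding property for Gammad} shows that every algebraic embedding problem with $d(G)\leq d$ is solvable. Both properties are invariant under isomorphism, so (ii) holds for any $\Gamma\simeq\Gamma_d$.

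For (ii)$\Rightarrow$(i) the plan is to show that such a $\Gamma$ is $d$-saturated in the sense of Definition~\ref{defi: d-saturated}. By Theorem~\ref{theo: Conjectures for k countable}, $\Gamma_d$ is $d$-saturated when $k$ is countable, so Proposition~\ref{prop: uniqueness of d-saturated} (equivalently Corollary~\ref{cor: characterization of Gammad}) then gives $\Gamma\simeq\Gamma_d$.

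To check $d$-saturation, fix an embedding problem $(G\twoheadrightarrow H,\ \Gamma\twoheadrightarrow H)$ with $d(G)\leq d$ and $\rank(H)<|k|$. Since $|k|=\aleph_0$, the rank condition forces $\rank(H)$ to be finite, so $H$ is algebraic. By Lemma~\ref{lemma: rank of finitely gen G}, $\rank(G)\leq|k|$ is countable. Hence, as in the proof of Theorem~\ref{theo: Conjectures for k countable}, I would write $G=\varprojlim_n G_n$ along a sequence of quotient maps
$$H=G_0\twoheadleftarrow G_1\twoheadleftarrow G_2\twoheadleftarrow\cdots$$
of algebraic groups, compatible with $G\twoheadrightarrow H$. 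To get this, choose a countable neighborhood basis at $1$, intersect its first $n$ members with $\ker(G\twoheadrightarrow H)$, and note that a finite intersection of coalgebraic subgroups is coalgebraic. Each $G_n$ is a quotient of $G$, so $d(G_n)\leq d$. Now solve inductively: given a quotient map $\Gamma\twoheadrightarrow G_{n-1}$ lifting the previous one, the pair $(G_n\twoheadrightarrow G_{n-1},\ \Gamma\twoheadrightarrow G_{n-1})$ is an algebraic embedding problem with $d(G_n)\leq d$. By (ii) it has a solution $\Gamma\twoheadrightarrow G_n$. The compatible family induces a morphism $\Gamma\to\varprojlim G_n=G$ over $H$.

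The only delicate step is checking that this limit morphism is a quotient map. Dually, $k[G]=\bigcup_n k[G_n]$, and each $k[G_n]\to k[\Gamma]$ is injective, so the union maps injectively. This makes the morphism a solution and shows that $\Gamma$ is $d$-saturated ($d(\Gamma)\leq d$ being assumed in (ii)). I expect no serious obstacle beyond this bookkeeping. Countability is used exactly twice: once to make $H$ algebraic, and once to reduce the proalgebraic $G$ to a sequential rather than transfinite limit.
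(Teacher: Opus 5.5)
Your proposal is correct and follows the paper's proof essentially step for step. You show that $\Gamma$ is $d$-saturated by writing $G$ as a sequential inverse limit of algebraic groups over the algebraic $H$ and solving the resulting algebraic embedding problems inductively, then conclude with Theorem~\ref{theo: Conjectures for k countable} and Proposition~\ref{prop: uniqueness of d-saturated}; your extra bookkeeping (building the sequence from a countable neighborhood basis, and checking that the limit map is a quotient map via $k[G]=\bigcup_n k[G_n]$) only fills in details the paper leaves implicit.
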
	
\begin{proof}
	The implication (i)$\Rightarrow$(ii) holds by Proposition \ref{prop: embedding property for Gammad}. Conversely, assume that $\Gamma$ satisfies (ii). We claim that $\Gamma$ is $d$-saturated. So let $(G\twoheadrightarrow H,\ \Gamma\twoheadrightarrow H)$ be an embedding problem for $\Gamma$ with $d(G)\leq d$ and $\rank(H)< |k|$. As $k$ is countable, $\rank(H)<|k|$ means that $H$ is algebraic. Furthermore, as $G$ is finitely generated, Lemma \ref{lemma: rank of finitely gen G} shows that the rank of $G$ is countable. As in the proof of Theorem \ref{theo: Conjectures for k countable}, we can thus write $G$ as the projective limit of a sequence of algebraic groups of the form
$$
		H\twoheadleftarrow G_1\twoheadleftarrow G_2\twoheadleftarrow\ldots .
	$$
	As the $G_n$'s are quotients of $G$, we have $d(G_n )\leq d$ for all $n\geq 1$. By assumption, the embedding problem $(G_1\twoheadrightarrow H,\ \Gamma\twoheadrightarrow H)$ has a solution $\f_1\colon \Gamma\twoheadrightarrow G_1$. Similarly, the embedding problem $(G_2\twoheadrightarrow G_1,\ \f_1\colon \Gamma\twoheadrightarrow G_1)$ has a solution $\f_2$. Continuing in this fashion	
	$$
	\xymatrix{
&		&	\Gamma \ar@{->>}[rd] \ar@{->>}^-{\f_1}[d] \ar@{->>}^-{\f_2}[ld] \ar@{->>}^-{\f_3}[lld] & \\
\cdots G_3 \ar@{->>}[r] & G_2 \ar@{->>}[r] &	G_1 \ar@{->>}[r] & H \\	
	}
	$$
	we obtain a sequence $(\f_n)_{n\geq 1}$ of quotient maps $\f_n\colon \Gamma \twoheadrightarrow G_n$ that defines a solution $\f\colon \Gamma \twoheadrightarrow \varprojlim_n G_n =G$ of our embedding problem  $(G\twoheadrightarrow H,\ \Gamma\twoheadrightarrow H)$. Thus $\Gamma$ is $d$-saturated.
	
	As $\Gamma_d$ is $d$-saturated by Theorem \ref{theo: Conjectures for k countable}, it follows from Proposition \ref{prop: uniqueness of d-saturated} that $\Gamma$ and $\Gamma_d$ are isomorphic.
\end{proof}	

Example \ref{ex: counterexample for algebraic characterization} below shows that Proposition \ref{prop: characterization for countable} fails if $k$ is not countable.

\subsection{One-generated proalgebraic groups} This subsection is not needed for the proof of our main results. Its purpose is to put Conjectures \ref{conj: Gamma_d is d-saturated} and \ref{conj: Gaschuetz for proalgebraic} in perspective. We prove the conjectures for $d=1$ and show that the conjectures do not hold without the assumption $\rank(H)<|k|$. We first obtain a full description of the proalgebraic groups that can be generated by one element. Recall that $D(M)$ denotes the diagonalizable proalgebraic group with character group $M$.

\begin{lemma} \label{lemma: one generation of diagonalizable}
	Let $M$ be an abelian group. Then $g\in D(M)(k)=\Hom(M,k^\times)$ generates $D(M)$ (as a proalgebraic group) if and only if $g\colon M\to k^\times $ is injective.
\end{lemma}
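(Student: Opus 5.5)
The plan is to use the anti-equivalence between diagonalizable proalgebraic groups and abelian groups, $M\mapsto D(M)$, under which closed subgroups of $D(M)$ correspond to quotients of $M$. Concretely, a closed subgroup of $D(M)$ is diagonalizable and has the form $D(M/M')$ for a subgroup $M'\leq M$: it is the common vanishing locus of the ideals generated by the elements $m-1$, $m\in M'$, in $k[M]$. Since $k$ is algebraically closed of characteristic zero, this is standard for algebraic groups (\cite[Theorem 12.9]{Milne:AlgebraicGroupsTheTheoryOfGroupSchemesOfFiniteTypeOverAField}). For proalgebraic groups I would pass to the limit, writing $D(M)=\varprojlim D(M_0)$ over the finitely generated subgroups $M_0\leq M$.

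With this description, the smallest closed subgroup $\langle g\rangle$ of $D(M)$ containing $g$ is $D(M/M')$, where $M'$ is the largest subgroup of $M$ such that $g$ factors through $M/M'$, i.e.\ $g(m)=1$ for all $m\in M'$. Thus $M'=\ker(g)$, and $\langle g\rangle=D(M/\ker(g))$. Consequently $\langle g\rangle=D(M)$ if and only if $\ker(g)=1$, which is exactly injectivity of $g\colon M\to k^\times$.

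To keep the argument self-contained, I would instead run it directly on coordinate rings. The defining ideal $I$ of $\langle g\rangle$ in $k[M]$ is the set of $f\in k[M]$ vanishing on every power $g^n$, $n\in\mathbb{Z}$, since the Zariski closure of the abstract group generated by $g$ is already a closed subgroup. For $f=\sum c_m m$ we have $f(g^n)=\sum c_m g(m)^n$.

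If $g$ is injective, the values $g(m)$ are pairwise distinct. The functions $n\mapsto g(m)^n$ on $\mathbb{Z}$ are then linearly independent, by Dedekind/Artin independence of distinct characters, or by a Vandermonde determinant argument. Hence $f(g^n)=0$ for all $n$ forces all $c_m=0$, so $I=0$ and $\langle g\rangle=D(M)$. Conversely, if $g(m)=1$ for some $m\neq 1$, then $m-1$ is a nonzero element of $I$, so $\langle g\rangle$ is proper.

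The main obstacle is the identification of $\langle g\rangle$ with the closure of $\{g^n\}$ in the proalgebraic setting. To handle it, I would note that the closure of an abstract subgroup is a closed subgroup. This can be checked on each algebraic quotient $D(M_0)$ and then passed to the limit, or it follows from \cite[Lemma 2.16]{Wibmer:FreeProalgebraicGroups}-type facts. Once that is in place, the character-independence argument settles both directions.
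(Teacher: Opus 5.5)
Your proposal is correct. Your first two paragraphs are the paper's proof. For ``if'', the paper uses the Demazure--Gabriel description of closed subgroups of $D(M)$ as those cut out by a subgroup $N\leq M$, and concludes $N\subseteq\ker(g)=1$. For ``only if'', it takes the closed subgroup defined by $\ker(g)$ and evaluates at the universal point $M\hookrightarrow k[M]^\times$; this amounts to your observation that $m-1\neq 0$ lies in the ideal of $\langle g\rangle$.

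The self-contained version you then commit to is a genuinely more elementary route for ``if''. You identify the ideal of $\langle g\rangle$ with $I=\{f\in k[M]\mid f(g^n)=0 \text{ for all } n\in\mathbb{Z}\}$. You then show $I=0$ by linear independence of the distinct characters $n\mapsto g(m)^n$ of $\mathbb{Z}$. In effect this reproves the one special case of the classification that is needed.

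The only extra cost is the step you flag, that $V(I)$ is a closed subgroup. This needs neither a limit argument nor outside lemmas. Write $\Omega=\{g^n\mid n\in\mathbb{Z}\}$; then $k[M]/I$ embeds into the function space $k^{\Omega}$, and $k^{\Omega}\otimes_k k^{\Omega}\to k^{\Omega\times\Omega}$ is injective. Hence $\Delta(I)\subseteq I\otimes k[M]+k[M]\otimes I$, because $\Delta(f)$ evaluates to $f(\omega\omega')$ on $\Omega\times\Omega$. For $f\in I$ one also has $f(1)=f(g^0)=0$, and the antipode sends $f$ to a function whose value at $g^n$ is $f(g^{-n})=0$. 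Minimality of $V(I)$ among closed subgroups containing $g$ in their $k$-points is then immediate.

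So the paper's argument is shorter given the structure theory of diagonalizable groups. Yours is self-contained and shows directly why injectivity of $g$ is the right condition.
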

\begin{proof}
	First assume $\langle g\rangle=D(M)$. Let $H$ be the closed subgroup of $D(M)$ defined by 
	$$H(R)=\{h\in\Hom(M,R^\times)|\ h(m)=1 \ \forall \ m\in \ker(g)\}$$ for any $k$-algebra $R$. Then $g\in H(k)$ and so $H=D(M)$. In particular, for $h\colon M\to k[M]^\times$ the inclusion map, we have $h\in H(k[M])$. This shows that $\ker(g)=1$, i.e., $g$ is injective.
	
	Conversely, assume that $g$ is injective.
	Let $H=\langle g\rangle$ be the closed subgroup of $D(M)$ generated by $g$. Then $H$ is diagonalizable and there exists a subgroup $N$ of $M$ such that
	$$H(R)=\{h\in\Hom(M,R^\times)|\ h(n)=1 \ \forall \ n\in N\}$$ for all $k$-algebras $R$ (\cite[Chapter IV, \S 1, Cor. 1.7 and Prop. 1.3]{DemazureGabriel:GroupesAlgebriques}). As $g\in H(k)$, we have $g(n)=1$ for all $n\in N$. Because $g$ is injective, this shows that $N=1$. Thus $\langle g\rangle=H=D(M)$ as desired.
\end{proof}	

\begin{cor} \label{cor: one generation of diagonalizable}
	The proalgebraic group $D(M)$ can be generated by one element if and only if there exists an embedding $M\hookrightarrow k^\times$ of abelian groups. \qed
\end{cor}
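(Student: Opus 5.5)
This follows directly from Lemma \ref{lemma: one generation of diagonalizable}, which says that $g\in D(M)(k)=\Hom(M,k^\times)$ generates $D(M)$ if and only if $g$ is injective. So the plan is simply to unwind the two directions of the equivalence.

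First, suppose $D(M)$ can be generated by one element. Then there is some $g\in D(M)(k)$ with $\langle g\rangle=D(M)$. By definition of $D(M)$, this $g$ is a group homomorphism $M\to k^\times$. The forward direction of Lemma \ref{lemma: one generation of diagonalizable} shows that $g$ is injective. Hence $g$ is the required embedding $M\hookrightarrow k^\times$ of abelian groups.

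Conversely, suppose there is an embedding $\iota\colon M\hookrightarrow k^\times$ of abelian groups. Then $\iota\in\Hom(M,k^\times)=D(M)(k)$, so $\iota$ is a $k$-point of $D(M)$. Since $\iota$ is injective, the reverse direction of Lemma \ref{lemma: one generation of diagonalizable} gives $\langle \iota\rangle=D(M)$. Thus $D(M)$ is generated by one element.

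No step presents a real obstacle, since all the substance lies in the lemma. The only point to keep in view is the identification $D(M)(k)=\Hom(M,k^\times)$, which turns "an element of $D(M)(k)$" into "a homomorphism $M\to k^\times$".
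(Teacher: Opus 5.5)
Your proposal is correct and matches the paper's approach: the corollary is stated with \qed as an immediate consequence of Lemma~\ref{lemma: one generation of diagonalizable}, and you unwind both directions of that lemma via the identification $D(M)(k)=\Hom(M,k^\times)$ in exactly this way.
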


Having understood the diagonalizable case, we now add a unipotent part.

\begin{lemma} \label{lemma: one generation}
	Let $g_1\colon M\to k^\times$ be an injective morphism of abelian groups and $g_2\in k^\times$. Then $g=(g_1,g_2)\in(D(M)\times\Ga)(k)$ is a generator of $D(M)\times \Ga$.
\end{lemma}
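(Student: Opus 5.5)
The plan is to use Goursat's Lemma (Lemma \ref{lemma: Goursat}) in the same way as in the proof of Lemma \ref{lemma: Gaschuetz for kernel Ga}, now for proalgebraic groups. The unipotent and diagonalizable factors have no nontrivial common quotient, so any closed subgroup projecting onto both factors must be the whole product.

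Set $H=\langle g\rangle\leq D(M)\times\Ga$. First I check that both projections of $H$ are quotient maps.
\begin{enumerate}
\item The image of $H$ in $D(M)$ is the closed subgroup generated by $g_1$. Since $g_1\colon M\to k^\times$ is injective, Lemma \ref{lemma: one generation of diagonalizable} shows this image is all of $D(M)$.
\item The image of $H$ in $\Ga$ is the closed subgroup generated by $g_2$. Because $g_2\in k^\times$ is a nonzero element of $\Ga(k)=k$, and we are in characteristic zero, this image is all of $\Ga$.
\end{enumerate}
(The image of a generated subgroup under a morphism is the subgroup generated by the image, as recalled in the notation section via \cite[Lemma 2.16]{Wibmer:FreeProalgebraicGroups}.)

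So $H$ is a subdirect product of $D(M)$ and $\Ga$. Lemma \ref{lemma: Goursat} then gives a proalgebraic group $Q$ and quotient maps $D(M)\twoheadrightarrow Q$ and $\Ga\twoheadrightarrow Q$ with $H=D(M)\times_Q\Ga$. It remains to show $Q=1$, because then $H=D(M)\times\Ga$.

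To see that $Q=1$, I would argue as follows.
\begin{enumerate}
\item The closed subgroups of $\Ga$ in characteristic zero are only $0$ and $\Ga$. Hence $Q$ is either trivial or isomorphic to $\Ga$.
\item A quotient of a diagonalizable proalgebraic group is diagonalizable. Dually, $k[Q]\subseteq k[M]$ is a Hopf subalgebra of a group algebra, so it is spanned by group-likes, i.e.\ it is $k[M']$ for some subgroup $M'\leq M$.
\item $\Ga$ is not diagonalizable: its coordinate ring $k[t]$ contains no group-likes other than $1$, for instance.
\item Equivalently, every morphism from a diagonalizable group to a unipotent group is trivial.
\end{enumerate}
Hence $Q=1$ and $g$ generates $D(M)\times\Ga$.

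The only step needing any care is the claim that the quotient $Q$ of $D(M)$ is diagonalizable, i.e.\ that Hopf subalgebras of $k[M]$ are group algebras of subgroups. I would cite \cite[Chapter IV, \S 1]{DemazureGabriel:GroupesAlgebriques} for this, as in the proof of Lemma \ref{lemma: one generation of diagonalizable}. Alternatively, one can avoid it by noting that a quotient map $D(M)\twoheadrightarrow\Ga$ would give a nonzero primitive element of $k[M]$, and $k[M]$ has none.
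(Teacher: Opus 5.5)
Your proposal is correct and follows the paper's proof: both projections of $\langle g\rangle$ are quotient maps, Goursat's Lemma gives a common quotient $Q$ of $D(M)$ and $\Ga$, and $Q=1$ because the only nontrivial quotient of $\Ga$ is $\Ga$ while every quotient of a diagonalizable group is diagonalizable. Your extra justifications (Hopf subalgebras of $k[M]$ are group algebras, or $k[M]$ has no nonzero primitive elements) only fill in details that the paper states without proof.
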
	
\begin{proof}
	Set $H=\langle g\rangle$. As $g_1$ generates $D(M)$ (Lemma \ref{lemma: one generation of diagonalizable}) and $g_2$ generates $\Ga$, we see that the projections $H\to D(M)$ and $H\to \Ga$ are quotient maps. Thus, by Lemma \ref{lemma: Goursat}, there exists a proalgebraic group $G$ and quotient maps $D(M)\twoheadrightarrow G$ and  $\Ga\twoheadrightarrow G$ such that $H=D(M)\times_G \Ga$. But the only non-trivial quotient of $\Ga$ is $\Ga$ and any quotient of a diagonalizable proalgebraic group is diagonalizable. This implies that $G=1$, so that $H=D(M)\times \Ga$ as desired. 
\end{proof}	

The following lemma gives a complete description of all proalgebraic groups that can be generated by one element.

\begin{lemma} \label{lemma: structure of one-generated group}
	A proalgebraic group $G$ can be generated by one element if and only if it is of the form $G=D(M)\times\Ga^e$, where $e\in \{0,1\}$ and $M$ is a subgroup of $k^\times$.
\end{lemma}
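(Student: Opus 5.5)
The plan is to prove both directions separately, reducing the forward direction to the structure theory of commutative proalgebraic groups.

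\emph{Sufficiency.} Suppose $G=D(M)\times\Ga^e$ with $M\leq k^\times$. The inclusion $g_1\colon M\hookrightarrow k^\times$ is an injective element of $D(M)(k)$, so by Lemma \ref{lemma: one generation of diagonalizable} it generates $D(M)$. This settles the case $e=0$. For $e=1$, pick any $g_2\neq 0$; Lemma \ref{lemma: one generation} then shows that $(g_1,g_2)$ generates $D(M)\times\Ga$. (I read the condition ``$g_2\in k^\times$'' there as meaning a nonzero element of $\Ga(k)=k$.)

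\emph{Necessity.} Let $G=\langle g\rangle$.

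First, $G$ is commutative. The closed subgroup of $G$ consisting of points commuting with $g$ (the centralizer, a closed subgroup scheme) contains $g$, so it equals $G$. Hence $g$ is central. The centralizer of the center is then closed and contains $g$, so $G$ is commutative.

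Next, since $k$ has characteristic zero and $k$ is algebraically closed, a commutative proalgebraic group splits as $G=G_s\times G_u$, with $G_s$ of multiplicative type (hence diagonalizable, $G_s=D(M)$) and $G_u$ unipotent. For algebraic $G$ this is \cite[Cor. 16.15]{Milne:AlgebraicGroupsTheTheoryOfGroupSchemesOfFiniteTypeOverAField} together with the structure of finite commutative groups. For proalgebraic $G$ I would pass to the limit: write $G$ as the limit of its algebraic quotients and use functoriality of the decomposition. The projections of $g$ generate each factor. Then:
\begin{itemize}
\item $D(M)$ is one-generated, so by Corollary \ref{cor: one generation of diagonalizable} $M$ embeds into $k^\times$. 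We may therefore take $M\leq k^\times$.
\item $G_u$ is a one-generated commutative unipotent proalgebraic group.
\end{itemize}

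\emph{Main obstacle: showing $G_u$ is $1$ or $\Ga$.} A commutative unipotent group in characteristic zero is a vector group, possibly infinite-dimensional: $G_u=\varprojlim V_i$ with $V_i$ finite-dimensional vector groups and the transition maps linear. By the exponential/logarithm correspondence used in Lemma \ref{lemma: specialize generator}, for $v\neq 0$ in a vector group $V$ we have $\langle v\rangle=kv$, a line. So each quotient $V_i$ generated by the image of $g$ has dimension at most $1$. Hence every algebraic quotient of $G_u$ is $1$ or $\Ga$, and the transition maps between copies of $\Ga$ are isomorphisms. It follows that $G_u$ is $1$ or $\Ga$, i.e.\ $G_u=\Ga^e$ with $e\in\{0,1\}$. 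This proves $G=D(M)\times\Ga^e$.

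The points needing the most care are the passage to the limit in the multiplicative/unipotent decomposition and the linearity of the maps between vector groups. Both follow from the cited Milne results applied to each algebraic quotient, since the decomposition is functorial.
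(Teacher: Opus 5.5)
Your proposal is correct and follows essentially the paper's route. Sufficiency comes from Lemmas \ref{lemma: one generation of diagonalizable} and \ref{lemma: one generation}, and necessity goes via commutativity, the splitting $G=D(M)\times G_u$ (which the paper cites directly for proalgebraic groups from Demazure--Gabriel instead of passing to the limit), then Corollary \ref{cor: one generation of diagonalizable}, and finally the observation that every algebraic quotient of $G_u$ is a one-generated vector group, hence $1$ or $\Ga$ (the paper phrases this last point as $d(\Ga^n)=n$).

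One small wording fix: in the commutativity step, the closed subgroup you want is the center $Z(G)$ itself, which contains the central element $g$ and hence equals $G$. The centralizer of the center is always all of $G$ and gives nothing.
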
	
\begin{proof} It follows from Lemmas \ref{lemma: one generation of diagonalizable} and \ref{lemma: one generation} that $D(M)\times\Ga^e$
	can be generated by one element.
	Conversely, if $G$ can be generated by one element, then $G$ is abelian. Any abelian proalgebraic group is of the form $G=G^s\times G^u$, where $G^s=D(M)$ is a diagonalizable proalgebraic group and $G^u$ is a unipotent proalgebraic group (\cite[Chapter IV, \S3, Thm 1.1]{DemazureGabriel:GroupesAlgebriques}). 
	As $D(M)$ can be generated by one element, it follows from Corollary \ref{cor: one generation of diagonalizable} that $M$ embeds into $k^\times$.

	An abelian unipotent algebraic group (over an algebraically closed field of characteristic zero) is isomorphic to $\Ga^n$ for some $n\geq 0$ (\cite[Chapter IV, \S2, Prop 4.2]{DemazureGabriel:GroupesAlgebriques}). As $d(\Ga^n)=n$, we see that any non-trivial algebraic quotient of $G^u$ must be isomorphic to $\Ga$. This implies that $G^u$ is trivial or isomorphic to $\Ga$.
\end{proof}	

We now prove Conjecture \ref{conj: Gaschuetz for proalgebraic} for $d=1$. The proof is based on the explicit description of the one-generated proalgebraic groups and a key property of the abelian group $k^\times$.

\begin{theo} \label{theo: conjecture for d=1}
	Conjecture \ref{conj: Gaschuetz for proalgebraic} (and therefore also Conjecture \ref{conj: Gamma_d is d-saturated}) holds for $d=1$, i.e., if $G$ is a proalgebraic group with $d(G)=1$ and $G\twoheadrightarrow H$ is a quotient map with $\rank(H)<|k|$, then every generator of $H$ lifts to a generator of $G$.
\end{theo}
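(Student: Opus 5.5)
By Lemma \ref{lemma: structure of one-generated group}, I would write $G=D(M)\times\Ga^{e}$ with $e\in\{0,1\}$ and $M$ a subgroup of $k^\times$. A quotient of $G$ is again abelian and one-generated (the image of a generator generates), so $H=D(M')\times\Ga^{e'}$ with $M'$ also embeddable in $k^\times$. The quotient map sends the diagonalizable part onto the diagonalizable part and the unipotent part onto the unipotent part, because these decompositions are functorial. So $G\twoheadrightarrow H$ is the product of a quotient map $D(M)\twoheadrightarrow D(M')$ and a quotient map $\Ga^e\twoheadrightarrow\Ga^{e'}$. The first comes from an injection of character groups $M'\hookrightarrow M$. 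The second is either the identity or $\Ga\twoheadrightarrow 1$ or $1\to 1$.

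A generator of $H$ is a pair $h=(h_1,h_2)$. By Lemma \ref{lemma: one generation of diagonalizable}, $h_1\colon M'\to k^\times$ is an injective homomorphism. The component $h_2$ is non-zero if $e'=1$. In view of Lemma \ref{lemma: one generation}, the unipotent part is easy to handle: if $e=e'$, take $g_2=h_2$ (nonzero if $e=1$); if $e=1,e'=0$, take $g_2=1$. Then, by Lemmas \ref{lemma: one generation of diagonalizable} and \ref{lemma: one generation}, it suffices to extend the injective homomorphism $h_1\colon M'\to k^\times$ to an \emph{injective} homomorphism $g_1\colon M\to k^\times$, where $M'\subseteq M$ via the dual inclusion. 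Here $\f(g)=h$ holds because restriction of $g_1$ to $M'$ is $h_1$.

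The key step, and the main obstacle, is this extension problem for abelian groups. The rank hypothesis enters here: $\rank(H)<|k|$ means $\dim_k k[M']=|M'|<|k|$ (or $H$ is algebraic, so $M'$ is finitely generated). I would use the structure $k^\times\cong\mathbb{Q}/\mathbb{Z}\oplus V$ with $V$ a $\mathbb{Q}$-vector space of dimension $|k|$, which is a divisible group. Divisibility already gives some extension $M\to k^\times$ of $h_1$, but injectivity needs more care. I would first fix an injection $\iota\colon M\hookrightarrow k^\times$, which exists since $M\le k^\times$. By injectivity, $h_1$ identifies the torsion of $M'$ with a subgroup of $\mathbb{Q}/\mathbb{Z}$. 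Since torsion subgroups of $\mathbb{Q}/\mathbb{Z}$ are determined by their orders, one can extend $h_1|_{M'_{\mathrm{tors}}}$ to an injective map $M_{\mathrm{tors}}\to\mathbb{Q}/\mathbb{Z}$; some care is needed with the compatibility of torsion embeddings, which I would handle via injective hulls and uniqueness of embeddings up to automorphism of $\mathbb{Q}/\mathbb{Z}$ fixing a given finite subgroup.

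For the torsion-free directions, I would pass to divisible hulls. The divisible hull of $M$ is $D(M_{\mathrm{tors}})\oplus W$ with $\dim_{\mathbb{Q}} W\le|k|$, and $h_1$ extends to the divisible hull of $M'$, whose image meets $V$ in a subspace of dimension $\le|M'|<|k|$. The complement of that subspace in $V$ therefore still has dimension $|k|$. This leaves room to map a complementary $\mathbb{Q}$-basis of $W$ (relative to the hull of $M'$) injectively and independently into this complement, which keeps the extended map injective. Restricting back to $M$ gives the required injective $g_1$. The cardinality count in this step is exactly where $\rank(H)<|k|$ is used, and it is why the conjecture can fail without that assumption.
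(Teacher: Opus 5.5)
Your proposal is correct and has the same skeleton as the paper's proof. You use Lemma~\ref{lemma: structure of one-generated group} to split $G\twoheadrightarrow H$ into $D(M)\twoheadrightarrow D(M')$ and $\Ga^e\twoheadrightarrow\Ga^{e'}$. When $e=e'=1$ the latter is multiplication by a nonzero scalar $\lambda$, not the identity, so take $g_2=\lambda^{-1}h_2$. Everything then reduces to extending the embedding $h_1\colon M'\hookrightarrow k^\times$ to an embedding of $M$, using divisible hulls and $k^\times\cong\mathbb{Q}/\mathbb{Z}\oplus V$.

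You differ from the paper only in how this last abelian-group step is executed. The paper replaces $M'\subseteq M$ by their hulls and uses that divisible subgroups are direct summands to write $M=M'\oplus P$ and $k^\times=h_1(M')\oplus\widehat P$. It then compares the invariants of divisible groups:
\begin{itemize}
\item the number of copies of $\mathbb{Z}(p^\infty)$ in $P$ is at most $1$ minus that in $M'$, which equals that in $\widehat P$;
\item $\widehat P$ contains $|k|$ copies of $\mathbb{Q}$ because $|M'|<|k|$.
\end{itemize}
This treats torsion and torsion-free parts at once and needs no gluing.

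Your split into a torsion step (inside $\mathbb{Q}/\mathbb{Z}$) and a torsion-free step (a complement in $V$) also works, but the gluing you only gesture at must be done in the right order. Extensions of $h_1|_{M'_{\mathrm{tors}}}$ to the hull of $M'_{\mathrm{tors}}$ are not unique. Extending separately to $E(M_{\mathrm{tors}})$ and to $E(M')$ can therefore produce maps that disagree on the common torsion part. The clean fix has three steps:
\begin{enumerate}
\item Glue first on $M_{\mathrm{tors}}+M'$. The two maps agree on $M_{\mathrm{tors}}\cap M'=M'_{\mathrm{tors}}$, and the glued map is injective because $h_1$ sends non-torsion elements to non-torsion elements.
\item Extend to the hull of $M_{\mathrm{tors}}+M'$. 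This is automatically injective, since the hull is an essential extension.
\item Choose a complement of that hull in $E(M)$; it is torsion-free. Send a $\mathbb{Q}$-basis of it independently into a complement in $V$ of the \emph{projection} $\pi_V$ of the image. The intersection of the image with $V$ need not be a subspace, though its $\mathbb{Q}$-span equals that projection.
\end{enumerate}
For the torsion step itself, the cleanest justification is that on each $p$-part with $M'_p\neq 0$ any extension is injective, because subgroups of $\mathbb{Z}(p^\infty)$ form a chain. No finiteness of $M'_{\mathrm{tors}}$ is needed.

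In exchange, your version avoids the paper's preliminary reduction to uncountable $k$ via Theorem~\ref{theo: Conjectures for k countable}, provided you bound the torsion-free rank of $M'$ rather than $|M'|$. When $H$ is algebraic and $k$ is countable, your stated bound $|M'|<|k|$ can fail (e.g.\ $M'\cong\mathbb{Z}$). The rank is still finite, which is all the complement argument needs.
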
	
\begin{proof}
	%Let $\f\colon G\twoheadrightarrow H$ be a quotient map of proalgebraic groups with $\rank(H)<|k|$ and $d(G)\leq 1$.
	 Assume that $h\in H(k)$ generates $H$. We first assume that $G$ is diagonalizable. Then also $H$ is diagonalizable, say $G=D(M)$ and $H=D(N)$ for abelian groups $M$ and $N$. By Theorem~\ref{theo: Conjectures for k countable} we may assume that $k$ is uncountable. We then have $|N|<|k|$ because $|N|=\rank(D(N))$ if $N$ is not finitely generated (i.e., $D(N)$ is not algebraic).

	The quotient map $ G\twoheadrightarrow H$ corresponds to an injection $N\to M$ and $h\colon N\to k^\times$ is injective (Lemma~\ref{lemma: one generation of diagonalizable}). We see that $h$ lifts to a generator of $G$ if and only if the embedding $h\colon N\to k^\times$ can be extended to an embedding $g\colon M\to k^\times$, i.e., the diagram
	$$
	\xymatrix{
		& k^\times \\
		M  \ar@{^(..>}^-{g}[ru]   & N	 \ar@{_(->}[l] \ar@{^(->}_-{h}[u] 
	}
	$$ 
	can be completed. As $G$ can be generated by one element, we know that $M$ can be embedded into $k^\times$. However, we have to show that $M$ can be embedded into $k^\times$ over $N$, using that $|N|<|k|$.
	
	Note that $k^\times$ is a divisible group. Let $E(N)$ and $E(M)$ denote the divisible hulls of $N$ and $M$ respectively. 
	Then $N\hookrightarrow M$ extends to an embedding $E(N)\hookrightarrow E(M)$. As $k^\times$ is divisible and $M$ embeds into $k^\times$, also $E(M)$ embeds into $k^\times$. Identifying $N$ with a subgroup of $k^\times$, we have
	$E(N)=\{a\in k^\times|\ \exists\ n\geq 1 :\ a^n\in N\}$. So $|E(N)|=\max\{|\mathbb{N}|,|N|\}$ and therefore $|E(N)|<|k|$. An embedding of $E(M)$ over $E(N)$ into $k^\times$ induces an embedding of $M$ over $N$ into $k^\times$. We can thus assume that $M$ and $N$ are divisible. 
	
	Any divisible group is a direct sum of Pr\"{u}fer $p$-groups $\mathbb{Z}(p^\infty)$ and the additive group of $\mathbb{Q}$ (\cite[Chapter 4, Thm. 3.1]{Fuchs:AbelianGroups}). The direct sums involved here are of arbitrary but uniquely determined cardinality. We also know that $k^\times=\mathbb{Q}/\mathbb{Z}\oplus\bigoplus_{|k|}\mathbb{Q}$ (\cite[Chapter 18, Thm.~8.3]{Fuchs:AbelianGroups}). Moreover, $\mathbb{Q}/\mathbb{Z}=\bigoplus_p \mathbb{Z}(p^\infty)$, where the direct sum is taken over all prime numbers. As a divisible subgroup is a direct summand (\cite[Chapter 4, Cor.~2.4]{Fuchs:AbelianGroups}), we can write $M=N\oplus N'$ for some (necessarily divisible) subgroup $N'$ of $M$. It suffices to see that we can embed $N'$ into $k^\times$ such that the intersection with $N\subseteq k^\times$ is trivial.
	 Write $N=\bigoplus_{p}\oplus_{\kappa_p} \mathbb{Z}(p^\infty)\oplus \bigoplus_{\kappa_\mathbb{Q}}\mathbb{Q}$ and $N'=\bigoplus_{p}\oplus_{\kappa'_p} \mathbb{Z}(p^\infty)\oplus \bigoplus_{\kappa'_\mathbb{Q}}\mathbb{Q}$. Then $\kappa_p\in\{0,1\}$ and $\kappa_\mathbb{Q}<|k|$. As $N$ is a direct summand of $k^\times$, we can write
	 $k^\times=N\oplus \widehat{N}$ with $\widehat{N}=\bigoplus_{p}\oplus_{\widehat{\kappa}_p} \mathbb{Z}(p^\infty)\oplus \bigoplus_{\widehat{\kappa}_\mathbb{Q}}\mathbb{Q}$. It suffices to see that $N'$ can be embedded into $\widehat{N}$. Because $M=N\oplus N'$ embeds into $k^\times$, we must have $\kappa_p+\kappa'_p\leq 1$ and $\kappa'_\mathbb{Q}\leq |k|$. As $\kappa_p+\widehat{\kappa}_p=1$ and $\widehat{\kappa}_\mathbb{Q}=|k|$ (since $\kappa_\mathbb{Q}<|k|$), we see that $\kappa'_p\leq \widehat{\kappa}_p$ and $\kappa'_\mathbb{Q}\leq \widehat{\kappa}_\mathbb{Q}$. Thus $N'$ embeds into $\widehat{N}$ and the theorem holds in case $G$ is diagonalizable.

If $G$ is not diagonalizable, then $G$ is of the form $G=D(M)\times\Ga$ for some abelian group $M$ (Lemma~\ref{lemma: structure of one-generated group}). Similarly, $H=D(N)\times \Ga^e$ with $e\in\{0,1\}$. As there are no nontrivial morphisms from $D(M)$ to $\Ga$ or from $\Ga$ to $D(N)$, we see that $G\twoheadrightarrow H$ is a product of two quotient maps $D(M)\twoheadrightarrow D(N)$ and $\Ga\twoheadrightarrow\Ga^e$. Write $h=(h_1,h_2)\in (D(N)\times\Ga)(k)$. Then $h_1$ generates $D(N)$ and $h_2$ generates $\Ga^e$. By the diagonalizable case established above, $h_1$ lifts to a generator $g_1$ of $D(M)$. The morphism  $\Ga\twoheadrightarrow\Ga^e$ is either trivial ($e=0$) or an isomorphism ($e=1$). In any case, $h_2$ lifts to a generator $g_2$ of $\Ga$. Then $g=(g_1,g_2)$ is a generator of $G$ (Lemma \ref{lemma: one generation}).	
\end{proof}

The following example shows that Conjecture \ref{conj: Gaschuetz for proalgebraic} (and therefore also Conjecture \ref{conj: Gamma_d is d-saturated}) does not hold without the assumption that $\rank(H)<|k|$.

\begin{ex} \label{ex: need rank assumption }
	As noted in the proof of Theorem \ref{theo: conjecture for d=1}, the abelian group $k^\times$ is isomorphic to $(\mathbb{Q}/\mathbb{Z})\oplus V$, where $V$ is a $\mathbb{Q}$-vector space of dimension $|k|$. It is therefore possible to find a proper subgroup of $k^\times$ that is isomorphic to $k^\times$. In particular, there exists an abelian group $M$ containing $k^\times$ as a proper subgroup such that $M$ can be embedded into $k^\times$. 
	
	We claim that the morphism $\f\colon D(M)\twoheadrightarrow D(k^\times)$ (corresponding to the inclusion $k^\times \subseteq M$) and the element $h=\id\in D(k^\times)(k)=\Hom(k^\times,k^\times)$ does not satisfy the conclusion of the Gasch\"{u}tz Lemma. Note that $h$ generates $D(k^\times)$ (Lemma \ref{lemma: one generation of diagonalizable}) and $d(D(M))\leq 1$ (Corollary \ref{cor: one generation of diagonalizable}). Suppose there exists a generator $g\in D(M)(k)=\Hom(M,k^\times)$ of $D(M)$ such that $\f(g)=h$. We would then have a commutative diagram
	$$
	\xymatrix{
		k^\times \ar@{^(->}[r] \ar^{\id}[rd] & M \ar^-{g}[d] \\
		& k^\times
	}
	$$
	with $g$ injective. This is impossible as $k^\times$ is a proper subgroup of $M$. 
	Thus $h$ does not lift to a generator of $D(M)$. Note that $\rank(D(k^\times))=|k|$.
\end{ex}

The following example shows that the implication (ii)$\Rightarrow$(i) in Proposition \ref{prop: characterization for countable} does not hold if $k$ is not countable.

\begin{ex} \label{ex: counterexample for algebraic characterization}
	We will construct a proalgebraic group $\Gamma$ with $d(\Gamma)=1$ such that every algebraic embedding problem 
	\begin{equation} \label{eq: embedding problem2}
		\xymatrix{
			\Gamma \ar@{..>}[d] \ar@{->>}^-{\beta}[rd] & \\
			G \ar@{->>}^-{\alpha}[r]& H
		}
	\end{equation} 
	for $\Gamma$ with $d(G)\leq 1$ has a solution, but $\Gamma$ is not isomorphic to $\Gamma_1$ unless $k$ is countable.
	
	Indeed, consider $\Gamma=D(L)\times \Ga$ with $L=\mathbb{Q}/\mathbb{Z}\oplus V$, where $V$ is $\mathbb{Q}$-vector space of countably infinite dimension.
	% Fix an embedding $g\colon M\hookrightarrow k^\times$. Then $x=(g,1)\in \Gamma(k)$ is a generator of $\Gamma$ (Lemma \ref{lemma: one generation}).
	As $L$ embeds into $k^\times$, it follows from Lemma \ref{lemma: one generation} that $d(\Gamma)=1$.
	
	Consider embedding problem (\ref{eq: embedding problem2}). We know (Lemma \ref{lemma: structure of one-generated group}) that $G$ is of the form $G=D(M)\times\Ga^e$, for a finitely generated abelian group $M$ and $e\in\{0,1\}$. 
	
	Let us first assume that $G$ is diagonalizable (i.e., $e=0$). Then also $H$ is diagonalizable, say $H=D(N)$. Then (\ref{eq: embedding problem2}) dualizes to
		$$
	\xymatrix{
		 L &  \\
		M  \ar@{^(..>}[u]   & N.	 \ar@{_(->}[l] \ar@{_(->}[lu] 
	}
	$$ 
That is, given embeddings $N\hookrightarrow M$ and $N\hookrightarrow L$, we have to show that $M$ embeds into $L$ over $N$. Note that $M$ embeds into $k^\times=\mathbb{Q}/\mathbb{Z}\oplus\bigoplus_{|k|}\mathbb{Q}$ by Corollary \ref{cor: one generation of diagonalizable}. As $M$ is countable, this implies that $M$ embeds into $L$.
 Because $L$ is divisible, we can, as in the proof of Theorem \ref{theo: conjecture for d=1}, replace $M$ and $N$ by their divisible hulls $E(M)$ and $E(N)$. Note that because $N$ is finitely generated, it has finite rank and so also $E(N)$ has finite rank. Thus, assuming without loss of generality that $M$ and $N$ are divisible, if we write $L=N\oplus \widehat{N}$ and 
 $\widehat{N}=\bigoplus_{p}\oplus_{\widehat{\kappa}_p} \mathbb{Z}(p^\infty)\oplus \bigoplus_{\widehat{\kappa}_\mathbb{Q}}\mathbb{Q}$, then $\widehat{\kappa}_\mathbb{Q}=|\mathbb{N}|$. So if $M=N\oplus N'$, we can embed $N'$ into $\widehat{N}$ (as in the proof of Theorem \ref{theo: conjecture for d=1}), thereby finding an embedding of $M$ into $L$ over $N$.
 
 If $G$ is not diagonalizable, then $G=D(M)\times \Ga$ and $N=D(N)\times\Ga^e$ with $e\in\{0,1\}$. Moreover, $\alpha\colon G\twoheadrightarrow H$ is a product of two quotient maps $\alpha_1\colon D(M)\twoheadrightarrow D(N)$ and $\alpha_2\colon\Ga\twoheadrightarrow \Ga^e$. Similarly $\beta\colon \Gamma\twoheadrightarrow H$ is a product of quotient maps $\beta_1\colon D(L)\twoheadrightarrow D(N)$ and $\beta_2\colon\Ga\twoheadrightarrow \Ga^e$. By the above discussion, the embedding problem
 $$ 
 	\xymatrix{
 		D(L) \ar@{..>}_-{\f_1}[d] \ar@{->>}^-{\beta_1}[rd] & \\
 		D(M) \ar@{->>}^-{\alpha_1}[r]& D(N)
 	}
 $$
 has a solution $\f_1$. The embedding problem 
  $$ 
 \xymatrix{
 	\Ga \ar@{..>}_-{\f_2}[d] \ar@{->>}^-{\beta_2}[rd] & \\
 	\Ga \ar@{->>}^-{\alpha_2}[r]& \Ga^{e}
 }
 $$
 also has a solution $\f_2$: For $e=0$, any isomorphism will do and for $e=1$, $\alpha_2,\beta_2$ are isomorphisms and we have a unique solution $\f_2=\alpha_2^{-1}\beta_2$. Then $\f=\f_1\times\f_2$ is a sought for solution to embedding problem~(\ref{eq: embedding problem2}).
 
 Note that $\Gamma$ is isomorphic to $\Gamma_1$ if and only if $k$ is countable (Example \ref{ex: Gamma1}).
\end{ex}

\section{Regular singular differential embedding problems}

\label{sec: Regular singular differential embedding problems}

In this section we show that every regular singular differential embedding problem over $k(x)$ has a regular singular solution (Theorem \ref{theo: solve regular singular embedding problem}) and deduce from this our main results concerning the structure of the differential Galois group of the family of all regular singular differential equations over $k(x)$ with singularities contained in a given subset of $\mathbb{P}^1(k)$.

\subsection{Differential Galois theory}

We recall the basic definitions and results from differential Galois, introducing our notation for the later sections along the way. For more background, see any of the textbooks \cite{CrespoHajto:AlgebraicGroupsAndDifferentialGaloisTheory}, \cite{Magid:LecturesOnDifferentialGaloisTheory}, \cite{Sauloy:DifferentialGaloisTheoryThroughRiemannHilbertCorrespondence} or \cite{SingerPut:differential}. 

Throughout this subsection we let $(K,\delta)$ be a differential field with field of constants $k=K^\de=\{a\in K|\ \de(a)=0\}$ algebraically closed and of characteristic zero.

A differential field extension $L/K$ is a \emph{Picard-Vessiot extension} for $\delta(y)=Ay$, where $A\in K^{n\times n}$, if $L^\de=K^\de$ and there exists $Y\in\Gl_n(L)$ with $\de(Y)=AY$ and $L=K(Y_{ij}|\ 1\leq i,j\leq n)$. Such a $Y$ is called a \emph{fundamental solution matrix} for $\de(y)=Ay$. The $K$-$\de$-subalgebra $R=K[Y,\frac{1}{\det(Y)}]=K[Y_{ij},\frac{1}{\det(Y)}]$ is $\de$-simple, i.e., has no non-trivial $\de$-ideals and is called a \emph{Picard-Vessiot ring} for $\delta(y)=Ay$. The functor $G(L/K)$ from the category of $k$-algebras to the category of groups, given by $$G(L/K)(C)=\operatorname{Aut}(R\otimes_kC/K\otimes_k C)$$ for any $k$-algebra $C$, is the \emph{differential Galois group} of $L/K$ ($R/K$ or $\de(y)=Ay$). Here $C$ is considered as a constant differential ring and the automorphisms are understood to be $K\otimes_k C$\=/$\de$\=/automorphisms. The functor $G(L/K)$ is representable by a finitely generated $k$-algebra, i.e., $G(L/K)$ is an algebraic group. The above definitions generalize from a single differential equation $\delta(y)=Ay$ to a potentially infinite family $(\de(y)=A_iy)_{i\in I}$ of differential equations (where $A_i\in K^{n_i\times n_i}$) in a straightforward fashion. In this generality, $G(L/K)$ is a proalgebraic group (but not necessarily algebraic). More precisely, $G=G(L/K)$ is represented by $k[G]=(R\otimes_K R)^\de$, the canonical map $\eta\colon R\otimes_k k[G]\to R\otimes_K R$ is an isomorphism and the (functorial) action of $G$ on $R$ is algebraically described by the map $R\to R\otimes_k k[G],\ r\mapsto \eta^{-1}(1\otimes r)$. A Picard-Vessiot ring or extension that is a Picard-Vessiot ring or extension for a single differential equation (equivalently for a finite family of differential equations) is called \emph{of finite type}. 

The differential Galois correspondence asserts that the assignment $M\mapsto G(L/M)$ is an inclusion reversing bijection between the intermediate differential fields of $L/K$ and the closed subgroups of $G(L/K)$ with inverse $H\mapsto L^H$. Furthermore, a closed subgroup $N$ of $G(L/K)$ is normal if and only if $M=L^N$ is a Picard-Vessiot extension of $K$ and in this case the restriction map $G(L/K)\to G(M/K)$ is a quotient map with kernel $N$. In particular, $G(M/K)=G(L/K)/G(L/M)$.

A \emph{differential module} $(M,\nabla)$ over $(K,\de)$ is a finite dimensional $K$-vector space $M$ together with an additive map $\nabla\colon M\to M$ satisfying $\nabla(am)=\de(a)m+a\nabla(m)$ for all $a\in K$ and $m\in M$. To a differential equation $\de(y)=Ay$, where $A\in K^{n\times n}$, we can associate the differential module $(K^n,\nabla_A)$, where $\nabla_A(\xi)=\de(\xi)-A\xi$ for $\xi\in K^n$. Conversely, if $(M,\nabla)$ is a differential module over $(K,\delta)$ with $K$-basis $\underline{e}=(e_1,\ldots,e_n)$, we can write $\nabla(\underline{e})=\underline{e}(-A)$ for some $A\in K^{n\times n}$. Then $\nabla(\underline{e}\xi)=\underline{e}\de(\xi)+\nabla(\underline{e})\xi=\underline{e}(\de(\xi)-A\xi)$.
So up to the choice of the basis $\underline{e}$, $(M,\nabla)$ is of the form $(K^n,\nabla_A)$. For a different choice of basis, say $\underline{e}=\underline{e}'T$ with $T\in\Gl_n(K)$ and $\nabla(\underline{e}')=\underline{e}'(-A')$, we have
\begin{equation} \label{eq: Gauge equivalent}
	A'=TAT^{-1}+\delta(T)T^{-1}.
\end{equation}
Two differential equations $\de(y)=Ay$ and $\de(y)=A'y$ with $A,A'\in K^{n\times n}$ are \emph{Gauge equivalent} (over $K$) if there exists a $T\in\Gl_n(K)$ such that (\ref{eq: Gauge equivalent}) is satisfied.
In this case, if $Y$ is a fundamental solution matrix for $\de(y)=Ay$, then $TY$ is a fundamental solution matrix for $\de(y)=A'y$.

If $(M,\nabla)$ is a differential module over $K$ and $K'/K$ is an extension of differential fields, then we have a differential module $(M',\nabla')$ obtained from $(M,\nabla)$ via base change from $K$ to $K'$. Explicitly, $M'=M\otimes_KK'$ and $\nabla' (m\otimes a)=\nabla (m)\otimes a+m\otimes\delta(a)$ for $m\in M$ and $a\in K'$. In terms of differential equations, this simply means that a differential equation $\de(y)=Ay$ over $K$ is considered as a differential equation over $K'$.

\subsection{Regular singular differential equations}

In this subsection we recall or establish several basic properties of regular singular differential equations and related concepts.

\subsubsection{Regular singular differential modules and Picard-Vessiot rings} \label{subsubsection: Regular singular differential modules and Picard-Vessiot rings}
We begin by recalling the basic definitions and results concerning regular singular differential equations.
For more background see \cite[Chapters 5 and 6]{SingerPut:differential}, \cite{Sauloy:DifferentialGaloisTheoryThroughRiemannHilbertCorrespondence}, \cite[Part I]{MitschiSauzin:DivergentSeriesSummabilityAndResurgenceI}, \cite[Section 11]{Katz:NilpotentConnectionsAndTheMonodromyTheorem} or \cite[Chapter III]{AndreBaldassarriCailotto:DeRhamCohomologyOfDifferentialModulesOnAlgebraicVarieties}.

 We first consider the differential field $(k((t)),\frac{d}{dt})$ of formal Laurent series equipped with the usual derivation with respect to $t$. A \emph{lattice} $\Lambda$ in a finite dimensional $k((t))$-vector space $M$ is a $k[[t]]$-submodule of $M$ of the form $\Lambda=k[[t]]e_1+\ldots+k[[t]]e_n$, where $e_1,\ldots,e_n$ is a $k((t))$-basis of $M$.  
 A differential module $(M,\nabla)$ over $(k((t)),\frac{d}{dt})$ is \emph{regular} if there exists a lattice $\Lambda$ in $M$ such that $\nabla(\Lambda)\subseteq\Lambda$. It is \emph{regular singular} if there exists a lattice $\Lambda$ in $M$ such that $\nabla(\Lambda)\subseteq\frac{1}{t}\Lambda$. %For $A\in k((t))^{n\times n}$, we call $\de(y)=Ay$ \emph{regular singular} if the corresponding differential module $(k((t))^n,\nabla_A)$ is regular singular. 
 
 \medskip
 
 We now focus on the field $k(x)$ of rational functions over $k$. Throughout this article, unless something else is indicated, we consider $k(x)$ as a differential field equipped with the usual derivation $\frac{d}{dx}$ with respect to $x$. For $p\in k$, we set 
 $\widehat{k(x)}^p=k((x-p))$. We also set $\widehat{k(x)}^\infty=k((x^{-1}))$. Writing $t=x-p$, we see that $(\widehat{k(x)}^p, \frac{d}{dt})$ is a differential field extension of $(k(x),\frac{d}{dx})$. Similarly, writing $t=x^{-1}$, so that $\frac{d}{dt}x=\frac{d}{dt}t^{-1}=-t^{-2}$, we see that $(\widehat{k(x)}^\infty,-t^2\frac{d}{dt})$ is a differential field extension of $k(x)$.
 
 Let $(M,\nabla)$ be a differential module over $k(x)$. Then $(M,\nabla)$ defines for every $p\in \mathbb{P}^1(k)=k\cup\{\infty\}$ a differential module $(M_p,\nabla_p)$ over $\widehat{k(x)}^p=k((t))$: For $p\in k$, $(M_p,\nabla_p)$ is the base change of $(M,\nabla)$ from $k(x)$ to $\widehat{k(x)}^p=k((t))$. For $p=\infty$ we can also base change $(M,\nabla)$ from $k(x)$ to $\widehat{k(x)}^\infty$ to obtain a differential module $(M',\nabla')$ over $(k((t)),-t^2\frac{d}{dt})$. To have a differential module $(M_\infty,\nabla_\infty)$ over $(k((t)),\frac{d}{dt})$ we set $M_\infty=M'$ and $\nabla_\infty(m')=-\frac{1}{t^2}\nabla'(m')$ for $m'\in M'$.
 
 The differential module $(M,\nabla)$ is \emph{regular singular} if $(M_p,\nabla_p)$ is regular singular for every $p\in\mathbb{P}^1(k)$. A point $p\in\mathbb{P}^1(k)$ is a \emph{singularity} of $(M,\nabla)$ if $(M_p,\nabla_p)$ is not regular. For $A\in k(x)^{n\times n}$, the differential equation $\de(y)=Ay$ is \emph{regular singular} if the corresponding differential module $(M,\nabla)=(k(x)^n,\nabla_A)$ is regular singular. A point $p\in\mathbb{P}^1(k)$ is a \emph{singularity} of $\de(y)=Ay$ if it is a singularity of $(M,\nabla)$. In this case we also say that $\de(y)=Ay$ is \emph{singular at $p$}. If $\de(y)=Ay$ is not singular at $p$, then it is \emph{regular at $p$}. I.e., $\de(y)=Ay$ is regular at $p$ if and only if $(M_p,\nabla_p)$ is regular.
 It follows that $\de(y)=Ay$ is regular (or regular singular) at $p\in k$ if $\de(y)=A(x+p)y$ is regular (or regular singular) at $0$ and $\de(y)=Ay$ is regular (or regular singular) at $\infty$ if $\de(y)=\frac{-1}{x^2}A(\frac{1}{x})y$ is regular (or regular singular) at $0$.

 If $p\in k$ is not a pole of (one of the entries of) $A$, then $k[[t]]^n$ is a lattice in $(M_p,\nabla_p)$ and therefore $\de(y)=Ay$ is regular at $p$. In particular, $\de(y)=Ay$ only has finitely many singularities. Conversely, if $p\in k$ is a pole of $A$, then $p$ may or may not be a singularity of $\de(y)=Ay$. A pole of $A$ that is not a singularity of $\de(y)=Ay$ is often called an \emph{apparent singularity} in the literature. But note that an apparent singularity is not a singularity in our terminology.

\begin{defi} \label{defi: regsing PV extension}
	Let $S$ be a subset of $\mathbb{P}^1(k)$. A Picard-Vessiot extension $L/k(x)$ (or a Picard-Vessiot ring $R/k(x)$) is \emph{regular singular} with respect to $S$ if it is the Picard-Vessiot extension (or Picard-Vessiot ring) for a regular singular differential equation over $k(x)$ with singularities contained in $S$.  
\end{defi}
For brevity, we also say that a differential equation $\de(y)=Ay$, or a differential module over $k(x)$, is regular singular with respect to $S$ if it is regular singular with singularities contained in $S$. We refer to \cite[Section 2.2]{SingerPut:differential} for the basic linear algebra constructions with differential modules.

\begin{lemma} \label{lemma: regular singular with respect to S is closed}
	Let $S\subseteq\mathbb{P}^1(k)$. Then differential submodules, quotient modules, direct sums, duals and tensor products of differential modules over $k(x)$ that are regular singular with respect to $S$ are regular singular with respect to $S$.	
\end{lemma}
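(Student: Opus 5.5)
The plan is to reduce everything to local statements at each point $p\in\mathbb{P}^1(k)$, and then to the corresponding closure properties of regular and of regular singular differential modules over $(k((t)),\frac{d}{dt})$. By definition, a differential module $(M,\nabla)$ over $k(x)$ is regular singular with respect to $S$ if and only if, for every $p\in\mathbb{P}^1(k)$, the module $(M_p,\nabla_p)$ is regular singular, and it is regular whenever $p\notin S$. The formation of $(M_p,\nabla_p)$ is a base change followed, in the case $p=\infty$, by rescaling $\nabla$ by $-1/t^2$. Both operations commute with submodules, quotients, direct sums, duals and tensor products, since extension of scalars is exact and compatible with these constructions. The rescaling is also harmless: the dual and tensor connections are built linearly from $\nabla$ and $\delta$, and the relation $\delta=-t^2\frac{d}{dt}$ is applied uniformly. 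So it suffices to prove the following local claim: over $k((t))$, the class of regular modules and the class of regular singular modules are each closed under the five constructions.

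For the local claim, I would work with lattices. Take $\Lambda,\Lambda'$ to be stable lattices, meaning $\nabla(\Lambda)\subseteq t^{-e}\Lambda$ with $e=0$ in the regular case and $e=1$ in the regular singular case. The easy constructions go as follows.
\begin{itemize}
\item \emph{Direct sums:} $\Lambda\oplus\Lambda'$ is a stable lattice.
\item \emph{Tensor products:} $\Lambda\otimes_{k[[t]]}\Lambda'$ is a lattice in $M\otimes M'$, and the Leibniz rule $\nabla(m\otimes m')=\nabla m\otimes m'+m\otimes\nabla m'$ shows it is stable.
\item \emph{Duals:} $\Lambda^\vee=\{f\mid f(\Lambda)\subseteq k[[t]]\}$ is a lattice. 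The formula $(\nabla^\vee f)(m)=\frac{d}{dt}(f(m))-f(\nabla m)$ gives $\nabla^\vee f\in t^{-e}\Lambda^\vee$, because $\frac{d}{dt}k[[t]]\subseteq k[[t]]$ and $f(\nabla m)\in t^{-e}k[[t]]$.
\item \emph{Quotients:} if $N\subseteq M$ is a differential submodule, the image of $\Lambda$ in $M/N$ is a finitely generated torsion-free $k[[t]]$-module spanning $M/N$. It is therefore a lattice (since $k[[t]]$ is a PID), and it is stable.
\end{itemize}

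For submodules, I would use $\Lambda\cap N$. It is a $k[[t]]$-submodule of the free module $\Lambda$, so it is free. It spans $N$, because any $n\in N$ satisfies $t^m n\in\Lambda$ for large $m$. Stability follows since $\nabla(\Lambda\cap N)\subseteq t^{-e}\Lambda\cap N=t^{-e}(\Lambda\cap N)$.

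The remaining point to check is the global bookkeeping. Each construction applied to modules that are regular singular at every $p$ and regular at every $p\notin S$ again has these properties. This follows pointwise from the local claim, together with the base-change compatibility noted above. The main subtlety, though minor, is confirming that the $p=\infty$ normalization commutes with dual and tensor constructions. I would verify this by noting that $(M\otimes M')_\infty$ and $(M_\infty)\otimes(M'_\infty)$ have the same underlying space and that their connections both equal $-\frac{1}{t^2}$ times the base-changed connection, and similarly for duals.
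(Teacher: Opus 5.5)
Your proposal is correct and takes essentially the same route as the paper: you reduce point by point, including the $-\frac{1}{t^2}$ normalization at $\infty$, to closure properties of regular and regular singular differential modules over $k((t))$. The only difference is that the paper cites this local statement from van der Put--Singer, whereas you prove it directly with stable lattices for both $e=0$ and $e=1$, and that argument is sound.
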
	
\begin{proof}
	This is well-known, see e.g., \cite[Lemma 3.10]{SingerPut:differential} for the local case, which implies the global case.
\end{proof}	

Lemma \ref{lemma: regular singular with respect to S is closed} shows that the differential modules over $k(x)$ that are regular singular with respect to $S$ are a tannakian category, indeed a tannakian subcategory of the tannakian category of all differential modules over $k(x)$.

At first sight, it may seem conceivable that a Picard-Vessiot extension that is regular singular with respect to $S$ is also a Picard-Vessiot extension for a differential equation that is not regular singular, or that is regular singular but has singularities outside of $S$. However, we will soon see that this is not possible. To this end, we briefly need to discuss \emph{$\de$-finite elements}. Let $(K,\de)$ be a differential field and $R$ a $K$-$\de$-algebra. An element $r\in R$ is $\delta$-finite if it is contained in a finite dimensional $K$-subspace $V$ of $R$ that is $\de$-stable, i.e., $\de(V)\subseteq V$, equivalently, the $K$-subspace generated by $r,\de(r),\de^2(r),\ldots$ is finite dimensional. One readily verifies that sums and products of $\de$-finite elements are $\de$-finite, so that the $\de$-finite elements form a $K$-$\de$-subalgebra of $R$. If $R/K$ is a Picard-Vessiot ring for $\de(y)=Ay$ so that $R=K[Y,\frac{1}{\det(Y)}]$ with $\de(Y)=AY$, then the $K$-subspace generated by all $Y_{ij}$'s is $\de$-stable and $\de(\frac{1}{\det(Y)})=-\operatorname{tr}(A)\frac{1}{\det(Y)}$, where $\operatorname{tr}(A)$ denotes the trace of $A$. Thus $R$ is generated by $\de$-finite elements and therefore all elements of $R$ are $\de$-finite. It follows that $R$ is the directed union of its
finite dimensional $\de$-stable $K$-subspaces. Note that any finite dimensional $\de$-stable $K$-subspace of $R$ is a differential module over $K$.

\begin{lemma} \label{lemma: characterize regular singular}
	Let $S\subseteq\mathbb{P}^1(k)$, $A\in k(x)^{n\times n}$ and $R/k(x)$ a Picard-Vessiot ring for $\de(y)=Ay$. Then $\de(y)=Ay$ is regular singular with respect to $S$ if and only if every finite dimensional $\de$-stable $k(x)$\=/subspace of $R$ is regular singular with respect to $S$.
\end{lemma}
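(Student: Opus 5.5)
The two implications have quite different flavours: one direction is a closure argument, the other is a generation argument.

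For the direction ``if every finite dimensional $\de$-stable subspace is regular singular, then $\de(y)=Ay$ is'', I would use the $K$-span $V\subseteq R$ of the entries $Y_{ij}$ of a fundamental solution matrix $Y$. This $V$ is finite dimensional and $\de$-stable, hence regular singular with respect to $S$ by hypothesis. I then want to recover the module $(k(x)^n,\nabla_A)$ from $V$ via the constructions of Lemma \ref{lemma: regular singular with respect to S is closed}. Each column $y$ of $Y$ satisfies $\de(y)=Ay$, and the columns are linearly independent. I would consider the map from the dual module $(k(x)^n,\nabla_A)^*$ (equation $\de(z)=-A^Tz$) into $V^n$ sending a row vector $\xi$ to $(\xi y_1,\ldots,\xi y_n)$. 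Concretely, $\xi\mapsto \xi Y\in V^{1\times n}$ is $k(x)$-linear, and it is injective because $Y$ is invertible. It is a morphism of differential modules once the correct sign conventions are matched: $\de(\xi Y)=\de(\xi)Y+\xi AY$, so the source is the module with $\nabla(\xi)=\de(\xi)+\xi A$, which is the dual of $\nabla_A$ (up to transpose). Hence the dual of $(k(x)^n,\nabla_A)$ embeds into $V^n$, a direct sum of copies of $V$, so it is regular singular with respect to $S$. Taking duals again, $(k(x)^n,\nabla_A)$ is regular singular with respect to $S$ by Lemma \ref{lemma: regular singular with respect to S is closed}.

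For the converse, assume $\de(y)=Ay$ is regular singular with respect to $S$ and let $W\subseteq R$ be a finite dimensional $\de$-stable subspace. I would show that $W$ lies in the image of a differential module built from $M=(k(x)^n,\nabla_A)$ by sums, tensor products and duals, so that Lemma \ref{lemma: regular singular with respect to S is closed} applies via quotients. Let $V$ be the span of the $Y_{ij}$, and let $D=k(x)\det(Y)^{-1}$, a rank-one $\de$-stable subspace with $\de(\det(Y)^{-1})=-\operatorname{tr}(A)\det(Y)^{-1}$. Then $D$ is the dual of $\det M=\Lambda^n M$, a quotient of $M^{\otimes n}$. Also $V$ is a quotient of $M^{*n}$ via the map above, and $\xi Y$ ranges over all rows in $V^{1\times n}$. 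Since $R=\bigcup_m\sum_{a+b\le m}V^aD^b$, the finite dimensional $W$ lies in some $\sum_{a,b\le m}V^aD^b$. Each $V^aD^b$ is the image of the multiplication map $V^{\otimes a}\otimes D^{\otimes b}\to R$, a morphism of differential modules because $\de$ is a derivation. So $W$ is a submodule of a quotient of $\bigoplus_{a,b}V^{\otimes a}\otimes D^{\otimes b}$, which is regular singular with respect to $S$ by Lemma \ref{lemma: regular singular with respect to S is closed}.

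The main obstacle is purely bookkeeping: getting the module maps (dual versus original, row versus column) to be genuine morphisms of differential modules, and checking that the powers $V^aD^b$ indeed exhaust $R$. I expect no substantive difficulty beyond this.
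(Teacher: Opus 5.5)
Your proposal is correct and is essentially the paper's proof. For the forward direction the paper also generates everything from the span of the solution entries (a quotient of copies of $M^*$) and $\det^{-1}$, using closure under tensor products, quotients and submodules; it only works in the universal solution ring $k(x)[X,\frac{1}{\det(X)}]$ instead of in $R$. For the converse the paper embeds $M$ itself into $R^n$ via the trivialization $M\otimes_{k(x)}R\simeq R^n$, whereas you embed $M^*$ into $V^n$ via $\xi\mapsto\xi Y$ and dualize; both work.

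There are two harmless slips in your bookkeeping. First, $k(x)\det(Y)^{-1}\simeq(k(x),\nabla_{\operatorname{tr}(A)})$ is $\Lambda^nM$ itself, not its dual. Second, $\xi\mapsto\xi Y$ is in general not onto $V^{1\times n}$. What you actually need, and what holds, is that the coordinate maps $\xi\mapsto(\xi Y)_j$ are morphisms $M^*\to V$ whose images together span $V$.
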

\begin{proof}
	Let $M=(k(x)^n,\nabla_A)$ denote the differential module corresponding to $\de(y)=Ay$. We first assume that $\de(y)=Ay$ is regular singular with respect to $S$. Let $X$ be an $n\times n$ matrix of indeterminates over $k(x)$. The Picard-Vessiot ring $R$ is a quotient of the universal solution ring $U=k(x)\big[X,\frac{1}{\det(X)}\big]$, where $\de$ on $U$ is determined by $\de(X)=AX$. By Lemma \ref{lemma: regular singular with respect to S is closed}, it suffices to show that $U$ is the sum of differential modules that are regular singular with respect to $S$. For $d\geq 0$ let $M_d$ denote the $k(x)$-subspace of $U$ generated by all monomials in the entries of $X$ of degree at most $d$. 
	
	As $M$ is regular singular with respect to $S$, by Lemma \ref{lemma: regular singular with respect to S is closed}, so is its dual $M^*\simeq (k(x)^n,\nabla_{-A^{\operatorname{T}}})$. We have 
	$$\de(X_{1j},\ldots,X_{nj})=(X_{1j},\ldots,X_{nj})A^{\operatorname{T}}$$ for $j=1,\ldots,n$, which shows that $M_1$ is an $n$-fold direct sum of copies of $M^*$. So $M_1$ is regular singular with respect to $S$. 
	Since $M_d$ is a quotient of $M_1^{\otimes d}$, it follows that $M_d$ is regular singular with respect to $S$. 
	
	Note that $\de(\det(X))=\operatorname{tr}(A)\det(X)$ and therefore
	$\frac{M_d}{\det(X)^e}=\{\frac{m}{\det(X)^e}|\ m\in M_d\}$ is a $\de$-stable $k(x)$\=/subspace of $U$ for all $d,e\geq 0$. As $U$ is the sum of the differential modules of the form $\frac{M_d}{\det(X)^e}$, it suffices to see that those differential modules are regular singular with respect to $S$. The $n$-th exterior power $\wedge^nM$ of $M$ is isomorphic to $(k(x),\nabla_{\operatorname{tr}(A)})$. It follows that $\frac{M_d}{\det(X)^e}$ is isomorphic to $M_d\otimes(\wedge^nM)^{\otimes e}$ and therefore is regular singular with respect to $S$ (Lemma \ref{lemma: regular singular with respect to S is closed}).
	
	For the reverse direction, note that $M$ becomes trivial over $R$, because $R$ contains a fundamental solution matrix for $\de(y)=Ay$, i.e., $M\otimes_{k(x)}R\simeq R^n$ as differential modules over $R$. In particular, $M$ embeds into $R^n$ as a $\de$-stable $k(x)$-subspace and therefore must be regular singular with respect to $S$.	 
\end{proof}

\begin{cor} \label{cor: reg sing stable}
	For $S\subseteq\mathbb{P}^1(k)$ let $k(x)\subseteq L\subseteq M$ be an inclusion of Picard-Vessiot extensions of $k(x)$ such that $M/k(x)$ is regular singular with respect to $S$ and let $A\in k(x)^{n\times n}$ be such that $L/k(x)$ is a Picard-Vessiot extension for $\de(y)=Ay$. Then $\de(y)=Ay$ is regular singular with respect to $S$. In particular, $L/k(x)$ is regular singular with respect to $S$.
\end{cor}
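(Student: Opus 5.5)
The plan is to reduce everything to Lemma \ref{lemma: characterize regular singular}, applied twice. Let $R_M$ be the Picard-Vessiot ring of $M/k(x)$ for a regular singular equation $\de(y)=By$ with singularities in $S$, and let $R_L\subseteq L$ be the Picard-Vessiot ring of $L/k(x)$ for $\de(y)=Ay$.

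First, I will show that $R_L\subseteq R_M$. The extension $L\subseteq M$ is an intermediate Picard-Vessiot extension. The standard fact here is that the Picard-Vessiot ring of $L$ is the set of $\de$-finite elements of $L$ over $k(x)$, and likewise for $M$. Granting this, every element of $R_L$ is $\de$-finite over $k(x)$ and lies in $M$, so it lies in $R_M$. If one prefers to avoid that fact, one can argue directly. A fundamental matrix $Y\in\Gl_n(L)$ has entries in $M$. The $k(x)$-span of the entries of $Y$ is a finite-dimensional $\de$-stable subspace of $M$. Using the torsor isomorphism $R_M\otimes_{k(x)} R_M\cong R_M\otimes_k k[G]$, where $G=G(M/k(x))$, one checks that $\de$-finite elements of $M$ lie in $R_M$. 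I expect this containment to be the only nontrivial step, and I would cite it from the standard textbooks as the $\de$-finite characterization of the Picard-Vessiot ring.

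Second, I will show that every finite-dimensional $\de$-stable $k(x)$-subspace $V$ of $R_L$ is regular singular with respect to $S$. Such a $V$ is also a finite-dimensional $\de$-stable subspace of $R_M$. The forward direction of Lemma \ref{lemma: characterize regular singular}, applied to $\de(y)=By$ and $R_M$, then gives that $V$ is regular singular with respect to $S$.

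Third, applying the reverse direction of Lemma \ref{lemma: characterize regular singular} to $\de(y)=Ay$ and $R_L$ yields that $\de(y)=Ay$ is regular singular with respect to $S$. The final clause, that $L/k(x)$ is regular singular with respect to $S$, is then immediate from Definition \ref{defi: regsing PV extension}.
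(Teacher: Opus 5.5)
Your proposal is correct and follows the paper's proof. The paper also observes that the Picard-Vessiot ring of $L$ lies inside that of $M$, then invokes Lemma~\ref{lemma: characterize regular singular}, using the forward direction for $M$ and the reverse direction for $L$. Your $\de$-finite-elements justification of $R_L\subseteq R_M$ is the standard one, and it is the only step the paper leaves implicit.
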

\begin{proof}
As the Picard-Vessiot ring for $L$ is contained in the Picard-Vessiot ring for $M$, this follows from Lemma \ref{lemma: characterize regular singular}.
\end{proof}	

Corollary \ref{cor: reg sing stable} shows, in particular, that a Picard-Vessiot extension for $\de(y)=Ay$ is regular singular with respect to $S$ if and only if $\de(y)=Ay$ is regular singular with respect to $S$.

%
%\begin{lemma} \label{lemma: no poles outside S}
%Let $S$ be a finite subset of $\mathbb{P}^1(\C)$ and let $R/\C(x)$ be a Picard-Vessiot ring that is regular singular with respect to $S$. Then there exists an $n\geq 1$ and a matrix $A\in \C(x)^{n\times n}$ such that $\de(y)=Ay$ has no poles outside $S$ and $R/\C(x)$ is a Picard-Vessiot ring for $\de(y)=Ay$.
%\end{lemma}
%\begin{proof}
%By the Riemann-Hilbert correspondence (\cite[Theorem 6.15]{SingerPut:differential}) there is a bijection between the isomorphism classes of regular singular differential modules with singularities in $S$ and isomorphism classes of (finite dimensional, $\C$-linear) representation of the topological fundamental group $\pi_1(\mathbb{P}^1(\C)\smallsetminus S)$. By the (weak) solution of the Riemann-Hilbert problem, also known as Plemelj's Theorem (see \cite[Theorem 3.2.1]{AnasovBolibruch:TheRiemannHilbertProblem} or \cite[Theorem 12.4]{Sauloy:DifferentialGaloisTheoryThroughRiemannHilbertCorrespondence}) any representation of $\pi_1(\mathbb{P}^1(\C)\smallsetminus S)$ can be realized by a differential equation $\de(y)=Ay$ with no poles outside of $S$ (and all but one point of $S$ Fuchsian but this is irrelevant for our purpose). So, by the above bijection, any regular singular differential equation with singularities in $S$ is Gauge equivalent over $\C(x)$ to a differential equation with no poles outside of $S$. As Gauge equivalent differential equations define the same Picard-Vessiot ring, the claim follows.
%\end{proof}	
%
%

\subsubsection{Gauge equivalence over $k(x)$}
Let $A\in k(x)^{n\times n}$. Recall (Section \ref{subsubsection: Regular singular differential modules and Picard-Vessiot rings}) that whether or not $\de(y)=Ay$ is regular or regular singular, say at $p=0$, is defined via Gauge transformations over $k((x))$. Explicitly, $\de(y)=Ay$ is regular (or regular singular) at $p=0$, if there exist a matrix $T\in\Gl_n(k((x)))$ such that $A'=TAT^{-1}+\de(T)T^{-1}$ has no pole (or at most a simple pole) at $p=0$. It turns out that the existence of such a matrix $T\in\Gl_n(k((x)))$ is equivalent to the existence of such a matrix $T\in\Gl_n(k(x))$. This is useful because when dealing with base change and specializations later, it will be easier to work with matrices over $k(x)$ than over $k((x))$. Also, for $k=\mathbb{C}$, whether $p=0$ is classified as regular or regular singular is usually defined via local meromorphic Gauge transformations (see e.g., \cite[Section 5.1.2]{SingerPut:differential}), i.e., one considers matrices $T\in \Gl_n(\C(\{x\}))$. For our later arguments to be sound, it is important to know that our algebraic definition is equivalent to the usual definition for $k=\C$.  
%
%
%For $k=\mathbb{C}$, in the regular singular case, this follows for example from \cite[Theorem 12.4]{Sauloy:DifferentialGaloisTheoryThroughRiemannHilbertCorrespondence} or \cite{Moser:TheOrderOfASingularityInFuchsTheory} (cf. \cite{Barkatou:ARationalVersionOfMosersAlgorithm}).
%For the regular singular case we can use \cite[Lemma 2.7]{SerafiniWibmer:RegularSingularInverseProblem} as a reference.

\begin{lemma} \label{lemma: Gauge over rational}
	Let $A\in k(x)^{n\times n}$.
\begin{enumerate}
	\item If $\de(y)=Ay$ is regular at $p=0$, then $\de(y)=Ay$ is Gauge equivalent over $k(x)$ to $\de(y)=A'y$, where $A'\in k(x)^{n\times n}$ has no pole at $p=0$.
	\item If $\de(y)=Ay$ is regular singular at $p=0$, then $\de(y)=Ay$ is Gauge equivalent over $k(x)$ to $\de(y)=A'y$, where $A'\in k(x)^{n\times n}$ has at most a simple pole at $p=0$, i.e., $xA'\in k[x]_{(x)}^{n\times n}$.	
\end{enumerate}	
\end{lemma}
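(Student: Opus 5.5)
The plan is an approximation argument. The local gauge matrix $T\in\Gl_n(k((x)))$ only matters modulo a high power of $x$, so it can be replaced by a rational, even polynomial, matrix. Concretely, assume $T\in\Gl_n(k((x)))$ is such that $B=TAT^{-1}+\de(T)T^{-1}$ has no pole at $0$ in case (i), or $xB\in k[[x]]^{n\times n}$ in case (ii). Multiplying $T$ by a power of $x$ does not change $\de(T)T^{-1}$ up to adding a multiple of $x^{-1}I$. That term would spoil (i), so instead we keep $T$ as is and use the equivalent condition on the lattice $\Lambda=T^{-1}k[[x]]^n$. This lattice is stable under $\nabla$ in case (i) and satisfies $\nabla(\Lambda)\subseteq x^{-1}\Lambda$ in case (ii).

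The key step is to realize $\Lambda$ by a $k[x]_{(x)}$-lattice in $k(x)^n$. Since $\Lambda$ is a $k[[x]]$-lattice in $k((x))^n$, there is $N$ with $x^N k[[x]]^n\subseteq\Lambda\subseteq x^{-N}k[[x]]^n$. The quotient $\Lambda/x^Nk[[x]]^n$ is a finite-dimensional $k$-subspace of $x^{-N}k[[x]]^n/x^Nk[[x]]^n$, and this space equals $x^{-N}k[x]_{(x)}^n/x^Nk[x]_{(x)}^n$. Hence $\Lambda_0=\Lambda\cap k(x)^n$ is a $k[x]_{(x)}$-submodule of $k(x)^n$ with $\Lambda_0\otimes k[[x]]=\Lambda$ (completion). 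Because $k[x]_{(x)}$ is a DVR, $\Lambda_0$ is free of rank $n$. Choose a basis and let $T_0\in\Gl_n(k(x))$ be the matrix with $\Lambda_0=T_0^{-1}k[x]_{(x)}^n$.

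Next, check the pole condition. Since $\nabla$ is defined over $k(x)$, we have $\nabla(\Lambda_0)\subseteq\nabla(\Lambda)\cap k(x)^n$. This is contained in $\Lambda\cap k(x)^n=\Lambda_0$ in case (i), and in $x^{-1}\Lambda\cap k(x)^n=x^{-1}\Lambda_0$ in case (ii). Translating back via the gauge formula (\ref{eq: Gauge equivalent}), $A'=T_0AT_0^{-1}+\de(T_0)T_0^{-1}$ has entries in $k[x]_{(x)}$ in case (i), and entries in $x^{-1}k[x]_{(x)}$ in case (ii).

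The main obstacle is the bookkeeping: verifying that $\Lambda\cap k(x)^n$ really is a lattice whose completion is $\Lambda$. This reduces to the standard fact $k[[x]]/x^{2N}=k[x]_{(x)}/x^{2N}$ together with the inclusion sandwich above. Two sign and dual conventions also need care: $\nabla_A(\xi)=\de(\xi)-A\xi$, and how the basis matrix transforms $A$. I would handle these by directly writing $\nabla$ in the basis given by the columns of $T_0^{-1}$.
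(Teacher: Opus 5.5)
Your argument is correct, but it takes a different route from the paper's.

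\emph{The paper's route} is topological. The map $T\mapsto TAT^{-1}+\de(T)T^{-1}$ on $\Gl_n(k((x)))$ is continuous for the $x$-adic topology. The set of matrices with $v_x\geq 0$ (resp.\ $v_x\geq -1$) is open. So its preimage is a non-empty open set, and it meets the dense subgroup $\Gl_n(k(x))$.

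\emph{Your route} descends the lattice itself. You show that $\Lambda_0=\Lambda\cap k(x)^n$ is a $k[x]_{(x)}$-lattice with $k[[x]]\Lambda_0=\Lambda$. The condition $\nabla(\Lambda_0)\subseteq\Lambda_0$ (resp.\ $\subseteq x^{-1}\Lambda_0$) is inherited because $\nabla_A$ preserves $k(x)^n$, and $x^{-1}\Lambda\cap k(x)^n=x^{-1}\Lambda_0$. A basis of $\Lambda_0$ then gives $T_0$.

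Your conventions check out. With the columns of $T_0^{-1}$ as the new basis, $\nabla(\Lambda_0)\subseteq\Lambda_0$ (resp.\ $\subseteq x^{-1}\Lambda_0$) holds exactly when $A'\in k[x]_{(x)}^{n\times n}$ (resp.\ $xA'\in k[x]_{(x)}^{n\times n}$), since $\de(k[x]_{(x)})\subseteq k[x]_{(x)}$. The bookkeeping you flag also goes through. $\Lambda_0$ is finitely generated because it sits inside $x^{-N}k[x]_{(x)}^n$, hence free of rank $n$. For $k[[x]]\Lambda_0=\Lambda$, write $\lambda\in\Lambda$ as a truncation in $x^{-N}k[x]^n$ plus a tail in $x^Nk[[x]]^n\subseteq\Lambda$.

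\emph{What each buys.} The paper's proof is shorter. It relies on continuity of the gauge action (which uses $v_x(\de f)\geq v_x(f)-1$) and on density of $\Gl_n(k(x))$ in $\Gl_n(k((x)))$.

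Yours is purely algebraic and works directly with the lattice definition the paper adopts. So your opening detour through a formal gauge matrix $T$ and multiplication by powers of $x$ can simply be dropped. It also makes transparent that the formal lattice is already defined over $k[x]_{(x)}$, i.e.\ one can take $T_0$ with $T_0T^{-1}\in\Gl_n(k[[x]])$. This is the standard correspondence between $k[x]_{(x)}$-lattices in $k(x)^n$ and $k[[x]]$-lattices in $k((x))^n$.
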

\begin{proof}
	We consider $k((x))$ as a topological field equipped with the $x$-adic topology and we let $v_x$ denote the $x$-adic valuation. Let $U$ be the open subset of $k((x))^{n\times n}$ consisting of all matrices $A'\in k((x))^{n\times n}$ with $v_x(A'_{ij})\geq 0$ for $1\leq i,j\leq n$. As the map $\psi\colon \Gl_n(k((x)))\to k((x))^{n\times n},\ T\mapsto TAT^{-1}+\de(T)T^{-1}$ is continuous, $\psi^{-1}(U)\subseteq \Gl_n(k((x)))$ is open. By assumption,  $\psi^{-1}(U)$ is non-empty. Because $\Gl_n(k(x))$ is dense in $\Gl_n(k((x)))$, there exists a $T\in \Gl_n(k(x))\cap \psi^{-1}(U)$. The matrix $A'=TAT^{-1}+\de(T)T^{-1}\in k(x)^{n\times n}$ then has the required property. This proves (i). The proof for (ii) is similar, with $U$ defined by $v_x(A'_{ij})\geq -1$. 
\end{proof}

\subsubsection{Base change for regular singular differential equations and Picard-Vessiot rings}

We show that several properties related to regular singular differential equations remain unchanged if we enlarge the constants. The following lemma shows that whether or not a point is regular (or regular singular) does not depend on the field of constants over which the differential equation is considered.
\begin{lemma} \label{lemma: extends constants for equation}
	Let $k'/k$ be an extension of algebraically closed fields, $p\in \mathbb{P}^1(k)$ and $A\in k(x)^{n\times n}$.
	Then $\de(y)=Ay$ is regular (or regular singular) at $p$, where $\de(y)=Ay$ is considered as a differential equation over $k(x)$, if and only if $\de(y)=Ay$ is regular (or regular singular) at $p$, where $\de(y)=Ay$ is considered as a differential equation over $k'(x)$.  
\end{lemma}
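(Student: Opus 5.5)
First reduce to $p=0$ by the substitutions $x\mapsto x+p$ (for $p\in k$) or $x\mapsto 1/x$ together with $A\mapsto -\frac{1}{x^2}A(\frac1x)$ (for $p=\infty$), as recorded in Section \ref{subsubsection: Regular singular differential modules and Picard-Vessiot rings}. These substitutions are defined over $k$ and commute with extending constants to $k'$. The direction ``over $k$ implies over $k'$'' is then immediate. By Lemma \ref{lemma: Gauge over rational}, regularity (resp.\ regular singularity) at $0$ over $k$ gives $T\in\Gl_n(k(x))$ such that $A'=TAT^{-1}+\de(T)T^{-1}$ has no pole (resp.\ at most a simple pole) at $0$. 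The same $T$, viewed in $\Gl_n(k'(x))\subseteq\Gl_n(k'((x)))$, witnesses the property over $k'(x)$.

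For the converse, assume there is $T'\in\Gl_n(k'(x))$ (again using Lemma \ref{lemma: Gauge over rational}, now over $k'$) such that $A'=T'AT'^{-1}+\de(T')T'^{-1}$ has $v_x(A'_{ij})\geq 0$ (resp.\ $\geq -1$). The plan is a specialization argument, as in Section \ref{sec: Finite generation under base change}. Let $R\subseteq k'$ be the finitely generated $k$-algebra generated by the coefficients of the numerators and denominators of the entries of $T'$ and $T'^{-1}$, and by the inverse of $\det T'$ in a suitable form. After inverting finitely many elements of $R$, we may arrange the following. All denominators of entries of $T'$, $T'^{-1}$ and $A'$, written in $R[x]$, have leading coefficients in $R^\times$. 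The lowest-order coefficients of these denominators at $x$ are units in $R$ as well. Write each denominator as $x^{m}u(x)$ with $u(0)\in R^\times$. Then the conditions ``$T'\in\Gl_n$'' and ``$v_x(A'_{ij})\geq 0$ (resp.\ $\geq -1$)'' become statements about exponents of $x$ in fixed expressions with unit leading data. These statements persist under every specialization $\xi\colon R\to k$.

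Pick any $\xi\in\spec(R)(k)$, which exists by the Nullstellensatz since $R\neq 0$ is of finite type over the algebraically closed field $k$. Apply $\xi$ coefficientwise to get $T=\xi(T')\in k(x)^{n\times n}$. Then $\det T=\xi(\det T')\neq 0$ because we inverted the relevant coefficient. Since $A$ has entries in $k(x)$, it is fixed by $\xi$, and $\xi$ commutes with $\frac{d}{dx}$. Hence $TAT^{-1}+\de(T)T^{-1}=\xi(A')$. By the unit arrangements above, $\xi(A')$ has no pole (resp.\ at most a simple pole) at $0$. So $\de(y)=Ay$ is regular (resp.\ regular singular) at $0$ over $k(x)$.

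The main obstacle is the bookkeeping that makes both $\det T'\neq 0$ and the pole-order bound for $A'$ at $0$ stable under specialization. A specialization can create cancellation in a denominator, which would lower the order of vanishing at $0$ and raise the apparent pole order. I would handle this by writing every entry as $x^{e}\,f(x)/g(x)$ with $f(0),g(0)\in R$ and $g$ monic up to a unit, and inverting $f(0)$ and $g(0)$ in $R$ where needed. With that normalization, the valuation at $0$ of each entry is exactly preserved.
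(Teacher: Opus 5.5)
Your proof is correct and follows the same route as the paper. You reduce to $p=0$, take a gauge transformation $T'\in\Gl_n(k'(x))$ from Lemma~\ref{lemma: Gauge over rational}, and descend it to $k(x)$ using that $k$ is algebraically closed. The paper states this descent as ``a system of algebraic equations over $k$ with a solution in $k'$ has a solution in $k$''. Your specialization $\xi\colon R\to k$, with the relevant leading and constant coefficients of the denominators inverted, is an explicit proof of that fact, and it spells out the nondegeneracy conditions (invertibility of $T$, no new poles at $0$) that the paper leaves implicit.
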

\begin{proof}
	Clearly, if $\de(y)=Ay$ is regular (or regular singular) at $p$, when considered over $k(x)$, then $\de(y)=Ay$ is regular (or regular singular) at $p$, when considered over $k'(x)$. 
	
	For the reverse implication, we may assume that $p=0$. By Lemma \ref{lemma: Gauge over rational} there exists a matrix $T\in\Gl_n(k'(x))$ such that
	$A'=TAT^{-1}+\de(T)T^{-1}$ has entries in $k'[x]_{(x)}$ (or $\frac{1}{x}k'[x]_{(x)}$). The existence of such a matrix $T$ can be expressed via a system of algebraic equations over $k$. In more detail, if $f\in k'[x]\smallsetminus\{0\}$ is such that $fT\in k'[x]^{n\times n}$, then writing $A'=\sum_{i\geq m}A_i'x^i\in k'((x))^{n\times n}$ with $A_i'\in k'^{n\times n}$, the coefficients of the $A'_i$'s are rational functions over $k$ in the coefficients of $f$ and the coefficients of the entries of $fT$. That $A'$ has entries in $k'[x]_{(x)}$ (or $\frac{1}{x}k'[x]_{(x)}$) means that $A_i'=0$ for $i<0$ (or $i< -1$), i.e., all the corresponding rational functions must vanish.
	Because $k$ is algebraically closed, a system of algebraic equations (in finitely many variables) over $k$ that has a solution in a field extension of $k$ (in this case in $k'$) must already have a solution in $k$. Thus there exists a matrix $T_0\in\Gl_n(k(x))$ such that $T_0AT_0^{-1}+\de(T_0)T_0^{-1}$ has entries in $k[x]_{(x)}$ (or $\frac{1}{x}k[x]_{(x)}$). This shows that $\de(y)=Ay$ is regular (or regular singular) at $p=0$, when $\de(y)=Ay$ is considered as a differential equation over $k(x)$.
\end{proof}

The following lemma shows that we can extend the constants of a Picard-Vessiot ring and that this operation is well-behaved with respect to being regular singular.

\begin{lemma} \label{lemma: extend constants for PV}
	Let $k'/k$ be an extension of algebraically closed fields and let $S\subseteq \mathbb{P}^1(k)$.
	Let $R/k(x)$ be a Picard-Vessiot ring with differential Galois group $G$ and set $R'=R\otimes_{k(x)}k'(x)$. Then $R'/k'(x)$ is a Picard-Vessiot ring with differential Galois group $G_{k'}$. Moreover, $R/k(x)$ is regular singular with respect to $S$ if and only if $R'/k'(x)$ is regular singular with respect to $S$.
\end{lemma}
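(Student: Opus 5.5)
We split the statement into two parts: first that $R'/k'(x)$ is a Picard-Vessiot ring with group $G_{k'}$, and then the regular singular equivalence.

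\emph{Step 1: $R'$ is a Picard-Vessiot ring with group $G_{k'}$.} Write $R=k(x)[Y,\frac{1}{\det(Y)}]$ with $\de(Y)=AY$ for a suitable family of equations. We may reduce to the finite type case by writing $R$ as a directed union, since both base change and the Galois group are compatible with directed unions. We identify $R\otimes_{k(x)}k'(x)$ with $(R\otimes_k k')\otimes_{k[x]\otimes_k k'}k'(x)$, which is a localization of $R\otimes_k k'$. Because $k$ is the constant field of the $\de$-simple ring $R$, the ring $R\otimes_k k'$ is $\de$-simple with constants $k'$. This is the standard fact that $\de$-ideals of $R\otimes_k C$ are generated by ideals of the constant ring $C$. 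Hence the localization $R'$ is $\de$-simple, and it is generated over $k'(x)$ by $Y$ and $\frac{1}{\det(Y)}$. So $R'$ is a Picard-Vessiot ring for $\de(y)=Ay$ over $k'(x)$.

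For the group, we use $k[G]=(R\otimes_{k(x)}R)^\de$ and the torsor isomorphism $R\otimes_k k[G]\cong R\otimes_{k(x)}R$. Base changing gives $R'\otimes_{k'(x)}R'\cong (R\otimes_{k(x)}R)\otimes_{k(x)}k'(x)\cong R'\otimes_{k'}(k[G]\otimes_k k')$. Taking constants, and using $R'^\de=k'$, yields $k'[G(R'/k'(x))]=k[G]\otimes_k k'$. This identification is compatible with the Hopf structures, so the group is $G_{k'}$.

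\emph{Step 2: regular singular with respect to $S$.} Suppose $R$ is the Picard-Vessiot ring of some $\de(y)=Ay$ over $k(x)$ that is regular singular with respect to $S$. Then $R'$ is the Picard-Vessiot ring of the same equation over $k'(x)$. By Lemma \ref{lemma: extends constants for equation}, applied at every $p\in\mathbb{P}^1(k)$, this equation is regular singular over $k'(x)$ with singularities in $S$.

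The points $p\in\mathbb{P}^1(k')\smallsetminus\mathbb{P}^1(k)$ need separate treatment. At such a $p$, no entry of $A\in k(x)^{n\times n}$ has a pole, because all poles of $A$ lie in $\mathbb{P}^1(k)$ (for $\infty$ we use the transformed matrix). Hence $p$ is a regular point. The forward direction follows.

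Conversely, suppose $R'$ is regular singular with respect to $S$. Take any $A$ over $k(x)$ for which $R$ is a Picard-Vessiot ring; then $R'$ is a Picard-Vessiot ring for $A$ over $k'(x)$. By Corollary \ref{cor: reg sing stable}, applied with $L=M$ the fraction field of $R'$, the equation $\de(y)=Ay$ is regular singular with respect to $S$ over $k'(x)$. Lemma \ref{lemma: extends constants for equation} then descends this, point by point for $p\in\mathbb{P}^1(k)$, to $k(x)$.

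\emph{Main obstacle.} We expect the main obstacle to be Step 1, specifically $\de$-simplicity and $R'^\de=k'$ after the localization from $k(x)\otimes_k k'$ to $k'(x)$. We would first prove it using the ideal correspondence for $R\otimes_k k'$. We would then argue that localizing at nonzero elements of $k[x]\otimes_k k'$ preserves $\de$-simplicity, since a nonzero $\de$-ideal of $R'$ contracts to a nonzero $\de$-ideal of $R\otimes_k k'$. If that argument is delicate, we would fall back on citing the known base change result for Picard-Vessiot rings.
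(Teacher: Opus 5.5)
Your proposal is correct and follows the paper's proof. For the first part the paper simply cites the base-change result for Picard--Vessiot rings as well known, where you sketch its standard proof via $\de$-simplicity of $R\otimes_k k'$ and localization. The regular singular part is the paper's argument: in the forward direction $A$ has no poles outside $\mathbb{P}^1(k)$, and in the converse you apply Lemma~\ref{lemma: extends constants for equation} point by point. Your explicit appeal to Corollary~\ref{cor: reg sing stable}, to pass from ``$R'$ is regular singular'' to ``$\de(y)=Ay$ is regular singular over $k'(x)$'', spells out a step the paper leaves implicit in an ``i.e.''.
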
	
\begin{proof}
	It is well-known that $R'/k'(x)$ is a Picard-Vessiot ring with differential Galois group $G_{k'}$ (see e.g., \cite[Lemma 5.2]{BachmayrHarbaterHartmannWibmer:TheDifferentialGaloisGroupOfRationalFunctionField}). 
	
	Assume that $R/k(x)$ is a Picard-Vessiot ring for $\de(y)=Ay$ with $A\in k(x)^{n\times n}$. Then $R'/k'(x)$ is a Picard-Vessiot ring for $\de(y)=Ay$. First assume that $R/k(x)$ is regular singular with respect to $S$, i.e., $\de(y)=Ay$ is regular singular with singularities contained in $S$, when considered as a differential equation over $k(x)$. As $\de(y)=Ay$ is regular at all points $p\in\mathbb{P}^1(k')$ that are not poles of $A$, we see that the singularities of $\de(y)=Ay$, considered as a differential equation over $k'(x)$, are contained in $\mathbb{P}^1(k)\subseteq\mathbb{P}^1(k')$. Thus the singularities are contained in $S$ and they are all regular singularities. 
	
	Conversely, assume that $R'/k'(x)$ is regular singular with respect to $S$, i.e., $\de(y)=Ay$ is regular singular with singularities contained in $S$, when considered as a differential equation over $k'(x)$. By Lemma \ref{lemma: extends constants for equation} a point $p\in\mathbb{P}^1(k)$ is regular (or regular singular) for $\de(y)=Ay$ considered over $k(x)$ if and only if $p$ is regular (or regular singular) for $\de(y)=Ay$ considered over $k'(x)$. Thus $\de(y)=Ay$ is regular singular with singularities contained in $S$, when considered over $k(x)$. Therefore $R/k(x)$ is regular singular with respect to $S$.
\end{proof}	

It is well-known that over the complex numbers the differential Galois group of a regular singular differential equation with $d+1$ singularities can be generated by $d$ elements. It is far from obvious that this also holds for arbitrary algebraically closed fields of characteristic zero. For example, for $k=\overline{\mathbb{Q}}$ there does not seem to be a natural way to construct or to choose those generators.

\begin{lemma} \label{lemma: Galois of reg sing generated}
	Let $\de(y)=Ay$ be a regular singular differential equation over $k(x)$ with $d+1$ singularities. Then the differential Galois group of $\de(y)=Ay$ can be generated by $d$ elements.  
\end{lemma}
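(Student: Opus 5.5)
The plan is to reduce to the case $k=\C$, where the claim follows from the Riemann-Hilbert correspondence, and then use Proposition \ref{prop: d under base change} to come back down to $k$. Let $G$ be the differential Galois group of $\de(y)=Ay$ over $k(x)$, and let $S\subseteq\mathbb{P}^1(k)$ be its set of singularities, so $|S|=d+1$. Since $A$ has only finitely many coefficients, all the data live over a countable algebraically closed subfield $k_0\subseteq k$. Concretely, we pick $k_0$ countable and algebraically closed, containing the coefficients of the entries of $A$ and the points of $S\cap k$. Then $A\in k_0(x)^{n\times n}$.

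By Lemma \ref{lemma: extends constants for equation}, the singularities of $\de(y)=Ay$ over $k_0(x)$ are exactly those over $k(x)$, and they are regular singular. So $\de(y)=Ay$ is regular singular over $k_0(x)$ with the same $d+1$ singularities. Let $G_0$ be its Galois group over $k_0(x)$. By Lemma \ref{lemma: extend constants for PV}, base change of the Picard-Vessiot ring gives $G=(G_0)_k$. Since $k_0$ is countable, we can embed $k_0$ into $\C$, and the same lemma gives Galois group $(G_0)_\C$ over $\C(x)$. Lemma \ref{lemma: extends constants for equation} again shows that the singularities over $\C(x)$ form the image of $S$, so there are $d+1$ of them and they are regular singular.

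Over $\C$ we invoke the classical Riemann-Hilbert correspondence (Schlesinger's density theorem). The Galois group of a regular singular equation is the Zariski closure of the monodromy group. The monodromy group is an image of $\pi_1(\mathbb{P}^1(\C)\smallsetminus S)$, a free group on $d$ generators. Hence $d((G_0)_\C)\le d$. One point needs care here: our algebraic notion of regular singularity must agree with the analytic one. Lemma \ref{lemma: Gauge over rational} handles this, since it provides rational, hence meromorphic, gauge transformations to an equation with simple poles.

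Finally, Proposition \ref{prop: d under base change} applied twice gives $d(G_0)=d((G_0)_\C)\le d$ and $d(G)=d((G_0)_k)=d(G_0)\le d$. The main obstacle is the descent from $\C$ to $k_0$, but Proposition \ref{prop: d under base change} has already done that work. What remains is bookkeeping: making sure that the set of singularities, and its cardinality $d+1$, is preserved under each extension of constants, and that the differential Galois group commutes with these base changes. Both are supplied by Lemmas \ref{lemma: extends constants for equation} and \ref{lemma: extend constants for PV}.
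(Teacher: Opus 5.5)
Your proposal is correct and follows essentially the same route as the paper. You descend to a countable algebraically closed subfield, embed it into $\C$, apply Schlesinger's density theorem there, and transfer the bound back via Proposition~\ref{prop: d under base change} together with Lemma~\ref{lemma: extend constants for PV}; your remark that Lemma~\ref{lemma: Gauge over rational} reconciles the algebraic and analytic notions of regular singularity is also the reason the paper gives for including that lemma.
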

\begin{proof}
	For $k=\C$ this is an immediate consequence of Schlesinger's density theorem (\cite[Theorem~5.8]{SingerPut:differential}) or the Riemann-Hilbert correspondence. We will reduce the general case to the complex case via Proposition \ref{prop: d under base change}. In detail: Let $S\subseteq\mathbb{P}^1(k)$ be the set of singularities of $\de(y)=Ay$, $R/k(x)$ a Picard-Vessiot ring for $\de(y)=Ay$ and $G$ the corresponding differential Galois group.
	There exists a countable algebraically closed subfield $k_1$ of $k$ such that $A\in k_1(x)^{n\times n}$ and $S\subseteq \mathbb{P}^1(k_1)$. Let $R_1/k_1(x)$ be a Picard-Vessiot ring for $\de(y)=Ay$ and $G_1$ the corresponding differential Galois group. By the uniqueness of Picard-Vessiot rings and Lemma \ref{lemma: extend constants for PV} we have $R_1\otimes_{k_1(x)}k(x)\cong R$ and $(G_1)_k\cong G$. Furthermore, $R_1/k_1(x)$ is regular singular with respect to $S$ because $R/k(x)$ is.
	
	Since $k_1$ is countable, we can choose an embedding of $k_1$ into $\C$. Then, again by Lemma \ref{lemma: extend constants for PV}, $R_1\otimes_{k_1(x)}\C(x)$ is a Picard-Vessiot ring over $\C(x)$ and regular singular with respect to $S$. The corresponding differential Galois group is $(G_1)_\C$. From the classical case $k=\C$ discussed above, we know that $d((G_1)_\C)\leq |S|-1=d$. Because $d((G_1)_\C)=d(G_1)=d((G_1)_k)=d(G)$ by Proposition \ref{prop: d under base change}, we find $d(G)\leq d$ as desired.	
\end{proof}	

Let $S\subseteq \mathbb{P}^1(k)$ be a finite subset with $d+1$ elements and let $\Gamma_S$ denote the 
differential Galois group of the family of all regular singular differential equations over $k(x)$ with singularities in $S$. Conjecture \ref{conj: equivalent} from the introduction asserts that $\Gamma_S\cong\Gamma_d$. While we only prove this for $k$ countable, in general we at least have:

\begin{cor} \label{cor: same quotients}
	The proalgebraic groups $\Gamma_S$ and $\Gamma_d$ have the same algebraic quotients, i.e., for an algebraic group $G$, there exists a quotient map $\Gamma_S\twoheadrightarrow G$ if and only if there exists a quotient map $\Gamma_d\twoheadrightarrow G$.
\end{cor}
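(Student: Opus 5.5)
Both implications reduce, through the differential Galois correspondence, to statements about Picard-Vessiot extensions and to earlier results in the paper. An algebraic group $G$ is a quotient of $\Gamma_d$ if and only if $d(G)\leq d$: a quotient map $\Gamma_d\twoheadrightarrow G$ amounts to a choice of $d$ generators of $G$. So the corollary is equivalent to the following claim. For an algebraic group $G$, there is a quotient map $\Gamma_S\twoheadrightarrow G$ if and only if $d(G)\leq d$.

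For the forward direction, suppose $\Gamma_S\twoheadrightarrow G$ is a quotient map with $G$ algebraic, and let $N$ be its kernel, a coalgebraic normal closed subgroup of $\Gamma_S$. By the Galois correspondence, the fixed field $L^N$ of the big Picard-Vessiot extension $L$ (for the family of all equations regular singular with respect to $S$) is a Picard-Vessiot extension of $k(x)$ with Galois group $G$. Since $G$ is algebraic, $k[G]$ is finitely generated, so $L^N$ is generated by finitely many entries of fundamental matrices of the family. It is therefore contained in the Picard-Vessiot extension of a finite direct sum of equations from the family. That direct sum is again regular singular with respect to $S$ by Lemma~\ref{lemma: regular singular with respect to S is closed}.

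By Corollary~\ref{cor: reg sing stable}, $L^N/k(x)$ is then a Picard-Vessiot extension for an equation $\de(y)=Ay$ that is regular singular with singularities contained in $S$. Its set of singularities has at most $d+1$ elements. Lemma~\ref{lemma: Galois of reg sing generated} is stated for exactly $d+1$ singularities, but its proof only uses $d((G_1)_\C)\leq |S|-1$. Alternatively, one can add apparent (non-)singular points of $S$ to the count; fewer singularities give a group generated by fewer elements, and hence by at most $d$. Either way $d(G)\leq d$.

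For the converse, let $G$ be algebraic with $d(G)\leq d$. The special case $L=k(x)$ of Theorem~\ref{theo: second key ingredient}, established in \cite{SerafiniWibmer:RegularSingularInverseProblem}, gives a Picard-Vessiot extension $M/k(x)$ with Galois group $G$ that is regular singular with respect to $S$. Then $M$ embeds into the big Picard-Vessiot extension for the family defining $\Gamma_S$. Restriction then gives a quotient map $\Gamma_S\twoheadrightarrow G(M/k(x))\cong G$.

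The main point needing care is the first direction: checking that an algebraic quotient of $\Gamma_S$ really comes from a single regular singular equation with singularities in $S$, and handling the case where that equation has fewer than $d+1$ singularities. For the latter I would rerun the proof of Lemma~\ref{lemma: Galois of reg sing generated} with $|S'|-1\leq d$, where $S'$ is the actual set of singularities.
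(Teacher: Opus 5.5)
Your proposal is correct and follows essentially the same route as the paper. The converse comes from the regular singular inverse problem of \cite{SerafiniWibmer:RegularSingularInverseProblem}. The forward direction goes through the Galois correspondence, the fact that $L_S^N$ is of finite type (which you derive from containment in a finite direct sum, while the paper cites \cite[Remark~2.3]{BachmayrHarbaterHartmannWibmer:FreeDifferentialGaloisGroups}), Corollary~\ref{cor: reg sing stable}, and Lemma~\ref{lemma: Galois of reg sing generated}. Your extra care about the equation possibly having fewer than $d+1$ singularities, handled by rerunning that lemma's proof with $|S'|-1\leq d$, is valid and fills a small point the paper passes over silently.
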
	
\begin{proof}
	It was already shown in \cite[Cor. 2.16]{SerafiniWibmer:RegularSingularInverseProblem} that every quotient of $\Gamma_d$ is a quotient of $\Gamma_S$. For the reverse conclusion, let $L_S/k(x)$ denote the Picard-Vessiot extension for the family of all differential equations over $k(x)$ that are regular singular with respect to $S$. Let $\Gamma_S\twoheadrightarrow G$ be a quotient map with $G$ algebraic and let $N$ be its kernel. By the differential Galois correspondence, $L_S^N/k(x)$ is a Picard-Vessiot extension with differential Galois group $\Gamma_S/N\simeq G$. Because $G$ is algebraic (i.e. of finite type over $k$), $L_S^N/k(x)$ is the Picard-Vessiot extension for a single differential equation $\de(y)=Ay$ (\cite[Remark~2.3]{BachmayrHarbaterHartmannWibmer:FreeDifferentialGaloisGroups}). By Corollary \ref{cor: reg sing stable} the differential equation $\de(y)=Ay$ is regular singular with respect to $S$ and because $G$ is its differential Galois group, it follows from Lemma \ref{lemma: Galois of reg sing generated} that $G$ can be generated by $d$ elements. So $G$ is a quotient of $\Gamma_d$.
\end{proof}

In fact, later in this section we will prove a much stronger statement than Corollary~\ref{cor: same quotients}: Corollary~\ref{cor: GammaS solves embedding problems} combined with Proposition \ref{prop: embedding property for Gammad} shows that $\Gamma_S$ and $\Gamma_d$ solve the same algebraic embedding problems.

\begin{cor} \label{cor: d for countable}
	Let $S\subseteq\mathbb{P}^1(k)$ be finite and let $\Gamma_S$ denote the differential Galois group of the family of all regular singular differential equations over $k(x)$ with singularities contained in $S$. If $k$ is countable, then $d(\Gamma_S)=|S|-1$.
\end{cor}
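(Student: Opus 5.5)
We need to prove $d(\Gamma_S)=|S|-1$ for countable $k$, with $d=|S|-1$. The plan is to show $d(\Gamma_S)\leq d$ by a countable inverse limit argument and $d(\Gamma_S)\geq d$ by exhibiting a suitable algebraic quotient.

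\emph{Upper bound.} Since $\Gamma_S$ is a differential Galois group over $k(x)$, $k[\Gamma_S]=(R\otimes_{k(x)}R)^\de$ for the corresponding Picard-Vessiot ring $R$. Over countable $k$ there are only countably many matrices $A\in k(x)^{n\times n}$. Hence $\Gamma_S$ is the projective limit of the countably many algebraic quotients $\Gamma_S/N$, one for each finite family of equations, and $\rank(\Gamma_S)$ is countable. We therefore write
$$\Gamma_S=\varprojlim_n G_n,\qquad \Gamma_S\twoheadrightarrow\cdots\twoheadrightarrow G_2\twoheadrightarrow G_1.$$
Each $G_n$ is the differential Galois group of a single regular singular equation with singularities in $S$. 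Such an equation has at most $|S|$ singularities, so Lemma~\ref{lemma: Galois of reg sing generated} gives $d(G_n)\leq d$. Adding apparent points does not help here; rather, if there are fewer singularities the bound is only smaller, so $d(G_n)\leq d$ in all cases.

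We then run the inductive lifting from the proof of Theorem~\ref{theo: Conjectures for k countable}. Choose generators of $G_1$. Using Theorem~\ref{theo: Geschuetz}, which applies to $G_{n+1}\twoheadrightarrow G_n$ because $d(G_{n+1})\leq d$, lift them step by step to compatible generators of each $G_n$. This produces $g_1,\ldots,g_d\in\Gamma_S(k)=\varprojlim G_n(k)$ whose images generate every $G_n$, so they generate $\Gamma_S$. Hence $d(\Gamma_S)\leq d$.

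\emph{Lower bound.} By Corollary~\ref{cor: same quotients}, every algebraic quotient of $\Gamma_d$ is a quotient of $\Gamma_S$. Now $\Ga^d$ is a quotient of $\Gamma_d$, since $d$ nonzero independent coordinates generate it. Therefore $\Ga^d$ is a quotient of $\Gamma_S$, and $d(\Gamma_S)\geq d(\Ga^d)=d$.

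The main obstacle is justifying that $\Gamma_S$ is a countable inverse limit of algebraic quotients, each a Galois group of a single regular singular equation. This follows from the Galois correspondence together with Corollary~\ref{cor: reg sing stable} and the countability of the set of equations over countable $k$.
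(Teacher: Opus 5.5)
Your proposal is correct and follows essentially the same route as the paper. The lower bound comes from $\Ga^d$ being a quotient of $\Gamma_S$ by Corollary~\ref{cor: same quotients}. The upper bound comes from writing $\Gamma_S$ as the inverse limit of a countable chain of Galois groups of single regular singular equations, bounding each by Lemma~\ref{lemma: Galois of reg sing generated}, and lifting generators step by step with Theorem~\ref{theo: Geschuetz}. The only cosmetic difference is how the chain is obtained: the paper builds it directly as a nested union $R_0\subseteq R_1\subseteq\cdots$ of finite-type Picard--Vessiot rings, whereas you get it from countability of the rank and then identify each $G_n$ via the Galois correspondence and Corollary~\ref{cor: reg sing stable}.
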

\begin{proof}
	Set $d=|S|-1$. As $\Ga^d$ can be generated by $d$ elements, it follows from Corollary \ref{cor: same quotients} that $\Ga^d$ is a quotient of $\Gamma_S$. As $d(\Ga^d)=d$, we must have $d(\Gamma_S)\geq d$.

	Let $R_S/k(x)$ denote the Picard-Vessiot ring for the family of all regular singular differential equations over $k(x)$ with singularities contained in $S$. Because $k$ is countable, there are only countably many linear differential equations over $k(x)$, and we can find Picard-Vessiot rings $R_i/k(x)$ of finite type such that $R_0\subseteq R_1\subseteq R_2\subseteq\ldots$ and $R_S=\bigcup_{i\geq 0}R_i$. 
	The inclusions $R_i\subseteq R_{i+1}$ induce quotient maps $G_{i+1}\twoheadrightarrow G_i$ between the corresponding differential Galois groups and $\Gamma_S=\varprojlim G_i$. By Lemma \ref{lemma: Galois of reg sing generated} we have $d(G_i)\leq d$ for all $i\geq 0$. Let $g_{0,1},\ldots,g_{0,d}$ be a generating set for $G_0$. By the Gasch\"{u}tz Lemma for algebraic groups (Theorem \ref{theo: Geschuetz}), there exists generators $g_{1,1},\ldots,g_{1,d}$ of $G_1$ lifting the generators $g_{0,1},\ldots,g_{0,d}$ of $G_0$. Continuing in this way, we find for every $i\geq 0$ a generating set $g_{i,1},\ldots,g_{i,d}$ of $G_i$ such that $g_{i+1,j}$ maps to $g_{i,j}$ under $G_{i+1}\twoheadrightarrow G_i$ for every $j=1,\ldots,d$. So $(g_{i,j})_{i\geq 0}$ defines an element $g_j$ of $\varprojlim G_i(k)=\Gamma_S(k)$ for every $j=1,\ldots,d$. As the closed subgroup $\langle g_1,\ldots, g_d\rangle$ of $\Gamma_S$ surjects onto every $G_i$, we must have $\langle g_1,\ldots, g_d\rangle=\Gamma_S$. So $d(\Gamma_S)\leq d$.
\end{proof}

\subsection{Differential embedding problems}
In this subsection we introduce regular singular differential embedding problems and explain how they relate to embedding problems for the corresponding differential Galois group.

We begin by recalling the fundamental notion of a differential embedding problem (\cite[Def.~ 3.6]{BachmayrHarbaterHartmannWibmer:FreeDifferentialGaloisGroups}.) Let $K$ be a differential field with $K^\de=k$. A \emph{differential embedding problem} over $K$ is a pair $(G\twoheadrightarrow H,\ L/K)$, where $L/K$ is a Picard-Vessiot extension with differential Galois group $H$ and $G\twoheadrightarrow H$ is a quotient map of proalgebraic groups over $k$. A \emph{solution} is a Picard-Vessiot extension $M/K$ containing $L$ together with an isomorphism $G\cong G(M/K)$ that identifies $G\twoheadrightarrow H$ with the restriction map $G(M/K)\twoheadrightarrow G(L/K)$ of differential Galois groups. In the trivial case $L=K$ (so that $H=1$), a solution is simply a realization of $G$ as a differential Galois group over $K$.

We note that the Picard-Vessiot extension $L/K$ in the above definition could be a Picard-Vessiot extension for an infinite family of linear differential equations and that $H$ and $G$ are allowed to be proalgebraic (rather than just algebraic) groups. We may sometimes simply refer to $M/K$ as a solution, thereby omitting the isomorphism $G\cong G(M/K)$ from the notation. Moreover, if $R\subseteq L$ is the Picard-Vessiot ring, it will sometimes be more convenient to write $(G\twoheadrightarrow H,\ R/K)$ for the differential embedding problem $(G\twoheadrightarrow H,\ L/K)$. Similarly, if $T\subseteq M$ is the Picard-Vessiot ring, we may refer to $T/K$ (rather than $M/K$) as the solution of the differential embedding problem. With this notation, the compatibility of the isomorphism $G\simeq G(M/K)$ with the restriction map means that
$$
\xymatrix{
	R \ar[d] \ar[r] & R\otimes_kk[H] \ar[d] \\
	T \ar[r] & T\otimes_k k[G]
}
$$
commutes.

Differential embedding problems over $k(x)$ have been studied extensively in \cite{BachmayrHarbaterHartmannWibmer:DifferentialEmbeddingProblems}, \cite{BachmayrHarbaterHartmann:DifferentialEmbeddingProblemsOverLaurentSeriesFields}, \cite{BachmayrHartmannHarbaterPopLarge}, \cite{BachmayrHarbaterHartmannWibmer:FreeDifferentialGaloisGroups}, \cite{BachmayrHarbaterHartmannWibmer:TheDifferentialGaloisGroupOfRationalFunctionField} and \cite{FengWibmer:DifferentialGaloisGroupsSpecializationsAndMatzatsConjecture} with the upshot that all differential embedding problems over $k(x)$ that one might hope to be solvable are indeed solvable. More precisely, every differential embedding problem $(G\twoheadrightarrow H,\ L/k(x))$ such that $\rank(H)<|k|$ and $\rank(G)\leq |k|$ has a solution and the latter is equivalent to Matzat's conjecture (now a theorem), stating that the absolute differential Galois group of $k(x)$ is the free proalgebraic group on a set of cardinality $|k|$. (See \cite[Theorem 3.7]{BachmayrHarbaterHartmannWibmer:FreeDifferentialGaloisGroups} and \cite[Theorem 5.15]{FengWibmer:DifferentialGaloisGroupsSpecializationsAndMatzatsConjecture}.)

Ultimately, one might hope for a regular singular analog of Matzat's conjecture, stating that, for a (not necessarily finite) subset $S$ of $\mathbb{P}^1(k)$, the differential Galois group of the family of all regular singular differential equations over $k(x)$ with singularities in $S$ is the free proalgebraic group on a set of cardinality $|S|-1$ (cf. \cite[Section 4.1]{Wibmer:RegularSingularDifferentialEquationsAndFreeProalgebraicGroups}). In particular, the differential Galois group of the family of all regular singular differential equations over $k(x)$ should be the free proalgebraic group on a set of cardinality~ $|k|$.  We will later (Theorem \ref{theo: main main} and Corollary \ref{cor: free for countable}) prove these claims for $k$ countable. 
It suggests itself to approach this question by solving regular singular differential embedding problems, which we now define.
\begin{defi}
Let $S$ be a finite subset of $\mathbb{P}^1(k)$. A differential embedding problem
	$(G\twoheadrightarrow H,\ L/k(x))$ over $k(x)$ is \emph{regular singular} with respect to $S$ if $L/k(x)$ is regular singular with respect to $S$ (Definition~\ref{defi: regsing PV extension}) and $G$ is an algebraic group with $d(G)\leq|S|-1$. A solution is \emph{regular singular} with respect to $S$ if the corresponding Picard-Vessiot extension is regular singular with respect to $S$.
\end{defi}	 

We note that the condition that $G$ can be generated by $|S|-1$ elements is necessary for the existence of a regular singular solution, because the differential Galois group of a regular singular equation with singularities in $|S|$ can be generated by $|S|-1$ elements (Lemma \ref{lemma: Galois of reg sing generated}).

The following proposition relates regular singular embedding problems to embedding problems for the corresponding differential Galois group. 

\begin{prop} \label{prop: equivalence of embedding property and differential embedding problem}
	Let $S$ be a finite subset of $\mathbb{P}^1(k)$. The following statements are equivalent:
	\begin{enumerate}
		\item Every regular singular (with respect to $S$) differential embedding problem over $k(x)$ has a regular singular (with respect to $S$) solution.
	\item The differential Galois group $\Gamma_S$ of the family of all regular singular differential equations over $k(x)$ with singularities in $S$, satisfies the following embedding property: Every algebraic embedding problem $(G\twoheadrightarrow H,\ \Gamma_S\twoheadrightarrow H)$ for $\Gamma_S$ with $d(G)\leq |S|-1$ has a solution.

	\end{enumerate}
\end{prop}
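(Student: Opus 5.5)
The plan is to use the differential Galois correspondence for the big Picard--Vessiot extension $L_S/k(x)$ with Picard--Vessiot ring $R_S$ (for the family of all differential equations that are regular singular with respect to $S$) and $G(L_S/k(x))=\Gamma_S$. The key dictionary is the following. Quotient maps $\Gamma_S\twoheadrightarrow H$ with $H$ algebraic correspond to Picard--Vessiot extensions $L/k(x)$ of finite type contained in $L_S$, with $H$ identified with $G(L/k(x))$. By Corollary \ref{cor: reg sing stable} every such $L$ is regular singular with respect to $S$. Conversely, every Picard--Vessiot extension $L/k(x)$ that is regular singular with respect to $S$ embeds into $L_S$, because $L_S$ contains a fundamental solution matrix for every regular singular equation with singularities in $S$, and Picard--Vessiot rings are unique up to isomorphism. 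Hence $L$ is isomorphic to a subfield $L_S^N$, and the choice of embedding only changes the quotient map by an automorphism, which is harmless.

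For (i)$\Rightarrow$(ii), take an algebraic embedding problem $(G\twoheadrightarrow H,\ \beta\colon\Gamma_S\twoheadrightarrow H)$ with $d(G)\le |S|-1$. Set $N=\ker\beta$ and $L=L_S^N$. Then $L/k(x)$ is of finite type (\cite[Remark~2.3]{BachmayrHarbaterHartmannWibmer:FreeDifferentialGaloisGroups}) and regular singular with respect to $S$, and $G(L/k(x))=H$ via $\beta$. So $(G\twoheadrightarrow H,\ L/k(x))$ is a regular singular differential embedding problem, and by (i) it has a regular singular solution $M\supseteq L$ with $G\cong G(M/k(x))$. Since $M$ is regular singular with respect to $S$, it embeds into $L_S$ over $k(x)$. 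Here one must be careful that the embedding extends the given inclusion $L\subseteq L_S$. I would handle this by first embedding $M$ into $L_S$ arbitrarily; the image of $L$ is then a Picard--Vessiot subextension of $L_S$ isomorphic to $L$, hence equal to $L$ as a subfield (it is the subfield generated by solutions of the same equations). Composing with an element of $\Gamma_S$, or rather an automorphism of $L$, one adjusts the embedding to restrict to the identity on $L$; automorphisms of $L$ extend to $L_S$ by normality. Restriction $\Gamma_S\twoheadrightarrow G(M/k(x))\cong G$ then gives a quotient map compatible with $\beta$, which is the required solution.

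For (ii)$\Rightarrow$(i), start with a regular singular differential embedding problem $(G\twoheadrightarrow H,\ L/k(x))$. Then $H$ is algebraic because $G$ is, and $L$ is regular singular with respect to $S$. Embed $L$ into $L_S$ and obtain $\beta\colon\Gamma_S\twoheadrightarrow G(L/k(x))=H$ by restriction. By (ii) there is a solution $\f\colon\Gamma_S\twoheadrightarrow G$. Set $M=L_S^{\ker\f}$. This is a Picard--Vessiot extension containing $L$ (because $\ker\f\subseteq\ker\beta$), with $G(M/k(x))\cong G$ compatibly with restriction to $L$. It is regular singular with respect to $S$ by Corollary \ref{cor: reg sing stable}, since it is of finite type and contained in $L_S$.

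I expect the main obstacle to be the bookkeeping in (i)$\Rightarrow$(ii): making sure the embedding $M\hookrightarrow L_S$ can be chosen over the fixed copy of $L$, so that the induced map $\Gamma_S\to G$ really lifts $\beta$ and not merely $\beta$ up to an automorphism of $H$. I would resolve this using uniqueness of Picard--Vessiot subextensions of $L_S$ together with the surjectivity of $\Gamma_S\to \Aut$ of such subextensions.
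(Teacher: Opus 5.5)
Your proposal is correct and follows the same approach as the paper's proof. In both directions you use the Galois correspondence for $L_S/k(x)$ together with Corollary~\ref{cor: reg sing stable} to see that the finite-type Picard--Vessiot subextensions of $L_S$ are regular singular with respect to $S$. The only difference is that the paper simply asserts that $M$ can be embedded into $L_S$ over $L$, whereas you justify this correctly: the Picard--Vessiot subfield of $L_S$ for a given equation is unique, and automorphisms of $L$ extend to $L_S$.
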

\begin{proof}
Let $L_S/k(x)$ be the Picard-Vessiot extension of the family of all regular singular differential equations with singularities in $S$, so that $\Gamma_S=G(L_S/k(x))$.
	
We begin with (i)$\Rightarrow$(ii). Let $(G\twoheadrightarrow H,\ \Gamma_S\twoheadrightarrow H)$ be an algebraic embedding problem for $\Gamma_S$ with $d(G)\leq |S|-1$. The kernel $N$ of $\Gamma_S\twoheadrightarrow H$ is a normal closed subgroup of $\Gamma_S$ and, by the differential Galois correspondence, the corresponding intermediate differential field $L=L_S^N$ is a Picard-Vessiot extension of $k(x)$ with differential Galois group $\Gamma_S/N=H$. Because $H$ is algebraic (i.e., of finite type over $k$) $L/K$ is a Picard-Vessiot extension for a single differential equation $\de(y)=Ay$ (\cite[Remark 2.3]{BachmayrHarbaterHartmannWibmer:FreeDifferentialGaloisGroups}). By Corollary \ref{cor: reg sing stable} the differential equation $\de(y)=Ay$ is regular singular with respect to $S$, i.e., $L/K$ is regular singular with respect to $S$. By (i) the regular singular differential embedding problem $(G\twoheadrightarrow H,\ L/k(x))$ has a regular singular solution $M/k(x)$. We can embed $M$ into $L_S$ over $L$. %While such an embedding is not unique, the image of the embedding is unique because $M/L$ is a Picard-Vessiot extension. 
So we can consider $M$ as a differential subfield of $L_S$. In particular, we have a restriction map $\Gamma_S\twoheadrightarrow G(M/k(x))$. The commutative diagram
$$ 
\xymatrix{
\Gamma_S \ar@{->>}[rd] \ar@{->>}[d] & \\
G(M/k(x)) \ar[d] \ar@{->>}[r] & G(L/k(x)) \ar[d] \\
G \ar@{->>}[r]  & H	
}
$$
shows that the embedding problem $(G\twoheadrightarrow H,\ \Gamma_S\twoheadrightarrow H)$ for $\Gamma_S$ has a solution.

Conversely, let $(G\twoheadrightarrow H,\ L/k(x))$ be a regular singular (with respect to $S$) differential embedding problem. We may assume that $L$ is contained in $L_S$. We thus have a restriction morphism $\Gamma_S\twoheadrightarrow H$ of differential Galois groups. By (ii), the embedding problem $(G\twoheadrightarrow H,\ \Gamma_S\twoheadrightarrow H)$ for $\Gamma_S$ has a solution $\Gamma_S\twoheadrightarrow G$. For the kernel $N$ of this solution, $M=L_S^N$ is a Picard-Vessiot extension of $k(x)$ containing $L$ (because $N$ is contained in the kernel of $\Gamma_S\twoheadrightarrow H$.) Furthermore $G(M/k(x))=\Gamma_S/N\simeq G$. Because $G$ is algebraic, $M$ is the Picard-Vessiot extension for a single differential equation and it follows from Corollary \ref{cor: reg sing stable} that $M/k(x)$ is regular singular (with respect to $S$).
In the diagram
$$ 
\xymatrix{
	\Gamma_S \ar@{->>}[rd] \ar@/_3pc/@{->>}[dd] \ar@{->>}`r[rr]`[dd][rdd] \ar@{->>}[d] & &\\
	G(M/k(x)) \ar[d] \ar@{->>}[r] & G(L/k(x)) \ar[d] & \\
	G \ar@{->>}[r]  & H	&}
$$
the left, right, top and outer triangle commute. Therefore also the square commutes. So $M/k(x)$ and $G\simeq G(M/k(x))$ constitute a regular singular solution of the embedding problem $(G\twoheadrightarrow H,\ L/k(x))$.
\end{proof}	

We note that the embedding property for $\Gamma_S$ in (ii) above is identical to the embedding property of $\Gamma_d$ in Proposition \ref{prop: embedding property for Gammad} where $d=|S|-1$.

Our main goal is to show that the equivalent statements in Proposition \ref{prop: equivalence of embedding property and differential embedding problem} do indeed hold. Our strategy to prove this is as follows: For $k=\mathbb{C}$ and $S\subseteq\mathbb{P}^1(\C)$ finite, it follows from the classical Riemann-Hilbert correspondence that $\Gamma_S$ is the free proalgebraic group on $|S|-1$ generators. Therefore, by the Gasch\"{u}tz lemma for algebraic groups, applied in the form of Proposition \ref{prop: embedding property for Gammad}, property (ii) of Proposition \ref{prop: equivalence of embedding property and differential embedding problem} holds. Thus also (i) holds for $k=\C$. Via a specialization argument we then deduce from the validity of (i) for $k=\C$, the validity of (i) for any algebraically closed field of characteristic zero. The specialization argument will be discussed in the next section. For this strategy to work, we also need to know that regular singular differential embedding problems are compatible with base change.

\begin{lemma} \label{lemma: extend constants}
	Let $k'/k$ be an extension of algebraically closed fields and let $S\subseteq \mathbb{P}^1(k)$ be a finite set.
	Let $(\f\colon G\twoheadrightarrow H,\ R/k(x))$ be a differential embedding problem over $k(x)$ and set $R'=R\otimes_{k(x)}k'(x)$. Then $(\f_{k'}\colon G_{k'}\twoheadrightarrow H_{k'},\ R'/k'(x))$ is a differential embedding problem over $k'(x)$ and $(\f\colon G\twoheadrightarrow H,\ R/k(x))$ is regular singular (with respect to $S$) if and only if $(\f_{k'}\colon G_{k'}\twoheadrightarrow H_{k'},\ R'/k'(x))$ is regular singular (with respect to $S$).
	
	Furthermore, if this is the case and $T/k(x)$ is a regular singular (with respect to $S$) solution, then $T'/k'(x)$ is a regular singular (with respect to $S$) solution, where $T'=T\otimes_{k(x)}k'(x)$.
\end{lemma}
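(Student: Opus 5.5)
The plan is to reduce everything to Lemma \ref{lemma: extend constants for PV} and Proposition \ref{prop: d under base change}, together with the standard fact that base change of a quotient map of proalgebraic groups is again a quotient map.

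First, by Lemma \ref{lemma: extend constants for PV}, $R'/k'(x)$ is a Picard-Vessiot ring with differential Galois group $H_{k'}$. Since $k[H]\to k[G]$ is injective and $k'$ is flat over $k$, the map $k'[H_{k'}]=k[H]\otimes_k k'\to k[G]\otimes_k k'=k'[G_{k'}]$ is also injective. Hence $\f_{k'}$ is a quotient map, and $(\f_{k'},\ R'/k'(x))$ is a differential embedding problem.

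For the equivalence of regular singularity, I would use the following facts:
\begin{itemize}
\item By Lemma \ref{lemma: extend constants for PV}, $R$ is regular singular with respect to $S$ if and only if $R'$ is.
\item $G$ is algebraic if and only if $G_{k'}$ is, since $k[G]$ is finitely generated over $k$ exactly when $k[G]\otimes_k k'$ is finitely generated over $k'$ (faithfully flat descent of finite generation along a field extension).
\item If $G$ is algebraic, then $d(G)=d(G_{k'})$ by Proposition \ref{prop: d under base change}.
\end{itemize}
Together these give that $(\f,\ R/k(x))$ is regular singular with respect to $S$ if and only if $(\f_{k'},\ R'/k'(x))$ is.

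For the last claim, let $T/k(x)$ be a solution with the isomorphism $G\cong G(T/k(x))$ compatible with the restriction. Then $T'=T\otimes_{k(x)}k'(x)$ is a Picard-Vessiot ring over $k'(x)$ with Galois group $G_{k'}$, and it is regular singular with respect to $S$, both by Lemma \ref{lemma: extend constants for PV}. It contains $R'=R\otimes_{k(x)}k'(x)$, because $k'(x)$ is flat over $k(x)$, so the inclusion $R\subseteq T$ stays injective after base change.

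It remains to check compatibility. The main point to handle carefully is that the Galois group isomorphism of Lemma \ref{lemma: extend constants for PV} is the natural one. Concretely, the coaction of $G_{k'}$ on $T'$ is the base change $T\otimes_{k(x)}k'(x)\to (T\otimes_k k[G])\otimes_{k(x)}k'(x)\cong T'\otimes_{k'}k'[G_{k'}]$ of the coaction on $T$. This identification comes from $(T'\otimes_{k'(x)}T')^\de=(T\otimes_{k(x)}T)^\de\otimes_k k'$, which follows from the torsor isomorphism $T\otimes_{k(x)}T\cong T\otimes_k k[G]$.

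Tensoring the commutative square relating $R\to R\otimes_k k[H]$ and $T\to T\otimes_k k[G]$ with $k'(x)$ then yields the corresponding commutative square for $R'$ and $T'$. Hence $T'/k'(x)$, together with $G_{k'}\cong G(T'/k'(x))$, is a regular singular solution of $(\f_{k'},\ R'/k'(x))$.
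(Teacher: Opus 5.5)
Your proposal is correct and follows the paper's proof. Both use Lemma \ref{lemma: extend constants for PV} for the Picard-Vessiot and regular-singularity claims, Proposition \ref{prop: d under base change} for $d(G_{k'})=d(G)$, and base change of the compatibility square to show that $T'/k'(x)$ is a solution. You also fill in points the paper leaves implicit: that $\f_{k'}$ is a quotient map, that $G$ is algebraic exactly when $G_{k'}$ is, and that the identification $G(T'/k'(x))\cong G(T/k(x))_{k'}$ is the natural one and hence compatible with restriction.
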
	
\begin{proof}
	We already noted in Lemma \ref{lemma: extend constants for PV} that $R'/k'(x)$ is a Picard-Vessiot ring with differential Galois group $H_{k'}$. Thus $(\f_{k'}\colon G_{k'}\twoheadrightarrow H_{k'},\ R'/k'(x))$ is a differential embedding problem over $k'(x)$.
	By Lemma~\ref{lemma: extend constants for PV} we also know that $R/k(x)$ is regular singular with respect to $S$ if and only if $R'/k'(x)$ is regular singular with respect to $S$. Furthermore,  $d(G_{k'})=d(G)$ by Proposition \ref{prop: d under base change}. So $(\f\colon G\twoheadrightarrow H,\ R/k(x))$ is regular singular with respect to $S$ if and only if $(\f_{k'}\colon G_{k'}\twoheadrightarrow H_{k'},\ R'/k'(x))$ is regular singular with respect to $S$.
	
	If $T/k(x)$ is a regular singular (with respect to $S$) solution, then, as above, $T'/k'(x)$ is a Picard-Vessiot ring with differential Galois group isomorphic to $G_{k'}$. Moreover $R'\subseteq T'$ and $T'/k'(x)$ is regular singular (with respect to $S$).
	As the diagram
	$$
	\xymatrix{
		G(T/k(x)) \ar@{->>}[r] \ar[d] & G(R/k(x)) \ar[d]\\
		G \ar@{->>}[r] & H	
	}
	$$
	commutes, also its base change to $k'$
	$$
	\xymatrix{
		G(T/k(x))_{k'} \ar@{->>}[r] \ar[d] & G(R/k(x))_{k'} \ar[d]\\
		G_{k'} \ar@{->>}[r] & H_{k'}	
	}
	$$
	commutes.
	Combining this with the commutative diagram
	$$
	\xymatrix{
		G(T'/k'(x)) \ar@{->>}[r] \ar[d] & G(R'/k'(x)) \ar[d] \\
		G(T/k(x))_{k'} \ar@{->>}[r]  & G(R/k(x))_{k'}
	}
	$$
	we see that
	$$
	\xymatrix{
		G(T'/k'(x)) \ar@{->>}[r] \ar[d] & G(R'/k'(x)) \ar[d]\\
		G_{k'} \ar@{->>}[r] & H_{k'}	
	}
	$$
	commutes. So $T'/k'(x)$ is a regular singular (with respect to $S$) solution.
\end{proof}

\subsection{Specializations}
Let $k\subseteq k'$ be an inclusion of algebraically closed fields and consider a differential equation $\de(y)=Ay$ with $A\in k'(x)^{n\times n}$. Then there exists a finitely generated $k$-subalgebra $\B$ of $k'$ and a monic polynomial $f\in \B[x]$ such that $A\in \B[x]_f^{n\times n}$. Every morphism of $k$-algebras $c\in\Hom_k(\B,k)$ extends to a morphism $c\colon \B[x]_f\to k(x)$ of $k$-$\de$-algebras by $c(x)=x$. Applying $c$ to the entries of $A$ one thus obtains a matrix $A^c\in k(x)^{n\times n}$ and correspondingly a differential equation
$$\de(y)=A^cy$$ over $k(x)$. In this context, we usually refer to $c\in\Hom_k(\B,k)$ as a \emph{specialization} and to $\de(y)=A^cy$ as a specialization of $\de(y)=Ay$. Clearly, a specialization can completely change the character of a differential equation. For example, the specialization could be sending all entries of $A$ to $0$. However, one might expect that for some, or even many, specializations all relevant properties are preserved. In this section, we first show that this is true for the property of being regular singular, and then we explain the specialization result from \cite{FengWibmer:DifferentialGaloisGroupsSpecializationsAndMatzatsConjecture} that, roughly speaking, shows that also the differential Galois group is preserved under many specializations.

\subsubsection{Specializations of regular singular differential equations} A specialization of a regular singular differential equation does not need to be regular singular. For example, if $a$ and $b$ are distinct, the differential equation $\de(y)=\frac{1}{(x-a)(x-b)}y$ is regular singular with singularities $a$ and $b$, but a specialization sending $a$ and $b$ to $0$, will turn the differential equation into $\de(y)=\frac{1}{x^2}y$, which is irregular singular at $0$. However, this kind of bad specializations are easily avoided. The following lemma is a slight generalization of \cite[Lemma 2.12]{SerafiniWibmer:RegularSingularInverseProblem}. As a notational convenience, we set $\infty^c=\infty$ for any specialization $c$.

\begin{lemma} \label{lemma: specialization of reg or reg sing}
	Let $k\subseteq k'$ be an inclusion of algebraically closed fields and let $A\in k'(x)^{n\times n}$ be such that $\de(y)=Ay$ is regular (or regular singular) at $p\in \mathbb{P}^1(k')$. Then there exists a finitely generated $k$-subalgebra $\B$ of $k'$ and a monic $f\in \B[x]$ such that $p\in\B\cup\{\infty\}$, $A\in \B[x]_f^{n\times n}$ and $\de(y)=A^cy$ is regular (or regular singular) at $p^c$ for every specialization $c\colon \B\to k$.
\end{lemma}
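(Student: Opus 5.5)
First reduce to $p=0$ (or $p$ finite). If $p=\infty$, replace $A$ by $-\frac{1}{x^2}A(\frac{1}{x})$; this substitution commutes with every specialization $c$ (since $c(x)=x$ and $\infty^c=\infty$), so it suffices to handle the point $0$. If $p\in k'$, replace $A$ by $A(x+p)$, after first including $p$ in $\B$. Specialization commutes with this translation, and $p^c=c(p)$. So assume $p=0$.

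Next, apply Lemma \ref{lemma: Gauge over rational}. It gives $T\in\Gl_n(k'(x))$ such that $A'=TAT^{-1}+\de(T)T^{-1}$ has entries in $k'[x]_{(x)}$ in the regular case, or in $\frac1x k'[x]_{(x)}$ in the regular singular case. Write $T=\frac{1}{g}T_0$ and $T^{-1}=\frac{1}{h}T_1$ with $T_0,T_1\in k'[x]^{n\times n}$ and $g,h\in k'[x]$. Write $A'=\frac{1}{x^e u}P$ with $e\in\{0,1\}$, $P\in k'[x]^{n\times n}$ and $u\in k'[x]$, $u(0)\neq 0$. Choose $\B$ to be the finitely generated $k$-subalgebra of $k'$ generated by the coefficients of $A$, $T_0,T_1,g,h,P,u$, together with the inverses of the leading coefficients of the denominators of $A,g,h,u$, and the inverses of $u(0)$, $\det(T_0)\det(T_1)$'s relevant leading coefficient, and so on. Take $f$ monic, a product of normalized denominators, so that $A,T,T^{-1},A'\cdot x^e\in\B[x]_{f'}$, with $f'$ monic. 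The identity $A'=TAT^{-1}+\de(T)T^{-1}$ and $TT^{-1}=I$ hold in $\B[x]_{f}$, and a $k$-$\de$-algebra map $c\colon\B[x]_f\to k(x)$ preserves them. Hence $T^c\in\Gl_n(k(x))$ gauge-transforms $A^c$ into $(A')^c=\frac{1}{x^e u^c}P^c$.

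The main obstacle is making sure that $u^c(0)\ne0$ for every $c$, so that $(A')^c$ has the required pole order at $0=p^c$. Normalizing $u$ to be monic would not by itself guarantee this. Instead I will include $u(0)^{-1}$ in $\B$; then $c(u(0))\neq 0$ is automatic since $c$ is a ring map to a field. Similarly, inverting the leading coefficients in $\B$ ensures that the monic denominators stay nonzero, which makes $c$ well defined on $\B[x]_f$. The invertibility of $T^c$ also needs no care, because $T^{-1}$ has entries in $\B[x]_f$. With these choices, $\de(y)=A^cy$ is gauge equivalent over $k(x)$ to a system with no pole (resp. at most a simple pole) at $0$. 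It is therefore regular (resp. regular singular) there, by the definition via lattices $k[[t]]^n$.
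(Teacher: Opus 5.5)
Your proposal is correct and essentially the paper's argument: reduce to $p=0$ via $A(x+p)$ (or $-\frac{1}{x^2}A(\frac{1}{x})$), spread $A$, $T$, $T^{-1}$ (with $T\in\Gl_n(k'(x))$ from Lemma \ref{lemma: Gauge over rational}) over $\B[x]_f$, and use that $c$ is a differential ring map, so $T^c$ gauge-transforms $A^c$ into $A'^c$. The paper keeps the pole order of $A'^c$ at $0$ under control by putting $a_i^{-1}$ into $\B$ for the non-zero poles $a_1,\ldots,a_r$ of $A'$; your inversion of $u(0)$ does the same job.
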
	
\begin{proof}
We first treat the regular case. We also first assume that $p\in k'$. By assumption, $\de(y)=A(x+p)y$ is regular at $0$. By Lemma \ref{lemma: Gauge over rational} (i) there exists a $T\in\Gl_n(k'(x))$ such that $A'=TA(x+p)T^{-1}+\de(T)T^{-1}$ has no pole at $x=0$. Let $a_1,\ldots,a_r\in k'\smallsetminus\{0\}$ denote the distinct poles of $A'$ in $k'$. Then
	$(x-a_1)^{n_1}\ldots(x-a_r)^{n_r}A'\in k'[x]^{n\times n}$ for some $n_1,\ldots,n_r\geq 1$. Let $\B\subseteq k'$ be a finitely generated $k$-algebra and $f\in \B[x]$ monic such that $p,a_1,a_1^{-1},\ldots,a_r,a_r^{-1}\in \B$ and $A,A(x+p),T,T^{-1}\in \B[x]_f^{n\times n}$.
	Then $(x-a_1)^{n_1}\ldots(x-a_r)^{n_r}A'\in\B[x]^{n\times n}$ and for any specialization $c\in\Hom_k(\B,k)$ we have $(x-a^c_1)^{n_1}\ldots(x-a^c_r)^{n_r}A'^c\in k[x]^{n\times n}$ and $$A'^c=T^cA(x+p)^c(T^c)^{-1}+\de(T^c)(T^c)^{-1}=T^cA^c(x+p^c)(T^c)^{-1}+\de(T^c)(T^c)^{-1}$$ with $T^c\in\Gl_n(k(x))$. As all $a_i^c$ are non-zero, this shows that $\de(y)=A^cy$ is regular at $p^c$.
	
	In case $p=\infty$, the differential equation $\de(y)=\frac{-1}{x^2}A(\frac{1}{x})$ is regular at $0$ and we can proceed similarly to the above.
	
	For the regular singular case, one can proceed as in the regular case with Lemma \ref{lemma: Gauge over rational} (i) replaced by Lemma \ref{lemma: Gauge over rational} (ii) and $(x-a_1)^{n_1}\ldots(x-a_r)^{n_r}A'\in k'[x]^{n\times n}$ replaced by $x(x-a_1)^{n_1}\ldots(x-a_r)^{n_r}A'\in k'[x]^{n\times n}$.
\end{proof}

The following lemma is a variation of \cite[Lemma 2.13]{SerafiniWibmer:RegularSingularInverseProblem}. It shows that one can always arrange things so that a regular singular differential equation specializes to a regular singular differential equation.

\begin{lemma} \label{lemma: specializaiton of reg sing}
	Let $k\subseteq k'$ be an inclusion of algebraically closed fields and let $S$ be a finite subset of $\mathbb{P}^1(k)\subseteq\mathbb{P}^1(k')$. Let $A\in k'(x)^{n\times n}$ be such that $\de(y)=Ay$ is regular singular with respect to $S$. Then there exists a finitely generated $k$-subalgebra $\B$ of $k'$ and a monic $f\in \B[x]$ such that $A\in \B[x]_f^{n\times n}$ and $\de(y)=A^cy$ is regular singular with respect to $S$ for every specialization $c\colon \B\to k$.
\end{lemma}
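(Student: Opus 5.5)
The idea is to apply Lemma~\ref{lemma: specialization of reg or reg sing} separately at the finitely many relevant points and then enlarge the base algebra so that all conclusions hold simultaneously. Two kinds of points need control: the points of $S$, where the specialized equation should stay regular singular, and the points outside $S$, where it should be regular. The difficulty is that $\mathbb{P}^1(k)\smallsetminus S$ is infinite. We handle this by controlling only the finitely many poles of $A$ in $k'$ that lie outside $S$, and by arranging that after specialization no new poles outside $S$ appear.

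First fix a finitely generated $k$-subalgebra $\B_0\subseteq k'$ and a monic $f_0\in\B_0[x]$ with $A\in\B_0[x]_{f_0}^{n\times n}$. Let $p_1,\ldots,p_m\in k'$ be the roots of $f_0$ not lying in $S$. Since $A$ has no poles outside the roots of $f_0$ and $\infty$, and $\de(y)=Ay$ is regular at each $p_i$ (because $p_i\notin S$), Lemma~\ref{lemma: specialization of reg or reg sing} gives a finitely generated $\B_i$ and monic $f_i$ such that $\de(y)=A^cy$ is regular at $p_i^c$ for all specializations $c$. For each $s\in S$, including $s=\infty$ if $\infty\in S$, the same lemma in the regular singular version gives $\B_s,f_s$ such that $\de(y)=A^cy$ is regular singular at $s^c=s$; here $s^c=s$ holds because $s\in k$. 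If $\infty\notin S$, we also apply the regular version at $\infty$.

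Next let $\B$ be generated by all the $\B_i$, all the $\B_s$, and $\B_0$. Each conclusion survives enlarging the algebra, since a specialization of $\B$ restricts to specializations of the smaller algebras, and we may take $f=f_0$. To control where poles can go, write $f_0=g\prod_i(x-p_i)$, where $g$ has all its roots in $S\cap k$, so $g\in k[x]$ up to the way roots lie in $\B$. Then every pole of $A^c$ in $k$ is either a root of $g$, hence in $S$, or one of the $p_i^c$. Consequently a point $q\in k\smallsetminus S$ is either not a pole of $A^c$, so $A^c$ is regular there, or $q=p_i^c$ for some $i$, where regularity was arranged. The point $\infty$ is covered by the case distinction above. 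Hence every singularity of $A^c$ lies in $S$, and each point of $S$ is regular singular.

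The main obstacle is this bookkeeping of poles: a specialization might send some $p_i$ into $S$. That case is harmless, since $A^c$ is then regular at a point of $S$, which is allowed. The real requirement is that the roots of $f_0$ are integral over $\B$, which we arrange by adjoining the $p_i$ to $\B$. With the $p_i$ in $\B$, the factorization of $f_0$ in $\B[x]$ makes the pole set of $A^c$ contained in $\{p_i^c\}\cup S$.
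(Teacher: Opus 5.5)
Your argument is correct and is essentially the paper's proof. You apply Lemma~\ref{lemma: specialization of reg or reg sing} at each point of $S$, at the finitely many possible poles outside $S$, and at $\infty$, merge the algebras, and check that every pole of $A^c$ in $k$ is the image of one of these controlled points. The only differences are cosmetic: you locate the poles of $A^c$ through the factorization $f_0=g\prod_i(x-p_i)$ in $\B[x]$, whereas the paper clears denominators with $\prod_i(x-a_i)^{n_i}$ over the actual poles of $A$. Also, what your argument uses is that the $p_i$ lie in $\B$, not merely that they are integral over it.
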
	
\begin{proof}
	We first assume that $\infty\notin S$.	Write $S=\{s_1,\ldots,s_{d+1}\}\subseteq \mathbb{P}^1(k)\subseteq\mathbb{P}^1(k')$. We can find distinct $a_1,\ldots,a_r\in k'$ and $n_1,\ldots,n_r\geq 1$ such that $(x-a_1)^{n_1}\ldots(x-a_r)^{n_r}A\in k'[x]^{n\times n}$. Without loss of generality, we may assume that $r\geq d+1$ and $a_i=s_i$ for $i=1,\ldots,d+1$. So $\de(y)=Ay$ is regular singular at $a_1,\ldots,a_{d+1}$ and regular at $a_{d+2},\ldots,a_r$.
	By Lemma \ref{lemma: specialization of reg or reg sing} there exists, for $i=1,\ldots,d+1$, a finitely generated $k$-subalgebra $\B_i$ of $k'$ and $f_i\in \B_i[x]$ monic such that $A\in \B_i[x]_{f_i}^{n\times n}$ and $\de(y)=A^cy$ is regular singular at $a_i^c=a_i=s_i$ for all specializations $c\colon\B_i\to k$.
	Similarly, for $i=d+2,\ldots,r$, there exists a finitely generated $k$-subalgebra $\B_i$ of $k'$ and a monic $f_i\in \B_i[x]$ such that $a_i\in \B_i$, $A\in \B_i[x]_{f_i}^{n\times n}$ and $\de(y)=A^cy$ is regular at $a_i^c$ for every specialization $c\colon \B_i\to k$. Because $\de(y)=Ay$ is regular at $\infty$, there exists, again by Lemma \ref{lemma: specialization of reg or reg sing}, a finitely generated $k$\=/subalgebra $\B_\infty\subseteq k'$ and $f_\infty\in \B_\infty[x]$ monic such that $A\in \B_\infty[x]_{f_\infty}^{n\times n}$ and $\de(y)=A^cy$ is regular at $\infty$ for every specialization $c\colon \B_\infty\to k$.
	
	Let $\B$ be the $k$-subalgebra of $k'$ generated by $\B_1,\ldots,\B_r$ and $\B_\infty$ and set $f=f_1\ldots f_rf_\infty\in\B[x]$. Then $a_1,\ldots,a_r\in \B$ and $A\in \B[x]_f^{n\times n}$.	
	Let $c\in\Hom_k(\B,k)$. From $(x-a_1)^{n_1}\ldots(x-a_r)^{n_r}A\in \B[x]^{n\times n}$, it follows that $(x-a_1^c)^{n_1}\ldots(x-a_r^c)^{n_r}A^c\in k[x]^{n\times n}$. This shows that $\de(y)=A^cy$ is regular at all points $p\in k\smallsetminus\{a_1^c,\ldots,a_r^c\}$. By the choice of $\B_i$ for $i=1,\ldots,d+1$, it follows that $\de(y)=A^cy$ is regular singular at $a_i^c=a_i=s_i$. By the choice of $\B_i$ for $i=d+2,\ldots,r$, it follows that $\de(y)=A^cy$ is regular at $a_i^c$. Finally, by the choice of $\B_\infty$, we see that $\de(y)=A^cy$ is regular at $\infty$. In summary, we see that $\de(y)=A^cy$ is regular singular with respect to $S$. 
	
	In case $\infty\in S$, a similar argument applies.
\end{proof}

\subsubsection{The specialization setup}

Let $k'/k$ be an extension of algebraically closed fields and $R'/k'(x)$ a Picard-Vessiot ring of finite type. Roughly speaking, in \cite{FengWibmer:DifferentialGaloisGroupsSpecializationsAndMatzatsConjecture} it was shown that $R'/k'(x)$ can be specialized to a Picard-Vessiot $R/k(x)$ without changing the differential Galois group. More generally, a similar statement holds for solutions of differential embedding problems.
However, more rigorously, to specialize the Picard-Vessiot ring, its differential Galois group or the solution of a differential embedding problem, they first need to be spread out in families. So one first has to specify precisely what kind of properties those spread out families are supposed to have, which leads to a fairly elaborate specialization setup. We now recall this specialization setup and the main specialization result from \cite{FengWibmer:DifferentialGaloisGroupsSpecializationsAndMatzatsConjecture}.

\medskip

Let $\B$ be a $k$-algebra (considered as a constant differential ring) and let $\mathcal{Q}$ be a $\B$-$\de$-algebra. An \emph{action} of an affine group scheme $\G$ over $\B$ on a $\mathcal{Q}$-$\de$-algebra $\mathcal{R}$ is a morphism of group functors $\G\to\underline{\operatorname{Aut}}(\mathcal{R}/\mathcal{Q})$, where $\underline{\operatorname{Aut}}(\mathcal{R}/\mathcal{Q})(\mathcal{C})=\operatorname{Aut}(\mathcal{R}\otimes_\B\mathcal{C}/\mathcal{Q}\otimes_\B\mathcal{C})$ for any $\B$-algebra $\mathcal{C}$ and the automorphisms are assumed to be differential automorphisms. To give such an action is equivalent to specifying a morphism $\rho\colon \mathcal{R}\to \mathcal{R}\otimes_\B\B[\G]$ of $\mathcal{Q}$-$\de$-algebras satisfying two diagrams, representing that the action is associative and unital (\cite[Lemma 3.2.1]{FengWibmer:DifferentialGaloisGroupsSpecializationsAndMatzatsConjecture}). We say that $\mathcal{R}/\mathcal{Q}$ is a \emph{differential $\G$-torsor} if the induced morphism $\mathcal{R}\otimes_{\mathcal{Q}}\mathcal{R}\to \mathcal{R}\otimes_\B\B[\G],\ r_1\otimes r_2\mapsto (r_1\otimes 1)\rho(r_2)$ is an isomorphism. This implies that $\spec(\mathcal{R})$ is a $\G_\mathcal{Q}$-torsor in the usual sense.
For $\mathcal{A}\in \mathcal{Q}^{n\times n}$ a differential $\G$-torsor $\mathcal{R}/\mathcal{Q}$ is a \emph{differential $\G$-torsor for $\de(y)=\mathcal{A}y$} if there exists a $\mathcal{Y}\in\Gl_n(\mathcal{R})$ such that $\de(\mathcal{Y})=\mathcal{A}\mathcal{Y}$, $\mathcal{R}=\mathcal{Q}[\mathcal{Y},\frac{1}{\det(\mathcal{Y})}]$ and $g(\mathcal{Y}\otimes 1)=(\mathcal{Y}\otimes 1)(1\otimes [g])$ for some $[g]\in\Gl_n(\mathcal{C})$ for all $g\in\G(\mathcal{C})$ and all $\B$-algebras $\mathcal{C}$. We call such a $\mathcal{Y}\in\Gl_n(\mathcal{R})$ a fundamental solution matrix for $\de(y)=\mathcal{A}y$.

The most basic example of the above definitions (cf. \cite[Example 3.2.3]{FengWibmer:DifferentialGaloisGroupsSpecializationsAndMatzatsConjecture}) is a Picard-Vessiot ring $R/k(x)$ for $\de(y)=Ay$ with fundamental solution matrix $Y\in\Gl_n(R)$ and differential Galois group $G$. I.e., with $\B=k$, $\mathcal{Q}=k(x)$, $\mathcal{R}=R$, $\mathcal{A}=A$, $\mathcal{Y}=Y$ and $\mathcal{G}=G$, we have a differential $\G$\=/torsor $\mathcal{R}/\mathcal{Q}$ for $\de(y)=\mathcal{A}y$ with fundamental solution matrix $\mathcal{Y}$. In this case, $k[G]=(R\otimes_{k(x)}R)^\de$ and the canonical map $\eta\colon R\otimes_kk[G]\to R\otimes_{k(x)}R$ is an isomorphism. The coaction $\rho$ is then given by $\rho\colon R\to R\otimes_k k[G], \ r\mapsto \eta^{-1}(1\otimes r)$ and the induced isomorphism $R\otimes_{k(x)}R\to R\otimes_kk[G]$ is $\eta^{-1}$. 

The following lemma explains how we can spread out a Picard-Vessiot ring into a nice family suitable for specializing.

\begin{lemma} \label{lemma: spread out PV ring}
	Let $k\subseteq k'$ be an inclusion of algebraically closed fields, $A\in k'(x)^{n\times n}$ and $R/k'(x)$ a Picard-Vessiot ring for $\de(y)=Ay$ with fundamental solution matrix $Y\in\Gl_n(R)$. Let $\B_0$ be a finitely generated $k$-subalgebra of $k'$ and $f_0\in\B_0[x]$ monic.
	Assume that the differential Galois group of $R/k'(x)$ is of the form $G_{k'}$ for some algebraic group $G$ over $k$. Then there exist
	\begin{itemize}
		\item a finitely generated $k$-subalgebra $\B$ of $k'$ containing $\B_0$,
		\item a monic polynomial $f\in \B [x]$ such that $f_0$ divides $f$ in $\B[x]$ and $A\in \B[x]_f^{n\times n}$,
		\item a differential $G_\B$-torsor $\mathcal{R}/\B[x]_f$ for $\de(y)=Ay$ with fundamental solution matrix $\mathcal{Y}$ such that $\mathcal{R}$ is flat over $\B[x]_f$, $\mathcal{R}\otimes_{\B[x]_f}K(x)$ is $\de$-simple, where $K\subseteq k'$ is the algebraic closure of the field of fractions of $\B$ and $\mathcal{R}\otimes_{\B[x]_f}k'(x)\simeq R$ via an isomorphism mapping $\mathcal{Y}$ to $Y$.
	\end{itemize}		
\end{lemma}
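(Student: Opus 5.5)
The plan is a standard spreading-out argument in three steps: descend the Galois group data over a finitely generated algebra, build the torsor there, then shrink to get flatness and $\delta$-simplicity at the generic fibre.

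First I descend the structure of $R$ to a finitely generated algebra. Write $R=k'(x)[Y,1/\det(Y)]$. Let $G\le \Gl_{m}$ over $k$, and use the isomorphism $G(R/k'(x))\cong G_{k'}$ to view the coaction as $\rho\colon R\to R\otimes_k k[G]$. By the standard description of Picard-Vessiot torsors, $\rho(Y)=(Y\otimes 1)\cdot [g]$, where $[g]\in \Gl_n(k'[G])=\Gl_n(k'\otimes_k k[G])$ is the matrix of a representation $G_{k'}\to \Gl_{n,k'}$. The isomorphism $R\otimes_{k'(x)}R\cong R\otimes_k k[G]$ is also recorded by $\rho$.

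The data then consist of:
\begin{itemize}
\item the finitely many polynomials generating the ideal $I$ of relations of $Y$ over $k'(x)$;
\item the matrix $[g]$;
\item the entries of $A$;
\item the inverse maps witnessing the torsor isomorphism on generators.
\end{itemize}
All of these involve only finitely many elements of $k'$. I adjoin these coefficients to $\B_0$ to obtain $\B$. I choose $f$ divisible by $f_0$, containing the denominators of $A$ and of the generators of $I$ (clearing these to lie in $k'[x]$), and I make $f$ monic by multiplying by inverses of leading coefficients, which are added to $\B$.

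Next I set $\mathcal{R}=\B[x]_f[X,1/\det X]/\mathcal{I}$, with $\mathcal{I}=I\cap \B[x]_f[X,1/\det X]$, or better, with $\mathcal{I}$ generated by the chosen generators. I take $\mathcal{Y}$ to be the image of $X$. The derivation $\delta(X)=AX$ descends because $\delta(I)\subseteq I$ over $k'(x)$, and enlarging $\B$ and $f$ finitely many times ensures $\delta$ of each generator lies in $\mathcal{I}$. The coaction $X\mapsto X[g]$ defines $\mathcal{R}\to\mathcal{R}\otimes_\B \B[G]$, once I check that it sends the generators of $\mathcal{I}$ into $\mathcal{I}\otimes\B[G]$; again this is a finite condition achievable after localizing. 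Likewise the map $\mathcal{R}\otimes_{\B[x]_f}\mathcal{R}\to\mathcal{R}\otimes_\B\B[G]$ becomes an isomorphism once its inverse, defined over $k'$ on generators, is defined over $\B$ and the compositions are identity on generators. The associativity and unit diagrams hold because they hold after base change and can be checked on the generator $X$.

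The main obstacle is flatness together with the faithfulness of the base change. I need $\mathcal{R}\otimes_{\B[x]_f}k'(x)\cong R$, with no extra torsion killed, and $\mathcal{R}$ flat over $\B[x]_f$. For this I apply generic flatness: $\mathcal{R}$ is of finite type over the domain $\B[x]_f$, so after inverting an element $b\in\B[x]_f$ it becomes free. The difficulty is that inverting a general $b$ would destroy the ring's form $\B'[x]_{f'}$ with $f'$ monic. My first attempt is to apply generic freeness over $\B$ alone: $\mathcal{R}$ is a finitely generated $\B$-algebra, so there is $0\ne b\in\B$ with $\mathcal{R}_b$ free over $\B_b$. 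Then I use the torsor structure: $\mathcal{R}\otimes_{\B[x]_f}\mathcal{R}\cong\mathcal{R}\otimes_\B\B[G]$ with $\B[G]$ free over $\B$, and faithfully flat descent along $\mathcal{R}$ should give that $\mathcal{R}$ is flat over $\B[x]_f$, provided $\mathcal{R}$ is faithfully flat after a further localization in $\B$. Replacing $\B$ by $\B_b$ keeps everything in the required form. With flatness, the kernel of $\mathcal{R}\to R$ is killed by nonzero elements of $\B[x]_f$, and flatness forces it to vanish, so $\mathcal{R}\otimes k'(x)\cong R$.

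For $\delta$-simplicity of $\mathcal{R}\otimes K(x)$, I use the fact that $R=(\mathcal{R}\otimes K(x))\otimes_{K(x)}k'(x)$ is $\delta$-simple. If $J$ is a nonzero proper $\delta$-ideal over $K(x)$, then $J\otimes k'(x)$ is a $\delta$-ideal, and it is proper and nonzero by faithful flatness of the field extension. This is a contradiction.
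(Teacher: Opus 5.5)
Your spreading-out of the presentation, derivation, coaction and torsor isomorphism is fine, and so is your $\de$-simplicity argument, which does not even need flatness. Likewise, $\mathcal{R}\otimes_{\B[x]_f}k'(x)\cong R$ already holds by right exactness once $\mathcal{I}$ is generated by generators of $I$, so flatness is not needed for that step either.

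The genuine gap is the flatness of $\mathcal{R}$ over $\B[x]_f$.

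Generic freeness over $\B$ only says that $\mathcal{R}_b$ is a free $\B_b$-module. This is blind to torsion in the $x$-direction. A nonzero $\B[x]_f$-module killed by a nonzero element can be free over $\B$: for example $\B[x]_f/(x-\beta)\cong\B$ when $f(\beta)\in\B^\times$. Such a module is never flat over the domain $\B[x]_f$.

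Your bridge via $\mathcal{R}\otimes_{\B[x]_f}\mathcal{R}\cong\mathcal{R}\otimes_\B\B[G]$ is circular. That isomorphism says the base change of $\mathcal{R}$ along $\B[x]_f\to\mathcal{R}$ is free over $\mathcal{R}$. Descending flatness from there requires $\B[x]_f\to\mathcal{R}$ to be faithfully flat, which is exactly what you want to prove.

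The torsor isomorphism cannot do this job by itself. Anything that is a torsor over a closed subscheme of $\spec(\B[x]_f)$ satisfies the same isomorphism without being flat over $\B[x]_f$. Nothing in your proposal explains why a further localization in $\B$ would produce the needed faithful flatness.

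The missing observation is that the difficulty you tried to avoid is not real, because inverting an arbitrary nonzero element of $\B[x]_f$ is harmless. Apply generic freeness directly over the Noetherian domain $\B[x]_f$: there is $0\neq a\in\B[x]_f$ with $\mathcal{R}_a$ free over $(\B[x]_f)_a$. Write $a=a_0/f^m$ with $a_0\in\B[x]$, and let $c\in\B\smallsetminus\{0\}$ be the leading coefficient of $a_0$.

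Replace $\B$ by $\B[c^{-1}]\subseteq k'$ and $f$ by the monic polynomial $fa_0/c$, which is still divisible by $f_0$. The new base ring $\B[c^{-1}][x]_{fa_0/c}$ is the localization of $(\B[x]_f)_a=\B[x]_{fa_0}$ at $c$. Over it, $\mathcal{R}$ becomes free, and it is nonzero because $R\neq 0$, hence faithfully flat. All the other properties you arranged are stable under this base change.

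With this step in place of your $\B$-freeness and descent argument, your direct proof is complete. For comparison, the paper gives no argument here; it simply quotes Lemma 3.2.9 of Feng--Wibmer.
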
	
\begin{proof}
	This is just a special case of \cite[Lemma 3.2.9]{FengWibmer:DifferentialGaloisGroupsSpecializationsAndMatzatsConjecture}.
\end{proof}	

The following theorem is the key specialization result.

\begin{theo}[{\cite[Theorem 4.6.2]{FengWibmer:DifferentialGaloisGroupsSpecializationsAndMatzatsConjecture}}] \label{theo: specialization}
	Let $\B$ be a finitely generated $k$-algebra that is an integral domain, $f\in\B[x]$, $\G$ an affine group scheme of finite type over $\B$ and $\mathcal{R}/\B[x]_f$ a differential $\G$-torsor for $\de(y)=Ay$ with $A\in\B[x]_f^{n\times n}$ such that $\mathcal{R}$ is flat over $\B[x]_f$ and $\mathcal{R}\otimes_{\B[x]_f}K(x)$ is $\de$-simple, where $K$ is the algebraic closure of the field of fractions of $\B$. Then there exists a specialization $c\colon \B\to k$ such that $R^c=\mathcal{R}\otimes_{\B[x]_f}k(x)$ is a Picard-Vessiot ring over $k(x)$.
\end{theo}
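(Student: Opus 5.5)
The idea is to compare $R^c$ with a Picard--Vessiot ring sitting inside it. For every specialization $c\colon\B\to k$ (over the locus where $\G$ has nice fibres), $R^c/k(x)$ is a differential $\G_c$-torsor for $\de(y)=A^cy$ over $k(x)$. Let $\mathfrak m\subseteq R^c$ be a maximal differential ideal. Then $R^c/\mathfrak m$ is a Picard--Vessiot ring for $\de(y)=A^cy$, and its Galois group is identified with the stabilizer $H_c\leq \G_c$ of $\mathfrak m$. Moreover, $R^c$ is $\de$-simple, i.e.\ Picard--Vessiot, if and only if $H_c=\G_c$. So the task is to find one $c$ for which no proper closed subgroup of $\G_c$ can occur as such a stabilizer. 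At the generic point this holds by hypothesis: $\mathcal{R}\otimes_{\B[x]_f}K(x)$ is $\de$-simple, so its Galois group is the full group $\G_K$.

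\textbf{Reduction to invariants.} Proper subgroups will be detected by semi-invariants of bounded degree. By Chevalley's theorem, each closed subgroup $H\leq \Gl_n$ is the stabilizer of a line in some tensor construction on the standard representation. By Hrushovski-type degree bounds, the degree of this construction can be bounded by a number $N$ depending only on $n$. Hence $H_c\neq \G_c$ forces the following: some construction $\mathcal{M}$ of degree at most $N$ on $\de(y)=A^cy$ contains a $\G_c$-non-invariant line that is $H_c$-stable. Such a line corresponds to a hyperexponential solution over $k(x)$ of the associated equation, i.e.\ a solution $y$ with $\de(y)/y\in k(x)$.

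\textbf{Spreading the obstruction over $\B$.} I would rewrite this in terms of finitely many auxiliary linear systems $\de(y)=\mathcal{A}_iy$ over $\B[x]_f$, coming from exterior powers of the bounded constructions. Their hyperexponential solutions split into two parts:
\begin{enumerate}
	\item a rational part of bounded degree, controlled by denominator and degree bounds that are uniform in $c$ after inverting finitely many elements of $\B$;
	\item an exponent part, controlled by the residues at the singular points, which lie in a finite extension of $\B$.
\end{enumerate}
For the rational part, the existence of such solutions is a constructible condition on $c$. Since it fails generically, it fails on a nonempty open subset of $\spec(\B)$.

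\textbf{The exponent part.} This is the main obstacle. Multiplicative relations among exponentials of residues, or equivalently residues becoming rationally dependent, can appear upon specialization without appearing generically, and such conditions are not closed. I would first translate these relations into injectivity of a specialization map on a finitely generated subgroup $\Omega$ of a commutative group scheme, namely a torus built from the local monodromy data. Then I would invoke Theorem~\ref{theo: Feng} in its general form for commutative group schemes of finite type. This gives that $W_X(\Gm,\Omega)$, and its analogues, are Zariski dense. Intersecting with the open set from the rational part yields a specialization $c$ for which no proper $H_c$ is possible. Hence $R^c$ is $\de$-simple, and $R^c$ is a Picard--Vessiot ring over $k(x)$.
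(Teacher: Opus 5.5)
The paper does not prove this statement; it quotes it verbatim from Feng--Wibmer, so your sketch has to stand on its own. Your first step is fine: take a maximal $\de$-ideal, let $H_c$ be its stabilizer, and note that $R^c$ is $\de$-simple if and only if $H_c=\G_c$. The overall shape is also the right one: a constructible part plus an arithmetic part controlled by density of the sets $W_X$. But the step meant to tie these together is false.

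\textbf{The uniform degree bound does not exist.} There is no $N=N(n)$ such that every closed subgroup of $\Gl_n$ is the stabilizer of a line in a construction of degree at most $N$. For $\mu_m\le\Gm=\Gl_1$ and $N<m/2$, any $\mu_m$-stable line in a construction of degree $N$ lies in a single $\Gm$-isotypic component, so it is $\Gm$-stable. This happens in exactly your setting:
\begin{itemize}
\item Take $\B=k[a]$, $f=x$, $\G=\mathbb{G}_{m,\B}$ and $\mathcal{R}=\B[x]_x[y,y^{-1}]$ with $\de(y)=\frac{a}{x}y$. The generic fibre is $\de$-simple.
\item At $c(a)=1/m$, the ideal $(y^m-x)$ is a maximal $\de$-ideal with stabilizer $H_c=\mu_m$.
\item No construction of degree $<m/2$ contains an $H_c$-stable line that is not $\G_c$-stable.
\end{itemize}
So ``$H_c\neq\G_c$ forces a witness of degree $\le N$'' fails. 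What Hrushovski's bounds (made explicit by Feng) actually give is only a proto-Galois group. Bounded-degree data determine a group $P$ with $(P^o)^t\le H\le P$, where $(\cdot)^t$ is the intersection of the kernels of all characters. The remaining discrepancy lives in a toric quotient and is invisible to bounded constructions. Your sketch never connects that discrepancy to the exponent step.

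\textbf{The rational/exponent split does not work as stated.} Existence of hyperexponential (even rational) solutions of a fixed auxiliary system is not constructible in $c$, because degree and denominator bounds are integer values of local exponents. For example, the Gauss hypergeometric equation with $\alpha$ as parameter ($\beta,\gamma$ fixed and generic) has a polynomial solution of degree $N$ at $\alpha=-N$. In the example above, the good set is $k\smallsetminus\mathbb{Q}$, which contains no nonempty open set. So you cannot take an open set from one part, intersect it with a dense set from the other, and treat the parts independently. The bounds become uniform only on the arithmetic set itself.

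Three further problems:
\begin{itemize}
\item ``Fails generically'' is the wrong criterion. The auxiliary systems may well have hyperexponential solutions generically, coming from $\G$-stable lines; what you must exclude is the appearance of new ones.
\item ``Exponentials of residues'' and ``a torus built from local monodromy'' have no meaning over a general $k$ such as $\overline{\mathbb{Q}}$.
\item The theorem has no regular-singularity hypothesis, so irregular singularities must be handled too.
\end{itemize}
The relations that can actually appear are new $\mathbb{Z}$-linear relations, together with $1$, among residues and higher polar coefficients of logarithmic derivatives. The relevant sets are therefore of type $W_X(\Ga^N,\Omega)$, not tori.

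\textbf{What a repair needs.} You would have to do three things:
\begin{itemize}
\item encode the sub-proto-Galois discrepancy by finitely many data fixed over $\B$, such as semi-invariants of $\mathcal{R}$ realizing generators of the relevant character group;
\item show that a proper $H_c$ forces such a new relation among their polar data;
\item prove all degree bounds uniformly on $U\cap W_X(\ldots)$, not on an open set.
\end{itemize}
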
	

We note that, in the above theorem, $R^c/k(x)$ automatically is a Picard-Vessiot ring for $\de(y)=A^cy$ with differential Galois group $G^c=\G_k$, where the implied base change is formed using $c\colon \B\to k$.

The following lemma explains how we can spread out the solution of a regular singular embedding problem.

\begin{lemma} \label{lemma: spread out solution to differential embedding problem}
	Let $k'/k$ be an extension of algebraically closed fields and $S\subseteq \mathbb{P}^1(k)$ finite. Let $(\f\colon G\twoheadrightarrow H,\ R/k(x))$ be a regular singular (with respect to $S$) differential embedding problem over $k(x)$ and let $T'/k'(x)$ be a regular singular (with respect to $S$) solution of the induced regular singular (with respect to $S$) differential embedding problem $(\f_{k'}\colon G_{k'}\twoheadrightarrow H_{k'},\ R'/k'(x))$ over $k'(x)$, where $R'=R\otimes_{k(x)}k'(x)$. Furthermore, let $A\in k'(x)^{n\times n}$ be such that $T'/k'(x)$ is a Picard-Vessiot ring for $\de(y)=Ay$. Then there exist
	\begin{enumerate}
		\item a monic polynomial $h\in k[x]$, and a $k[x]_h$-$\de$-subalgebra $R_0$ of $R$ with an action of $H$ on $R_0/k[x]_h$
		such that $R_0\otimes_{k[x]_h}k(x)=R$  (equivalently, $U^{-1}R_0=R$, where $U=k[x]_h\smallsetminus \{0\}$) and the induced action of $H$ on $U^{-1}R_0/U^{-1}k[x]_h$ is the natural action of $H$ on $R/k(x)$,
		%		\rf{Since $S'$ and $S^{\gen}$ are used to denote the rings, to avoid the confusion, I changed $S$ into $U$ here. }
		\item a finitely generated $k$-subalgebra $\mathcal{B}$ of $k'$ and a monic polynomial $f\in \mathcal{B}[x]$ such that $h\in k[x]\subseteq \mathcal{B}[x]$ divides $f$ in $\mathcal{B}[x]$, $A\in \B[x]_f^{n\times n}$ and $\de(y)=A^cy$ is regular singular with respect to $S$ for every specialization $c\colon \B\to k$,
		\item a differential $G_\B$-torsor $\mathcal{T}/\B[x]_f$ for $\de(y)=Ay$ such that $\mathcal{T}$ is flat over $\B[x]_f$
		and %$S^{\operatorname{gen}}
		$\mathcal{T}\otimes_{\B[x]_f}K(x)$ is $\de$-simple, where $K\subseteq k'$ is the algebraic closure of the field of fractions of $\B$,
		
		\item a morphism $\mathcal{R}\to \mathcal{T}$ of $\mathcal{B}[x]_f$-$\de$-algebras that is equivariant with respect to the morphism $\f_{\mathcal{B}}\colon G_{\mathcal{B}}\to H_{\mathcal{B}}$, where $\mathcal{R}=R_0\otimes_{k[x]_h}\mathcal{B}[x]_f=(R_0\otimes_k\B)_f$ and $\mathcal{R}/\B[x]_f$ has an $H_\B$-action induced from \rm{(i)} above. (The equivariance means that for every $\mathcal{B}$-algebra $\mathcal{C}$ and $g\in G_{\B}(\mathcal{C})$ the diagram
		$$
		\xymatrix{
			\mathcal{T}\otimes_\B\mathcal{C} \ar^g[r] & \mathcal{T}\otimes_\B\mathcal{C} \\
			\mathcal{R}\otimes_\B\mathcal{C} \ar[u] \ar^-{\f(g)}[r]& \mathcal{R}\otimes_\B \mathcal{C} \ar[u]
		}
		$$
		commutes.)
	\end{enumerate}
\end{lemma}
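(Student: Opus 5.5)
The plan is to build the three pieces in order: first the descended model $R_0$ of $R$ over $k[x]_h$, then the torsor $\mathcal{T}$ over some $\B[x]_f$, and finally a morphism $\mathcal{R}\to\mathcal{T}$, enlarging $\B$ and $f$ as we go. All of this is standard spreading out, combined with Lemma \ref{lemma: spread out PV ring} and Lemma \ref{lemma: specializaiton of reg sing}.

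For (i), let $R/k(x)$ be a Picard-Vessiot ring for $\de(y)=A_0y$ with fundamental matrix $Y_0$. The coaction $\rho\colon R\to R\otimes_k k[H]$ satisfies $\rho(Y_0)=(Y_0\otimes1)[\,\cdot\,]$, where $[\,\cdot\,]$ denotes the matrix of coordinate functions on $H\leq\Gl_n$. Choose $h$ monic so that $A_0\in k[x]_h^{n\times n}$, and set $R_0=k[x]_h[Y_0,1/\det Y_0]$. This ring is $\de$-stable, and $\rho$ maps it into $R_0\otimes_k k[H]$, so $H$ acts on it. Moreover $U^{-1}R_0=R$ because $R$ is generated over $k(x)$ by $Y_0$ and $1/\det Y_0$. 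The compatibility of the actions then holds by construction.

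For (ii) and (iii), apply Lemma \ref{lemma: specializaiton of reg sing} to $A$ to get $\B_0$ and $f_0$ such that every specialization of $\de(y)=Ay$ is regular singular with respect to $S$. We may enlarge $\B_0$ and replace $f_0$ by $f_0h$. The Galois group of $T'$ is identified with $G_{k'}$ by the given solution, so Lemma \ref{lemma: spread out PV ring} produces $\B\supseteq\B_0$, a monic $f$ divisible by $f_0$ (hence by $h$), and a flat differential $G_\B$-torsor $\mathcal{T}$ for $\de(y)=Ay$. Its generic fibre is $\de$-simple and $\mathcal{T}\otimes k'(x)\simeq T'$. Enlarging $\B$ and $f$ later keeps all of these properties, by base change, provided the regular-singularity property of (ii) is kept. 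It is kept, since specializations of the larger $\B$ restrict to specializations of $\B_0$.

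For (iv), the most delicate step, we use that $R'=R\otimes_{k(x)}k'(x)\subseteq T'$. The entries of $Y_0$ and of $1/\det Y_0$ lie in $T'=\mathcal{T}\otimes_{\B[x]_f}k'(x)$. After enlarging $\B$ and $f$, these entries lie in the image of $\mathcal{T}$, which we may arrange to be injective by flatness. This gives a $\B[x]_f$-$\de$-morphism $\mathcal{R}=(R_0\otimes_k\B)_f\to\mathcal{T}$: it is defined on generators, and it is well defined because $R_0\otimes_k k'(x)\to T'$ is, while $\mathcal{R}$ and $\mathcal{T}$ embed into their $k'(x)$-base changes, which requires flatness of $\mathcal{R}$, clear since $R_0$ is flat over $k[x]_h$. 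For equivariance, both composites in the square are determined by where $Y_0$ goes. The element $g$ acts on $\mathcal{T}$ by $\mathcal{Y}\mapsto\mathcal{Y}[g]$, and $\f(g)$ acts on $Y_0$ by $Y_0\mapsto Y_0[\f(g)]$. Over $k'(x)$ the compatibility holds because $T'$ solves the embedding problem. To conclude, write the image of $Y_0$ as $\mathcal{Y}$ times a matrix over $\B[x]_f$ (possibly after enlarging), and reduce the equivariance to an identity of the coaction maps $\mathcal{R}\to\mathcal{T}\otimes_\B\B[G]$. This identity can be checked after the injective base change to $k'$, using that $\B[G]=\B\otimes k[G]$ is free. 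The main obstacle is this last verification of injectivity and descent; everything else is routine bookkeeping.
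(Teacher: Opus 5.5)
Your proposal is correct and is essentially the paper's argument. The paper simply invokes Lemma~5.3.3 of \cite{FengWibmer:DifferentialGaloisGroupsSpecializationsAndMatzatsConjecture}, modified exactly as you do by feeding the $\B_0,f_0$ from Lemma~\ref{lemma: specializaiton of reg sing} into Lemma~\ref{lemma: spread out PV ring}, and your explicit construction of $R_0$ and of the equivariant map $\mathcal{R}\to\mathcal{T}$ just unpacks that citation.

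One aside is wrong but harmless. The image of $Y_0$ need not be $\mathcal{Y}$ times a matrix over $\B[x]_f$: take $Y_0=\sqrt{x}$ for $\de(y)=\frac{1}{2x}y$ inside the Picard--Vessiot ring of $\de(y)=\frac{1}{4x}y$, where $Y_0=\mathcal{Y}^2$ with $\mathcal{Y}=x^{1/4}$. You do not need this, because the coaction identity can be checked in $T'\otimes_k k[G]$, into which $\mathcal{T}\otimes_\B\B[G]=\mathcal{T}\otimes_k k[G]$ injects by flatness. This requires the $G_\B$-action on $\mathcal{T}$ to base-change to the action fixed by the solution's identification $G_{k'}\cong G(T'/k'(x))$. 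That is implicit in your use, and the paper's, of Lemma~\ref{lemma: spread out PV ring}, and can otherwise be arranged by twisting with an automorphism of $G_\B$ after enlarging $\B$.
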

\begin{proof}
	This is more or less Lemma 5.3.3 in \cite{FengWibmer:DifferentialGaloisGroupsSpecializationsAndMatzatsConjecture}. The main difference is the requirement in (ii) that $\de(y)=A^cy$ is regular singular with respect to $S$ for every specialization $c\colon \B\to k$. To achieve this, we use Lemma \ref{lemma: specializaiton of reg sing} to produce a finitely generated $k$-subalgebra $\B_0$ of $k'$ and a monic $f_0\in \B_0[x]$ such that $A\in \B_0[x]_{f_0}^{n\times n}$ and $\de(y)=A^cy$ is regular singular with respect to $S$ for every specialization $c\colon \B_0\to k$. In the proof of Lemma 5.3.3 in \cite{FengWibmer:DifferentialGaloisGroupsSpecializationsAndMatzatsConjecture}, where Lemma 3.2.9 from \cite{FengWibmer:DifferentialGaloisGroupsSpecializationsAndMatzatsConjecture} is used, we then use Lemma \ref{lemma: spread out PV ring} with this $\B_0$ and $f_0$.
\end{proof}

\subsection{Proof of the main results}

We now have all the ingredients to establish the solvability of all regular singular differential embedding problems.

\begin{theo} \label{theo: solve regular singular embedding problem}
Let $S$ be a finite subset of $\mathbb{P}^1(k)$. Then every regular singular (with respect to $S$) differential embedding problem over $k(x)$ has a regular singular (with respect to $S$) solution.
\end{theo}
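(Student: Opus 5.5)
The plan follows the strategy sketched after Proposition \ref{prop: equivalence of embedding property and differential embedding problem}: first settle the case $k=\C$, then transfer to arbitrary $k$ by passing to a big field containing both and specializing back.

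\emph{Step 1 (the case $k=\C$).} For $S\subseteq\mathbb{P}^1(\C)$ finite with $|S|=d+1$, the classical Riemann-Hilbert correspondence identifies $\Rep(\Gamma_S)$ with $\Rep(\pi_1(\mathbb{P}^1(\C)\smallsetminus S))$, a free group on $d$ generators. It therefore gives $\Gamma_S\cong\Gamma_d$. Our algebraic notion of regular singularity agrees with the analytic one by Lemma \ref{lemma: Gauge over rational}. Proposition \ref{prop: embedding property for Gammad}, i.e.\ the Gasch\"{u}tz Lemma, then gives property (ii) of Proposition \ref{prop: equivalence of embedding property and differential embedding problem}. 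Hence (i) holds over $\C$, and by Lemma \ref{lemma: extend constants} the same then holds over any algebraically closed $k'\supseteq\C$: a solution over $k'$ of the base-changed problem can be produced from one over $\C$.

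\emph{Step 2 (reduction for general $k$).} Let $(\f\colon G\twoheadrightarrow H,\ R/k(x))$ be regular singular with respect to $S$. Choose an algebraically closed field $k'$ containing $k$ and admitting an embedding of $\C$. For instance, take $k'$ algebraically closed of transcendence degree at least $\max(|k|,2^{\aleph_0})$ over $k$; then $\C$ embeds into $k'$ over $\mathbb{Q}$. Base change to $k'$ gives a regular singular problem $(\f_{k'},R'/k'(x))$ by Lemma \ref{lemma: extend constants}.

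To solve it over $k'$, I would argue as follows. The group $\Gamma_{S,k'}$ inherits the embedding property from $k'\supseteq\C$ via Step 1. This requires the claim ``(i) over $\C$ implies (i) over $k'$'', which needs a descent: the data $G,H$ and $R$ are defined over some subfield. A cleaner route is to prove (i) directly over every $k'$ of large transcendence degree. Alternatively, observe that the data of the problem live over a countable algebraically closed subfield $k_0\subseteq k$. It then suffices to solve over $k_0$ and base change up to $k$ by Lemma \ref{lemma: extend constants}, reducing to $k$ countable. Then embed $k\hookrightarrow\C$, solve over $\C$ by Step 1, and specialize back from $\C$ to $k$. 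So I take $k'=\C$ after reducing to countable $k$.

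\emph{Step 3 (specialization).} With $k$ countable and $k\subseteq\C=k'$, let $T'/\C(x)$ be a regular singular solution of $(\f_{\C},R')$ for an equation $\de(y)=Ay$. Apply Lemma \ref{lemma: spread out solution to differential embedding problem} to get $\B$, $f$, the torsor $\mathcal{T}$, and the equivariant morphism $\mathcal{R}\to\mathcal{T}$, with every specialization $\de(y)=A^cy$ regular singular with respect to $S$. Theorem \ref{theo: specialization} provides $c\colon\B\to k$ with $T^c=\mathcal{T}\otimes k(x)$ a Picard-Vessiot ring with group $(G_\B)_c=G$. Since $R_0$ is defined over $k$, specializing $\mathcal{R}\to\mathcal{T}$ along $c$ gives an $\f$-equivariant map $R\to T^c$. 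It is injective because $R$ is $\de$-simple, so $T^c$ is a solution. It is regular singular by Lemma \ref{lemma: characterize regular singular} / Corollary \ref{cor: reg sing stable}, since $\de(y)=A^cy$ is regular singular with respect to $S$.

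The main obstacle is Step 3's compatibility check. One must verify that the specialized equivariant map really identifies $G\twoheadrightarrow H$ with the restriction of Galois groups, i.e.\ that the coaction diagram commutes after base change along $c$. This should follow from the equivariance in Lemma \ref{lemma: spread out solution to differential embedding problem}(iv) applied with $\mathcal{C}=k$ via $c$, together with the fact that $R_0\otimes_{k[x]_h}k(x)=R$ is unchanged by $c$.
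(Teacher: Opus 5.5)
Your final route is exactly the paper's proof: descend the problem to a countable algebraically closed subfield of $k$ via Lemma~\ref{lemma: extend constants}, embed it into $\C$, and solve there using the Riemann--Hilbert correspondence with Propositions~\ref{prop: embedding property for Gammad} and~\ref{prop: equivalence of embedding property and differential embedding problem}; then specialize back with Lemma~\ref{lemma: spread out solution to differential embedding problem} (part (ii) of which gives regularity) and Theorem~\ref{theo: specialization}, and base change up to $k$. You should delete the detour through a large field $k'\supseteq\C$ in Step 2: as you noticed, the claim that $\Gamma_S$ over $k'$ inherits the embedding property from $\C$ does not follow from Lemma~\ref{lemma: extend constants} alone, and the countable reduction makes it unnecessary.
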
	
\begin{proof}
	For $k=\mathbb{C}$ and $d=|S|-1$, it follows from the classical Riemann-Hilbert correspondence (\cite[Theorem 6.15]{SingerPut:differential}), paired with the tannakian formalism, that the differential Galois group $\Gamma_S$ of the family of all regular singular linear differential equations over $\C(x)$ with singularities in $S$, is isomorphic to $\Gamma_d$. It thus follows from Proposition \ref{prop: embedding property for Gammad} that $\Gamma_S$ satisfies condition (ii) of Proposition \ref{prop: equivalence of embedding property and differential embedding problem}. Therefore condition (i) of  Proposition~\ref{prop: equivalence of embedding property and differential embedding problem} holds. That is, the theorem holds for $k=\mathbb{C}$. We will deduce the validity of the theorem in general, from the case $k=\C$ via a specialization argument.
	
	Let $(G\twoheadrightarrow H,\ R/k(x))$ be a regular singular (with respect to $S$) differential embedding problem over $k(x)$.	Let $k_1$ be a countable algebraically closed subfield of $k$ such that $S\subseteq \mathbb{P}^1(k_1)\subseteq\mathbb{P}^1(k)$ and such that $R/k(x)$ is a Picard-Vessiot ring for $\de(y)=Ay$ with $A\in k_1(x)^{n\times n}$. Let $R_1/k_1(x)$ be a Picard-Vessiot ring for $\de(y)=Ay$ over $k_1(x)$ and let $H_1$ be the corresponding differential Galois group. Then $R_1\otimes_{k_1(x)}k(x)/k(x)$ is a Picard-Vessiot ring for $\de(y)=Ay$ with differential Galois group $(H_1)_{k}$ (Lemma~\ref{lemma: extend constants}). By the uniqueness of Picard-Vessiot rings, we have $R_1\otimes_{k_1(x)}k(x)\cong R$ and $(H_1)_k\cong H$. Enlarging $k_1$ if necessary, we can also assume that $G\twoheadrightarrow H$ is the base change to $k$ of a quotient map $G_1\twoheadrightarrow H_1$ of algebraic groups over $k_1$. We thus have a differential embedding problem $(G_1\twoheadrightarrow H_1, \ R_1/k_1(x))$ over $k_1(x)$.
	
	The base change of $(G_1\twoheadrightarrow H_1, \ R_1/k_1(x))$ from $k_1$ to $k$ is the original embedding problem $(G\twoheadrightarrow H,\ R/k(x))$. As the latter is regular singular with respect to $S$, it follows from Lemma \ref{lemma: extend constants} that  $(G_1\twoheadrightarrow H_1, \ R_1/k_1(x))$ is regular singular with respect to $S$. Because $k_1$ is countable, it can be embedded into $\C$ and moving forward we now fix such an embedding. By Lemma \ref{lemma: extend constants} again, the base change  $(G_2\twoheadrightarrow H_2, \ R_2/\C(x))$ of $(G_1\twoheadrightarrow H_1, \ R_1/k_1(x))$ from $k_1$ to $\C$ is a regular singular (with respect to $S$) differential embedding problem over $\C(x)$. From the case $k=\mathbb{C}$ of the theorem established in the first paragraph of this proof, we know that $(G_2\twoheadrightarrow H_2, \ R_2/\C(x))$ has a regular singular (with respect to $S$) solution $T_2/\C(x)$. Let $A\in \C(x)^{n\times n}$ be such that $T_2/\C(x)$ is a Picard-Vessiot ring for $\de(y)=Ay$.

	Let $h$, $R_0$, $\B$, $f$, $\mathcal{T}/\B[x]_f$ and $\mathcal{R}\to\mathcal{T}$ be as in Lemma \ref{lemma: spread out solution to differential embedding problem} applied with $k=k_1$ and $k'=\C$ to the regular singular (with respect to $S$) differential embedding problem $(G_1\twoheadrightarrow H_1, \ R_1/k_1(x))$ over $k_1(x)$, the regular singular (with respect to $S$) solution $T_2/\C(x)$ of $(G_2\twoheadrightarrow H_2, \ R_2/\C(x))$ and the matrix $A\in\C(x)^{n\times n}$. By Theorem \ref{theo: specialization}, there exists a specialization $c\colon \B\to k_1$ such that $T^c=\mathcal{T}\otimes_{\B[x]_f}k_1(x)$ is a Picard-Vessiot ring over $k_1(x)$ for $\de(y)=A^cy$ with differential Galois group $(G_\B)_{k_1}=G_1$. Note that $\mathcal{R}\otimes_{\B[x]_f}k_1(x)=(R_0\otimes_{k_1[x]_h}\B[x]_f)\otimes_{\B[x]_f}k_1(x)=R_0\otimes_{k_1[x]_h}k_1(x)=R_1$.
	So the commutative diagram
	$$
	\xymatrix{
		\mathcal{R} \ar[d] \ar[r] & \mathcal{R}\otimes_\B \B[H_1]=\mathcal{R}\otimes_{\B[x]_f}\B[x]_f[H_1] \ar[d] \\
		\mathcal{T} \ar[r] & \mathcal{T}\otimes_\B \B[G_1]=\mathcal{T}\otimes_{\B[x]_f}\B[x]_f[G_1]
	}
	$$ 
	induces, via base change $c\colon\B[x]_f\to k_1$, the commutative diagram
	 $$
	 \xymatrix{
	 	R_1 \ar[d] \ar[r] & R_1\otimes_{k_1} k_1[H_1] \ar[d] \\
	 	T^c \ar[r] & T^c\otimes_{k_1} k_1[G_1], 
	 }
	 $$ 
	which shows that $T^c/k_1(x)$ is a solution of $(G_1\twoheadrightarrow H_1, \ R_1/k_1(x))$. Because $T^c/k_1(x)$ is a Picard-Vessiot ring for $\de(y)=A^cy$, point (ii) of Lemma \ref{lemma: spread out solution to differential embedding problem} guarantees that the solution $T^c/k_1(x)$ is regular singular with respect to $S$. Finally, $T^c\otimes_{k_1(x)}k(x)$ is a regular singular (with respect to $S$) solution of $(G\twoheadrightarrow H,\ R/k(x))$ by Lemma \ref{lemma: extend constants}.	
\end{proof}	

Combining Theorem \ref{theo: solve regular singular embedding problem} with Proposition \ref{prop: equivalence of embedding property and differential embedding problem} we obtain:

\begin{cor} \label{cor: GammaS solves embedding problems}
	Let $S$ be a finite subset of $\mathbb{P}^1(k)$ and let $\Gamma_S$ denote the differential Galois group of the family of all regular singular differential equations over $k(x)$ with singularities in $S$. Then every algebraic embedding problem $(G\twoheadrightarrow H,\ \Gamma_S\twoheadrightarrow H)$ for $\Gamma_S$ with $d(G)\leq |S|-1$ has a solution. \qed
\end{cor}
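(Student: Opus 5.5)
This follows immediately from Theorem \ref{theo: solve regular singular embedding problem} together with the equivalence in Proposition \ref{prop: equivalence of embedding property and differential embedding problem}. Set $d=|S|-1$. Condition (i) of Proposition \ref{prop: equivalence of embedding property and differential embedding problem} asserts that every regular singular (with respect to $S$) differential embedding problem over $k(x)$ has a regular singular (with respect to $S$) solution. Theorem \ref{theo: solve regular singular embedding problem} establishes exactly this, for an arbitrary algebraically closed field $k$ of characteristic zero and any finite $S\subseteq\mathbb{P}^1(k)$. The implication (i)$\Rightarrow$(ii) of the proposition then gives condition (ii). Condition (ii) is precisely the claimed embedding property: every algebraic embedding problem $(G\twoheadrightarrow H,\ \Gamma_S\twoheadrightarrow H)$ with $d(G)\leq |S|-1$ has a solution.

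The proof therefore has two steps. First, invoke Theorem \ref{theo: solve regular singular embedding problem}. Second, apply the direction (i)$\Rightarrow$(ii) of Proposition \ref{prop: equivalence of embedding property and differential embedding problem}.

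It is worth checking that no hypothesis is lost in this translation. Given an algebraic embedding problem for $\Gamma_S$, the kernel $N$ of $\Gamma_S\twoheadrightarrow H$ yields the fixed field $L=L_S^N$. This $L$ is a Picard-Vessiot extension of finite type, because $H$ is algebraic. By Corollary \ref{cor: reg sing stable}, $L$ is regular singular with respect to $S$. Together with $d(G)\leq d$, this makes $(G\twoheadrightarrow H,\ L/k(x))$ a regular singular differential embedding problem, so it has a regular singular solution $M$. Embedding $M$ into $L_S$ over $L$ then produces the required solution $\Gamma_S\twoheadrightarrow G(M/k(x))\cong G$. All of this is already carried out in the proof of the proposition, so nothing beyond citation is needed.

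There is no real obstacle at this stage. All the substantive work lies in Theorem \ref{theo: solve regular singular embedding problem}, which rests on the Riemann-Hilbert correspondence over $\C$, the Gasch\"{u}tz Lemma (Theorem \ref{theo: Geschuetz}, via Proposition \ref{prop: embedding property for Gammad}), and the specialization argument. The only point requiring care is confirming that the finiteness of $S$ and the condition $d(G)\leq|S|-1$ match the hypotheses of Proposition \ref{prop: equivalence of embedding property and differential embedding problem}, and they do.
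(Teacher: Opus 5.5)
Your proposal is correct and matches the paper's argument exactly: the corollary is obtained by combining Theorem \ref{theo: solve regular singular embedding problem} with the implication (i)$\Rightarrow$(ii) of Proposition \ref{prop: equivalence of embedding property and differential embedding problem}. Your recap of that implication also follows the paper's proof of the proposition faithfully: you pass to $L=L_S^N$, use Corollary \ref{cor: reg sing stable} to see that $L$ is regular singular with respect to $S$, and embed the solution $M$ into $L_S$.
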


We are now prepared to prove Conjectures \ref{conj: RH} and \ref{conj: equivalent} from the introduction in case the base field $k$ is countable. 
In fact, we prove the conjectures without the assumption that $S$ is finite.

\begin{theo} \label{theo: main main}
Assume that $k$ is countable and let $S$ be a subset of $\mathbb{P}^1(k)$. Then the differential Galois group of the family of all regular singular differential equations over $k(x)$ with singularities contained in $S$, is isomorphic to the free proalgebraic group on a set of cardinality $|S|-1$.
\end{theo}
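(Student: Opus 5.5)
We split according to whether $S$ is finite or infinite. Write $\Gamma_S$ for the differential Galois group of the family of all differential equations over $k(x)$ that are regular singular with respect to $S$. Trivial cases: if $S=\emptyset$ or $|S|=1$, every equation regular singular with respect to $S$ is trivializable (such modules on $\mathbb{P}^1$ minus at most one point are trivial), so $\Gamma_S=1$. This agrees with the free proalgebraic group on the empty set; we interpret $|S|-1$ as $0$ when $S=\emptyset$.

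\textbf{$S$ finite, $d=|S|-1\geq 1$.} We apply Proposition \ref{prop: characterization for countable}, which requires two things, and both are already available. First, $d(\Gamma_S)\leq d$ by Corollary \ref{cor: d for countable}; this is where countability of $k$ enters. Second, every algebraic embedding problem $(G\twoheadrightarrow H,\ \Gamma_S\twoheadrightarrow H)$ with $d(G)\leq d$ is solvable by Corollary \ref{cor: GammaS solves embedding problems}. Since $k$ is countable, Proposition \ref{prop: characterization for countable} then gives $\Gamma_S\cong\Gamma_d$.

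\textbf{$S$ infinite.} Now $S$ is countably infinite, so $|S|-1=|S|=\aleph_0=|k|$. We use the characterization of free proalgebraic groups on infinitely many generators from \cite{Wibmer:FreeProalgebraicGroups}: a proalgebraic group $\Gamma$ of rank $\leq|X|$ (with $X$ infinite) is free on $X$ if every algebraic embedding problem for $\Gamma$ whose kernel is nontrivial has $|X|$ independent solutions. For countable $k$ this should reduce to the following: rank at most $\aleph_0$ plus solvability of all algebraic embedding problems, with enough independent solutions.

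\emph{Rank.} $\Gamma_S$ has rank $\leq\aleph_0$, since there are only countably many equations over $k(x)$.

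\emph{Solvability.} Write $L_S=\bigcup_{S'}L_{S'}$ over finite $S'\subseteq S$, so $\Gamma_S=\varprojlim_{S'}\Gamma_{S'}$. Given $(G\twoheadrightarrow H,\ \Gamma_S\twoheadrightarrow H)$ with $G$ algebraic, the map $\Gamma_S\twoheadrightarrow H$ factors through some $\Gamma_{S'}$, because $H$ is of finite type and hence realized by a single equation whose singularities form a finite set. Enlarge $S'$ so that $|S'|-1\geq d(G)$. The finite case shows $\Gamma_{S'}\cong\Gamma_{|S'|-1}$, and Proposition \ref{prop: embedding property for Gammad} then yields a solution $\Gamma_{S'}\twoheadrightarrow G$, which composes with $\Gamma_S\twoheadrightarrow\Gamma_{S'}$.

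\emph{Independence.} This is the main obstacle. The plan is to solve the problem through disjoint enlargements $S'\cup\{p_i\}$ for infinitely many distinct points $p_i\in S$. We then need the resulting solutions $\Gamma_S\twoheadrightarrow G$ to be independent, i.e. the induced map to the fibre product $G\times_H\cdots\times_H G$ should be surjective. We would obtain this by choosing each solution so that its Picard–Vessiot extension is genuinely singular at $p_i$, and then arguing via the Galois correspondence and a Goursat-type argument (Lemma \ref{lemma: Goursat}): a solution singular at $p_i$ cannot be dominated by solutions that are regular at $p_i$. Achieving such a choice itself uses the embedding property for the fibre product over $\Gamma_{S'\cup\{p_i\}}$.

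Alternatively, if \cite[Section 4.1]{Wibmer:RegularSingularDifferentialEquationsAndFreeProalgebraicGroups} provides a limit theorem for a projective system of free groups $\Gamma_{S'}$ with compatible maps, we would invoke it instead. In that case it remains only to check that the transition maps $\Gamma_{S''}\twoheadrightarrow\Gamma_{S'}$ have the required form.
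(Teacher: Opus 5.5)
Your finite case is the paper's argument: Corollary \ref{cor: d for countable} and Corollary \ref{cor: GammaS solves embedding problems} fed into Proposition \ref{prop: characterization for countable}. In the infinite case your rank bound and your solvability step also match the paper: factor $\Gamma_S\twoheadrightarrow H$ through some $\Gamma_{S'}$, enlarge $S'$ until $|S'|-1\geq d(G)$, solve there and compose. The gap is the step you flag yourself. You invoke a characterization that needs $|X|$ independent solutions, then leave independence open, offering only a sketch (solutions singular at distinct points $p_i$ plus a Goursat argument) and a conditional appeal to an unspecified limit theorem.

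As sketched, the singularity argument would not suffice anyway. It only shows that $M_{n+1}$ is not contained in the compositum $M_1\cdots M_n$. Independence needs more: $M_{n+1}\cap M_1\cdots M_n=L$, i.e., surjectivity of $\Gamma_S\to G\times_H\cdots\times_H G$. When $\ker(G\twoheadrightarrow H)$ is not minimal normal, a nontrivial intermediate Picard-Vessiot extension of $M_{n+1}/L$ could still be regular at $p_{n+1}$ and lie in $M_1\cdots M_n$.

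The missing observation is that independence comes for free from solvability of all algebraic embedding problems, which you have already established. Let $\phi_1,\ldots,\phi_n\colon\Gamma_S\twoheadrightarrow G$ be independent solutions of $(G\twoheadrightarrow H,\ \Gamma_S\twoheadrightarrow H)$, and write $G^{(n)}=G\times_H\cdots\times_H G$ ($n$ factors). Then $(\phi_1,\ldots,\phi_n)\colon\Gamma_S\twoheadrightarrow G^{(n)}$ is a quotient map. The projection $G^{(n+1)}\twoheadrightarrow G^{(n)}$ forgetting the last factor is a quotient map of algebraic groups, being a base change of $G\twoheadrightarrow H$. Your solvability step applies to the algebraic embedding problem $(G^{(n+1)}\twoheadrightarrow G^{(n)},\ \Gamma_S\twoheadrightarrow G^{(n)})$, since it handles arbitrary algebraic groups precisely because $S$ is infinite. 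The last coordinate of a solution is a $\phi_{n+1}$ with $\phi_1,\ldots,\phi_{n+1}$ independent. Induction therefore produces $\aleph_0=|X|$ independent solutions.

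This is exactly why \cite[Cor.~3.43]{Wibmer:FreeProalgebraicGroups}, the result the paper uses, states that for countable $k$ a proalgebraic group of countable rank is free on a countably infinite set if and only if every algebraic embedding problem for it is solvable. With that citation, your rank and solvability steps already finish the proof, and the singularity considerations can be dropped.
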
	
\begin{proof}
	Let $\Gamma_S$ be the differential Galois group of the family of all regular singular differential equations over $k(x)$ with singularities contained in $S$. We first treat the case that $S$ is finite.
	By Corollary \ref{cor: GammaS solves embedding problems}, every algebraic embedding problem $(G\twoheadrightarrow H,\ \Gamma_S\twoheadrightarrow H)$ for $\Gamma_S$ with $d(G)\leq |S|-1$ has a solution. According to Proposition \ref{prop: characterization for countable}, to conclude that $\Gamma_S$ is the free proalgebraic group on $|S|-1$ generators, it suffices to know that $d(\Gamma_S)\leq |S|-1$. This is provided by Corollary \ref{cor: d for countable}. 
	So the case when $S$ is finite is proved.
	
	In the general case, we use the following characterization of free proalgebraic groups over countable fields on countably infinite sets from \cite[Cor. 3.43]{Wibmer:FreeProalgebraicGroups}: A proalgebraic group $\Gamma$ of countable rank is free on a countably infinite set if and only if every algebraic embedding problem for $\Gamma$ is solvable. Firstly, note that $\Gamma_S$ has countable rank because there are only countably many linear differential equations over $k(x)$. So it suffices to show that every algebraic embedding problem for $\Gamma_S$ is solvable.
	
	For a subset $S'$ of $S$ let $R_{S'}/k(x)$ denote the Picard-Vessiot ring for the family of all regular singular differential equations over $k(x)$ with singularities in $S'$ and let $\mathcal{S}$ be the directed set of all finite subsets of $S$. Then $R_S$ is the directed union of all $R_{S'}$ with $S'\in \mathcal{S}$ and accordingly $\Gamma_S=\varprojlim_\mathcal{S} \Gamma_{S'}$. Let $(G\twoheadrightarrow H,\ \Gamma_S\twoheadrightarrow H)$ be an algebraic embedding problem for $\Gamma_S$. Because $H$ is algebraic, the morphism $\Gamma_S\twoheadrightarrow H$ factors through $\Gamma_S\twoheadrightarrow \Gamma_{S'}$ for some $S'\in\mathcal{S}$. Because $G$ is algebraic, $d(G)$ is finite. Without loss of generality, we may assume that $d(G)\leq |S'|-1$. By Corollary \ref{cor: GammaS solves embedding problems}, the embedding problem
	$(G\twoheadrightarrow H,\ \Gamma_{S'}\twoheadrightarrow H)$ has a solution $\Gamma_{S'}\twoheadrightarrow G$. The composition $\Gamma_S\twoheadrightarrow \Gamma_{S'}\twoheadrightarrow G$ then is a solution to $(G\twoheadrightarrow H,\ \Gamma_S\twoheadrightarrow H)$.	
\end{proof}	
In the special case $S=\mathbb{P}^1(k)$, Theorem \ref{theo: main main} implies:

\begin{cor} \label{cor: free for countable}
	Assume that $k$ is countable. Then the differential Galois group of the family of all regular singular differential equations over $k(x)$ is the free proalgebraic group on a set of cardinality $|k|$. \qed
\end{cor}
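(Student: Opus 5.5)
This is the special case $S=\mathbb{P}^1(k)$ of Theorem \ref{theo: main main}, so the plan is to apply that theorem and compute the cardinality $|S|-1$.

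First, Theorem \ref{theo: main main} allows an arbitrary subset $S\subseteq\mathbb{P}^1(k)$. For $S=\mathbb{P}^1(k)$, the family of regular singular differential equations over $k(x)$ with singularities contained in $S$ is simply the family of all regular singular differential equations over $k(x)$. This holds because every singularity of an equation over $k(x)$ is, by definition, a point of $\mathbb{P}^1(k)$. So the group in the corollary is exactly $\Gamma_{\mathbb{P}^1(k)}$, and the theorem identifies it with the free proalgebraic group on a set of cardinality $|\mathbb{P}^1(k)|-1$.

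Second, we compute this cardinality. Since $k$ is algebraically closed, it is infinite, and $|\mathbb{P}^1(k)|=|k|+1=|k|$. For infinite cardinals, removing one element does not change the cardinality, so $|\mathbb{P}^1(k)|-1=|k|$. In the countable setting this is $\aleph_0$, consistent with the infinite case of the proof of Theorem \ref{theo: main main}, which handles countably infinite $S$.

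There is no substantive obstacle. The only point to check is that "$|S|-1$" for infinite $S$ is read as the cardinality of $S$ minus a point. This matches how the infinite case of Theorem \ref{theo: main main} was treated, via the characterization of free proalgebraic groups on a countably infinite set from \cite[Cor. 3.43]{Wibmer:FreeProalgebraicGroups}.
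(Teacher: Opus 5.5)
Your proposal is correct and matches the paper's argument exactly. The paper obtains the corollary as the special case $S=\mathbb{P}^1(k)$ of Theorem \ref{theo: main main}, and $|\mathbb{P}^1(k)|-1=|k|$ because $k$ is infinite.
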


Theorem \ref{theo: solve regular singular embedding problem} is not strong enough to imply the freeness of $\Gamma_S$ for $k$ uncountable. However, Theorem~\ref{theo: solve regular singular embedding problem} implies a property that is weaker than freeness, namely, $\Gamma_S$ is a \emph{projective proalgebraic group}. Recall (\cite[Section 2.3]{BassLubotzkyMagidMozes:TheProalgebraicCompletionOfRigidGroups}) that a proalgebraic group $P$ is \emph{projective} if for every quotient map $G\twoheadrightarrow H$ of proalgebraic groups and every morphism $P\to H$, there exists a morphism $P\to G$ making
$$
\xymatrix{
	P \ar[rd] \ar@{..>}[d] & \\
	G \ar@{->>}[r] & H
}
$$
commutative. We note that by Proposition 4 in \cite{BassLubotzkyMagidMozes:TheProalgebraicCompletionOfRigidGroups}, in the above definition, it suffices to consider the case that $G$ is algebraic.

\begin{cor} \label{cor: projective}
	Let $S\subseteq\mathbb{P}^1(k)$. Then the differential Galois group of the family of all regular singular differential equations over $k(x)$ with singularities contained in $S$ is a projective proalgebraic group.
\end{cor}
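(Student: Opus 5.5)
We reduce to algebraic $G$ and then handle the finite and infinite cases of $S$ separately. By Proposition 4 of \cite{BassLubotzkyMagidMozes:TheProalgebraicCompletionOfRigidGroups} it suffices to treat a quotient map $\alpha\colon G\twoheadrightarrow H$ with $G$ algebraic and an arbitrary morphism $\beta\colon\Gamma_S\to H$. We have to produce a morphism $\Gamma_S\to G$ lifting $\beta$.

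First we replace $\beta$ by a quotient map. Let $H'=\beta(\Gamma_S)$, a closed subgroup of $H$, and set $G'=\alpha^{-1}(H')$. Then $\alpha$ restricts to a quotient map $G'\twoheadrightarrow H'$. A lift $\Gamma_S\to G'$ composed with the inclusion $G'\le G$ is a lift to $G$. So we may assume $\beta$ is a quotient map, i.e.\ we face an algebraic embedding problem $(G\twoheadrightarrow H,\ \Gamma_S\twoheadrightarrow H)$. We only need a weak solution: a morphism, not necessarily surjective.

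Next, since $H$ is algebraic, $\beta$ factors through $\Gamma_S\twoheadrightarrow\Gamma_{S'}$ for some finite $S'\subseteq S$. This uses the description $\Gamma_S=\varprojlim\Gamma_{S'}$ over finite subsets, exactly as in the proof of Theorem~\ref{theo: main main}. We may enlarge $S'$ inside $S$ as far as $S$ allows.

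If $S$ is infinite, we enlarge $S'$ until $|S'|-1\ge d(G)$, which is possible because $d(G)<\infty$. Corollary~\ref{cor: GammaS solves embedding problems} then gives a solution $\Gamma_{S'}\twoheadrightarrow G$, and composing with $\Gamma_S\twoheadrightarrow\Gamma_{S'}$ finishes this case.

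If $S$ is finite, $d(G)$ may exceed $|S|-1$, and this is the main obstacle. The fix is to pass to a suitable closed subgroup of $G$. Let $d=|S|-1$ and choose generators $h_1,\ldots,h_d$ of $H$; these exist by Corollary~\ref{cor: same quotients} and Lemma~\ref{lemma: Galois of reg sing generated}. Choose arbitrary lifts $g_i\in G$ with $\alpha(g_i)=h_i$ and set $G''=\langle g_1,\ldots,g_d\rangle$. Then $G''\to H$ is a quotient map, since its image is closed and contains the generators $h_i$, and $d(G'')\le d$. Corollary~\ref{cor: GammaS solves embedding problems} applied to $(G''\twoheadrightarrow H,\ \Gamma_S\twoheadrightarrow H)$ gives $\Gamma_S\twoheadrightarrow G''\le G$, the desired lift.

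The degenerate cases $S=\emptyset$ and $|S|=1$ give $\Gamma_S$ trivial, which is projective, so they are trivial. The same subgroup trick also works for infinite $S$, so the case split is only for clarity.
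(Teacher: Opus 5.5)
Your proof is correct and follows the paper's argument. You reduce to algebraic $G$ and a quotient map $\beta$, and for finite $S$ you lift generators of $H$ individually to pass to $G''=\langle g_1,\ldots,g_d\rangle$ with $d(G'')\le d$ before invoking Corollary~\ref{cor: GammaS solves embedding problems}, which is exactly what the paper does. The only minor variation is for infinite $S$: you enlarge $S'$ until $d(G)\le|S'|-1$, as in the proof of Theorem~\ref{theo: main main}, whereas the paper simply applies the finite case to $\Gamma_{S'}$. Both work, and yours even gives a surjective lift in that case.
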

\begin{proof}
	Let $\Gamma_S$ denote the differential Galois group of the family of all regular singular differential equations over $k(x)$ with singularities contained in $S$. Let $\alpha\colon G\twoheadrightarrow H$ be a quotient map of algebraic groups and $\beta\colon \Gamma_S\to H$ a morphism. We have to show that there exists a morphism $\f\colon \Gamma_S\to G$ making
	\begin{equation} \label{eq: diag for proj}
	\xymatrix{
		\Gamma_S \ar^-{\beta}[rd] \ar_-{\f}@{..>}[d] & \\
		G \ar^-{\alpha}@{->>}[r] & H
	}
	\end{equation}
	commutative. Replacing $H$ with $\beta(\Gamma_S)$ and $G$ with $\alpha^{-1}(\beta(\Gamma_S))$, we may assume that $\beta\colon \Gamma_S\to H$ is a quotient map.

	We first consider the case that $S$ is finite. Then, as $H$ is an algebraic quotient of $\Gamma_S$, it follows that $H$ can be generated by $d=|S|-1$ elements (Corollary \ref{cor: same quotients}). Let $h_1,\ldots,h_d$ be generators of $H$. As we don't know if $d(G)\leq d$, we may not be able to lift $h_1,\ldots,h_d$ to generators of $G$, but we can surely lift each $h_i$ individually. So let $g_1,\ldots,g_d\in G(k)$ be such that $\alpha(g_i)=h_i$ for $i=1,\ldots,d$. Set $G'=\langle g_1,\ldots,g_d\rangle\leq G$. Then $\alpha$ restricts to a quotient map $G'\twoheadrightarrow H$ and $d(G')\leq d$. By Corollary \ref{cor: GammaS solves embedding problems}, there exists a morphism $\f\colon \Gamma_S\to G'$ making 
		$$
	\xymatrix{
		\Gamma_S \ar^-{\beta}[rd] \ar_-{\f}@{..>}[d] & \\
		G' \ar^-{\alpha}@{->>}[r] & H
	}
	$$
	commutative. Then $\f$ also makes (\ref{eq: diag for proj}) commutative. This proves the claim for $S$ finite.
	
	In the general case, as in the proof of Theorem \ref{theo: main main}, we can write $\Gamma_S$ as $\Gamma_S=\varprojlim_{\mathcal{S}}\Gamma_{S'}$, with $\mathcal{S}$ the directed set of all finite subsets of $S$. Because $H$ is algebraic, $\beta\colon \Gamma_S\twoheadrightarrow H$ factors through $\Gamma_{S'}\twoheadrightarrow H$ for some $S'\in\mathcal{S}$. From the finite case established above, we know that there exists a morphism $\Gamma_{S'}\to G$ making
	$$
	\xymatrix{
		\Gamma_{S'} \ar@{->>}[rd] \ar@{..>}[d] & \\
		G \ar@{->>}[r]&  H
	}
	$$ 
	commutative. The composition $\Gamma_S\twoheadrightarrow\Gamma_{S'}\to G$ then makes (\ref{eq: diag for proj}) commutative.
\end{proof}	

In connection with Corollary \ref{cor: projective}, it seems interesting to note that it has recently been shown in \cite{Magid:TheDifferentialGaloisGroupOfTheMaximalProunipotentExtensionIsFree} that the maximal prounipotent quotient of the absolute differential Galois group of any differential field (with constants $k$) is a projective proalgebraic group (and hence a free prounipotent group).

\subsection{Future work} \label{subsec: Future work} Our proof of Theorem \ref{theo: main main} relies on the classical Riemann-Hilbert correspondence (over $\mathbb{C}$) and is restricted to countable base fields. It seems desirable to have a purely algebraic proof of Conjecture \ref{conj: RH} that does not rely on the classical Riemann-Hilbert correspondence and of course we would like to remove the assumption that $k$ is countable in Theorem~\ref{theo: main main}. A possible strategy to tackle this problem is as follows:

\begin{itemize}
	\item[(1)] Find a characterization of free proalgebraic groups in terms of embedding problems.
	\item[(2)] Translate the characterization in (1) to the language of differential embedding problems.
	\item[(3)] Prove that all differential embedding problems from (2) are solvable using patching.
\end{itemize}	
We note that the idea for the above strategy (without the patching) essentially goes back to K. Iwasawa and his proof of the solvable variant of the Shafarevich conjecture (\cite{Iwasawa:OnSolvableExtensionsOfAlgebraicNumberFields}). Furthermore, the above strategy has recently been applied successfully in the proof of Matzat's conjecture, stating that the absolute differential Galois group of $k(x)$ is the free proalgebraic group on a set of cardinality $|k|$ (\cite{BachmayrHarbaterHartmannWibmer:FreeDifferentialGaloisGroups}, \cite{BachmayrHarbaterHartmannWibmer:TheDifferentialGaloisGroupOfRationalFunctionField}, \cite{FengWibmer:DifferentialGaloisGroupsSpecializationsAndMatzatsConjecture}).

Point (1) has been achieved in \cite{Wibmer:FreeProalgebraicGroups} for free proalgebraic groups on a set $X$ over a field $k$ in the case that $|X|\geq |k|$. For $|X|$ finite, Conjecture \ref{conj: Gaschuetz for proalgebraic} provides the desired characterization (Corollary \ref{cor: characterization of Gammad}). 

Point (2) is routine. See Proposition \ref{prop: equivalence of embedding property and differential embedding problem} for an example of this. Point (3) requires regular singular variants of the patching results in \cite{BachmayrHarbaterHartmannWibmer:DifferentialEmbeddingProblems}, \cite{BachmayrHarbaterHartmann:DifferentialEmbeddingProblemsOverLaurentSeriesFields}, \cite{BachmayrHartmannHarbaterPopLarge}, \cite{BachmayrHarbaterHartmannWibmer:TheDifferentialGaloisGroupOfRationalFunctionField}.

\bibliographystyle{alpha}
 \bibliography{bibdata}
\end{document}